\newtheorem{satz}{Theorem}
\newtheorem{proposition}[satz]{Proposition}
\newtheorem{theorem}[satz]{Theorem}
\newtheorem{lemma}[satz]{Lemma}
\newtheorem{definition}[satz]{Definition}
\newtheorem{corollary}[satz]{Corollary}
\newtheorem{remark}[satz]{Remark}
\newtheorem{example}[satz]{Example}
\def\eps{\varepsilon}
\def\_phi{\varphi}
\def\a{\alpha}
\def\d{\delta}
\def\la{\lambda}
\def\F{{\mathbb F}}
\def\L{\Lambda}
\def\o{\omega}
\def\ov{\overline}
\def\C{{\mathbb C}}
\def\R{{\mathbb R}}
\def\E{\mathsf {E}}
\def\T{{\mathbb T}}
\def\Z_N{{\mathbb Z}_N}
\def\Z{{\mathbb Z}}
\def\N{{\mathbb N}}
\def\U{{\mathcal U}}
\def\f{{\mathbb F}}
\def\Gr{{\mathbf G}}
\def\det{{\rm det\,}}
\def\G{\Gamma}
\def\FF{\widehat}
\def\c{\circ}
\def\D{\Delta}
\def\T{\mathsf {T}}
\def\I{\mathcal {I}}
\def\P{\mathcal {P}}
\def\Do{{\mathsf D}}
\def\No{{\mathsf N}}
\def\Q{\mathsf{Q}}
\def\SL{{\rm SL\,}}
\def\GL{{\rm GL\,}}
\def\U{{\rm U\,}}
\def\B{{\rm B\,}}
\author{Shkredov I.D.}
\title{ On asymptotic formulae in some sum--product questions
	\footnote{
		This work was supported by grant
		Russian Scientific Foundation RSF 14--11--00433.}
}
\date{}
\begin{document}
	\maketitle

\begin{center}
	Annotation.
\end{center}

{\it \small
	In 
	this 
	paper we obtain a series of asymptotic formulae in the sum--product phenomena 
	over 
	the prime field $\F_p$.
	In the proofs we use usual incidence theorems in  $\F_p$, as well as the growth result in $\SL_2 (\F_p)$ due to  Helfgott. 
	Here some of our applications: \\
	$\bullet~$ a new bound for the number of the  solutions to the equation $(a_1-a_2) (a_3-a_4) = (a'_1-a'_2) (a'_3-a'_4)$, $\,a_i, a'_i\in A$, $A$ is an arbitrary subset of $\F_p$, \\
	$\bullet~$ a new effective bound for multilinear  exponential sums of Bourgain, \\ 
	$\bullet~$ an asymptotic analogue of the Balog--Wooley decomposition theorem, \\
	$\bullet~$ growth of $p_1(b) + 1/(a+p_2 (b))$, where $a,b$ runs over two subsets of $\F_p$, $p_1,p_2 \in \F_p [x]$ are two non--constant polynomials, \\ 
	$\bullet~$ new bounds for some exponential sums with multiplicative and additive characters.  
}
\\

	\section{Introduction}
	\label{sec:introduction}

	Let $p$ be an odd prime number, and $\F_p$ be the finite field.
	Having two sets $A,B\subset \F_p$, define the  \textit{sumset}, the \textit{product set} and the \textit{quotient set} of $A$ and $B$ as 
	$$A+B:=\{a+b ~:~ a\in{A},\,b\in{B}\}\,,$$
	$$AB:=\{ab ~:~ a\in{A},\,b\in{B}\}\,,$$
	and 
	$$A/B:=\{a/b ~:~ a\in{A},\,b\in{B},\,b\neq0\}\,,$$
	correspondingly.
	Our paper is devoted to so--called the {\it sum--product phenomenon} 
	in 
	$\F_p$ which was developed in papers \cite{AMRS}--\cite{GK}, \cite{K_mult}---\cite{RSS},
	\cite{s_E_k}---\cite{SZ_inc}, \cite{TTT}, \cite{Vinh} and in many others.
	This is extensively 
	growing 
	area of mathematics with plenty of applications to Number Theory, Additive Combinatorics, Computer Science and Dynamical Systems. 
	It seems like at the moment there is "the second wave"\, of results and applications in this field see, e.g.,  \cite{BW}, \cite{MP}---\cite{PS}, \cite{RSS} 
	and this wave 
	is connected with a fundamental incidence result of Rudnev \cite{Rudnev_pp} (see a simple proof of his 
	theorem 
	in \cite{Zeeuw}
	and also the famous Guth--Katz  \cite{Guth_Katz} solution of the Erd\H{o}s distinct distance problem which contains the required technique for such incidence results), as well as with more applicable 
	{\it energy} versions of the sum--product phenomenon \cite{BW}, \cite{BG}, \cite{KS2}, \cite{MP}, \cite{MPR-NRS} and \cite{RSS}. 
	The sum--product phenomenon asserts that either the sumset or the product set of a set must be large up to some natural algebraic constrains.  
	One of 
	the strongest form of this principle is the Erd\H{o}s--Szemer\'{e}di  conjecture \cite{ES} 
	which says that for any sufficiently large set $A$ and an arbitrary $\epsilon>0$ one has
	$$\max{\{|A+A|,|AA|\}}\gg{|A|^{2-\epsilon}} \,.$$
	At the moment the best results in the direction  can be found in 
	\cite{soly}, 
	\cite{KS1}, \cite{KS2}, \cite{RSS} 
	and in \cite{AMRS}, \cite{RRS}  for  $\R$ and $\F_p$, respectively. 
	For example, let us recall the main results from \cite{AMRS}, \cite{RRS}.

	\begin{theorem}
		Let $A\subseteq \F_p$ be an arbitrary set and $|A|<p^{5/8}$. 
		Then 
		\begin{equation}\label{f:1/5_F_p}
		\max\{ |A+A|, |AA| \} \gg |A|^{1+1/5} \,.
		\end{equation}
		\label{t:1/5_F_p}
	\end{theorem}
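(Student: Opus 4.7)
Set $K := \max(|A+A|, |AA|)/|A|$; the task is to show $K \gg |A|^{1/5}$. The overall strategy is to combine Rudnev's point--plane incidence theorem in $\F_p^3$ with standard energy inequalities, using the Pl\"unnecke--Ruzsa inequality in both the additive and multiplicative groups to bound auxiliary quantities such as $|A-A|$, $|A/A|$, $|A+A-A|$ and $|AA/A|$ by small polynomials in $K$ times $|A|$.

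The core step is to prove an asymmetric energy inequality of the form
$$\E^\times(A)^{c} \ll |A|^{\alpha}\,|A+A|^{\beta}\,\log|A|,$$
(or its dual with the roles of $+$ and $\times$ interchanged). To obtain this, one interprets the multiplicative relation $a_1 a_2 = a_3 a_4$ with $a_i \in A$, weighted by additive representation functions such as $r_{A+A}$, as an incidence count between suitable points and planes in $\F_p^3$: writing $a_1 / a_3 = a_4 / a_2$ makes the point $(a_2, a_3, a_4)$ lie on an explicit plane parametrised by the remaining variable. Rudnev's theorem then yields the bound, \emph{provided} the non-concurrency hypothesis (few collinear points) is verified. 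This is exactly where the restriction $|A| < p^{5/8}$ enters: it ensures that the relevant point set in $\F_p^3$ is far enough from filling up a plane that no more collinear points appear than the generic count tolerates.

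Combining the energy bound above with the trivial lower estimate $\E^\times(A) \geq |A|^4/|AA|$ (from Cauchy--Schwarz) and substituting the assumption $|AA|, |A+A| \leq K|A|$, one rearranges to obtain $K^{5} \gg |A|$, which is exactly the claim~\eqref{f:1/5_F_p}.

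The main obstacle is choosing the correct incidence configuration so that Rudnev's theorem delivers the sharp pair of exponents $(\alpha, \beta)$ leading to the $1/5$ gain; a less efficient arrangement of the variables reproduces only the classical $1/4$ (or even $4/3$-type) bounds. A secondary technicality is that the incidence setup naturally produces dyadic/popularity losses when restricting to typical values of $r_{A+A}$ or $r_{A/A}$, and one must check that these logarithmic factors can be absorbed into the $\gg$ notation together with the Pl\"unnecke--Ruzsa constants.
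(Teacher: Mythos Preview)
This theorem is not proved in the paper; it is quoted in the introduction from \cite{AMRS} and \cite{RRS} as background. Your overall plan---bound one of the energies via Rudnev's point--plane theorem and combine with the Cauchy--Schwarz lower bound $\E\ge |A|^4/|A\!\cdot\! A|$---is exactly the route taken in those references, and the paper itself packages the needed estimate as Lemma~\ref{l:AA_small_energy}: if $|AA|\le M|A|$ then $\E^+(A)\ll M^2|A|^4/p+M^{3/2}|A|^{5/2}$, which together with $\E^+(A)\ge |A|^4/|A+A|$ yields $K^{5/2}\gg |A|^{1/2}$ once the $p$-term is negligible.

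Two specifics in your sketch are off. First, the configuration you write down is not a plane: for fixed $a_1$ the locus $\{(a_2,a_3,a_4):a_1a_2=a_3a_4\}$ is the quadric $a_1x=yz$, so Rudnev's theorem does not apply to it. The actual setup (in the $\E^\times$/small $|A+A|$ direction, say) rewrites $q_1q_2=q_3q_4$ as $(s_1-a)\,q_2=(s_3-a')\,q_4$ with $s_1,s_3\in A+A$ and $a,a'\in A$; taking points $(s_1,q_4,a)$ and planes indexed by $(q_2,s_3,a')$ makes every bilinear term a point-coordinate times a plane-parameter, hence genuinely affine, with $|\mathcal P|=|\Pi|=|A+A|\,|A|^2$. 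No Pl\"unnecke--Ruzsa inequalities or dyadic pigeonholing are required.

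Second, the hypothesis $|A|<p^{5/8}$ does not enter through the collinearity parameter $k$ in Rudnev's theorem; here $k\le |A+A|$ and its contribution is harmless whenever $K\le |A|$. What $|A|<p^{5/8}$ actually controls is the leading term $|\mathcal P||\Pi|/p$ (equivalently, the term $M^2|A|^4/p$ in Lemma~\ref{l:AA_small_energy}): one needs this not to swamp $M^{3/2}|A|^{5/2}$ when $M\le |A|^{1/5}$, which amounts to $M|A|^3\ll p^2$, i.e.\ $|A|^{16/5}\ll p^2$.
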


	As one can  see the bound above works for small sets only and this is an usual 
	thing for 
	the results in this area. 
	On the other hand, the exact  behaviour of the maximum in (\ref{f:1/5_F_p}) and other sum--product quantities are known just for very large sets 
	having its sizes 
	comparable  to the characteristic $p$,
	see, e.g., \cite{G}, \cite{Garaev_survey}, \cite{TTT}, \cite{Vinh}. 
	Even in strong recent paper \cite{SZ_inc}  containing an optimal estimate for the number of point/lines incidences in the case of Cartesian products  we have just an upper bound for such incidences  but not an asymptotic. 
	The first 
	result in the sum--product theory which gives us an asymptotic formula for a sum--product quantity and which works  for sets of any sizes  
	was proved in \cite[Theorem 10]{MPR-NRS}.

	\begin{theorem}
		Let $A\subseteq \F_p$ be a set and $\Q(A)$ be the number of collinear quadruples in $A\times A$.
		Then
		\begin{equation}\label{f:Q_1}
		\Q(A) = \frac{|A|^8}{p^2} + O(|A|^5 \log |A|) \,.
		\end{equation}	
		Further for the number $\T(A)$ of  collinear triples in $A \times A$ one has
		\begin{equation}\label{f:Q_2}
		\T(A) = \frac{|A|^6}{p} + O(p^{1/2} |A|^{7/2}) \,.
		\end{equation}		
		\label{t:Q}
	\end{theorem}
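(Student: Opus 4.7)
The plan is to express both $\Q(A)$ and $\T(A)$ as power-moments of the line-incidence function on the point set $P := A\times A$, and then control these moments via Vinh's point--line incidence bound in $\F_p^2$. For an affine line $\ell\subset\F_p^2$ write $i(\ell):=|\ell\cap P|$. By double counting, $\sum_\ell i(\ell)^k$ equals a weighted count of ordered collinear $k$-tuples in $P$, the weight being $p+1$ on the full diagonal (a single point) and $1$ elsewhere, so after separating the thin diagonals one has $\T(A) = \sum_\ell i(\ell)^3 + O(p|A|^2)$ and $\Q(A) = \sum_\ell i(\ell)^4 + O(p|A|^4)$, with these boundary terms easily absorbed into the claimed errors.

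The next step is to exploit the exact first two moments: since every point of $\F_p^2$ lies on $p+1$ lines and every pair of distinct points on exactly one, $\sum_\ell i(\ell) = (p+1)|A|^2$ and $\sum_\ell i(\ell)^2 = |A|^4 + p|A|^2$. Writing $r(\ell) := i(\ell)-|A|^2/p$, these identities become $\sum_\ell r(\ell) = 0$ and $\sum_\ell r(\ell)^2 = p|A|^2 - |A|^4/p$. Expanding $\sum_\ell (|A|^2/p+r(\ell))^k$ by the binomial theorem, the $r^0$-term recovers the main term $(p^2+p)(|A|^2/p)^k$, matching $|A|^6/p$ and $|A|^8/p^2$ respectively; the $r^1$-term vanishes; the $r^2$-term contributes $O(|A|^4)$ for $k=3$ and $O(|A|^6/p)$ for $k=4$, both comfortably absorbed.

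The nontrivial task is then to bound $\sum_\ell r(\ell)^3$ and $\sum_\ell r(\ell)^4$. Here Vinh's theorem enters: for any collection of lines $L$ in $\F_p^2$,
$$
\bigl|\,I(P,L)-|P|\,|L|/p\,\bigr|\;\leq\;\sqrt{p\,|P|\,|L|}.
$$
Applied to a level set $L_t := \{\ell : i(\ell)\geq t\}$ this yields $|L_t|\ll p|A|^2/(t-|A|^2/p)^2$ for $t$ a constant factor above $|A|^2/p$, and applied to a single line it gives the pointwise bound $|r(\ell)|\leq p^{1/2}|A|$. A dyadic level-set summation $\sum_\ell |r(\ell)|^k \leq \sum_{j}(2^j)^k\,|L_{2^j}|$ over the scales between the bulk $|A|^2/p$ and the maximum $p^{1/2}|A|$ then delivers the tail estimates. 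The logarithmic factor in the $\Q(A)$ error reflects precisely the $O(\log|A|)$ dyadic scales in this range, each contributing at the same order $|A|^5$.

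The hardest part is reaching the sharper error $O(p^{1/2}|A|^{7/2})$ for $\T(A)$: a crude cubic tail bound via $L^\infty$ and the second moment gives only $p^{3/2}|A|^3$, short by a factor of $p^{1/2}/|A|^{1/2}$. I expect the remedy to be a Cauchy--Schwarz bootstrap from the quartic bound,
$$
\Bigl(\sum_\ell r(\ell)^3\Bigr)^{\!2}\;\leq\;\Bigl(\sum_\ell r(\ell)^2\Bigr)\Bigl(\sum_\ell r(\ell)^4\Bigr)\;\leq\;p|A|^2\cdot O(|A|^5\log|A|),
$$
which yields $\sum_\ell r(\ell)^3 = O(p^{1/2}|A|^{7/2}\sqrt{\log|A|})$; a careful handling at the top scale, possibly invoking the Cartesian-product incidence bound of Stevens--de Zeeuw \cite{SZ_inc}, should remove the parasitic $\sqrt{\log|A|}$ and produce the clean bound. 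Overall this is the standard incidence-theoretic moment-extraction scheme appearing throughout the sum--product literature cited in the introduction.
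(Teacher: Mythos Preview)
There is a genuine gap in your plan for $\Q(A)$. Vinh's point--line bound gives only the level-set estimate $|L_t|\ll p|A|^2/(t-|A|^2/p)^2$, so the dyadic fourth-moment sum satisfies
\[
\sum_{\ell}|r(\ell)|^4 \;\ll\; \sum_{s\text{ dyadic}} s^4\cdot \frac{p|A|^2}{s^2} \;=\; p|A|^2\sum_{s} s^2,
\]
which is \emph{increasing} in the scale $s$. Even using the trivial grid bound $i(\ell)\le |A|$ to cap the top scale at $s\sim |A|$, this yields only $O(p|A|^4)$, not $O(|A|^5\log|A|)$; with the weaker cap $s\sim p^{1/2}|A|$ you wrote, it is worse still. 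Your claim that ``each dyadic scale contributes at the same order $|A|^5$'' is exactly what fails: for that you need $|L_t|\ll |A|^5/t^4$, and Vinh does not deliver this.

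What does deliver it is the Rudnev point--plane theorem (Theorem~\ref{t:Misha+}), or equivalently the Stevens--de Zeeuw Cartesian-product incidence bound you mention only at the end. That bound gives $|L_t|\ll |A|^5/t^4$ (plus lower-order terms) in the relevant range, and then the dyadic sum is flat and produces the $\log|A|$ factor for the reason you state. This is not a cosmetic upgrade to shave a logarithm: it is the main input for the $\Q(A)$ error term. The paper does not reprove Theorem~\ref{t:Q} (it is quoted from \cite{MPR-NRS}), but its own treatment of the closely related bounds---see the proof of Theorem~\ref{t:Q_new} for $\T(A)$ and Lemma~\ref{l:Q(A,B,C,D)} for $\Q$---works by interpreting these counts as point--plane incidences in $\F_p^3$ and applying Theorem~\ref{t:Misha+} together with the spectral bound~\eqref{f:Vinh}, rather than Vinh's point--line theorem in $\F_p^2$.

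Once the $\Q(A)$ bound is established correctly, your Cauchy--Schwarz bootstrap for $\T(A)$ is sound up to the parasitic $\sqrt{\log|A|}$; the clean $O(p^{1/2}|A|^{7/2})$ stated in~\eqref{f:Q_2} is obtained in \cite{MPR-NRS} by a direct incidence argument rather than by passing through $\Q(A)$.
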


	It is known that formula (\ref{f:Q_1}) is sharp up to logarithms but (\ref{f:Q_2}) is probably not, 
	see \cite{MPR-NRS}.

	One of the aims of our paper is to prove a series of new asymptotic formulae in the considered 
	area. 
	In the proofs we use usual incidence theorems in  $\F_p$, see \cite{Rudnev_pp}, \cite{SZ_inc} and other papers,  
	as well as the  growth result  in $\SL_2 (\F_p)$ due to Helfgott \cite{Harald}.
	So, our another aim  is to obtain some new applications (also, see recent papers \cite{Brendan_rich},  \cite{NG_S} where other applications were found) of classical graph (group) expansion phenomena,  see \cite{BG_SL}, \cite{Gill}, \cite{Gow_random}, \cite{Harald}, \cite{SX} and others.

	Our first asymptotic formula concerns to the quantity
	$$
	\T^{+}_k (A) := |\{ (a_1,\dots,a_k,a'_1,\dots,a'_k) \in A^{2k} ~:~ a_1 + \dots + a_k = a'_1 + \dots + a'_k \}|
	$$
	(similarly one can define its  multiplicative analogue $\T^{\times}_k (A)$) 
	in the case when $A$ is a multiplicative subgroup of $\F_p^*$ 
	(see Theorem \ref{t:T_k_G} below). 
	In papers \cite{Bourgain_more}, \cite{Bourgain_DH}, \cite{BGK}, \cite{s_Bourgain} 
	just upper bounds for $\T^{+}_k (A)$ can be found but not an asymptotic formula.

	\begin{theorem}
		Let 
		$\G \subseteq \F_p^*$  be a multiplicative subgroup. 
		Then for any $k\ge 1$ 
		one has 
		\begin{equation}\label{f:T_k_G_intr}
		0\le \T^{+}_{2^k} (\G) - \frac{|\G|^{2^{k+1}}}{p}
		\le 
		2^{3k^2} (C_* \log^4 p)^{k-1} \cdot |\G|^{2^{k+1} - \frac{(k+7)}{2}} \T^{+}_2 (\G) \,,
		\end{equation}
		where $C_* >0$ is an absolute constant.
		\label{t:T_k_G_intr}
	\end{theorem}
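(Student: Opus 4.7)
The natural starting point is the Fourier identity
\[
\T^{+}_{2^k}(\G) \;=\; \frac{1}{p}\sum_{\xi\in\F_p}|\FF{\G}(\xi)|^{2^{k+1}},\qquad \FF{\G}(\xi):=\sum_{g\in\G} e_p(\xi g),
\]
from which, after isolating the $\xi=0$ contribution,
\[
E_k \;:=\; \T^{+}_{2^k}(\G)-\frac{|\G|^{2^{k+1}}}{p} \;=\; \frac{1}{p}\sum_{\xi\neq 0}|\FF{\G}(\xi)|^{2^{k+1}} \;\gs\; 0.
\]
Non-negativity immediately delivers the lower half of (\ref{f:T_k_G_intr}). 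A structural feature that will drive the upper bound is that, because $\G$ is a multiplicative subgroup of $\F_p^*$, the function $\xi\mapsto |\FF{\G}(\xi)|$ is constant on $\G$-cosets in $\F_p^*$, so any single large Fourier coefficient is automatically accompanied by $|\G|$ others of the same size.

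For the upper bound I would proceed by induction on $k$, driven by the elementary moment inequality
\[
E_{k+1} \;=\; \frac{1}{p}\sum_{\xi\neq 0}|\FF{\G}(\xi)|^{2^{k+2}} \;\le\; \Bigl(\max_{\xi\neq 0}|\FF{\G}(\xi)|^{2^{k+1}}\Bigr)\cdot E_k.
\]
The base case $k=1$ is trivial, since $E_1 \le \T^{+}_2(\G)$ and the prefactor $2^3=8$ supplies ample slack. Comparing the claimed bounds at consecutive levels, the inductive step reduces to establishing, uniformly in $k$,
\[
\max_{\xi\neq 0}|\FF{\G}(\xi)|^{2^{k+1}} \;\le\; 2^{6k+3}\,C_{*}\log^{4}p\cdot |\G|^{2^{k+1}-1/2}.
\]
Telescoping then yields $E_k\le \T^{+}_2(\G)\prod_{j=1}^{k-1}\bigl(2^{6j+3}C_{*}\log^{4}p\,|\G|^{2^{j+1}-1/2}\bigr) = 2^{3(k^2-1)}(C_{*}\log^{4}p)^{k-1}|\G|^{2^{k+1}-(k+7)/2}\T^{+}_2(\G)$, which lies within the claimed constant $2^{3k^2}$.

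The crux is therefore the pointwise Fourier estimate above, which after taking $2^{k+1}$th roots reads $|\FF{\G}(\xi)|\le (C_*\log^4p)^{1/2^{k+1}}|\G|^{1-2^{-(k+2)}}$ -- a Konyagin-type bound that is almost trivial for large $k$ and genuinely substantive only for the first few steps. To obtain it I would appeal to Helfgott's growth theorem in $\SL_2(\F_p)$: a single anomalously large $|\FF{\G}(\xi_0)|$, combined with coset-invariance, produces $|\G|$ frequencies with comparable Fourier mass, which after expanding $|\FF{\G}(\xi)|^2=\sum_{g,g'\in\G}e_p(\xi(g-g'))$ and dyadic pigeonholing on ratios $(g_1-g_2)/(g_3-g_4)\in\G$ manufactures a large subset of Möbius transformations $x\mapsto(ax+b)/(cx+d)$ in $\SL_2(\F_p)$ that grows slowly under tripling, contradicting Helfgott's theorem with a single $\log^{4}p$ loss per level. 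The main obstacle I foresee is precisely this reduction: packaging the large-coefficient hypothesis into an $\SL_2(\F_p)$ configuration clean enough that Helfgott yields the saving with a $\log^{4}p$ loss \emph{uniform in} $k$, rather than a larger $\log^{c}p$ that would spoil the telescoping bookkeeping. Once this one-step Fourier estimate is secured, the remainder of the argument is the telescoping arithmetic above.
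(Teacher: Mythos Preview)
Your reduction to the pointwise Fourier estimate is arithmetically correct, but the estimate itself is the gap. After taking $2^{k+1}$th roots, the binding case is $k=1$, where you need
\[
\max_{\xi\neq 0}\bigl|\FF{\G}(\xi)\bigr|\;\le\;(2^{9}C_{*})^{1/4}(\log p)\,|\G|^{7/8}.
\]
This is a genuine power saving of $|\G|^{1/8}$ over the trivial bound $|\G|$, uniformly in the size of $\G$. For small subgroups (say $|\G|=p^{\eps}$ with tiny $\eps$) no such bound is known; the best available pointwise estimates for $\FF{\G}$ in that regime save only $p^{-\delta(\eps)}$ with $\delta(\eps)\to 0$ as $\eps\to 0$, which is far weaker than $|\G|^{-1/8}$. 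More to the point, the standard route to \emph{any} nontrivial pointwise bound on $\FF{\G}$ runs \emph{through} the higher energies $\T^{+}_{2^k}(\G)$ --- exactly what you are trying to prove --- so your scheme is circular. Your proposed Helfgott argument does not close this: there is no mechanism that turns a single large value of $|\FF{\G}(\xi)|$ into a slowly tripling subset of $\SL_2(\F_p)$; the ratios $(g_1-g_2)/(g_3-g_4)$ you mention live in $\F_p^*$, not in $\SL_2$, and no amount of dyadic pigeonholing produces the M\"obius maps you describe. Indeed, in the paper the constant $C_*$ is the absolute constant from the Rudnev-based energy bound, not anything coming from $\SL_2$ growth.

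The paper's argument is entirely different and never touches the maximum Fourier coefficient. It works with the balanced function $f=f_{\G}$ and proves directly, for every $s\ge 2$,
\[
\T^{+}_{2s}(f)\;\le\;128\,C_{*}(s\log p)^{4}\,\|f\|_{1}^{2s}\,\T^{+}_{s}(f)\,|\G|^{-1/2},
\]
then telescopes. The one-step saving $|\G|^{-1/2}$ is obtained not from a Fourier $L^{\infty}$ bound but by writing $\T^{+}_{2s}(f)$ as an additive energy of $r_{sf}$, exploiting $\G$-invariance to average multiplicatively, dyadically decomposing into level sets $P=\{x:\Delta<|r_{sf}(x)|\le 2\Delta\}$, and then bounding $\E^{+}(P)$ by two incidence estimates: the Vinh-type design bound (for large $|P|$) and the Rudnev-based Lemma giving $\E^{+}(P)\ll |P|^4/p + |P|^3/|\G|^{1/2}$ (for small $|P|$). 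Helfgott and $\SL_2$ play no role here; the pointwise exponential-sum bound is obtained \emph{afterwards} as a corollary of the $\T^{+}_{2^k}$ estimate, in the opposite direction to what you propose.
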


	In Section \ref{sec:proof} we obtain new asymptotic formulae and bounds for the quantities 
	$$
	|\{ (a_1-a_2) \dots (a_{2k-1}-a_{2k}) = (a'_1-a'_2) \dots (a'_{2k-1}-a'_{2k}) ~:~ a_i,\, a'_i\in A \}| 
	$$
	as well as for 
	$$
	|\{ a_1 a_2 + \dots + a_{2k-1} a_{2k} = a'_1 a'_2 + \dots + a'_{2k-1} a'_{2k} ~:~ a_i,\, a'_i\in A\}|
	$$
	It allows us to improve estimates for exponential sums of Petridis and Shparlinski \cite{PS}, see Corollary \ref{c:PS_new} below.

	\begin{corollary}
		Given three sets $X,Y,Z\subseteq \F_p$, $|X| \ge |Y| \ge |Z|$ and three complex 
		weights $\rho = (\rho_{x,y})$, $\sigma = (\sigma_{x,z})$, $\tau = (\tau_{y,z})$ all  bounded by one, 
		we have 	
		$$
		\sum_{x \in X,\, y\in Y,\, z\in Z} \rho_{x,y} \sigma_{x,z} \tau_{y,z} e(xyz)  \ll p^{1/8} |X|^{7/8} |Y|^{29/32} |Z|^{29/32} 
		(|Y||Z|)^{-1/3072} \,,
		$$
		provided $|Y|, |Z| < p^{48/97}$.  
		\label{c:PS_new_intr}
	\end{corollary}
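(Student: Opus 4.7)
The plan is to follow the Cauchy--Schwarz scheme of Petridis and Shparlinski \cite{PS} essentially unchanged, and to feed into it the sharper asymptotic formula for
$$
|\{ a_1 a_2 + \cdots + a_{2k-1} a_{2k} = a'_1 a'_2 + \cdots + a'_{2k-1} a'_{2k} : a_i,\, a'_i \in A\}|
$$
announced in the introduction and established in Section \ref{sec:proof}. Here the only new input compared with \cite{PS} is the stronger control on this energy (applied with $A$ replaced by the product set $Y\cdot Z$), which is what produces the extra saving $(|Y||Z|)^{-1/3072}$.

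Concretely, I would first apply Cauchy--Schwarz in the $x$-variable, eliminating $\rho$ by $|\rho|\le 1$, to get
$$
|S|^{2} \le |X|\sum_{x}\Bigl|\sum_{y,z}\sigma_{x,z}\tau_{y,z} e(xyz)\Bigr|^{2}\,,
$$
then expand the square and use orthogonality in $x$ to linearise the phase in the new variable $w = y_1 z_1 - y_2 z_2$. A further Cauchy--Schwarz in an appropriate pair of variables, iterated a fixed number of times, reduces the problem to controlling the $2^k$-fold additive energy of $Y\cdot Z$, that is, the quantity above with $A=Y\cdot Z$ (with multiplicities coming from the representation function of the product set). Substituting the Section \ref{sec:proof} asymptotic, whose main term equals $(|Y||Z|)^{2^{k+1}}/p$ and whose error is strictly smaller by a power of $|Y||Z|$, and then taking the $2^{k+1}$-th root back through the Cauchy--Schwarz chain, the main-term contribution reproduces the Petridis--Shparlinski bound $p^{1/8}|X|^{7/8}|Y|^{29/32}|Z|^{29/32}$, while the genuinely smaller error term contributes the refinement $(|Y||Z|)^{-1/3072}$, with the specific exponent dictating the value of $k$ to use.

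The principal obstacle is organisational rather than conceptual: one must carry three bounded complex weights through several Cauchy--Schwarz steps, dropping them via $|\rho|,|\sigma|,|\tau|\le 1$ at exactly the right moments so that the residual off-diagonal mass is governed by the additive energy of $Y\cdot Z$ alone, and not by a mixed $X$--$Y$--$Z$ quantity that would lie outside the reach of Section \ref{sec:proof}. The hypothesis $|Y|,|Z|<p^{48/97}$ then arises naturally as the precise range in which the error term in the Section \ref{sec:proof} asymptotic is still dominated by its main term $(|Y||Z|)^{2^{k+1}}/p$; outside this range the asymptotic degenerates and no gain over \cite{PS} can be extracted.
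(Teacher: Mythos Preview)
Your proposal misidentifies both the energy quantity that appears after the Petridis--Shparlinski reduction and the result from Section~\ref{sec:proof} that yields the gain.

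First, the $T$-sum (with bilinear weights $\rho_{x,y},\sigma_{x,z},\tau_{y,z}$) does \emph{not} reduce to the additive energy $\Do'_k$ of a product set $Y\cdot Z$; that is the reduction for the simpler sum $S$ with separated weights $\alpha_x\beta_y\gamma_z$ (and is what underlies Theorem~\ref{t:multilinear}). For $T$ the Petridis--Shparlinski Cauchy--Schwarz scheme instead leads to the quantity $\Do^\times(A)$ of~\eqref{def:D_times}, i.e.\ the number of solutions to $(a_1-a_2)(a_3-a_4)=(a'_1-a'_2)(a'_3-a'_4)$, applied to $Y$ and to $Z$ separately (not to $Y\cdot Z$). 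The exponent arithmetic confirms this: the PS bound has the shape $T^{8}\ll p\,|X|^{7}(|Y||Z|)^{4}\bigl(\Do^\times(Y)\Do^\times(Z)\bigr)^{1/2}$, so that an improvement $\Do^\times(A)\ll |A|^{13/2-c}$ saves exactly $(|Y||Z|)^{-c/16}$, and $c=1/192$ gives the stated $1/3072$.

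Second, the input is \emph{not} the asymptotic formula of Theorem~\ref{t:D_times} (whose error term $|A|^{5}(\E^{+}(A))^{1/2}$ is only $\ll |A|^{13/2}$ in general and gives no unconditional saving), but the unconditional bound of Theorem~\ref{t:D_uncond}: $\Do^\times(A)\ll |A|^{13/2-c_1}$ for any $c_1<1/192$, valid when $|A|\le p^{48/97}$. This is where the hypothesis $|Y|,|Z|<p^{48/97}$ comes from --- it is the range of validity of Theorem~\ref{t:D_uncond}, not a condition comparing main and error terms of an asymptotic. The proof of Theorem~\ref{t:D_uncond} is not a direct incidence estimate; it runs a decomposition algorithm using Balog--Szemer\'edi--Gowers together with the energy bound of Corollary~\ref{c:pre_eigen}, and this is the genuinely new ingredient.
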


	Moreover we obtain a new effective bound for such sums in an optimal range $|X| |Y| |Z| \ge p^{1+\d}$ (and for higher sums). 
	Previously, Bourgain \cite[Theorem 1]{B_multilinear} obtained $(\d/10)^{10^4}$ instead  of $\frac{\d }{8 \log (8/\delta)+4}$, 
	see formula \eqref{f:etropy_exp_intr} below. 
	In general, our saving has the form $p^{-\d/(C_1 \log (C_2 r/\d))^r}$ for $r$ sets instead of $p^{-(\d/r)^{Cr}}$ from \cite[Theorem A]{B_multilinear}.
	Here $C,C_1,C_2>0$ are some absolute constants.

	\begin{theorem}
		Let $X,Y,Z \subseteq \F_p$ be arbitrary sets such that for some $\d >0$ the following holds
		\begin{equation}\label{f:t_mult_3}
		|X| |Y| |Z| \ge p^{1+\d} \,.
		\end{equation}
		Then 
		\begin{equation}\label{f:etropy_exp_intr}
		\sum_{x \in X,\, y\in Y,\, z\in Z} e(xyz) \ll |X||Y||Z| \cdot p^{-\frac{\d }{8 \log (8/\delta)+4}} \,.
		\end{equation}
		\label{t:etropy_exp_intr}
	\end{theorem}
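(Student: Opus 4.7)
The plan is to reduce the trilinear exponential sum $S:=\sum_{x,y,z} e(xyz)$ to a high--moment multiplicative collision count via iterated Cauchy--Schwarz, and then apply the asymptotic formulae for $\T^\times$-type quantities established earlier in Section~\ref{sec:proof}, optimising the number of iterations in terms of $\delta$. The target is $|S|\ll |X||Y||Z|\cdot p^{-\delta/(8\log(8/\delta)+4)}$ under \eqref{f:t_mult_3}.

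First I would apply one Cauchy--Schwarz in $x$ and complete the inner sum over $\F_p$ by orthogonality to get
\[
|S|^2 \;\ll\; \frac{|X|^2|Y|^2|Z|^2}{p} \;+\; |X|\cdot \E^{\times}(Y,Z),
\]
where $\E^{\times}(Y,Z)$ counts the solutions of $yz=y'z'$ with $y,y'\in Y$, $z,z'\in Z$. Iterating this step $k$ times (each time dualising one more variable and squaring, or equivalently raising to the $2^k$-th power via H\"older) upgrades this to a bound of the shape
\[
|S|^{2^k} \;\ll\; \frac{(|X||Y||Z|)^{2^k}}{p^{2^k-1}} \;+\; (|X||Y||Z|)^{2^k-1}\cdot \T^{\times}_{2^{k-1}}(W)
\]
for some auxiliary set $W$ of cardinality comparable to $|Y||Z|$.

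Next I would insert the multiplicative analogue of Theorem~\ref{t:T_k_G_intr} for arbitrary sets, proved earlier in Section~\ref{sec:proof} at the cost of extra logarithmic factors per doubling. This splits $\T^{\times}_{2^{k-1}}(W)$ into the main term $|W|^{2^k}/p$ plus an error of size $2^{O(k^2)}(\log p)^{O(k)}\cdot |W|^{2^k-c}$ for some fixed $c>0$. Under \eqref{f:t_mult_3} the main terms combine into $(|X||Y||Z|)^{2^k}\cdot p^{-\delta\cdot 2^k}$ up to lower--order pieces, and taking the $2^k$-th root converts this into a genuine power saving over $|X||Y||Z|$.

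Finally one chooses $k$ to balance the polynomial gain $p^{\delta\cdot 2^k}$ against the accumulated $2^{O(k^2)}(\log p)^{O(k)}$ losses; the optimum $k\asymp \log(8/\delta)$ then produces the stated exponent $\delta/(8\log(8/\delta)+4)$. The main technical obstacle will be bookkeeping these losses, since each Cauchy--Schwarz step both doubles the effective exponent and introduces $\log p$ factors from the dyadic pigeonhole used to apply the asymptotic on a level set, and it is the precise interplay between these two competing effects that dictates the form of the denominator.
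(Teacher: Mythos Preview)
Your high-level plan---iterated Cauchy--Schwarz to reduce $S$ to a high-moment collision count, feed in an asymptotic formula from Section~\ref{sec:proof}, then optimise the number of iterations---is the paper's strategy, but the intermediate objects and their error structure are misidentified in ways that break the argument. After iterating Cauchy--Schwarz in $z$ and one application of Parseval, what appears is $|S|^{2^{k+1}}\le p\,|Z|^{2^{k+1}-1}\,\Do'_{2^k}(X,Y)$ with $\Do'_{2^k}(X,Y)=\T^{+}_{2^k}(r_{XY})$, i.e.\ \emph{additive} collisions among $2^k$-fold sums of products $xy$; this is not $\T^{\times}_{2^{k-1}}(W)$ for any set $W$. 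The correct input is then the $\Do'$ variant of Theorem~\ref{t:D_times} (Remark~\ref{r:D'(A)}), whose error term has the shape $(\log p)^{O(1)}(|X||Y|)^{2^{k+1}-1}(|X||Y|)^{2^{1-2^k}}$: a single bounded power of $\log p$, and a saving that shrinks \emph{double-exponentially} in $k$. The $2^{O(k^2)}(\log p)^{O(k)}|W|^{2^k-c}$ shape you quote belongs to the subgroup theorem~\ref{t:T_k_G_intr} and has no analogue for arbitrary sets here, so there is no ``accumulated $2^{O(k^2)}$ loss'' to balance against.

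The main-term bookkeeping is also off. There is exactly one factor of $1/p$, coming once from the asymptotic for $\Do'_{2^k}$; after taking $2^{k+1}$-th roots this contributes a saving of $|Z|^{-2^{-(k+1)}}$, not $p^{-\delta\cdot 2^k}$ or anything that compounds with $k$. The genuine optimisation (Corollary~\ref{c:etropy_exp}) compares the two resulting contributions $|Z|^{-1/(2l)}$ and $(p/|X||Y||Z|)^{1/(2l)}(|X||Y|)^{2^{-l}}$ with $l=2^k$; the condition $2^l/l\ge 8/\delta$ forces $l\asymp\log(8/\delta)$ and yields the exponent $\delta/(4l)$. (When $|Z|<p^{\delta/2}$ one instead uses the trivial bilinear bound on the inner sum.) Finally, your first displayed inequality is not what a single Cauchy--Schwarz plus orthogonality gives: extending the $x$-sum yields $|S|^2\le p|X|\,\E^{\times}(Y,Z)$, with no separate ``main term'' $|X|^2|Y|^2|Z|^2/p$ at that stage.
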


	Our next result is an asymptotic version of the Balog--Wooley \cite{BW} decomposition Theorem, also, see \cite{KS2}, \cite{RRS}, \cite{s_E_k}
	(consult  Theorem \ref{t:BW_as} and Corollary \ref{c:p_sum-prod} below). 
	In particular, it gives us an asymptotic variant of Theorem \ref{t:1/5_F_p} (signs $\lesssim, \gtrsim$  mean some powers of logarithm of $|A|$).

	\begin{theorem}
		Let $A\subseteq \F_p$ be a set and let $1\le M \le p/(2|A|)$ be a parameter.
		There exist two disjoint subsets $B$ and $C$ of $A$ such that $A = B\sqcup C$ 
		and 
		\begin{equation}\label{f:BW_as_1_intr}
		0\le \T_2^{+} (B) - \frac{|B|^4}{p} \le \frac{|A|^{2/3} |B|^{7/3}}{M} \,,
		\end{equation}
		and for  any set $X \subseteq \F_p$ one has
		\begin{equation}\label{f:BW_as_2_intr}
		\T_2^{\times} (C,X)  \lesssim  \frac{M^2 |X|^2 |A|^2 }{p} + M^{3/2} |A|^{} |X|^{3/2} \,.
		\end{equation}
		In particular, for any set $A\subseteq \F_p$ 
		either 
		$$ 
		|A+A| \ge 5^{-1} \min\{|A|^{6/5}, p/2\} 
		$$ 
		or 
		$$|AA| \gtrsim \min\{ p |A|^{-2/5}, |A|^{6/5} \} \,.$$ 
		
		\label{t:BW_as_intr}
	\end{theorem}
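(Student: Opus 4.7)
The argument is a refined Balog--Wooley decomposition that extracts asymptotics rather than one-sided bounds. Put $t:=|A|/M$ and let $P:=\{s\in A+A:\,r_{A+A}(s)\ge t\}$ be the popular sums, so that $|P|\le M|A|$. Declare $a\in A$ to be \emph{additively heavy} if $|(P-a)\cap A|\ge t$, gather such $a$'s in $C$, and set $B:=A\setminus C$. The defining property of $B$ is the $\ell^1$ bound $\sum_{s\in P}r_{B+A}(s)<|B|t$, while double-counting gives $|C|\le M^2|A|$.

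For \eqref{f:BW_as_1_intr} one starts from the (automatically non-negative) identity
\[
\T_2^+(B)-\frac{|B|^4}{p}=\sum_{d\in\F_p}\left(r_{B+B}(d)-\frac{|B|^2}{p}\right)^2
\]
and splits the range of $d$ into $P$ and its complement. For $d\notin P$ the trivial bound $r_{B+B}(d)\le r_{A+A}(d)<t$ applies (the hypothesis $M\le p/(2|A|)$ guarantees $t\ge 2|B|^2/p$, so deviations are of order $t$). For $d\in P$ one combines the pointwise bound $r_{B+B}(d)\le|B|$, the $\ell^1$ estimate $\sum_{d\in P}r_{B+B}(d)\le|B|t$ from Step~1, and $|P|\le M|A|$; a H\"older-type interpolation sharpened by a third-moment (Rudnev-type incidence) estimate on $r_{B+B}$ restricted to $P$ produces the target $|A|^{2/3}|B|^{7/3}/M$.

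For \eqref{f:BW_as_2_intr} one reinterprets $\T_2^\times(C,X)$ as the number of incidences between the Cartesian product $C\times X\subset\F_p^2$ and the family of hyperbolae $xy=\lambda$: the main term $|C|^2|X|^2/p$ emerges from uniform distribution, while the Rudnev-type incidence bound for Cartesian products in $\F_p$ (as used elsewhere in the paper) handles the remainder. Inserting $|C|\le M^2|A|$ and rebalancing then yields \eqref{f:BW_as_2_intr}.

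For the sum-product corollary, assume $|A+A|<\frac{1}{5}\min(|A|^{6/5},p/2)$. Cauchy--Schwarz gives $\T_2^+(A)\ge|A|^4/|A+A|>5|A|^{14/5}$. Choosing $M\asymp|A|^{1/5}$ turns \eqref{f:BW_as_1_intr} into $\T_2^+(B)-|B|^4/p\ll|A|^{14/5}$, which is too small to account for $\T_2^+(A)$; expanding $r_{A+A}=r_{B+B}+2r_{B+C}+r_{C+C}$ and comparing forces $|C|\gtrsim|A|$. Cauchy--Schwarz then gives $\T_2^\times(C,A)\ge|C|^2|A|^2/|AA|\gtrsim|A|^4/|AA|$, and at $X=A$, $M=|A|^{1/5}$ the bound \eqref{f:BW_as_2_intr} becomes $\lesssim|A|^{22/5}/p+|A|^{14/5}$, whence $|AA|\gtrsim\min(p|A|^{-2/5},|A|^{6/5})$; the $p/2$ inside the first minimum reflects the constraint $M\le p/(2|A|)$. \emph{Main obstacle}: the sharpening in Step~2, since the naive $\ell^1$--$\ell^\infty$ splitting only produces $|A||B|^2/M$, and recovering the extra factor $(|B|/|A|)^{1/3}$ needs the Rudnev-type third-moment bound on $r_{B+B}$ supported on $P$.
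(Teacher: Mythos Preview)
Your single-shot popular-sums decomposition is not the paper's route, and both key estimates have gaps that the paper avoids by an entirely different, \emph{iterative} construction.

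\textbf{The multiplicative bound.} The step ``Rudnev-type incidence bound for hyperbolae in $C\times X$\dots inserting $|C|\le M^2|A|$ and rebalancing'' does not produce \eqref{f:BW_as_2_intr}. First, $C\subseteq A$ so $|C|\le M^2|A|$ is vacuous. Second, the only usable structure on your $C$ is that every $c\in C$ has $\ge t=|A|/M$ representations $c=p-a'$ with $p\in P$, $a'\in A$; amplifying with this and applying Rudnev's point--plane bound to $(p-a')x=(p''-a'')x'$ yields
\[
\E^\times(C,X)\ \ll\ t^{-2}\Bigl(\tfrac{(|A|\,|P|\,|X|)^2}{p}+(|A|\,|P|\,|X|)^{3/2}\Bigr)\ \ll\ \tfrac{M^4|A|^2|X|^2}{p}+M^{7/2}|A|\,|X|^{3/2},
\]
off by a factor $M^2$ in each term. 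The paper gets the exponents $M^2,\,M^{3/2}$ because it does \emph{not} fix a single $P$ and threshold $t$: it repeatedly tests whether $\E^+(f_B,B)\le |A|^{2/3}|B|^{7/3}/M$ for the current $B$; when this fails, it uses a dyadic level set $P$ of $r_{f_B-B}$ together with Lemma~\ref{l:Misha_c} to peel off a piece $B_*\subseteq B$, and bounds $\E^\times(B_*,X)$ via Rudnev with all parameters calibrated to the \emph{current} value of $\E^+(f_B,B)$. Summing over the pieces with the triangle inequality for $\E^\times(\cdot,X)^{1/2}$ then gives \eqref{f:BW_as_2_intr}. The sharp $M$-dependence comes precisely from this adaptive calibration; a fixed threshold cannot reproduce it.

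\textbf{The additive bound.} You correctly note that your $\ell^1/\ell^\infty$ split only yields $|A|\,|B|^2/M$ and flag the upgrade to $|A|^{2/3}|B|^{7/3}/M$ as the ``main obstacle'', to be handled by an unspecified ``Rudnev-type third-moment bound on $r_{B+B}$ restricted to $P$''. No such argument is given, and for your fixed $B$ it is not clear one exists. In the paper this exponent is not obtained by estimation at all: it is the \emph{stopping condition} of the algorithm, so \eqref{f:BW_as_1_intr} holds for the terminal $B$ by construction.

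\textbf{The corollary.} The paper does not expand $r_{A+A}$ and compare $\T_2^+(A)$ with $\T_2^+(B)$; it simply uses the dichotomy $|B|\ge|A|/2$ or $|C|\ge|A|/2$, applies Cauchy--Schwarz to the relevant half, and reads off the conclusion with $M=|A|^{1/5}$.
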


	In the last Section \ref{sec:SL} we consider the expansion in $\SL_2 (\F_p)$ and obtain some combinatorial applications of the celebrated Helfgott's growth result.  
	%
	Our first 
	theorem concerns to the intersection of the inverses of additively rich sets $A$,  
	see Corollary \ref{c:1/A}. 
	
	\begin{theorem}
		Let $A, B\subseteq \F_p$, $|B| \ge p^\eps$, $\eps > 0$ and $|A+B| \le K |A|$.
		Then for any $\la \neq 0$ one has 
		$$
		\left| \left\{ \frac{1}{a_1} - \frac{1}{a_2} = \la ~:~ a_1, a_2 \in A \right\} \right| \le 
		\frac{K^2 |A|^2}{p} + 2 K |A| p^{-1/2^{k+2}} \,, 
		$$
		where $k=k(\eps)$. 
		Also,
		$$
		\T_2^{+} (1/A, 1/B) - \frac{K^2 |A|^2 |B|^2}{p}  \ll K^{5/4} |A|^{5/4} |B|^{3/2}  + K^2 |A|^2 \,. 
		$$
		\label{t:1/A_intr}
	\end{theorem}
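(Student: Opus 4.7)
I would first recast the first count as an intersection problem associated to a single element of $\SL_2(\F_p)$. For $\la \neq 0$, let $g_\la = \begin{pmatrix} 1 & 0 \\ -\la & 1 \end{pmatrix} \in \SL_2(\F_p)$; this matrix acts on $\F_p$ by the M\"obius transformation $x \mapsto x/(1-\la x)$. A direct computation shows that $1/a_1 - 1/a_2 = \la$ is equivalent to $a_2 = g_\la(a_1)$, so the quantity to bound equals $N_\la := |A \cap g_\la(A)|$. Summing over $\la \in \F_p^*$ gives $|A|^2-|A|$, so the mean of $N_\la$ is $\approx |A|^2/p$, matching the main term $K^2|A|^2/p$.

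\textbf{Deviation via Helfgott's growth.} To control $N_\la$ above its average, I would run a Helfgott-style iteration in $\SL_2(\F_p)$. The hypothesis $|A+B|\le K|A|$ with $|B|\ge p^\eps$ and Pl\"unnecke--Ruzsa supply many additive shifts of $A$ by elements of $B-B$ with controlled cardinality; lifted to upper-unipotent matrices and combined with the lower-unipotent family $\{g_\mu\}$, these generate $\SL_2(\F_p)$. A Cauchy--Schwarz step then converts an excess of $N_\la$ over the average into a symmetric subset $S \subseteq \SL_2(\F_p)$ with small tripling. Iterating Helfgott's growth theorem $k$ times — each application squaring the exponent gain via one further Cauchy--Schwarz — produces a saving of the form $p^{-1/2^{k+2}}$, where $k = k(\eps)$ is chosen so that $|B|^{2^k}$ eventually surpasses the trivial bound $|\SL_2(\F_p)| \asymp p^3$.

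\textbf{The second inequality.} I would expand the energy as
\[
\T_2^{+}(1/A, 1/B) = \sum_{\la \in \F_p} |A \cap g_\la(A)| \cdot |B \cap g_\la(B)|,
\]
isolate the $\la = 0$ diagonal (which absorbs the $K^2|A|^2$ error), and estimate the $\la\neq 0$ contribution by an incidence-type bound. In particular, applying the Stevens--de Zeeuw estimate \cite{SZ_inc} to the M\"obius graphs $\{(a, g_\la(a)) : a\in A\}$ paired against the Cartesian product $B\times B$ yields an upper bound whose asymmetric shape $K^{5/4}|A|^{5/4}|B|^{3/2}$ is exactly what such an incidence count delivers with Cartesian factors of sizes $|A|$ and $|B|$.

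\textbf{Main obstacle.} The hard part is the $\SL_2$ iteration in the second step: at every intermediate stage one must verify that the generated subset has not been absorbed into a proper algebraic subgroup (a Borel, a split torus, or the normalizer of a torus). Handling this requires an escape-from-subvarieties argument combining the additive richness of $A$ (via $B$-translates) with the non-degeneracy of the family $g_\la$; carrying this through cleanly across $k(\eps)$ iterates, while keeping track of the exponential dependence on $k$, is where the bulk of the work lies.
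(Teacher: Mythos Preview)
Your outline has the right two ingredients (Helfgott growth for the first inequality, an incidence bound for the second), but in each part you are missing the concrete step that actually makes the argument run, and in the second part your incidence setup is not the right one.

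\textbf{First inequality.} You write $N_\la = |A\cap g_\la(A)|$ for a \emph{single} matrix $g_\la$ and then speak of producing a set $S\subset \SL_2(\F_p)$ with small tripling from ``an excess of $N_\la$''. There is no mechanism for that: one element of $\SL_2(\F_p)$ gives you nothing to apply Helfgott or the flattening lemma to. The missing idea is the substitution that simultaneously manufactures a large set of matrices \emph{and} brings in the factor $K|A|$. Write each $a_i\in A$ as $x-b$ with $x\in A+B$ and $b\in B$; then (after normalising $\la=1$)
\[
N_\la \;\le\; |B|^{-2}\,\bigl|\{\,s\,x = y : x,y\in A+B,\; s\in S\,\}\bigr|,
\]
where $S=\{s_{-(b+1),\,c-1}:b,c\in B\}$ is a two--parameter family of $\SL_2$ matrices with $|S|=|B|^2\ge p^{2\eps}$. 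Now the counting lemma (which packages the Cauchy--Schwarz/flattening iteration together with the subgroup classification) applies directly to $S$ and the set $A+B$, giving the stated bound with $(K|A|)^{1/2}(K|A|)^{1/2}=K|A|$ in the error term. Your ``upper unipotent shifts plus lower unipotent $g_\mu$'' never produces a concrete $S$ to which the flattening lemma applies, and there is no way to extract small tripling from a single large fibre $N_\la$.

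\textbf{Second inequality.} Your expansion $\T_2^{+}(1/A,1/B)=\sum_\la |A\cap g_\la(A)|\,|B\cap g_\la(B)|$ is fine, but pairing ``M\"obius graphs'' against $B\times B$ is not how the exponents $K^{5/4}|A|^{5/4}|B|^{3/2}$ arise. The correct device is the algebraic identity $(1/\a+1/\beta)^{-1}=\beta-\beta^2/(\a+\beta)$, which turns the energy into a point--line count with points in $(A+B)^{-1}\times(A+B)^{-1}$ and $|B|^2$ lines $\ell_{b_1,b_2}:\,b_1-b_1^2 x = b_2-b_2^2 y$. The asymptotic Szemer\'edi--Trotter bound in $\F_p$ then gives the error $|A+B|^{5/4}|B|^{3/2}+|A+B|^2$, which becomes $K^{5/4}|A|^{5/4}|B|^{3/2}+K^2|A|^2$ after using $|A+B|\le K|A|$. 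The point set is a Cartesian square over $A+B$, not over $A$ and $B$ as you suggest; this is precisely what injects the $K$--dependence.
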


	Theorem above can be extended to general polynomial  maps (and even to rational functions), see Corollary \ref{c:pol}.

	\begin{theorem}
		Let $p_1,p_2 \in \F_p [x]$ be any non--constant polynomials.
		Then for any $A, B \subseteq \F_p$, $|B| \ge p^\eps$, $\eps>0$ one has 
		$$
		0\le \left| \left\{ p_1(b) + \frac{1}{a+p_2 (b)} = p_1(b') + \frac{1}{a'+p_2 (b')} ~:~ a,a'\in A,\, b,b' \in B \right\} \right|
		- \frac{|A|^2 |B|^2}{p}
		\le
		$$
		\begin{equation}\label{f:pol_intr} 
		\le
		2 |A| |B|^2 p^{-1/2^{k+2}} \,,
		\end{equation}
		where $k=k(\eps, \deg p_1, \deg p_2)$. 
		In particular, 
		$$
		\left| \left\{ p_1(b) + \frac{1}{a+p_2 (b)} ~:~ a \in A,\, b \in B \right\} \right| \gg \min \{ p, |A| p^{1/2^{k+2}} \} \,.
		$$
		\label{t:pol_intr}
	\end{theorem}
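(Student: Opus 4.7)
The natural approach is to encode the expression as a matrix action in $\SL_2(\F_p)$. To each $b\in B$ associate the M\"obius transformation $g_b(x):=p_1(b)+1/(x+p_2(b))$, represented by
\[ M_b \;:=\; \begin{pmatrix} p_1(b) & 1+p_1(b)\,p_2(b) \\ 1 & p_2(b) \end{pmatrix} \in \GL_2(\F_p), \]
of determinant $-1$ (harmless after normalisation, e.g.\ working modulo scalars). With this notation the left-hand side of (\ref{f:pol_intr}) equals
\[ \sum_{b,b'\in B}\#\bigl\{(a,a')\in A\times A : a'=g_{b'}^{-1}(g_b(a))\bigr\}. \]
For fixed $a,b,b'$ the value $a'=g_{b'}^{-1}(g_b(a))$ is uniquely determined, so the trivial bound is $|A||B|^2$, and the task is to show that the map $a\mapsto g_{b'}^{-1}(g_b(a))$ equidistributes $A$ in $\F_p$ for typical pairs $b,b'$.

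Next, I would extract the main term by Fourier inversion: writing $\mathbf{1}[a'\in A]=|A|/p+p^{-1}\sum_{\xi\neq 0}\widehat{A}(\xi)\,e_p(\xi a')$, the $\xi=0$ contribution gives exactly $|A|^2|B|^2/p$, while the error reduces, after Cauchy--Schwarz and Parseval, to an energy estimate along the orbit of $A$ under the matrix set $\{M_{b'}^{-1}M_b:b,b'\in B\}\subset\SL_2(\F_p)$. I would then invoke Helfgott's growth theorem \cite{Harald} for $\SL_2(\F_p)$, iterated $k$ times via the standard tripling / triple product mechanism already employed in the proof of Theorem \ref{t:1/A_intr}; each iteration doubles the effective base set in $\SL_2$ and halves the exponent gain, which is precisely the mechanism producing the saving $p^{-1/2^{k+2}}$.

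The main obstacle is verifying the hypothesis of Helfgott's theorem: the matrix family $\{M_b:b\in B\}$ must not be trapped in a proper algebraic subgroup of $\SL_2(\F_p)$ (a Borel, a torus, or the normaliser of a torus). Since $p_1,p_2$ are non-constant, their fibres have size at most $\max(\deg p_1,\deg p_2)$, so $|p_j(B)|\ge|B|/\deg p_j\ge p^\eps/\deg p_j$; a short elementary argument shows that $M_b$ can lie in any fixed proper algebraic subgroup for only $O_{\deg p_1,\deg p_2}(1)$ values of $b$, yielding an initial set of size $\gtrsim p^{\eps'}$ with $\eps'=\eps'(\eps,\deg p_1,\deg p_2)>0$. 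This is exactly the dependence producing the parameter $k=k(\eps,\deg p_1,\deg p_2)$ in the statement. Finally, the "in particular" assertion follows from (\ref{f:pol_intr}) by Cauchy--Schwarz: if $S$ denotes the image set, $r(s)$ the representation function, and $\mathcal{E}$ the energy bounded in (\ref{f:pol_intr}), then $(|A||B|)^2=\bigl(\sum_s r(s)\bigr)^2\le|S|\cdot\mathcal{E}$, and balancing the two terms of that bound yields $|S|\gg\min\{p,\,|A|\,p^{1/2^{k+2}}\}$.
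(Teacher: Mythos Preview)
Your proposal is correct and follows essentially the same route as the paper: encode the map as an $\SL_2(\F_p)$ action via a one-parameter matrix family (the paper's $S_{p_2,p_1}$ in Lemma~\ref{l*:intersection}), verify that this family meets every \emph{coset} of every proper subgroup in $O_{\deg p_1,\deg p_2}(1)$ points, and then feed this into the counting lemma (Lemma~\ref{l:counting}), which packages exactly the iterated Cauchy--Schwarz / $L^2$-flattening (Helfgott) / Frobenius spectral-gap mechanism you describe. Two small technical points: the subgroup-avoidance check must be for cosets $g_1\Gamma g_2$, not just subgroups (the paper handles Borel and dihedral cosets separately in Lemma~\ref{l*:intersection}); and the paper's matrices have determinant $1$ from the start, sidestepping the normalisation you flag.
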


	Also, we break the square--root barrier for exponential sums of the form (and many other exponential  sums)
	$$
	e\left( y\left( \frac{1}{x+b_1} + b_2 \right) \right) \,,  \quad \quad 
	e\left(  y\left( p_1(b) + \frac{1}{x+p_2 (b)}  \right) \right) \,, 
	$$
	$$
	\chi \left( y+ b_2 + \frac{1}{x+b_1}  \right) \,, \quad \quad 
	\chi \left( y+ p_1(b) + \frac{1}{x+p_2 (b)}  \right) \,,
	$$
	see Corollaries \ref{c:new_exp_sums}, \ref{c:R[A,B,B]} below.   
	Here the variables $x,y$ belong to some sets $X,Y$, further $b_1,b_2\in B$, $|B| > p^{\eps}$, 
	$e$ and $\chi$ are any non--principal additive/multiplicative characters   and $p_1,p_2 \in \F_p[x]$ are non--constant polynomials.

	\bigskip

	Finally, we 
	obtain 
	an expansion result of another sort (see Corollary \ref{c:GL2_rational} from Section \ref{sec:GL}). 

	\begin{corollary}
		Let $A\subseteq \F_p$, $B_1, B_2, B_3 \subseteq \F_p$, $B:=|B_1|=|B_2|=|B_3| >p^\eps$. 
		Suppose that $|B_3-B_1 B_2| \le B^2 p^{-\eps}$.
		Then there is $\d = \d (\eps) >0$ such that 
		\begin{equation}\label{f:GL2_rational_intr}
		\left| \left\{ \frac{a+b_1}{a b_2  + b_3 } ~:~ a\in A\,, b_j \in B_j \right\} \right| \gg \min \{ p, |A| p^\d \} \,.
		\end{equation}
		\label{c:GL2_rational_intr}
	\end{corollary}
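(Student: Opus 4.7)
The plan is to encode the set in \eqref{f:GL2_rational_intr} as an orbit of $A$ under a large set of Möbius transformations, pigeonhole over the determinant to land inside a single coset of $\SL_2(\F_p)$, verify non--containment in proper algebraic subgroups using the fact that all three of $b_1,b_2,b_3$ vary, and then invoke Helfgott's growth theorem to force the orbit to have near--maximal size.

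\textbf{Step 1: Matrix encoding and pigeonhole on the determinant.} To each triple $(b_1,b_2,b_3)\in B_1\times B_2\times B_3$ associate the matrix $M(b_1,b_2,b_3)=\left(\begin{smallmatrix}1&b_1\\b_2&b_3\end{smallmatrix}\right)$, whose Möbius action on $\F_p\cup\{\infty\}$ is exactly $a\mapsto (a+b_1)/(b_2 a+b_3)$, with determinant $b_3-b_1 b_2$. The hypothesis $|B_3-B_1B_2|\le B^2p^{-\eps}$ forces these $B^3$ matrices to have determinants in a set of size at most $B^2p^{-\eps}$. The degenerate branch $b_3=b_1b_2$ collapses the transformation to the constant $1/b_2$ and contributes at most $B$ values to the LHS; the number of such degenerate triples is trivially at most $B^2$, hence negligible inside $B^3$. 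Pigeonholing over the at most $B^2p^{-\eps}$ nonzero values of the determinant yields some $d_0\neq 0$ with at least $Bp^{\eps}/2$ preimage triples. Fixing any $M_*$ of determinant $d_0$, the left translate $\Gr:=M_*^{-1}\cdot\{M(b_1,b_2,b_3):b_3-b_1b_2=d_0\}$ lies in $\SL_2(\F_p)$, satisfies $|\Gr|\ge Bp^{\eps}/2$, and its orbit $\Gr\cdot A$ has the same size as the sub-orbit produced by the selected triples, hence bounds the LHS of \eqref{f:GL2_rational_intr} from below.

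\textbf{Step 2: Escape from proper subgroups.} I claim $\Gr$ generates $\SL_2(\F_p)$. Left translation does not alter dimension counts, so it suffices to examine the untranslated set. For a Borel $\mathrm{Stab}(x)$ with $x\in\F_p$, the fixed--point condition $M(x)=x$ becomes $b_1=b_2 x^2+(b_3-1)x$; combined with $b_3-b_1b_2=d_0$ this parametrizes the intersection by $b_2\in B_2$ alone, so $|\Gr\cap\mathrm{Stab}(x)|\le B$. The case $x=\infty$ forces $b_2=0$ and $b_3=d_0$, giving the same bound. Since $|\Gr|\ge Bp^{\eps}/2>B$, no Borel contains $\Gr$. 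The normalizers of split/non--split tori are one--dimensional subvarieties in the three--dimensional parameter space and are dispatched by an identical parameter count. The exceptional finite subgroups $\tilde A_4,\tilde S_4,\tilde A_5$ (of bounded order) are ruled out by size for $p$ large, and there are no nontrivial subfield subgroups since $p$ is prime. Thus $\Gr$ generates $\SL_2(\F_p)$.

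\textbf{Step 3: Growth and orbit expansion.} Helfgott's growth theorem gives an $n=n(\eps)=O(1)$ such that $(\Gr\cup\Gr^{-1})^n=\SL_2(\F_p)$, and by transitivity $(\Gr\cup\Gr^{-1})^n\cdot A=\F_p\cup\{\infty\}$ whenever $A\neq\emptyset$. Suppose $|\Gr\cdot A|\le K|A|$. A non--commutative Ruzsa--type covering lemma, run in the group action of $\SL_2(\F_p)$ on $\F_p\cup\{\infty\}$, propagates this to $|\Gr^m\cdot A|\le K^{Cm}|A|$ for every $m\le n$. Taking $m=n$ and using $|\Gr^n\cdot A|\ge p$ yields $K^{Cn}|A|\ge p$, i.e., $K\ge (p/|A|)^{1/(Cn(\eps))}$, which gives
$$
|\Gr\cdot A|\gg\min\bigl\{p,\;|A|\cdot p^{\d(\eps)}\bigr\}
$$
for some $\d(\eps)>0$ (taking $\d(\eps)$ a small fixed fraction of $1/(Cn(\eps))$ to absorb the case $|A|\ge p^{1-\d}$).

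\textbf{Main obstacle.} The matrix encoding, the pigeonhole, and the Borel escape argument are essentially elementary once one notices that $b_1$ and $b_2$ vary independently. The real difficulty sits in Step 3: the commutative Plünnecke inequality does not transfer cleanly to actions, so one must either execute a non--commutative Ruzsa covering with asymmetric tripling constants adapted to the action on $\F_p\cup\{\infty\}$, or bound the incidence/collision energy $|\{(M,a,a')\in \Gr^{-1}\Gr\times A\times A:M(a)=a'\}|$ via a point/coset incidence estimate and iterate Helfgott's growth to pull the multi--step expansion back to the one--step orbit. Handling the narrow regime where $|A|$ lies in the band $p^{1-Cn\d}\le|A|\le p^{1-\d}$ also requires a slight tuning of the exponent $\d$ to produce the single clean $\min\{p,|A|p^{\d}\}$ bound.
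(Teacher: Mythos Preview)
Your high--level strategy---pigeonhole on the determinant to land inside a fixed $\SL_2$--coset, escape proper subgroups, then invoke Helfgott---matches the paper's, and you correctly locate the crux in Step~3. But Step~3 as written has a genuine gap: there is no Pl\"unnecke--Ruzsa inequality for group \emph{actions} that yields $|\Gr^m\cdot A|\le K^{Cm}|A|$ from $|\Gr\cdot A|\le K|A|$. Knowing that one application of $\Gr$ expands $A$ by at most $K$ says nothing about how $\Gr$ acts on the new set $\Gr A$; the usual Ruzsa covering produces structural information about $\Gr$ inside the group, not about iterated orbits on $\F_p\cup\{\infty\}$. The lemma you invoke does not exist in the form you need.

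The paper bypasses orbit iteration entirely by moving one level up, into the group rather than the action. By Cauchy--Schwarz (see the proof of Proposition~\ref{p:GL2}),
\[
(|A|\,|G_\la|)^2 \;\le\; |G_\la A|\,\sum_{x}(G_\la*A)(x)^2 \;=\; |G_\la A|\,\sum_{a\in A}\bigl((G_\la^{-1}*G_\la)*A\bigr)(a),
\]
so everything reduces to bounding $\sum_{a\in A}(\mu*A)(a)$ for the symmetric probability measure $\mu$ built from $G^{-1}*G$ on $\SL_2(\F_p)$. Helfgott's growth is then applied, not to $\Gr$ acting on $\F_p$, but to flatten $\mu$ under self--convolution \emph{inside} $\SL_2$ (Theorem~\ref{t:flattering}); once $\|\mu*_{2^k}\mu\|_2^2$ is close to $|\SL_2(\F_p)|^{-1}$, the Frobenius lower bound on nontrivial representation dimensions (Lemma~\ref{l:Frobenious}) converts this spectral gap in the group into the desired orbit bound, exactly as in Lemma~\ref{l:counting}. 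This is precisely your ``Option 2'' (collision energy plus iteration), but the bridge from group to action is the quasirandomness of $\SL_2(\F_p)$, not a covering lemma.

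Two smaller points. First, the paper does not fix a single determinant value: Proposition~\ref{p:GL2} dyadically pigeonholes on $|G_\la|$ and works with the whole level set $\L$, because the flattening hypothesis concerns the measure $\mu$ built from $\sum_\la G_\la^{-1}*G_\la$, not a single coset. Second, and relatedly, the subgroup escape you need is not ``$\Gr\not\subset g\G$'' but the quantitative mass bound $\sum_{x\in g\G}(G^{-1}*G)(x)\ll B^4$, which is a two--sided coset computation (Lemma~\ref{l:GL2_intr}) and is strictly harder than the one--parameter count you sketch for Borels.
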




	The author is grateful to 
	Igor Shparlinski, Nikolay Moshchevitin, Brendan Murphy and Maxim Korolev   
	for useful discussions.


\section{Notation}
\label{sec:definitions}

In this paper $p$ is an odd prime number, 
$\F_p = \Z/p\Z$ and $\F_p^* = \F_p \setminus \{0\}$. 
%
%
We denote the Fourier transform of a function  $f : \F_p \to \C$ by~$\FF{f},$
\begin{equation}\label{F:Fourier}
\FF{f}(\xi) =  \sum_{x \in \F_p} f(x) e( -\xi \cdot x) \,,
\end{equation}
where $e(x) = e^{2\pi i x/p}$. 
We rely on the following basic identities. 
The first one is called the Plancherel formula and its particular case $f=g$ is called the Parseval identity 
\begin{equation}\label{F_Par}
\sum_{x\in \F_p} f(x) \ov{g (x)}
=
\frac{1}{p} \sum_{\xi \in \F_p} \widehat{f} (\xi) \ov{\widehat{g} (\xi)} \,.
\end{equation}
Another  particular case of (\ref{F_Par}) is 
\begin{equation}\label{svertka}
\sum_{y\in \F_p} \Big|\sum_{x\in \F_p} f(x) g(y-x) \Big|^2
= \frac{1}{p} \sum_{\xi \in \F_p} \big|\widehat{f} (\xi)\big|^2 \big|\widehat{g} (\xi)\big|^2 \,,
\end{equation}
and the identity 
\begin{equation}\label{f:inverse}
f(x) = \frac{1}{p} \sum_{\xi \in \F_p} \FF{f}(\xi) e(\xi \cdot x) 
\end{equation}
is called the inversion formula. 
Further let $f,g : \f_p \to \C$ be two functions.
Put
\begin{equation}\label{f:convolutions}
(f*g) (x) := \sum_{y\in \f_p} f(y) g(x-y) \quad \mbox{ and } \quad
(f\circ g) (x) := \sum_{y\in \f_p} \ov{f(y)} g(y+x)  \,.
\end{equation}
Then
\begin{equation}\label{f:F_svertka}
\FF{f*g} = \FF{f} \FF{g} \quad \mbox{ and } \quad 
\FF{f \circ g} 
= \ov{\FF{f}} \FF{g} \,.
\end{equation}
Put
$\E^{+}(A,B)$ for the {\it common additive energy} of two sets $A,B \subseteq \f_p$
(see, e.g., \cite{TV}), that is, 
$$
\E^{+} (A,B) = |\{ (a_1,a_2,b_1,b_2) \in A\times A \times B \times B ~:~ a_1+b_1 = a_2+b_2 \}| \,.
$$
If $A=B$, then  we simply write $\E^{+} (A)$ instead of $\E^{+} (A,A)$
and the quantity $\E^{+} (A)$ is called the {\it additive energy} in this case. 
Clearly,
\begin{equation*}\label{f:energy_convolution}
\E^{+} (A,B) = \sum_x (A*B) (x)^2 = \sum_x (A \circ B) (x)^2 = \sum_x (A \circ A) (x) (B \circ B) (x)
\end{equation*}
and by (\ref{svertka}),
\begin{equation}\label{f:energy_Fourier}
\E(A,B) = \frac{1}{p} \sum_{\xi} |\FF{A} (\xi)|^2 |\FF{B} (\xi)|^2 \,.
\end{equation}
Also, notice that
\begin{equation}\label{f:E_CS}
\E^{+} (A,B) \le \min \{ |A|^2 |B|, |B|^2 |A|, |A|^{3/2} |B|^{3/2} \} \,.
\end{equation}
Sometimes we write $\E^{+}(f_1,f_2,f_3,f_4)$ for the additive energy of four real functions, namely,
$$
\E^{+}(f_1,f_2,f_3,f_4) = \sum_{x,y,z} f_1 (x) f_2 (y) f_3 (x+z)  f_4 (y+z) \,.
$$
It can be shown using the H\"older inequality (see, e.g., \cite{TV}) that 
\begin{equation}\label{f:E_Ho}
\E^{+}(f_1,f_2,f_3,f_4) \le (\E^{+} (f_1) \E^{+} (f_1) \E^{+} (f_1) \E^{+} (f_1))^{1/4} \,.
\end{equation}
In the same way define the {\it common multiplicative energy} of two sets $A,B \subseteq \f_p$
$$
\E^{\times} (A,B) =  |\{ (a_1,a_2,b_1,b_2) \in A\times A \times B \times B ~:~ a_1 b_1 = a_2 b_2 \}| \,. 
$$
Certainly, the multiplicative energy $\E^{\times} (A,B)$ can be expressed in terms of multiplicative convolutions 
similar to (\ref{f:convolutions}).
Further the definitions and the formulae above take place in an arbitrary abelian group $\Gr$.  
If there is no difference between $\E^{+}$ and $\E^\times$ or there is the only operation on the considered group $\Gr$, then we write just $\E$.

Sometimes we  use representation function notations like $r_{AB} (x)$ or $r_{A+B} (x)$, which counts the number of ways $x \in \F_p$ can be expressed as a product $ab$ or a sum $a+b$ with $a\in A$, $b\in B$, respectively. 
For example, $|A| = r_{A-A}(0)$ and  $\E^{+} (A) = r_{A+A-A-A}(0)=\sum_x r^2_{A+A} (x) = \sum_x r^2_{A-A} (x)$.  
In this paper we use the same letter to denote a set $A\subseteq \F_p$ and  its characteristic function $A: \F_p \to \{0,1 \}$. 
Thus $r_{A+B} (x) = (A*B) (x)$, say.
Having $P\subseteq A-A$ we write $\sigma_P (A) := \sum_{x \in P} r_{A-A} (x)$. 
Also, we write $f_A (x)$ for the balanced function of a set $A\subseteq \F_p$, namely, $f_A (x) = A(x) - |A|/p$.

Now consider two families of higher energies. Firstly, let
\begin{equation}\label{def:T_k}
\T^{+}_k (A) := |\{ (a_1,\dots,a_k,a'_1,\dots,a'_k) \in A^{2k} ~:~ a_1 + \dots + a_k = a'_1 + \dots + a'_k \}|
=
\frac{1}{p} \sum_{\xi} |\FF{A} (\xi)|^{2k}
\,.
\end{equation}
It is useful to note that 
$$
\T^{+}_{2k} (A) = |\{ (a_1,\dots,a_{2k},a'_1,\dots,a'_{2k}) \in A^{4k} ~:~ (a_1 + \dots + a_k) + (a_{k+1} + \dots + a_{2k})=
$$
$$
= (a'_1 + \dots + a'_k) + (a'_{k+1} + \dots + a'_{2k}) \}| 
=
$$
\begin{equation}\label{f:T_2k_r}
= \sum_{x,y,z} r_{kA} (x) r_{kA} (y) r_{kA} (x+z) r_{kA} (y+z) \,,
\end{equation}
so one can rewrite $\T^{+}_{2k} (A)$ via the additive energy of the function $r_{kA} (x)$. 
Sometimes we use $\T^{+}_k (f)$ for an arbitrary function $f$. 
It is easy to check that
\begin{equation}\label{f:T_f_12}
\T^{+}_k (f) \le \| f\|^{2}_1 \T^{+}_{k-1} (f) \,,
\end{equation}
and hence by the Parseval identity 
\begin{equation}\label{f:T_f_12'}
\T^{+}_k (f) \le  \| f\|^{2k-2}_1  \| f\|^2_2 \,.
\end{equation}
Secondly, for $k\ge 2$, we put 
\begin{equation}\label{def:E_k}
\E^{+}_k (A) = \sum_{x\in \F_p} (A\c A)(x)^k = \sum_{x\in \F_p} r_{A-A}^k (x) = 
\E^{+} (\Delta_k (A),A^{k}) \,,
\end{equation}
where 
$$
\Delta_k (A) := \{ (a,a, \dots, a)\in A^k \}\,.
$$ 
Thus $\E^{+}_2 (A) = \T^{+}_2 (A)= \E^{+} (A)$. 
Also, notice that we always have 
$|A|^k \le \E^{+}_k (A) \le |A|^{k+1}$ and moreover 
\begin{equation}\label{f:E_k_crude}
\E^{+}_k (A) \le |A|^{k-l} \E^{+}_l (A) \,, \quad \quad \forall l\le k \,.
\end{equation} 
Finally, let us remark that by definition (\ref{def:E_k}) one has $\E^{+}_1 (A) = |A|^2$.
Similarly, one can define $\E^{+} (f)$ for an arbitrary function $f$. 
From the inversion formula and the Parseval identity, it follows that
\begin{equation}\label{f:E_k_Fourier}
\E^{+}_k (f) = p^{1-k} \sum_{x_1+\dots+x_k = 0} |\FF{f} (x_1)|^2 \dots |\FF{f} (x_k)|^2 \ge 0 \,.
\end{equation}  
Some
further 
results about the  properties of the energies $\E^{+}_k$ can be found in \cite{SS1}.
Again, sometimes we use $\E^{+}_k (f)$ for an arbitrary function $f$
and the first  formula from   (\ref{def:E_k}) allows to define $\E^{+}_k (A)$ for any positive $k$.  
It was proved in \cite[Proposition 16]{s_E_k} that  $(\E^{+}_k (f))^{1/2k}$ is a norm for even $k$ and a real function $f$. 
The fact that $(\T^{+}_k (f))^{1/2k}$ is a norm is contained in \cite{TV} and follows from a generalization of inequality (\ref{f:E_Ho}).

We write  $\d\{ x\} =1$ if $x=0$ and $\d\{ x\} =0$
otherwise. 


All logarithms are to base $2.$ The signs $\ll$ and $\gg$ are the usual Vinogradov symbols.
If we have a set $A$, then we will write $a \lesssim b$ or $b \gtrsim a$ if $a = O(b \cdot \log^c |A|)$, $c>0$.
When the constants in the signs  depend on a parameter $M$, we write $\ll_M$ and $\gg_M$. 
For a positive integer $n,$ we set $[n]=\{1,\ldots,n\}.$ 
We do not normalize $L_p$--norms of functions. So, $\|f\|_p = \left( \sum_{x} |f(x)|^p \right)^{1/p}$ for any complex function $f$.



\section{Preliminaries}
\label{sec:preliminaries}

First of all, we need a general design bound for the number of incidences.  
Let $\mathcal{P} \subseteq \F_q^3$ be a set of points  and $\Pi$ be a collection of planes in $\F_q^3$. 
Having $p\in \mathcal{P}$ and $\pi \in \Pi$ we write 
\begin{displaymath}
\I (p,\pi) = \left\{ \begin{array}{ll}
1 & \textrm{if } q\in \pi\\
0 & \textrm{otherwise}
\end{array} \right.
\end{displaymath}
So, $\I$ is $|\P| \times |\Pi|$ matrix. If $\P = \F_q^3$ and $\Pi$ is the family of all planes in $\F_q^3$, then we obtain the matrix $\ov{\I}$ and $\I$ is a submatrix of $\ov{\I}$. 
One can easily calculate $\ov{\I}^* \ov{\I}$ and $\ov{\I}\, \ov{\I}^*$ in the projective plane 
$\mathbb{P}\F_q^3$ 
and check that both of this matrices have form 
$aI + bJ$, where $a,b$ some numbers, $I$ is the identity matrix and $J$ is the matrix having all entries equal 1's see, e.g., \cite{TTT,Vinh}. 
In particular, vectors  $(1,\dots,1)$ of the correspondent lengths are the first eigenvectors of $\ov{\I}^* \ov{\I}$ and $\ov{\I}\, \ov{\I}^*$. 
Moreover, one can check that in our case of points and planes the following holds  $a = q^2$ and $b= q+1$ (see \cite{TTT,Vinh}). 
Having 
these facts in mind and using the singular decomposition (see, e.g., \cite{Horn-Johnson}), we 
derive 
that 
for any functions $\a : \P \to \C$, $\beta : \Pi \to \C$ one has 
\begin{equation}\label{f:Vinh}
|\sum_{p,\pi} \I (p,\pi) \a(p) \beta(\pi) | = |\sum_{p,\pi} \ov{\I} (p,\pi) \a(p) \beta(\pi)| \le q \| \a \|_2 \| \beta \|_2 \,,
\end{equation}
provided either $\sum_{p\in \P} \a(p) = 0$ or $\sum_{\pi\in \Pi} \beta (\pi) = 0$.
Of course, similar arguments work not just for points/planes incidences but, e.g., points/lines incidences and so on.

\bigskip

A much more deep 
result on incidences is contained in \cite{Rudnev_pp} (or see \cite[Theorem 8]{MPR-NRS} and the proof of Corollary 2 from \cite{MP}).
In the proof of formula (\ref{f:Misha+_a}) one should use an incidence bound from \cite[Section 3]{MP_MJCNT}.

\begin{theorem}
	Let $p$ be an odd prime, $\mathcal{P} \subseteq \F_p^3$ be a set of points and $\Pi$ be a collection of planes in $\F_p^3$. 
	Suppose that $|\mathcal{P}| \le |\Pi|$ and that $k$ is the maximum number of collinear points in $\mathcal{P}$. 
	Then the number of point--planes incidences satisfies 
	\begin{equation}\label{f:Misha+}
	\mathcal{I} (\mathcal{P}, \Pi)  \ll \frac{|\mathcal{P}| |\Pi|}{p} + |\mathcal{P}|^{1/2} |\Pi| + k |\mathcal{P}| \,.	
	\end{equation}
	More precisely,
	\begin{equation}\label{f:Misha+_a}
	\mathcal{I} (\mathcal{P}, \Pi)  - \frac{|\mathcal{P}| |\Pi|}{p} \ll |\mathcal{P}|^{1/2} |\Pi| + k |\mathcal{P}| \,.	
	\end{equation}
	\label{t:Misha+}	
\end{theorem}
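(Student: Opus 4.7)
The plan is to extract the main term $|\mathcal{P}||\Pi|/p$ via a spectral argument on the full incidence matrix, and then control the deviation by the Guth--Katz polynomial method in $\F_p^3$ as carried out by Rudnev \cite{Rudnev_pp}. For the main term, recall from the discussion immediately preceding the theorem that $\ov{\I}^{*}\ov{\I}$ on all points and planes of $\mathbb{P}\F_p^3$ equals $aI+bJ$ with $a=p^2$ and $b=p+1$, so that the all-ones vector is the top singular direction and the second singular value is $p$. Applying \eqref{f:Vinh} to the balanced indicators of $\mathcal{P}$ and $\Pi$ (each with mean zero on the ambient space) isolates the difference $\mathcal{I}(\mathcal{P},\Pi) - |\mathcal{P}||\Pi|/p$ and bounds it by $\ll p\sqrt{|\mathcal{P}||\Pi|}$. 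This is sharp only when $|\mathcal{P}||\Pi|$ is close to $p^3$, so an additional input is required in the intermediate range addressed by the theorem.

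For the deviation, I would follow Rudnev's strategy. Dyadically decompose the planes by their richness $\tau$; by pigeonhole, it suffices to bound incidences for a subfamily $\Pi' \subseteq \Pi$ of planes each containing roughly $\tau$ points of $\mathcal{P}$. Cauchy--Schwarz against the number of plane pairs through a common point of $\mathcal{P}$ rewrites the estimate as a weighted count of lines in $\F_p^3$ of the form $\pi_1 \cap \pi_2$, each weighted by the number of points of $\mathcal{P}$ it carries. The hypothesis $|\mathcal{P}| \le |\Pi|$ prevents a single plane from absorbing most of $\Pi'$. The Guth--Katz theorem on line--line incidences in $\F_p^3$ (in the positive-characteristic form of \cite{Rudnev_pp}) then controls this count, and unwinding the dyadic partition yields the $|\mathcal{P}|^{1/2}|\Pi|$ contribution. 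Collinear clusters of size up to $k$ are separated off and contribute the remaining term $k|\mathcal{P}|$.

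The main obstacle lies in the Guth--Katz step itself in positive characteristic. The classical argument builds the flecnode polynomial, whose degree is at most $11 \deg F$, and this must stay well below $p$ for the vanishing and factorisation arguments to yield non-trivial information; it is exactly this degree threshold that forces the main term $|\mathcal{P}||\Pi|/p$ to appear, since the stated inequality is trivially true once this term exceeds $|\mathcal{P}||\Pi|$. The refined form \eqref{f:Misha+_a}, with the main term cleanly separated from the error, is the observation of \cite[Section 3]{MP_MJCNT}, obtained by bookkeeping the spectral and polynomial contributions separately; I would invoke that refinement rather than reconstruct the polynomial method from scratch.
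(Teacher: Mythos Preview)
The paper does not prove this theorem at all: it is stated in the preliminaries as a cited result, with the line ``A much more deep result on incidences is contained in \cite{Rudnev_pp} (or see \cite[Theorem 8]{MPR-NRS} and the proof of Corollary 2 from \cite{MP}). In the proof of formula (\ref{f:Misha+_a}) one should use an incidence bound from \cite[Section 3]{MP_MJCNT}.'' Your proposal likewise ends by invoking \cite{Rudnev_pp} and \cite{MP_MJCNT} rather than reconstructing the polynomial method, so in substance you and the paper agree: this is a quoted input, not something reproved here.

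One remark on the sketch you interposed: your description of Rudnev's argument via Cauchy--Schwarz on plane pairs and then counting points on the lines $\pi_1\cap\pi_2$ is not how the proof actually goes. Rudnev (and de Zeeuw's streamlined version \cite{Zeeuw}) sets up a direct correspondence sending each point of $\mathcal{P}$ and each plane of $\Pi$ to a line in $\F_p^3$, arranged so that a point--plane incidence becomes an intersection between the two associated lines; the Guth--Katz bound on pairwise line intersections is then applied once, with no dyadic richness decomposition or Cauchy--Schwarz step. The collinearity parameter $k$ enters because many points on a line map to many lines in a regulus, which is the degenerate configuration for Guth--Katz. Your spectral extraction of the main term and the attribution of \eqref{f:Misha+_a} to \cite{MP_MJCNT} are both correct.
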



\begin{corollary}
	Let $\a,\beta$ be non--negative functions, $C\subseteq \F_p$ be a set. 
	Suppose that 
	\begin{equation}\label{f:weight_inc_cond}
	\max\{ \|\a\|_1 \|\beta\|^{-1}_1 \|\a\|^{-1}_2 \|\beta\|_2, \|\a\|^{-1}_1 \|\beta\|_1 \|\a\|_2 \|\beta\|^{-1}_2 \}\le |C|^{1/2} \le \frac{\|\a\|_1 \|\beta\|_1}{\|\a\|_2 \|\beta\|_2}
	\end{equation}
	and put $L = \log (\|\a\|_1 \|\beta\|_1 |C| / (\|\a\|_2 \|\beta\|_2 ))$.
	Then
	\begin{equation}\label{f:weight_inc1} 
	\sum_x r^{2}_{\a\beta+C} (x) - \frac{(\|\a\|_1 \|\beta\|_1 |C|)^2}{p} 
	\ll L^4 \|\a\|_1 \|\beta\|_1 \|\a\|_2 \|\beta\|_2 |C|^{3/2} \,,
	\end{equation}
	and
	\begin{equation}\label{f:weight_inc2}
	\sum_x r^{2}_{\a(\beta+C)} (x) - \frac{(\|\a\|_1 \|\beta\|_1 |C|)^2}{p}  
	\ll L^4 \|\a\|_1 \|\beta\|_1 \|\a\|_2 \|\beta\|_2 |C|^{3/2} \,.
	\end{equation}
	\label{cor:weight_inc}
\end{corollary}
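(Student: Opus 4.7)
The plan is to interpret the left hand side of \eqref{f:weight_inc1} as a weighted point--plane incidence count in $\F_p^3$ and then apply Theorem~\ref{t:Misha+} after a dyadic decomposition of the weights. Expanding the representation function, $\sum_x r^{2}_{\a\beta+C}(x)$ is the weighted count of sextuples $(a_1,b_1,c_1,a_2,b_2,c_2)$ satisfying $a_1 b_1 - a_2 b_2 = c_2 - c_1$, with weight $\a(a_1)\a(a_2)\beta(b_1)\beta(b_2)C(c_1)C(c_2)$. Rewriting the equation as $b_1 X_1 - a_2 X_2 + X_3 = c_2$ with $(X_1,X_2,X_3)=(a_1,b_2,c_1)$ realises this count as weighted incidences between the point set $\{(a_1,b_2,c_1)\}$, carrying mass $\a(a_1)\beta(b_2)C(c_1)$, and the plane family $\pi_{a_2,b_1,c_2}$, carrying mass $\a(a_2)\beta(b_1)C(c_2)$; each side has total mass $\|\a\|_1\|\beta\|_1|C|$.

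Next I would split $\a$ and $\beta$ into dyadic level sets $A_i = \{a : \a(a) \sim 2^{-i}\|\a\|_\infty\}$ and $B_j$ analogously, keeping only the $O(L)$ levels that matter. For each quadruple of levels $(i,j,i',j')$, after (if needed) swapping the roles of points and planes to secure $|\P_{ij}|\le|\Pi_{i'j'}|$ where $\P_{ij} = A_i\times B_j\times C$, the sharper form \eqref{f:Misha+_a} of Theorem~\ref{t:Misha+} yields the main term $|\P_{ij}||\Pi_{i'j'}|/p$ plus an error of size $|\P_{ij}|^{1/2}|\Pi_{i'j'}| + k_{ij}|\P_{ij}|$, where $k_{ij}\le\max(|A_i|,|B_j|,|C|)$ according as the richest line is parallel to the $X_3$, $X_2$ or $X_1$ axis. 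Reassembling the main terms with the dyadic weights $2^{-i-j-i'-j'}\|\a\|_\infty^2\|\beta\|_\infty^2$ recovers exactly $(\|\a\|_1\|\beta\|_1|C|)^2/p$. The principal error $|\P|^{1/2}|\Pi|$ is handled by Cauchy--Schwarz on each dyadic sum, using $\sum_i 2^{-i}|A_i|^{1/2}\lesssim L^{1/2}\|\a\|_2/\|\a\|_\infty$ and $\sum_i 2^{-i}|A_i|\sim \|\a\|_1/\|\a\|_\infty$; combining the four dyadic variables and absorbing the symmetrisation losses yields the claimed factor $L^4\cdot|C|^{3/2}\|\a\|_1\|\beta\|_1\|\a\|_2\|\beta\|_2$. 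Finally, the second estimate \eqref{f:weight_inc2} follows from the very same scheme applied to the equation $a_1(b_1+c_1) = a_2(b_2+c_2)$, rewritten as $b_1 a_1 - a_2 b_2 = a_2 c_2 - a_1 c_1$; one just replaces the third coordinate of the point by $a_1 c_1$ and the constant in the plane equation by $a_2 c_2$.

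The main obstacle is to verify that the collinearity term $k_{ij}|\P_{ij}|$ is dominated by the principal error term, and this is precisely where the two--sided hypothesis \eqref{f:weight_inc_cond} intervenes. The upper bound $|C|^{1/2}\le \|\a\|_1\|\beta\|_1/(\|\a\|_2\|\beta\|_2)$ kills the case $k_{ij}=|C|$; the two symmetric lower bounds $\|\a\|_1\|\beta\|_2/(\|\beta\|_1\|\a\|_2)\le|C|^{1/2}$ and $\|\beta\|_1\|\a\|_2/(\|\a\|_1\|\beta\|_2)\le|C|^{1/2}$ take care of $k_{ij}=|A_i|$ and $k_{ij}=|B_j|$, respectively, after the dyadic sums are performed. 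Tracking these three sub--cases and checking that the logarithmic losses all consolidate into the stated $L^4$ is the one tedious but routine piece of bookkeeping, but no further ideas are required.
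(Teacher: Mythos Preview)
Your overall strategy---interpret the quantity as weighted point--plane incidences, decompose dyadically, apply the sharp Rudnev bound \eqref{f:Misha+_a}---is close in spirit to the paper's, but the step where you extract the main term does not work as written. You claim that after applying \eqref{f:Misha+_a} to each dyadic cell and multiplying by ``the dyadic weights $2^{-i-j-i'-j'}\|\a\|_\infty^2\|\beta\|_\infty^2$'', the main terms $|\P_{ij}||\Pi_{i'j'}|/p$ reassemble \emph{exactly} to $(\|\a\|_1\|\beta\|_1|C|)^2/p$. They do not: the constant $2^{-i}\|\a\|_\infty$ differs from the true values of $\a$ on $A_i$ by a factor up to $2$, so the reassembled main term is only within a bounded factor of the target. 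The resulting discrepancy is of order $(\|\a\|_1\|\beta\|_1|C|)^2/p$, which in the interesting regime dominates the claimed error $L^4\|\a\|_1\|\beta\|_1\|\a\|_2\|\beta\|_2|C|^{3/2}$; the truncation to $O(L)$ levels produces a further discrepancy of the same order. One could attempt a repair by keeping the \emph{exact} restricted weights $\a|_{A_i}$, $\beta|_{B_j}$ (then the weighted main terms $(\|\a_i\|_1\|\beta_{j'}\|_1|C|)(\|\a_{i'}\|_1\|\beta_j\|_1|C|)/p$ do sum to the target), but then \eqref{f:Misha+_a} is an unweighted statement and a further layer--cake argument inside each cell is needed to pass to weights; none of this is indicated.

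The paper sidesteps the whole issue by balancing $C$ first. Writing $f(x)=C(x)-|C|/p$, one has the exact identity
\[
\sum_x r^2_{\a\beta+C}(x)=\frac{(\|\a\|_1\|\beta\|_1|C|)^2}{p}+\sum_x r^2_{\a\beta+f}(x)\,,
\]
so the main term comes out before any decomposition. The remainder $\sigma=\sum_x r^2_{\a\beta+f}(x)$ is a positive semi--definite quadratic form in the vector $(\a(a)\beta(b))_{a,b}$ (its kernel is $r_{f-f}$), and positivity lets the paper pigeonhole to a \emph{single} level pair $A,B$ at the cost of $L^4$, rather than summing over all $L^4$ cells. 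For this single pair the paper then combines the Vinh design bound \eqref{f:Vinh} (which uses $\sum f=0$) with Rudnev's Theorem~\ref{t:Misha+}: Vinh handles the range $p\le (|A||B||C|)^{1/2}$ where the first Rudnev term would otherwise dominate. Your idea of using the sharp form \eqref{f:Misha+_a} directly would in fact make the Vinh step unnecessary, so that part of your plan is a genuine simplification; but it is the balancing of $C$, not the choice of incidence bound, that makes the main--term accounting go through.
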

\begin{proof}
	We obtain (\ref{f:weight_inc1}) because the proof of (\ref{f:weight_inc2}) is similar.
	Let $f (x) = C(x) - |C| / p$.
	Then 
	$$
	\sum_x r^{2}_{\a\beta+\gamma} (x) = \frac{(\|\a\|_1 \|\beta\|_1 |C|)^2}{p} + \sum_x r^{2}_{\a\beta+f} (x) \,.
	$$
	Split the level set of $\a,\beta$ into level  sets $P_j (\a)$, $P_j (\beta)$  where the functions $\a$, $\beta$ differ at most twice, correspondingly. 
	Clearly, there are at most $L$ such sets because if, say, $\a (x) \le \eps := 2^{-2}|C|^{-1/2} \|\beta\|^{-1}_1 \|\a\|_2 \|\beta\|_2$, then
	$$
	\eps \|\a\|_1 |C|^2 \|\beta\|^2_1 \le 2^{-2} |C|^{3/2} \|\a\|_1 \|\beta\|_1 \|\a\|_2 \|\beta\|_2 \,,
	$$
	so it is negligible and hence the inequality  $2^j \eps \le \|\a\|_1$ gives the required bound. 
	Using the pigeonhole principle and positivity of the operator $r_{f-f} (x-y)$, we find some $\D_A,\D_B$ and $A \subseteq P_j (\a)$, 
	$B\subseteq P_j (\beta)$ such that
	$$
	\sigma:= \sum_x r^{2}_{\a\beta+f} (x) \ll L^4 \D^2_A \D^2_B \sum_x r^{2}_{AB+f} (x) \,.
	$$
	On the one hand, in view of (\ref{f:Vinh}) the last sum is bounded by 
	\begin{equation}\label{tmp:23.02.2018_1}
	\sigma \le  L^4 \D^2_A \D^2_B p \|f\|^2_2 |A| |B| \le L^4 \D^2_A \D^2_B p |A| |B| |C| \,.
	\end{equation}
	On the other hand, using Theorem \ref{t:Misha+} (one can consult paper \cite{AMRS})
	$$
	\sum_x r^{2}_{AB+f} (x) = \frac{|A|^2 |B|^2 |C|^2}{p} + \sum_x r^{2}_{AB+C} (x) 
	\ll 
	$$
	$$
	\ll 
	\frac{|A|^2 |B|^2 |C|^2}{p} + (|A||B||C|)^{3/2} + |A||B||C| \max\{ |A|, |B|, |C|\} \,.
	$$
	If the second term in the last formula dominates, then we are done.
	If the first term is the largest one,  then $p\le (|A||B||C|)^{1/2}$ and \eqref{tmp:23.02.2018_1} gives us \
	$$
	\sigma \le L^4 \D^2_A \D^2_B p |A| |B| |C| \le L^4 \D^2_A \D^2_B  (|A||B||C|)^{3/2} \le L^4 \|\a\|_1 \|\beta\|_1 \|\a\|_2 \|\beta\|_2 |C|^{3/2} 
	$$
	as required. 
	Finally, condition (\ref{f:weight_inc_cond}) implies that the third term is negligible. 
	This completes the proof. 
	$\hfill\Box$
\end{proof}

\bigskip

Now we obtain a simple  asymptotic formula for the number of points/lines incidences in the case when  the set of points form a Cartesian product.

\begin{lemma}
	Let $A,B \subseteq \F_p$ be sets, $|A| \le |B|$, $\mathcal{P} = A\times B$, and $ \mathcal{L}$ be a collection of lines in $\F^2_p$.
	Then 
	\begin{equation}\label{f:line/point_as}
	\mathcal{I}(\mathcal{P}, \mathcal{L}) - \frac{|A| |B| |\mathcal{L}|}{p} \ll |A|^{3/4} |B|^{1/2} |\mathcal{L}|^{3/4} + |\mathcal{L}| + |A| |B| \,.
	\end{equation}
	\label{l:line/point_as}	
\end{lemma}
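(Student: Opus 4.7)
The plan is to reduce the planar point/line incidence problem on the Cartesian product $\mathcal{P} = A \times B$ to a point/plane incidence problem in $\F_p^3$ and apply Theorem~\ref{t:Misha+} in its asymptotic form, following the Rudnev--Stevens--de~Zeeuw scheme \cite{Rudnev_pp, SZ_inc}.  I first discard vertical and horizontal lines from $\mathcal{L}$, whose contribution to $\mathcal{I}(\mathcal{P}, \mathcal{L})$ totals at most $|A||B| + |\mathcal{L}|$ and is absorbed into the stated error.  For the remaining lines $\ell = (m_\ell, c_\ell)$ with $m_\ell \ne 0$, introduce the lift
$$
\widetilde{\mathcal{P}} := \{(a,\ a m_\ell,\ c_\ell) : a \in A,\ \ell \in \mathcal{L}\} \subseteq \F_p^3,
\qquad |\widetilde{\mathcal{P}}| = |A||\mathcal{L}|,
$$
together with the parallel planes $\pi_b := \{(x,y,z) \in \F_p^3 : y + z = b\}$ for $b \in B$.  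The equivalence $(a, am_\ell, c_\ell) \in \pi_b \iff (a,b) \in \ell$ identifies $\mathcal{I}(\widetilde{\mathcal{P}}, \{\pi_b\}) = \mathcal{I}(\mathcal{P}, \mathcal{L})$, and the principal term $|\widetilde{\mathcal{P}}||B|/p$ of Theorem~\ref{t:Misha+} coincides with $|A||B||\mathcal{L}|/p$.

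I then split into two regimes.  If $|B|^2 \ge |A||\mathcal{L}|$, the trivial column bound $\mathcal{I}(\mathcal{P}, \mathcal{L}) \le |A||\mathcal{L}|$ is already $\le |A|^{3/4}|B|^{1/2}|\mathcal{L}|^{3/4}$ and there is nothing to prove.  In the complementary regime $|B|^2 < |A||\mathcal{L}|$ --- which, by the hypothesis $|A| \le |B|$, forces $|A| < |\mathcal{L}|$ --- I combine the Cauchy--Schwarz step
$$
\mathcal{I}(\mathcal{P}, \mathcal{L})^2 \le |B| \sum_{b \in B} N(b)^2, \qquad N(b) := |\{(a, \ell) : \ell(a) = b\}|,
$$
with a second application of Theorem~\ref{t:Misha+} to the weighted point/plane incidence $\sum_b N(b)^2 = \mathcal{I}(\widetilde{\mathcal{P}}, \{\pi_b\})$ carrying plane-multiplicity $N(b)$.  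Theorem~\ref{t:Misha+} furnishes the asymptotic $\sum_b N(b)^2 = |A||\mathcal{L}|\,\mathcal{I}(\mathcal{P}, \mathcal{L})/p + O\bigl((|A||\mathcal{L}|)^{1/2}\mathcal{I}(\mathcal{P}, \mathcal{L}) + k \cdot |A||\mathcal{L}|\bigr)$, and substituting back produces a quadratic inequality in $\mathcal{I}(\mathcal{P}, \mathcal{L})$ that solves to
$$
\mathcal{I}(\mathcal{P}, \mathcal{L}) - \frac{|A||B||\mathcal{L}|}{p} \ll |A|^{1/2}|B||\mathcal{L}|^{1/2} + \sqrt{k \cdot |A||B||\mathcal{L}|}.
$$
In the regime $|B|^2 < |A||\mathcal{L}|$ the first term is automatically bounded by $|A|^{3/4}|B|^{1/2}|\mathcal{L}|^{3/4}$.

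The main obstacle is controlling the max-collinearity constant $k$ of $\widetilde{\mathcal{P}}$.  A parametric analysis of a line $L \subseteq \F_p^3$ meeting $\widetilde{\mathcal{P}}$ shows $k \le \max(|A|, \mathcal{M})$, where $\mathcal{M}$ is the maximum number of concurrent lines in $\mathcal{L}$: either the $a$-coordinate of $L$ varies (giving at most $|A|$ hits of $A$) or it is constant, in which case $(m_\ell(t), c_\ell(t))$ traces a linear curve in the slope-intercept plane that meets $\mathcal{L}$ in at most $\mathcal{M}$ points.  Using $|A| < |\mathcal{L}|$ from the regime hypothesis, the generic bound $k \le |A|$ gives $\sqrt{k|A||B||\mathcal{L}|} \le |A||B|^{1/2}|\mathcal{L}|^{1/2} \le |A|^{3/4}|B|^{1/2}|\mathcal{L}|^{3/4}$, as required.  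In the degenerate case $\mathcal{M} > |A|$, a dyadic popularity decomposition of $\mathcal{L}$ by concurrency takes care of the dense pencils separately: a pencil of $T$ concurrent lines through a point $q$ contributes at most $O(T)$ to $\mathcal{I}(\mathcal{P}, \mathcal{L})$, since lines of a pencil meet $\mathcal{P}$ outside $q$ in pairwise distinct points, and such pencils are absorbed into the $|\mathcal{L}|$ term when $q \notin \mathcal{P}$ and into the $|A||B|$ term when $q \in \mathcal{P}$; the residual sparse configurations then satisfy $k \le |A|$ and the argument concludes.
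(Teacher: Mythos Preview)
Your overall route—lift the Cartesian incidence to a point/plane problem in $\F_p^3$ and invoke Theorem~\ref{t:Misha+}—is the Stevens--de~Zeeuw scheme, and in principle it can be made to work.  But the specific encoding you chose fails at the decisive step.

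In your ``second application'' you take points $\widetilde{\mathcal{P}}$ with $|\widetilde{\mathcal{P}}|=|A||\mathcal{L}|$ and the planes $\pi_b$ for $b\in B$ with multiplicity $N(b)$, so that $|\Pi'|=\sum_{b\in B}N(b)=\mathcal{I}(\mathcal{P},\mathcal{L})$.  Theorem~\ref{t:Misha+} \emph{requires} $|\widetilde{\mathcal{P}}|\le|\Pi'|$, i.e.\ $|A||\mathcal{L}|\le\mathcal{I}(\mathcal{P},\mathcal{L})$; but for non--vertical lines one always has $\mathcal{I}(\mathcal{P},\mathcal{L})\le|A||\mathcal{L}|$, so this hypothesis essentially never holds, and the conclusion $(|A||\mathcal{L}|)^{1/2}\mathcal{I}+k|A||\mathcal{L}|$ you wrote down does not follow.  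Swapping the roles of points and planes does not save you either: the $\pi_b$ are all parallel, hence projectively they share a common line at infinity, so the dual collinearity parameter equals $|\Pi'|$ and the bound is vacuous.  The genuine Stevens--de~Zeeuw encoding writes $\sum_b N(b)^2$ as incidences between the point set $\{(a',m_\ell,c_\ell)\}$ and the plane set $\{a y-m_{\ell'}x+z=c_{\ell'}\}$, both of cardinality $|A||\mathcal{L}|$, so the hypothesis is met; but then the main term becomes $(|A||\mathcal{L}|)^2/p$, not $|A||\mathcal{L}|\,\mathcal{I}/p$, and after Cauchy--Schwarz you obtain $|A||B|^{1/2}|\mathcal{L}|/p^{1/2}$ in place of $|A||B||\mathcal{L}|/p$, which is \emph{not} dominated by the stated error when $|A||\mathcal{L}|>p^2$.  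Your pencil argument is also wrong as stated: a pencil of $T$ lines through $q$ can produce up to $|A||B|$ incidences away from $q$ (the lines are disjoint there, so the total is bounded by $|\mathcal{P}|$, not by $O(T)$), so iteratively stripping pencils accumulates an uncontrolled multiple of $|A||B|$.

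The paper's proof is both shorter and avoids all of this.  It does not redo the Rudnev reduction at all.  Instead it first replaces $B$ by its balanced function $f=B-|B|/p$, which extracts the main term \emph{exactly}:
\[
\mathcal{I}(\mathcal{P},\mathcal{L})=\frac{|A||B||\mathcal{L}|}{p}+\mathcal{I}(f\otimes A,\mathcal{L})\,,
\]
and then bounds the signed remainder by quoting the Stevens--de~Zeeuw Cartesian incidence bound \cite{SZ_inc} as a black box in the range $|A||\mathcal{L}|\le p^2$, and by the spectral design bound \eqref{f:Vinh} (which gives $\mathcal{I}(f\otimes A,\mathcal{L})\le(p|A||B||\mathcal{L}|)^{1/2}$) in the complementary range.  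The balancing trick is the idea you are missing: it delivers the correct leading term without any Cauchy--Schwarz loss, and it reduces the problem to an incidence bound that is already in the literature, so no collinearity bookkeeping or pencil pruning is needed.
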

\begin{proof}
	Let $f(x) = B (x) - |B|/p$ be the balanced function of the set $B$. 
	Then, using the natural notation, we get 
	$$
	\mathcal{I}(\mathcal{P}, \mathcal{L}) = \frac{|A||B| |\mathcal{L}|}{p} + \mathcal{I}(f \otimes A, \mathcal{L}) \,,
	$$
	where we count the number of incidences with the weight $f(x) A(a)$.
	Using the design bound for points/lines incidences, we obtain
	\begin{equation}\label{tmp:08.01.2018_1}
	\mathcal{I}(f \otimes A, \mathcal{L}) \le \| f \|_2 (p|A| |\mathcal{L}|)^{1/2} \le  (p|A| |B| |\mathcal{L}|)^{1/2}  \,.
	\end{equation}
	By an analogue of the Szemer\'edi--Trotter theorem in $\F_p$, see \cite{SZ_inc} (or \cite[Theorem 7]{MP}, \cite[Theorem 9]{MPR-NRS}), 
	we have 
	\begin{equation}\label{tmp:08.01.2018_2}
	\mathcal{I}(f \otimes A, \mathcal{L}) \ll |A|^{3/4} |B|^{1/2} |\mathcal{L}|^{3/4} + |\mathcal{L}| + |A| |B| \,,
	\end{equation} 
	provided $|A| |\mathcal{L}| \le p^2$. 
	But if $|A| |\mathcal{L}| > p^2$, then by (\ref{tmp:08.01.2018_1}), we see that 
	$$
	\mathcal{I}(f \otimes A, \mathcal{L}) \le  (p|A| |B| |\mathcal{L}|)^{1/2}   \le |A|^{3/4} |B|^{1/2} |\mathcal{L}|^{3/4} 
	$$ 		
	and the last bound is even better than (\ref{tmp:08.01.2018_2}).	
	This completes the proof. 
	$\hfill\Box$
\end{proof}

\bigskip

We need a lemma from \cite{s_Bourgain} which is a consequences of the main result from \cite{Rudnev_pp} or Theorem \ref{t:Misha+}.


\begin{lemma}
	Let $A,Q\subseteq \F_p$ be two sets, $A,Q\neq \{0\}$, $M\ge 1$ be a real number, and
	$|QA|\le M|Q|$. 
	Then
	\begin{equation}\label{f:AA_small_energy}
	\E^{+} (Q) \le C_* \left( \frac{M^2 |Q|^4}{p} + \frac{M^{3/2} |Q|^3}{|A|^{1/2}} \right)  \,,
	\end{equation} 
	where $C_* \ge 1$ is an absolute constant. 
	\label{l:AA_small_energy}
\end{lemma}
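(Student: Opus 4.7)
The plan is to recast $\E^{+}(Q)$ as a weighted count of point--plane incidences in $\F_p^3$ and invoke Theorem~\ref{t:Misha+}. First I enrich the energy with multiplicative parameters: for each $(q_1,q_2,q_3,q_4) \in Q^4$ with $q_1 - q_3 = q_4 - q_2$ and each $(a_1, a_2) \in A^2$, set $s_1 = q_1 a_1$, $s_3 = q_3 a_1$, $s_2 = q_2 a_2$, $s_4 = q_4 a_2$, all lying in $S := QA$ of size at most $M|Q|$. The defining equation becomes the linear relation $a_2(s_1 - s_3) = a_1(s_4 - s_2)$, and the map $(q_1,q_2,q_3,q_4,a_1,a_2) \mapsto ((s_1,a_1,s_3),(s_2,a_2,s_4))$ bijects the relevant subset of $Q^4 \times A^2$ onto solutions of this relation in
$$\mathcal{P} = \{(s_1, a_1, s_3) \in S \times A \times S : s_1/a_1,\, s_3/a_1 \in Q\}, \quad |\mathcal{P}| = |Q|^2|A|,$$
paired with a symmetric family $\mathcal{Q}$ of the same size. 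Each $(s_2, a_2, s_4) \in \mathcal{Q}$ defines the plane $a_2 x - (s_2 - s_4) y - a_2 z = 0$ in coordinates $(x,y,z)$, so that $|A|^2\E^{+}(Q)$ equals the corresponding weighted incidence count.

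Next I apply Theorem~\ref{t:Misha+} combined with a dyadic decomposition of plane multiplicities. Distinct planes are parameterized by $\lambda = (s_2 - s_4)/a_2 \in \F_p$, and the plane with index $\lambda$ has multiplicity $|A|\, r_{Q-Q}(\lambda)$ (or $|Q|\,|A|$ for $\lambda = 0$). For each dyadic range of multiplicities $2^j$ the corresponding family $\Pi_j$ satisfies $2^j |\Pi_j| \le |A| |Q|^2$, so the weighted sums of the main term $|\mathcal{P}||\Pi_j|/p$ and the incidence error $|\mathcal{P}|^{1/2}|\Pi_j|$ telescope cleanly. The collinear parameter $k$ for $\mathcal{P} \subseteq S \times A \times S$ is controlled by the constraints $s/a \in Q$: axis-parallel lines meet $\mathcal{P}$ in at most $|Q|$ points, while a generic rationally-parameterized line meets it in at most $|S| \le M|Q|$ points, so $k \le M|Q|$ and its contribution is absorbed into the lower-order error.

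Combining the dyadic contributions and dividing through by $|A|^2$ yields the asserted bound
$$\E^{+}(Q) \le C_*\left(\frac{M^2|Q|^4}{p} + \frac{M^{3/2}|Q|^3}{|A|^{1/2}}\right).$$
The main obstacle is the dyadic accounting of plane multiplicities: a direct application of Theorem~\ref{t:Misha+} either to distinct planes alone or to planes weighted by the maximum multiplicity $\max_{\pi} m(\pi) \sim |Q||A|$ loses powers of $M$; only by balancing $|\Pi_j|$ against $2^j$ at each dyadic level $j$, and observing that the worst $j$ controls the two stated terms, does one recover the sharp exponents $M^2$ and $M^{3/2}$. A secondary technicality is that Theorem~\ref{t:Misha+} requires $|\mathcal{P}| \le |\Pi|$, which may fail at small dyadic levels where $|\Pi_j|$ is tiny; the roles of points and planes must be exchanged on those levels using the symmetric form of the incidence bound.
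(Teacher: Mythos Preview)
Your setup has a fatal degeneracy. After normalising by $a_2\neq 0$, every plane $a_2 x-(s_4-s_2)y-a_2 z=0$ becomes $x-z=\lambda y$ with $\lambda=(s_4-s_2)/a_2=q_4-q_2$; hence all planes contain the common line $L=\{(t,0,t):t\in\F_p\}$ and there are at most $|Q-Q|\le |Q|^2$ distinct ones. Since $|\mathcal P|=|Q|^2|A|>|Q-Q|$, the hypothesis $|\mathcal P|\le|\Pi_j|$ of Theorem~\ref{t:Misha+} fails at \emph{every} dyadic level, not just ``small'' ones. Your proposed swap does not rescue this: in the dual problem the parameter ``maximum number of planes through a common line'' equals $|\Pi_j|$ itself (they all pass through $L$), so the term $k'|\Pi_j|=|\Pi_j|^2$ is worthless. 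In fact the incidence problem you wrote down is tautological: the plane $\pi_\lambda$ meets $\mathcal P$ in exactly those $(a_1q_1,a_1,a_1q_3)$ with $q_1-q_3=\lambda$, i.e.\ in $|A|\,r_{Q-Q}(\lambda)$ points, which is the same as its multiplicity as a plane; summing gives back $|A|^2\E^+(Q)$ with no saving. A sanity check confirms this: your ``telescoped'' main terms would yield $\E^+(Q)\ll |Q|^4/p+|Q|^3/|A|^{1/2}$ with \emph{no} dependence on $M$, which is false (take $Q$ an arithmetic progression and $|A|$ large).

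The paper does not reprove this lemma but quotes it from~\cite{s_Bourgain}. The point of the standard argument is precisely to break the homogeneity that forces your pencil: one substitutes $s_i=aq_i$ for \emph{some} of the $q_i$ only, \emph{drops} the membership constraints $s_i/a\in Q$ so that the $s_i$ range freely over $S=QA$, and chooses the three point coordinates to mix an $s$--variable, an $a$--variable and a $q$--variable (rather than two $s$'s tied to the same $a$). The resulting plane equation then has a nonzero constant term coming from the unreplaced $q_i$, the planes are genuinely two--parameter (no common line), and Theorem~\ref{t:Misha+} applies with $|\mathcal P|,|\Pi|$ of order $|S||A||Q|$, which is exactly what produces the exponents $M^2$ and $M^{3/2}$ after dividing by $|A|^2$. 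Your bound $k\le M|Q|$ is also off: the lines $\{(c_1t,t,c_3t):t\in\F_p\}$ with $c_1,c_3\in Q$ contain $|A|$ points of your $\mathcal P$, so in fact $k\ge|A|$.
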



The second lemma can be obtained in the same vein.

\begin{lemma}
	Let $A, B\subseteq \F_p$, 
	and $|A+B| \le K |A|$.
	Then 
	\begin{equation}\label{f:1/A_energy}
	\E^{+} (1/A,1/B) - \frac{K^2 |A|^2 |B|^2}{p} \ll K^{5/4} |A|^{5/4} |B|^{3/2}  + K^2 |A|^2 \,. 
	\end{equation}
	\label{l:1/A_energy}
\end{lemma}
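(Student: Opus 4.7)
The plan is to follow the same template as Lemma~\ref{l:AA_small_energy}: convert $\E^{+}(1/A,1/B)$ into a point--plane incidence count in $\F_p^3$ and apply Rudnev's bound (Theorem~\ref{t:Misha+}), with the hypothesis $|A+B|\le K|A|$ playing the role that $|QA|\le M|Q|$ played in Lemma~\ref{l:AA_small_energy}.

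The key algebraic identity is that, upon clearing denominators, $1/a_1+1/b_1 = 1/a_2+1/b_2$ is equivalent to
\[
(a_1+b_1)\,a_2 b_2 \;=\; a_1 b_1\,(a_2+b_2).
\]
Taking $P=\{(a,b,ab):a\in A,\,b\in B\}\subset\F_p^3$ (a set of $|A||B|$ points lying on the quadric $Z=XY$) and, for each $(a_1,b_1)\in A\times B$, the plane
\[
\pi_{a_1,b_1}\colon \;(a_1+b_1)\,Z \;-\; a_1 b_1\,X \;-\; a_1 b_1\,Y \;=\; 0,
\]
one checks that a point $(a_2,b_2,a_2 b_2)\in P$ lies on $\pi_{a_1,b_1}$ exactly when the energy relation holds. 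Hence $\E^{+}(1/A,1/B)=I(P,\Pi)$, where $\Pi$ is the multiset of $|A||B|$ planes indexed by $(a_1,b_1)$.

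Next I feed this into Theorem~\ref{t:Misha+} (in the sharper form \eqref{f:Misha+_a}). The hypothesis $|A+B|\le K|A|$ enters through the first parameter of $\pi_{a_1,b_1}$: the value $s=a_1+b_1$ is confined to $A+B$, a set of size at most $K|A|$. I partition $\Pi$ into slices $\Pi_s$ indexed by $s\in A+B$, with $|\Pi_s|=r_{A+B}(s)$, and apply Theorem~\ref{t:Misha+} to each $(P,\Pi_s)$ separately. A Cauchy--Schwarz step in $s$, using $\sum_{s\in A+B}r_{A+B}(s)^2\le K|A|\cdot|A||B|$ (from the trivial bound $\max_s r_{A+B}(s)\le\min(|A|,|B|)$ together with the size of $A+B$), upgrades the per-slice main term to the stated $K^2|A|^2|B|^2/p$. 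The collinear-points constant $k$ in Rudnev's bound is controlled by the two rulings of $Z=XY$ (the horizontal and vertical lines fixing one of the first two coordinates), which give $k\le\max(|A|,|B|)\le K|A|$ by Plünnecke--Ruzsa, and the middle term $|P|^{1/2}|\Pi|$ is balanced by a dyadic decomposition of the plane multiplicities to yield $K^{5/4}|A|^{5/4}|B|^{3/2}$; the residual $k\,|P|$ contribution is absorbed into $K^2|A|^2$ after the dyadic refinement.

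The main obstacle is the second step: the factor $K^2$ in the main term does not fall out of a single raw application of Theorem~\ref{t:Misha+} to $(P,\Pi)$; it must be extracted by slicing the plane multiset by $s\in A+B$ and reassembling with Cauchy--Schwarz, and one has to check that the lower-order terms still balance to give the stated shape $K^{5/4}|A|^{5/4}|B|^{3/2}+K^2|A|^2$. Once the slicing is set up correctly, everything else follows the same blueprint as in Lemma~\ref{l:AA_small_energy}.
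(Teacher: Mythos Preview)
Your encoding has a fatal degeneracy: every plane $\pi_{a_1,b_1}\colon (a_1+b_1)Z=a_1b_1(X+Y)$ contains the line $\ell=\{Z=0,\ X+Y=0\}$, so the whole family $\Pi$ is a pencil through $\ell$. A point $(a_2,b_2,a_2b_2)\in P$ off $\ell$ lies on $\pi_{a_1,b_1}$ precisely when $1/a_1+1/b_1=1/a_2+1/b_2$, so the incidence count is literally $\E^{+}(1/A,1/B)$ with no extra structure to exploit. Applying Theorem~\ref{t:Misha+} directly yields only
\[
\E^{+}(1/A,1/B)-\frac{(|A||B|)^2}{p}\ \ll\ (|A||B|)^{3/2}+\max(|A|,|B|)\,|A||B|,
\]
and neither term is dominated by $K^{5/4}|A|^{5/4}|B|^{3/2}+K^2|A|^2$ in general (take $K$ bounded and $|A|\sim|B|$ large). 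Your slicing by $s=a_1+b_1$ does not help: within each slice the planes $sZ=t(X+Y)$ are still through $\ell$, and summing the Rudnev middle term over $s$ gives $\sum_s|P|^{1/2}|\Pi_s|=|P|^{1/2}|\Pi|=(|A||B|)^{3/2}$ again. No dyadic decomposition on the plane side can beat this, and the hypothesis $|A+B|\le K|A|$ is never actually used to shrink either the point set or the effective incidence count.

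The paper takes a different route. From the identity
\[
\left(\frac{1}{\alpha}+\frac{1}{\beta}\right)^{-1}=\beta-\frac{\beta^2}{\alpha+\beta}
\]
the energy equation becomes $b_1-b_1^2 x=b_2-b_2^2 y$ with $x,y\in(A+B)^{-1}$, a genuine point--\emph{line} problem: $|A+B|^2$ points forming a Cartesian product and $|B|^2$ lines indexed by $(b_1,b_2)$. Lemma~\ref{l:line/point_as} then gives
\[
\mathcal{I}-\frac{|A+B|^2|B|^2}{p}\ \ll\ |A+B|^{5/4}|B|^{3/2}+|B|^2+|A+B|^2,
\]
and the bound $|A+B|\le K|A|$ is exactly what produces the exponents $K^{5/4}|A|^{5/4}$ and $K^2|A|^2$. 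The point is that the sumset hypothesis controls the \emph{point} set, not a parameter of the planes; your setup puts it on the wrong side of the incidence.
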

\begin{proof}
	Indeed, for any $\a,\beta$ the following holds 
	$$
	\left( \frac{1}{\a} + \frac{1}{\beta} \right)^{-1} = \frac{\a \beta}{\a + \beta} = \beta - \beta^2 \cdot \frac{1}{\a + \beta} \,.
	$$
	Hence
	$$
	\E^{+} (1/A,1/B) \le \left| \left\{ b_1 - b^2_1 x = b_2 - b^2_2 y ~:~ b_1,b_2 \in B,\, x,y \in (A+B)^{-1} \right\}\right| 
	=
	\mathcal{I}(\mathcal{P}, \mathcal{L}) \,,
	$$
	where $\mathcal{P} = (A+B) \times (A+B)$, $\mathcal{L} = \{ l_{b_1,b_2} \}$ and line $l_{b_1,b_2}$ is defined by the equation
	$b_1 - b^2_1 x = b_2 - b^2_2 y$. 
	Applying Lemma \ref{l:line/point_as}, we get 	
	$$
	\mathcal{I}(\mathcal{P}, \mathcal{L}) - \frac{|A+B|^2 |B|^2}{p} 
	\ll
	|A+B|^{5/4} |B|^{3/2} + |B|^2 + |A+B|^2 \,. 
	$$
	Clearly, $|B| \le |A+B| \le K|A|$ and hence 
	$$
	\E^{+} (1/A,1/B)  - \frac{K^2 |A|^2 |B|^2}{p}  \ll K^{5/4} |A|^{5/4} |B|^{3/2}  + K^2 |A|^2
	$$
	as required. 
	$\hfill\Box$
\end{proof}

\bigskip

The next result is a slight generalization of \cite[Lemma 10]{s_Bourgain}.

\begin{lemma}
	Let 
	$f$ be a real 
	function and $P\subseteq \F^*_p$ be a set.
	Then for any $k\ge 1$ one has 
	\begin{equation}\label{f:change_QG}	
	\left( \sum_{x\in P} r^k_{f-f} (x)  \right)^4 \le 
	\| f\|^{4k}_2 \E^{+}_{2k} (f) \E^{+} (P)  \,.
	\end{equation}
	\label{l:change_QG}	
\end{lemma}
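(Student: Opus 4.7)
The plan is a two--step Cauchy--Schwarz, first on the Fourier side exploiting the pointwise non--negativity of $\widehat{(f\circ f)^k}$, then on the physical side where $\E^+(P)$ and $\E^+_{2k}(f)$ appear naturally.

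Set $h := f\circ f$, so that $r_{f-f}^k(x) = h(x)^k$ and $S := \sum_{x\in P} r_{f-f}^k(x) = \sum_x P(x) h^k(x)$. By \eqref{f:F_svertka} we have $\widehat{h}(\xi) = |\widehat{f}(\xi)|^2 \ge 0$, and since the Fourier transform of a product is proportional to the Fourier convolution of the transforms, $\widehat{h^k}$ is a non--negative multiple of the $k$-fold convolution $\widehat{h}^{*k}$; in particular $\widehat{h^k}(\xi) \ge 0$ for every $\xi$. Evaluating the inversion formula \eqref{f:inverse} at $x = 0$ yields $\sum_\xi \widehat{h^k}(\xi) = p\cdot h^k(0) = p\|f\|_2^{2k}$, so $\widehat{h^k}$ is a non--negative weight of total mass $p\|f\|_2^{2k}$.

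Using \eqref{f:inverse} we rewrite $S = \frac{1}{p}\sum_\xi \widehat{P}(-\xi)\widehat{h^k}(\xi)$, and Cauchy--Schwarz against the weight $\widehat{h^k}$ gives
\begin{equation*}
|S|^2 \le \frac{1}{p^2}\Big(\sum_\xi \widehat{h^k}(\xi)\Big)\Big(\sum_\xi |\widehat{P}(\xi)|^2 \widehat{h^k}(\xi)\Big) = \frac{\|f\|_2^{2k}}{p}\sum_\xi |\widehat{P}(\xi)|^2 \widehat{h^k}(\xi).
\end{equation*}
Since $|\widehat{P}(\xi)|^2 = \widehat{P\circ P}(\xi)$ by \eqref{f:F_svertka}, one application of Parseval together with the evenness of $P\circ P$ and $h^k$ converts the remaining sum to $p\sum_y r_{P-P}(y) h^k(y)$, so $|S|^2 \le \|f\|_2^{2k}\sum_y r_{P-P}(y)\,h^k(y)$.

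A second Cauchy--Schwarz,
\begin{equation*}
\sum_y r_{P-P}(y)\,h^k(y) \le \Big(\sum_y r_{P-P}^2(y)\Big)^{1/2}\Big(\sum_y h^{2k}(y)\Big)^{1/2} = \E^+(P)^{1/2}\,\E^+_{2k}(f)^{1/2},
\end{equation*}
combines with the previous inequality and squaring yields \eqref{f:change_QG}. The only delicate point is the pointwise non--negativity of $\widehat{h^k}$, which is what licenses its use as a Cauchy--Schwarz weight and produces the critical $\|f\|_2^{4k}$ factor; a straightforward Parseval gives only $|S|^2 \le |P|\cdot \E^+_{2k}(f)$, with no mechanism for $\E^+(P)$ to appear.
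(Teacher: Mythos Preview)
Your proof is correct. Both your argument and the paper's reach the same intermediate inequality
\[
\Big(\sum_{x\in P} r_{f-f}^k(x)\Big)^2 \le \|f\|_2^{2k}\sum_y r_{P-P}(y)\,r_{f-f}^k(y),
\]
after which the final Cauchy--Schwarz is identical. The difference is how that intermediate bound is obtained: the paper stays entirely on the physical side, expanding $r_{f-f}^k(y)=\sum_{x_1,\dots,x_k}\prod_j f(x_j)f(x_j+y)$ and applying Cauchy--Schwarz over $(x_1,\dots,x_k)$ to split off $\prod_j f(x_j)$ (which yields the factor $\|f\|_2^{2k}$) from the $P$-dependent part; expanding the remaining square collapses to $\sum_y r_{P-P}(y)r_{f-f}^k(y)$. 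Your route is the Fourier dual of this: you observe that $\widehat{h^k}\ge 0$ (since $\widehat{h}=|\widehat f|^2\ge 0$ and $\widehat{h^k}$ is a convolution of non-negative functions), use it as a Cauchy--Schwarz weight, and recover the same sum via Parseval. The paper's version is slightly more direct in that it never needs the non-negativity observation, while your version makes transparent why the factor $\|f\|_2^{2k}$ is exactly the total mass of the relevant weight.
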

\begin{proof}
	We have 
	$$
	\left( \sum_{x\in P} r^k_{f-f} (x)  \right)^2
	=  \left( \sum_{x_1,\dots,x_k} \prod_{j=1}^k f(x_j) \sum_{y} P(y) f (y+x_1) \dots f(y+x_k)  \right)^2
	\le
	$$
	$$
	\le
	\| f\|^{2k}_2 \sum_{x_1,\dots,x_k} |\sum_{y} P(y) f (y+x_1) \dots f(y+x_k)|^2
	=
	\| f\|^{2k}_2 \sum_x r_{P-P} (x) r^k_{f-f} (x) \,.
	$$
	Hence by the Cauchy--Schwarz inequality, we obtain
	$$
	\left( \sum_{x\in X} r^k_{f-f} (x)  \right)^4
	\le
	\| f\|^{4k}_2 \E^{+}_{2k} (f) \E^{+} (P) 
	$$
	as required. 
	$\hfill\Box$
\end{proof}



\bigskip

Let $A,B,C,D \subseteq \F_p$ be four sets.
By $\Q(A,B,C,D)$ we denote the number of {\it collinear quadruples} in $A\times A$, $B\times B$, $C\times C$, $D\times D$.
If $A=B=C=D$, then we write $\Q(A)$ for $\Q(A,A,A,A)$.
Recent results on the quantity $\Q(A)$ can be found in \cite{Petridis_quadruples} and \cite{MPR-NRS}.
It is easy to see (or consult \cite{MPR-NRS}) that 
\begin{equation}\label{def:Q}
\Q(A,B,C,D)  = \left| \left\{ \frac{b'-a'}{b-a} =  \frac{c'-a'}{c-a} = \frac{d'-a'}{d-a} ~:~ a,a'\in A,\, b,b'\in B,\, c,c'\in C,\, d,d'\in D \right\} \right| 
\end{equation}
\begin{equation}\label{f:Q_E_3}
=
\sum_{a,a'\in A} \sum_x r_{(B-a)/(B-a')} (x)  r_{(C-a)/(C-a')} (x)   r_{(D-a)/(D-a')} (x)  \,.
\end{equation}
Notice that in (\ref{def:Q}), we mean that the condition, say, $b=a$ implies $c=d=b=a$ or, in other words, that all four points $(a,a'), (b,b'), (c,c'), (d,d')$ have the same abscissa.
More rigorously, the summation in (\ref{f:Q_E_3})  should be taken over 
$\F_p \cup \{+\infty\}$,
where $x=+\infty$ means that the denominator in any fraction $x=\frac{b'-a'}{b-a}$ from, say, $r_{(B-a)/(B-a')} (x)$  equals zero. 
Anyway, it is easy to see that the contribution of the point $+\infty$ is at most $O(M^5)$, where $M=\max\{ |A|, |B|, |C|, |D| \}$, and hence it is negligible (see, say, Theorem \ref{t:Q} above). 
Further defining a function $q_{A,B,C,D} (x,y)$ (see \cite{MPR-NRS}) as 
\begin{equation}\label{f:def_t,q}
q_{A,B,C,D} (x,y) :=  \left| \left\{ \frac{b-a}{c-a} = x,\, \frac{d-a}{c-a} = y ~:~  a\in A,\, b\in B,\, c\in C,\, d\in D \right\} \right| \,,
\end{equation}
we obtain another formula for the quantity $\Q(A,B,C,D)$, namely, 
$$
\Q(A,B,C,D) 	= \sum_{x,y} q^2_{A,B,C,D} (x,y) \,.
$$


An optimal  (up to logarithms factors) upper bound for $\Q(A)$ was obtained in \cite{MPR-NRS}, \cite{Petridis_quadruples}, see Theorem \ref{t:Q} from the Introduction. 
We need  a simple lemma about the same bound for a generalization of the quantity $\Q(A)$.
The proof is analogous to the proof \cite[Lemma 6]{ShZ} and \cite[Lemma 5]{s_sumset}.

\begin{lemma}
	Let $A,B \subseteq \F_p$ be two sets, 
	$|B| \le |A| \le \sqrt{p}$.  
	Then 
	\begin{equation}\label{f:Q(A,B,C,D)}
	\Q(B,A,A,A) \ll |A|^{15/4}  |B|^{5/4} \log^2 |A| + \T(A) 
	\,.
	\end{equation}
	\label{l:Q(A,B,C,D)}
\end{lemma}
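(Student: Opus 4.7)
The starting point is the identity
\[
\Q(B,A,A,A) = \sum_{\ell \subset \F_p^2} I_B(\ell)\, I_A(\ell)^3 \;+\; \mathcal{E}_\infty,
\]
obtained by grouping the quadruples of (\ref{def:Q}) according to the affine line $\ell$ on which all four points $(b,b'),(a_i,a'_i)$ lie; here $I_X(\ell) := |(X \times X) \cap \ell|$. The term $\mathcal{E}_\infty$ collects the $x = +\infty$ contribution (vertical directions, i.e.\ configurations with $b = a_i$), which by the trivial bound $\mathcal{E}_\infty \ll |B|^2 \cdot |A| + \text{(cross terms)}$ is dwarfed by $\T(A)$ and is absorbed into the additive $\T(A)$ term of the conclusion.

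The main sum is handled by two successive Cauchy--Schwarz inequalities. Writing $I_A^3 I_B = I_A^2 \cdot (I_A I_B)$,
\[
\Bigl(\sum_{\ell} I_A(\ell)^3 I_B(\ell)\Bigr)^2 \le \sum_{\ell} I_A(\ell)^4 \cdot \sum_{\ell} I_A(\ell)^2 I_B(\ell)^2 = \Q(A)\cdot \Q(A,A,B,B),
\]
and a second Cauchy--Schwarz on the pairing $I_A^2 \cdot I_B^2$ gives $\Q(A,A,B,B)^2 \le \Q(A)\,\Q(B)$. Combining,
\[
\Q(B,A,A,A) \;\ll\; \Q(A)^{3/4}\, \Q(B)^{1/4} \;+\; \T(A).
\]

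The hypothesis $|B| \le |A| \le \sqrt{p}$ puts us inside the range where the main term of Theorem \ref{t:Q} is dominated by the error: indeed $|A|^8/p^2 \le |A|^4 \le |A|^5 \log |A|$, and likewise for $B$. Hence $\Q(A) \ll |A|^5 \log|A|$ and $\Q(B) \ll |B|^5 \log|B|$, and substituting yields
\[
\Q(B,A,A,A) \;\ll\; |A|^{15/4} |B|^{5/4} \log|A| \;+\; \T(A),
\]
which matches (\ref{f:Q(A,B,C,D)}) up to one power of $\log|A|$.

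The remaining $\log$ factor --- the difference between $\log|A|$ and the stated $\log^2 |A|$ --- is an artefact of performing, as in the proofs of \cite[Lemma 6]{ShZ} and \cite[Lemma 5]{s_sumset}, a preliminary dyadic pigeonhole on the value $I_A(\ell)$ before applying Cauchy--Schwarz. That pigeonhole is what makes the additive $\T(A)$ term appear naturally: lines with $I_A(\ell)$ comparable to the trivial maximum are not efficiently controlled via $\Q(A)$ and contribute directly an amount $\ll \T(A)$, whereas on the remaining dyadic range the above two-step Cauchy--Schwarz applies. The only real bookkeeping obstacle is separating these two regimes cleanly, and ensuring that Theorem \ref{t:Q} is invoked only on the regime where its error term is admissible; the second logarithm is the cost of summing over the dyadic scales.
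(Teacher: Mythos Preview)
Your approach---write $\Q(B,A,A,A)$ as a sum over lines and reduce by Cauchy--Schwarz to $\Q(A)^{3/4}\Q(B)^{1/4}$---is the right one, and is in the spirit of the references the paper cites. But two steps of your writeup are not correct as stated.

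First, the identities $\sum_\ell I_A(\ell)^4=\Q(A)$ and $\sum_\ell I_A(\ell)^2 I_B(\ell)^2=\Q(A,A,B,B)$ are \emph{false}: the left-hand sides overcount quadruples with all four points equal by a factor $p+1$ (such a quadruple lies on every line through the point). Thus $\sum_\ell I_A^4 = \Q(A)+p|A|^2$, and for $|A|\ll p^{1/3}$ this extra term dominates, so your Cauchy--Schwarz does not yield $\Q(A)^{3/4}\Q(B)^{1/4}$ directly. The fix is to first strip off the $|A|^2|B|^2$ quadruples in which the three $A$-points coincide (these are trivially collinear and absorbed by the main term since $|B|\le|A|$), leaving $\sum_\ell I_B(\ell)\bigl(I_A(\ell)^3-I_A(\ell)\bigr)$, which is supported on $\{I_A\ge 2\}$. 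On that set $I_A^4\le 4\bigl(I_A(I_A-1)\bigr)^2$, and $\sum_\ell \bigl(I_A(I_A-1)\bigr)^2\le\Q(A)$ since an ordered pair of \emph{distinct} points determines a unique line; similarly for $B$.

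Second, the $\T(A)$ term does not come from a dyadic pigeonhole on $I_A$. It arises when you further split off the lines with $I_B(\ell)\le 1$: their contribution is at most $\sum_{\ell:\,I_A\ge 2} I_A(\ell)^3\ll \T(A)$ (using $\T(A)\ge|A|^4$). On the remaining set $\{I_A\ge 2,\,I_B\ge 2\}$ your two-step Cauchy--Schwarz now legitimately gives $\ll\Q(A)^{3/4}\Q(B)^{1/4}\ll|A|^{15/4}|B|^{5/4}\log|A|$ via Theorem~\ref{t:Q}. With these repairs the argument is complete, and in fact delivers a single logarithm rather than the $\log^2|A|$ stated in (\ref{f:Q(A,B,C,D)}).
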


It is known \cite[Proposition 2.5]{AMRS} that $\T(A) \ll |A|^{9/2}$, provided $|A| \le p^{2/3}$ (also, see  Theorem \ref{t:Q} from the Introduction). 
So, the term $\T(A)$ in (\ref{f:Q(A,B,C,D)}) is negligible if $A$ and $B$ have comparable sizes, say.  

\bigskip


Proposition 16 from \cite{RSS} contains a combinatorial lemma, see Lemma \ref{l:Misha_c} below. 

\begin{lemma}
	Let $(\Gr,+)$ be an abelian group.
	Also, let $A\subseteq \Gr$ be a set, $P\subseteq A-A$, $P=-P$.
	Then there is $A_* \subseteq A$ and a number $q$, $q \lesssim |A_*|$
	such that for any $x\in A_*$ one has $r_{A+P} (x) \ge q$,
	and $\sum_{x\in P} r_{A-A} (x) \sim |A_*| q$.
	\label{l:Misha_c}
\end{lemma}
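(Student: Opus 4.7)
The plan is to apply a dyadic pigeonhole argument to the function $f(a):=r_{A+P}(a)$ defined on $a\in A$. The key preliminary identity is
$$
\sigma_P(A) \;=\; \sum_{p\in P} r_{A-A}(p) \;=\; \sum_{a\in A} r_{A+P}(a),
$$
which follows by counting the triples $(a,a',p)\in A\times A\times P$ satisfying $a=a'+p$ in two different orders. Since $0\le f(a)\le|A|$, one may decompose $A$ into the $O(\log|A|)$ dyadic level sets $D_j:=\{a\in A : 2^j\le f(a)<2^{j+1}\}$ and rewrite the identity as $\sigma_P(A)\sim\sum_j 2^j|D_j|$.

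A naive dyadic pigeonhole on this sum would yield a level $2^{j^*}$ with $2^{j^*}|D_{j^*}|\gtrsim\sigma_P(A)$, but the resulting candidate $q=2^{j^*}$ need not satisfy $q\lesssim|A_*|$. To accommodate this size constraint, I would introduce the decreasing superlevel-counting function $N(q):=|\{a\in A:f(a)\ge q\}|$, which satisfies $N(1)\le|A|$ and $N(|A|+1)=0$, and select $q$ to be the largest dyadic value at which $N(q)\ge q$ — the \emph{balance point}, which exists whenever $P$ is nonempty (otherwise the lemma is vacuous, since then $N(1)\ge 1$). Setting $A_*:=\{a\in A:f(a)\ge q\}$ immediately gives $|A_*|=N(q)\ge q$ by construction and $r_{A+P}(a)\ge q$ for every $a\in A_*$, which secures two of the three conclusions.

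It remains to verify $|A_*|q\sim\sigma_P(A)$. The easy direction $|A_*|q\le\sigma_P(A)$ is immediate from $\sum_{a\in A_*}f(a)\ge q|A_*|$. The reverse bound is the main obstacle: one would write $\sigma_P(A)=\sum_{q'\ge 1}N(q')$ and split at the balance point, bounding the contribution of $q'\le q$ by a further dyadic pigeonhole exploiting $|A_*|\le N(q')\le|A|$, and the contribution of $q'>q$ using simultaneously $N(q')<q'$ (from the maximality of $q$) and $N(q')\le N(q+1)\le q$. The delicate step is matching both pieces to $O(q|A_*|\log|A|)$; this is where the symmetry $P=-P$ and the inclusion $P\subseteq A-A$ should be invoked to prevent adversarial concentration of $f$ on a tiny subset of $A$, since otherwise the tail of the distribution could inflate $\sigma_P(A)$ far beyond $q|A_*|$ and the matching would break.
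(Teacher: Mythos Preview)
Your identity $\sigma_P(A)=\sum_{a\in A}r_{A+P}(a)$ and the dyadic setup are correct, but the balance-point choice of $q$ does not work, and the hypotheses $P=-P$, $P\subseteq A-A$ do not repair it. Take $\Gr=\Z$, a large integer $d$, a much larger $M$, and set
\[
A=\{M^i:1\le i\le 10\}\cup\{M^i+3^k:1\le i\le 10,\ 1\le k\le d\},\qquad P=\{\pm 3^k:1\le k\le d\}.
\]
Then $P=-P\subseteq A-A$, and a short check gives $r_{A+P}(M^i)=d$ for each ``centre'' and $r_{A+P}(M^i+3^k)=1$ for each ``leaf''. Hence $\sigma_P(A)=20d$, while $N(1)=10+10d$ and $N(2)=\cdots=N(d)=10$. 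Your balance point is $q^*=8$, so $A_*$ is the ten centres, $|A_*|q^*=80$, and $\sigma_P(A)/(|A_*|q^*)=d/4$ is unbounded. The only structural consequence of $P=-P$ here is $N(1)\ge f_{\max}$ (a vertex of degree $m$ has $m$ neighbours, each of degree $\ge1$), and this is satisfied in the example yet does nothing for the tail $\sum_{q'>q^*}N(q')\approx 10d$.

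The actual argument is a two-stage pigeonhole rather than a single balance point. First pigeonhole on $f$ to get $q$ and $A_1=\{a:f(a)\sim q\}$ with $q|A_1|\gtrsim\sigma_P(A)/\log|A|$. If $q\le|A_1|$ take $A_*=A_1$. Otherwise look at the edges emanating from $A_1$: there are $\gtrsim q|A_1|$ of them, and each endpoint $a'\in A$ receives at most $|A_1|$ of them. A second dyadic pigeonhole on this ``degree into $A_1$'' yields $q'\le|A_1|<q$ and a set $A_*\subseteq A$ with $r_{A+P}\ge q'$ on $A_*$ and $q'|A_*|\gtrsim\sigma_P(A)/\log^2|A|$; then $|A_*|\gtrsim q|A_1|/(q'\log|A|)\ge q/\log|A|>|A_1|/\log|A|\ge q'/\log|A|$, which is exactly the bound $q'\lesssim|A_*|$ you were missing. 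The symmetry $P=-P$ enters only to ensure that the neighbours of $A_1$ are again elements of $A$ with $r_{A+P}\ge1$, so that the second pigeonhole makes sense.
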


Another combinatorial result is \cite[Theorem 13]{RSS}.

\begin{theorem}
	Let $(\Gr,+)$ be an abelian group.
	Also, let $A\subseteq \Gr$ be a set, $K\ge 1$ be a real number, and $k\ge 2$ be an integer.
	Suppose that $\E^+ (A) \ge |A|^3 / K$.
	Then there are sets $A_* \subseteq A$, $P \subseteq A-A$ such that $|A_*| \ge |A|/(8kK)$,
	$|P| \le 8kK|A|$
	and for any $a_1,\dots, a_k \in A_*$ one has
	\begin{equation}\label{f:BSzG_Schoen}
	|A \cap (P+a_1) \cap \dots \cap (P+a_k)| \ge \frac{|A|}{4K} \,.
	\end{equation}
	\label{t:BSzG_Schoen}
\end{theorem}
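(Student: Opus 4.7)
My plan is to apply a Schoen-type Balog--Szemer\'{e}di--Gowers extraction to produce a dense subset $A_* \subseteq A$ of small doubling, then take $P := A_* - A_*$. The $k$-fold intersection property then becomes tautological, because $b - a_i \in A_* - A_* = P$ for any $b, a_i \in A_*$ forces $A_* \subseteq A \cap \bigcap_i (P + a_i)$.

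To produce $A_*$ I would first set $\tau := |A|/(4K)$ and $D := \{x \in A-A : r_{A-A}(x) \geq \tau\}$; by symmetry of $r_{A-A}$ we have $D = -D$, and double counting gives $|D| \leq 4K|A|$. The popularity calculation $\sum_{x \notin D} r_{A-A}(x)^2 \leq \tau|A|^2 = |A|^3/(4K) \leq \E^+(A)/4$ shows that at least three quarters of the energy lives on $D$; combined with the trivial bound $r_{A-A} \leq |A|$ this gives $\sigma_D(A) \geq 3|A|^2/(4K)$. Next I would apply Lemma~\ref{l:Misha_c} to $A$ and $D$ to obtain $A_* \subseteq A$ and $q \lesssim |A_*|$ with $r_{A+D}(x) \geq q$ for every $x \in A_*$ and $|A_*|\,q \sim \sigma_D(A) \gtrsim |A|^2/K$. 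Combining the constraints $q \lesssim |A_*|$ and $|A_*|\,q \gtrsim |A|^2/K$ forces $|A_*| \gtrsim |A|/\sqrt{K}$, which comfortably exceeds both $|A|/(8kK)$ and $|A|/(4K)$ for $k \geq 2$ and $K \geq 1$.

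The technical heart of the argument is to upgrade the uniform multiplicity $r_{A+D}(x) \geq q$ on $A_*$ into a linear-in-$K$ estimate $|A_* - A_*| \leq 8kK|A|$. Applying the Ruzsa triangle inequality together with the Katz--Koester argument to the decomposition $A_* \subseteq A + D$ (equivalently, Pl\"unnecke--Ruzsa in its Schoen-sharpened form) yields
$$|A_* - A_*| \lesssim \frac{|A|\cdot|D|}{q} \lesssim K|A|,$$
and the implicit logarithmic losses can be absorbed into the factor $8k$ in the statement (permissible since $k \geq 2$). Setting $P := A_* - A_*$ therefore gives $|P| \leq 8kK|A|$. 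Finally, for any $a_1, \dots, a_k, b \in A_*$, $b - a_i \in P$ for each $i$, so $b \in A \cap \bigcap_i (P + a_i)$ and consequently $|A \cap \bigcap_i (P + a_i)| \geq |A_*| \geq |A|/(4K)$.

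The main obstacle is the linear-in-$K$ Pl\"unnecke step: a crude Pl\"unnecke--Ruzsa application would produce a power of $K$ strictly greater than one in the bound on $|A_* - A_*|$. The sharp bound relies on the uniform representation count $r_{A+D}(x) \geq q$ guaranteed by Lemma~\ref{l:Misha_c} combined with Schoen's refinement of the Ruzsa triangle inequality; careful bookkeeping of the logarithmic losses (absorbed into the $8k$ factor) and constants is what makes the numerics work out in the desired form.
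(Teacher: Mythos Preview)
The paper does not supply a proof of this theorem; it is quoted verbatim from \cite[Theorem~13]{RSS}. So there is no ``paper's own proof'' to compare against, and your proposal has to stand on its own.

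Your reduction is correct as far as it goes: if $P=A_*-A_*$ then the $k$-fold intersection bound is indeed tautological, since $A_*\subseteq A\cap\bigcap_i(P+a_i)$ for any $a_1,\dots,a_k\in A_*$. The entire content of your argument therefore sits in the single claim
\[
|A_*-A_*|\ \le\ 8kK|A|,
\]
and this is where the proposal breaks. What you are asserting is a Balog--Szemer\'edi--Gowers theorem with \emph{linear} dependence on $K$ in both the density of $A_*$ and the size of its difference set. No such statement follows from Lemma~\ref{l:Misha_c}, the Ruzsa triangle inequality, or any Katz--Koester-type inclusion; those tools give at best $|A_*-A_*|\ll K^{c}|A|$ with some $c>1$. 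Concretely, the displayed estimate $|A_*-A_*|\lesssim |A|\,|D|/q$ is not something the cited ingredients produce: the multiplicity hypothesis $r_{A+D}(x)\ge q$ on $A_*$ yields $|A_*|\le |A|\,|D|/q$, but it gives no control whatsoever on $|A_*-A_*|$. The appeal to ``Schoen's refinement'' is doing all the work here and is not an actual theorem with the required strength. A secondary issue is that Lemma~\ref{l:Misha_c} carries genuine logarithmic losses (that is what the $\lesssim$ means in this paper), and these cannot be ``absorbed into the factor $8k$'', which is an absolute constant independent of $|A|$.

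The point of the theorem is precisely that $P$ is \emph{not} required to be $A_*-A_*$; one is allowed to take $P$ to be the set of popular differences directly, and the work in \cite{RSS} goes into building $A_*$ so that the $k$-fold intersection of the neighbourhoods $A\cap(P+a_i)$ stays large --- a much weaker requirement than controlling $|A_*-A_*|$. Your approach tries to prove a strictly stronger (and, with the stated constants, likely false) statement.
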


We need a result on the energy of a set which is obtained using the eigenvalues method, see \cite{s_ineq}, \cite{s_mixed}, \cite{MPR-NRS}, \cite{MRSS}. 
In this form an analogue of the result above was appeared first time in \cite[Theorem 28]{MPR-NRS}. 
One can decrease number of logarithms slightly but it is not our aim.

\begin{theorem}
	Let $A$ be a finite subset of an abelian group $(\Gr,+)$.
	Suppose there are parameters $D_1$ and $D_2$ such that
	\[
	\E^+_3(A) \leq D_1|A|^3
	\]
	and for any finite set $B\subset \Gr$
	\[
	\E^+ (A,B) \leq D_2|A||B|^{3/2}.
	\]
	Then
	\[
	\E^+ (A) \ll D_1^{6/13}D_2^{2/13}|A|^{32/13} \log^{12/13}|A| \,.
	\]
	\label{thm:eigenval}
\end{theorem}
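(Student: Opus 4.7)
The plan is to follow the eigenvalue/dyadic-decomposition strategy used to prove \cite[Theorem~28]{MPR-NRS}. First, partition the non-zero differences of $A$ into dyadic level sets $P_j=\{x\ne 0 : 2^{j-1}<r_{A-A}(x)\le 2^j\}$ and observe that $\E^{+}(A)\le |A|^2 + 4\sum_j 2^{2j}|P_j|$. Only $O(\log|A|)$ scales contribute non-trivially, so pigeonholing picks out a dominant scale $P:=P_{j^*}$ of size $p$ and weight $\alpha:=2^{j^*}$ with $\E^{+}(A)\lesssim L\alpha^2 p$, where $L:=\log|A|$.

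From this dominant level I would extract four inequalities relating $\alpha$ and $p$ to $D_1$, $D_2$, $|A|$. Two are immediate: summing $r_{A-A}(x)$ and $r_{A-A}(x)^3$ over $P$ gives $\alpha p\le |A|^2$ and $\alpha^3 p\le \E^{+}_3(A)\le D_1|A|^3$. A third comes from Lemma~\ref{l:Misha_c}: applied to the symmetric set $P$ it produces $A_*\subseteq A$ and a level $q\lesssim|A_*|$ with $|A_*|q\sim\alpha p$ and $r_{A+P}(x)\ge q$ for every $x\in A_*$, so $\E^{+}(A,P)\ge q^2|A_*|=(\alpha p)^2/|A_*|\ge (\alpha p)^2/|A|$; combining this with $\E^{+}(A,P)\le D_2|A|p^{3/2}$ (the hypothesis applied to $B=P$) yields $\alpha^2 p^{1/2}\lesssim D_2|A|^2$.

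The fourth and critical inequality I would obtain by feeding $P$ into Lemma~\ref{l:change_QG} with $k=1$, $f=A$: this gives $(\alpha p)^4\le |A|^2\E^{+}(A)\E^{+}(P)$. To make this useful one needs a non-trivial upper bound on $\E^{+}(P)$; I plan to invoke Theorem~\ref{t:BSzG_Schoen} with $K:=|A|^3/\E^{+}(A)$ and a suitable parameter $k$, producing a popular-difference set $Q$ of controlled size together with a large core $A'\subseteq A$ on which the iterated intersections $|A\cap\bigcap_{i\le k}(Q+a_i)|$ are large. Comparing this lower bound on $\sum_b r_{A'+Q}(b)^k$ against the hypothesis $\E^{+}(A,Q)\le D_2|A||Q|^{3/2}$ (together with its higher-moment Hölder consequences) controls $\E^{+}(P)$ in terms of $D_2,|A|,p$, and after simplification feeds a fourth constraint on $\alpha,p$ into the system.

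The last step is interpolation: the four inequalities form a linear-programming problem in the log-coordinates $(\log\alpha,\log p)$ with objective $2\log\alpha+\log p$. Taking the correct weighted geometric mean produces $\alpha^2 p\lesssim L^{11/13}D_1^{6/13}D_2^{2/13}|A|^{32/13}$, whence $\E^{+}(A)\lesssim L\alpha^2 p\ll D_1^{6/13}D_2^{2/13}|A|^{32/13}\log^{12/13}|A|$. The hard part is clearly the fourth bound: the three elementary inequalities alone force only the weaker estimate $\E^{+}(A)\ll L\min(D_1^{1/2},D_2)|A|^{5/2}$, which is essentially the Cauchy--Schwarz bound $\E^{+}(A)\le |A|\sqrt{\E^{+}_3(A)}$. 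Pinning down the denominator $13$ in the exponents requires the correct choice of $k$ in Theorem~\ref{t:BSzG_Schoen} and a careful Hölder interpolation, and keeping track of how the $L$-factor propagates through the four bounds is what produces the precise exponent $12/13$ on $\log|A|$.
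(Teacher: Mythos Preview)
Your outline through the third inequality is sound, and you are right that those three constraints alone cannot beat the Cauchy--Schwarz bound. The gap is entirely in your fourth step. Feeding $P$ into Lemma~\ref{l:change_QG} with $k=1$ correctly gives $(\alpha p)^4 \le |A|^2\,\E^{+}(A)\,\E^{+}(P)$, but now you must bound $\E^{+}(P)$, the additive energy of a subset of $A-A$. The hypotheses say nothing about this: they control $\E^{+}(A,B)$ for arbitrary $B$, not $\E^{+}(P,P)$, and nothing prevents $\E^{+}(P)$ from being as large as $|P|^{3}$, which would make your fourth inequality vacuous. Your proposed remedy via Theorem~\ref{t:BSzG_Schoen} does not close the gap: that theorem manufactures a \emph{different} popular-difference set $Q\subseteq A-A$ with covering properties for a large core $A'\subseteq A$, but $Q$ has no a~priori relationship to your dyadic level set $P$, and the unspecified ``higher-moment H\"older consequences'' do not convert a bound on $\E^{+}(A,Q)$ into one on $\E^{+}(P)$.

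The paper does not prove this theorem here (it is quoted from \cite[Theorem~28]{MPR-NRS}), but it explicitly names the mechanism: the \emph{eigenvalue method} of \cite{s_ineq,s_mixed}. That method avoids $\E^{+}(P)$ altogether. One studies the symmetric operator $T_P(a,a')=P(a-a')$ on $\ell^2(A)$; its top eigenvalue satisfies $\mu_1\ge\sigma_P(A)/|A|$, and the top eigenfunction $\phi$, after dyadic level-setting, produces a set $B\subseteq A$ (not a subset of $A-A$). The eigen-equation $\mu_1\phi=T_P\phi$, iterated and combined with the $\E^{+}_3$ bound, yields an inequality relating $\sigma_P(A)$, $|A|$, $\E^{+}_3(A)$ and $\E^{+}(A,B)$ directly. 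Now the hypothesis $\E^{+}(A,B)\le D_2|A||B|^{3/2}$ applies with $B$ the spectral level set; this is precisely why the remark following the theorem notes that only $|B|\le 4|A|^{4}/\E^{+}(A)$ is needed. The exponents $6/13$, $2/13$, $32/13$ come from this operator inequality, and the power $12/13$ on the logarithm from the level-setting of $\phi$; neither falls out of linear programming among four independent scalar bounds.
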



It is implicit in the proof of Theorem~\ref{thm:eigenval} that the bound for $\E(A,B)$ only needs to hold for $|B|\leq 4|A|^4/\E^{+}(A)$.

\bigskip 

Theorem~\ref{thm:eigenval}
implies the following bound for the multiplicative energy of a subset of $\F_p$ with
large additive energy.

\begin{corollary}
	Let $A\subseteq \F_p$ and $\E^{+} (A) \ge |A|^3/K$. 
	Then there is $A_*\subseteq A$, $|A_*| \ge |A|/(16K)$ such that for any 
	$B\subseteq \F_p$ 
	the following holds 
	\begin{equation}\label{f:pre1}
	\E^\times (A_*,B) \ll \frac{K^4 |A|^2 |B|^2}{p} +  K^{7/2} |B|^{3/2} |A| \,,
	\end{equation}
	and if $|A|K \le \sqrt{p}$, then 
	\begin{equation}\label{f:pre2}
	\E^\times_3 (A_*) \ll K^{23/4} |A|^3 \log^2 |A| \,.
	\end{equation}
	In particular, if $|A|K \le \sqrt{p}$, then 
	\begin{equation}\label{f:pre3}
	\E^\times (A_*) \lesssim K^{83/26} |A|^{32/13} \,.
	\end{equation}
	\label{c:pre_eigen}
\end{corollary}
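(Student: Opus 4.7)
The plan is to apply Theorem~\ref{t:BSzG_Schoen} with $k=2$ to the additively rich set $A$: this furnishes a subset $A_*\subseteq A$ with $|A_*|\ge |A|/(16K)$ and a symmetric set $P\subseteq A-A$, $|P|\le 16K|A|$, such that for every pair $a_1,a_2\in A_*$ one has $|A\cap(P+a_1)\cap(P+a_2)|\ge T:=|A|/(4K)$; in particular every $a\in A_*$ has $r_{A-P}(a)\ge T$ representations as $x-p$ with $x\in A$, $p\in P$. For~(\ref{f:pre1}), I linearise $\E^\times(A_*,B)$ by substituting $a_i=x_i-p_i$ in the defining equation $a_1 b_1 = a_2 b_2$, obtaining
\[
T^2\,\E^\times(A_*,B)\le\bigl|\{(x_1-p_1)b_1=(x_2-p_2)b_2:x_i\in A,\,p_i\in P,\,b_i\in B\}\bigr|.
\]
The right-hand side is a point--plane incidence count in $\F_p^3$: take points $(b_1,x_2,p_2)\in B\times A\times P$ and planes indexed by $(x_1,p_1,b_2)\in A\times P\times B$ via $(x_1-p_1)B_1+b_2(P_2-X_2)=0$ (the unique splitting of variables making the equation affine in the point coordinates); apply Theorem~\ref{t:Misha+}, using $|P|\le 16K|A|$ to control the maximum-collinear-points term, and divide by $T^2=|A|^2/(16K^2)$ to recover~(\ref{f:pre1}).

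For~(\ref{f:pre2}), write $\E^\times_3(A_*)=\sum_{(a,a')\in A_*^2}r_{A_*/A_*}(a/a')^2$ and apply the full $k=2$ intersection to the pair $(a,a')$, replacing it by $(y-p,y-p')$ with $y\in A$, $p,p'\in P$ ($T$ choices per pair), which gives
\[
T\cdot\E^\times_3(A_*)\le\sum_{y\in A,\,p,p'\in P}r_{A_*/A_*}^{\,2}\bigl((y-p)/(y-p')\bigr).
\]
Expanded, this $7$-variable sum counts triples of points $(y-p,y-p')$, $(a,a')$, $(b,b')$ lying on a common line through the origin in $\F_p^2$---equivalently, a collinear-quadruples count with the origin as the fourth vertex. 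Applying Lemma~\ref{l:Q(A,B,C,D)} in the appropriate role assignment (with the $\T(A)$ tail handled by Theorem~\ref{t:Q}), and using the hypothesis $|A|K\le\sqrt{p}$ to discard the $p^{-1}$-type main terms, then dividing through by $T$, yields~(\ref{f:pre2}).

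Finally, (\ref{f:pre3}) follows by feeding (\ref{f:pre1}) and (\ref{f:pre2}) into Theorem~\ref{thm:eigenval} applied to $A_*$. Translated into the form required there, (\ref{f:pre2}) gives $D_1=K^{23/4}(|A|/|A_*|)^3\log^2|A|$, and (in the range of $|B|$ permitted by the remark after Theorem~\ref{thm:eigenval}, where the $p^{-1}$ term in (\ref{f:pre1}) is negligible) (\ref{f:pre1}) gives $D_2=K^{7/2}(|A|/|A_*|)$. A direct computation,
\[
D_1^{6/13}D_2^{2/13}|A_*|^{32/13}\lesssim K^{(6\cdot 23/4+2\cdot 7/2)/13}(|A|/|A_*|)^{20/13}|A_*|^{32/13}=K^{83/26}|A|^{20/13}|A_*|^{12/13}\le K^{83/26}|A|^{32/13},
\]
using $|A_*|\le |A|$ in the last step, gives~(\ref{f:pre3}). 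The hard step is the proof of~(\ref{f:pre2}): the reduction of the lifted sum to a collinear-quadruples count must be carried out carefully enough to keep the exponent of $K$ at $23/4$---a naive Cauchy--Schwarz against the 6-variable count $\sum_t n(t)^2$ coming from the equation $y_1(p_2-p_2')+y_2(p_1'-p_1)=p_1'p_2-p_1 p_2'$ yields only $K^{\ge 5}|A|^{13/4}$, which has the wrong shape for driving Theorem~\ref{thm:eigenval}. The bookkeeping in the first paragraph around the max-collinear term and the case split depending on $\max(|A|,|B|,|P|)$ is routine but must be checked.
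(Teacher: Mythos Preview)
Your treatment of (\ref{f:pre1}) and the deduction of (\ref{f:pre3}) from (\ref{f:pre1})--(\ref{f:pre2}) via Theorem~\ref{thm:eigenval} match the paper's argument and are correct.

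The gap is in (\ref{f:pre2}). After your $k=2$ substitution you arrive at
\[
\sum_{y\in A}\sum_{p,p'\in P} r_{A_*/A_*}^{\,2}\!\bigl((y-p)/(y-p')\bigr),
\]
which, as you say, counts configurations where $(0,0)$, $(\alpha,\alpha')$, $(\beta,\beta')$, $(y-p,y-p')$ are collinear with $\alpha,\alpha',\beta,\beta'\in A_*$. But this is \emph{not} a quantity that Lemma~\ref{l:Q(A,B,C,D)} controls: that lemma bounds $\Q(B,A,A,A)$ for \emph{sets} $B,A$, whereas here one vertex is the singleton $\{0\}$ (so the bound degenerates to the $\T$-term), and the remaining ``vertex'' $(y-p,y-p')$ is a \emph{three}-parameter family rather than a point in a Cartesian product. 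There is no ``appropriate role assignment'' that makes the lemma apply, and the self-aware remark at the end of your proposal is exactly right --- a direct attack from this sum does not recover the shape $K^{23/4}|A|^3$.

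The fix is what the paper does: apply Theorem~\ref{t:BSzG_Schoen} with $k=3$ rather than $k=2$. Then for every triple $(\alpha,\beta,\gamma)\in A_*^3$ there are $\ge T$ choices of a \emph{common} $a\in A$ with $\alpha,\beta,\gamma\in a-P$, and likewise $(\alpha',\beta',\gamma')\mapsto a'\in A$. Substituting both triples gives
\[
T^{2}\,\E^\times_3(A_*)\le\left|\left\{\frac{b-a}{b'-a'}=\frac{c-a}{c'-a'}=\frac{d-a}{d'-a'}:\ a,a'\in A,\ b,b',c,c',d,d'\in P\right\}\right|=\Q(A,P,P,P),
\]
which is \emph{exactly} the quantity in formula~(\ref{def:Q}). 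Now Lemma~\ref{l:Q(A,B,C,D)} (with the roles $|A|\le|P|$) yields $\Q(A,P,P,P)\ll |P|^{15/4}|A|^{5/4}\log^2|A|$, and with $|P|\ll K|A|$ and $T\gg |A|/K$ one reads off $\E^\times_3(A_*)\ll K^{23/4}|A|^3\log^2|A|$. The point is that the three-fold intersection forces the six substituted points to share two base points $(a,a')$, which is precisely the structure of $\Q$; the two-fold version produces a hybrid object that no available lemma handles.
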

\begin{proof}
	Applying Theorem \ref{t:BSzG_Schoen} with $k=3$, we find two sets $A_* \subseteq A$, $P \subseteq A-A$, $|A_*| \ge |A|/(24K)$, $|P| \le 24K|A|$
	such that for any $a_1,a_2, a_3 \in A_*$ one has
	\begin{equation}\label{tmp:26.10_0}
	|A \cap (P+a_1) \cap (P+a_2) \cap (P+a_3)| \ge \frac{|A|}{4K} \,.
	\end{equation}
	Then
	\begin{equation}\label{tmp:26.10_1}
	\E^\times (A_*,B) \le (|A|/ 4K)^{-2} \left| \left\{ (a-p)b = (a'-p')b' ~:~ a,a'\in A,\, b,b'\in B,\, p,p'\in P \right\}\right| \,. 
	\end{equation}
	Clearly, the number of the  solutions to equation (\ref{tmp:26.10_1}) can be interpreted as points/planes incidences. 
	Hence 
	applying 
	Theorem \ref{t:Misha+}, we obtain 
	\begin{equation}\label{tmp:26.10_2}
	\E^\times (A_*,B) \ll (|A|/ K)^{-2} \left( \frac{|A|^2 |B|^2 |P|^2}{p} + (|A||B||P|)^{3/2} + |A||B||P| \max\{|A|,|B|,|P|\}  \right) \,.
	\end{equation}
	In view of the desired bound  (\ref{f:pre1}) one can assume that $|B|\ge K^7$, $|A| \ge |B|^{1/2} K^{7/2}$ (otherwise trivial bounds (\ref{f:E_CS}), namely,  
	$\E^\times (A_*,B)\le \min \{ |A| |B|^2, |A|^2 |B| \}$ work better). 
	Also, 
	(\ref{tmp:26.10_0}) implies, trivially, $|P| \ge |A|/(4K)$ and we can assume that $|B|\leq 4|A|^4/\E^{+}(A) \ll K|A|$.
	Thus it is easy to check that the third term in  (\ref{tmp:26.10_2}) is negligible and using $|P| \ll K|A|$,  we obtain (\ref{f:pre1}).

	To prove  (\ref{f:pre2}) we notice that in view of (\ref{tmp:26.10_0}) and (\ref{def:Q}) one has 
	$$
	\E^\times_3 (A_*) = \left| \left\{ \a/\a' = \beta/\beta' = \gamma/\gamma' ~:~ \a,\a', \beta,\beta', \gamma, \gamma' \in A_* \right\}\right| 
	\le
	$$
	$$
	\le 
	(|A|/ 4K)^{-2} \left| \left\{ \frac{b-a}{b'-a'} = \frac{c-a}{c'-a'} = \frac{d-a}{d'-a'} ~:~ a,a'\in A,\, b,b',c,c',d,d' \in P \right\}\right| 
	\ll
	$$
	$$
	\ll
	(|A|/  K)^{-2}  \Q(A,P,P,P)	\,.
	$$
	Suppose that $|A|\le |P| \le \sqrt{p}$. 
	One can assume that $K\le |A|^{4/23}$ because otherwise there is nothing to prove. 
	It remains to estimate $\Q(A,P,P,P)$ and we have by Lemma \ref{l:Q(A,B,C,D)} that 
	\begin{equation}\label{tmp:08.01_3}
	\Q(A,P,P,P) \ll |P|^{15/4} |A|^{5/4} \log^2 |A| + \T(P) \,.
	\end{equation}
	Thus in view of $\T(P) \ll |P|^{9/2}$, see \cite[Proposition 2.5]{AMRS} the second term in 
	(\ref{tmp:08.01_3}) 
	is negligible.  
	Then applying Lemma \ref{l:Q(A,B,C,D)} and the bound $|P| \le 24 K|A|$, we obtain  (\ref{f:pre2}).
	If $|A| > |P|$, then we get even better estimate for $\E^\times_3 (A_*)$. 
	Finally, using Theorem \ref{thm:eigenval}, we derive from (\ref{f:pre1}), (\ref{f:pre2}) the desired bound (\ref{f:pre3})
	(because $|B|\leq 4|A|^4/\E^{+}(A)$ and $|A| K \le \sqrt{p}$ we see that the second term in (\ref{f:pre1}) dominates). 
	This completes the proof. 
	$\hfill\Box$ 
\end{proof}

\bigskip

In \cite{MRSS} some better bounds for the energy were obtained but they work in a  situation which is opposite to Corollary \ref{c:pre_eigen}, namely, when the product set (not the sumset) is small.

\bigskip

Now consider the group $\SL_2 (\F_p)$  of matrices 
\[
g=
\left( {\begin{array}{cc}
	a & b \\
	c & d \\
	\end{array} } \right) \,, \quad \quad a,b,c,d\in \F_p \,, \quad \quad ad-bc=1 \,,
\]
which acts on $\F_p$ by 
$$
g z := \frac{az+b}{cz+d}  \,,\quad \quad z \in \F_p \,.
$$
There are two important subgroups in $\SL_2 (\F_p)$. 
Let $\B$ be the standard  Borel subgroup of upper--triangular matrices, namely, elements of $\B$ are 
\[
b = b_{r,q} = 
\left( {\begin{array}{cc}
	r & q \\
	0 & r^{-1} \\
	\end{array} } \right) \,, \quad \quad q\in \F_p \,,  \quad r\in \F_p \setminus \{0\}  \,.
\]
Also, let $\U \subseteq \B$ be the standard unipotent subgroup. 
In other words, elements of $\U$ are 
\[
u = u_q = 
\left( {\begin{array}{cc}
	1 & q \\
	0 & 1 \\
	\end{array} } \right) \,, \quad \quad q\in \F_p  \,.
\]

Having a group which is acting of a set,  one can define a convolution which is slightly generalizes  the ordinary convolution. 

\begin{definition}
	Let $F: \SL_2 (\F_p) \to \mathbb{C}$ and $f : \F_p \to \mathbb{C}$ be two functions.
	Define the 
	convolution  of $F * f : \F_p \to \mathbb{C}$ as 
	$$
	(F*f) (x) := \sum_{g\in \SL_2 (\F_p)} F(g) f(g^{-1} x) \,.
	$$
\end{definition}

Let us 
mention 
a well--known lemma (see \cite{BG_SL},  \cite{Gill}, \cite{SX} and other papers) on convolutions in $\SL_2 (\F_p)$ which follows from 
the 
well--known 
Frobenius Theorem \cite{Frobenius} on representations of $\SL_2 (\F_p)$. 
For the sake of completeness we add the proof of this lemma in the Appendix. 

\begin{lemma}
	Let $f : \F_p \to \mathbb{C}$ be a function such that $\sum_x f(x) = 0$. 
	Then for any functions $F: \SL_2 (\F_p) \to \mathbb{C}$ and $\_phi : \F_p \to \mathbb{C}$ one has 
	\begin{equation}\label{f:Frobenious}
	\sum_{x\in \F_p} (F*f) (x) \_phi(x) \le 2p \| F\|_2 \|\_phi\|_2 \| f\|_2 \,.
	\end{equation}
	\label{l:Frobenious}
\end{lemma}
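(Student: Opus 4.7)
The plan is to invoke the representation theory of $\SL_2 (\F_p)$ and in particular the Frobenius lower bound $(p-1)/2$ on the dimension of every non--trivial irreducible representation. The first step is to pass to the compactification $\mathbb{P}^1(\F_p) = \F_p \cup \{\infty\}$ on which $\SL_2(\F_p)$ acts by genuine permutations via the M\"obius action, extending $f$ and $\_phi$ by $0$ at $\infty$ (call the extensions $\t f$ and $\t \_phi$). Let $\pi$ denote the resulting unitary permutation representation of $\SL_2(\F_p)$ on $L^2(\mathbb{P}^1(\F_p))$, and set $T_F := \sum_g F(g) \pi(g)$. A direct computation shows $(T_F \t f)(x) = (F*f)(x)$ for every $x\in \F_p$ (the only $g$ for which $g^{-1}x = \infty$ contribute $F(g) \cdot 0$), so the sum in question equals $\sum_{x\in \mathbb{P}^1(\F_p)} (T_F \t f)(x) \t \_phi(x)$.

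The hypothesis $\sum_{x\in \F_p} f(x) = 0$, together with $\t f(\infty) = 0$, places $\t f$ in the orthogonal complement of the constant function, i.e.\ in the sum of isotypic components for the non--trivial irreducible subrepresentations of $L^2(\mathbb{P}^1(\F_p))$. By Frobenius' classification every non--trivial irreducible representation $\rho$ of $\SL_2(\F_p)$ satisfies $d_\rho \ge (p-1)/2$. I would then apply the Plancherel identity $\sum_\rho d_\rho \|\rho(F)\|_{HS}^2 = |\SL_2 (\F_p)| \cdot \|F\|_2^2$ together with the trivial inequality $\|\rho(F)\|_{op} \le \|\rho(F)\|_{HS}$ to obtain
\[
\|\rho(F)\|_{op}^2 \le \frac{|\SL_2 (\F_p)|}{d_\rho} \|F\|_2^2 \le \frac{p(p^2-1)}{(p-1)/2} \|F\|_2^2 = 2p(p+1)\|F\|_2^2 \le 4p^2 \|F\|_2^2.
\]
Since $T_F$ acts block--diagonally in the irreducible decomposition and $\t f$ has no component along the trivial block, this gives $\|T_F \t f\|_2 \le 2p \|F\|_2 \|f\|_2$.

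The conclusion then follows from Cauchy--Schwarz applied on $L^2(\mathbb{P}^1(\F_p))$:
\[
\Big| \sum_{x\in \F_p} (F*f)(x) \_phi(x) \Big| \le \|T_F \t f\|_{L^2(\mathbb{P}^1)} \|\t \_phi\|_{L^2(\mathbb{P}^1)} \le 2p \|F\|_2 \|f\|_2 \|\_phi\|_2,
\]
where I used $\|\t \_phi\|_{L^2(\mathbb{P}^1)} = \|\_phi\|_{L^2(\F_p)}$ since $\t\_phi(\infty)=0$. The only real subtlety is bookkeeping around the added point at infinity: one must check that the compactification is compatible with the definition of $F*f$, which is forced precisely by the hypothesis $\sum_x f(x) = 0$ (this is what kills the contribution of the trivial subrepresentation, on which $T_F$ has the large operator norm $\|F\|_1$ rather than $O(p \|F\|_2)$). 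Everything else is routine character theory, and the numerical factor $2p$ comes out of the crude estimate $p(p^2-1)/((p-1)/2) \le 4p^2$.
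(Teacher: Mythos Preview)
Your proof is correct and follows essentially the same route as the paper: both exploit Frobenius' lower bound $(p-1)/2$ on the dimension of every non--trivial irreducible representation of $\SL_2(\F_p)$ to obtain the spectral gap, and both conclude by Cauchy--Schwarz. The packaging differs slightly: the paper takes the singular value decomposition of the $|\SL_2(\F_p)|\times p$ matrix $M(g,x)=f(g^{-1}x)$ and argues that each non--zero singular value has multiplicity $\ge (p-1)/2$, whereas you work with the operator $T_F=\sum_g F(g)\pi(g)$ on $L^2(\mathbb{P}^1)$ and bound $\|\rho(F)\|_{op}$ via Plancherel --- these are dual formulations of the same Gowers--type quasirandomness bound, and the numerical constant $2p$ arises from the identical estimate $|\SL_2(\F_p)|/((p-1)/2)\le 4p^2$. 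Your compactification to $\mathbb{P}^1$ is arguably cleaner, since it makes the permutation action genuine and transparently handles the convention $f(\infty)=0$ that the paper's matrix $M(g,x)$ uses implicitly.
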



Finally, we need the classification of subgroups of $\SL_2 (\F_p)$, see \cite{Suzuki}.

\begin{theorem}
	Let $p$ be a prime and $p\ge 5$. Then any subgroup of $\SL_2 (\F_p)$ is isomorphic to one of the following subgroups:\\
	$(1)~$ Finite groups $A_4$, $S_4$, $A_5$. \\
	$(2)~$ The dihedral groups of order 
	$4\left(\frac{p\pm 1}{2}\right)$
	and their subgroups.\\	
	$(3)~$ A Borel subgroup 
	of order 
	$p(p-1)$ 
	and its   subgroups.
	\label{t:classification}
\end{theorem}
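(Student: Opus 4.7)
The plan is to follow Dickson's classical approach: reduce to the projective group and split into cases based on whether the subgroup contains unipotent elements. First I would pass to the image $\bar H$ of $H$ in $\mathrm{PSL}_2(\F_p) := \SL_2(\F_p)/\{\pm I\}$; this is harmless because central extensions by a subgroup of $\{\pm I\}$ do not enlarge the isomorphism types listed in (1)--(3), so once $\bar H$ is classified, $H$ is determined up to such an extension. I then distinguish the two cases $p \mid |\bar H|$ and $p \nmid |\bar H|$.

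In the case $p \mid |\bar H|$, Cauchy's theorem yields a Sylow $p$-subgroup $\bar U \le \bar H$, consisting of nontrivial unipotent elements. A short matrix computation shows that any two distinct Sylow $p$-subgroups of $\mathrm{PSL}_2(\F_p)$ together generate the entire group. Hence either $\bar U$ is the unique Sylow $p$-subgroup of $\bar H$, forcing $\bar H \le N(\bar U)$, the image of a Borel subgroup, so that $H \le \B$ of order $p(p-1)$, giving case (3); or $\bar H = \mathrm{PSL}_2(\F_p)$, which for $p=5$ coincides with $A_5$ (absorbed by case (1)) and for larger $p$ is the non-proper case at which the theorem is really aimed only through its \emph{proper} subgroups.

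For the case $p \nmid |\bar H|$, every element of $\bar H$ is semisimple, acting on $\mathbb{P}^1(\F_p)$ with at most two fixed points over $\bar{\F}_p$. Here I would invoke the classical Klein--Jordan classification: any finite subgroup of $\mathrm{PGL}_2$ over an algebraically closed field, of order coprime to the characteristic, is isomorphic to a cyclic group, a dihedral group, or one of $A_4$, $S_4$, $A_5$. The standard proof lifts to $\mathrm{PGL}_2(\C)$ via Witt-vector Hensel lifting (or by a direct character-theoretic argument) and applies the classification of finite rotation groups on $S^2 \cong \mathbb{P}^1(\C)$. The exceptional cases yield (1); the cyclic and dihedral ones embed into the normalizer of a maximal torus in $\SL_2(\F_p)$, which is either the split torus of order $p-1$ or the non-split torus of order $p+1$ (the norm-one subgroup of $\F_{p^2}^*$). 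The full normalizers are dihedral of order $2(p\pm 1) = 4\left(\tfrac{p\pm 1}{2}\right)$, matching case (2).

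The main obstacle is the Klein--Jordan step, since the lift from characteristic $p$ to characteristic $0$ is technical: one must control how cyclic subgroups of semisimple elements glue together and rule out any exotic configuration beyond the five standard types. A secondary issue is the correct identification of central extensions when pulling back from $\mathrm{PSL}_2$ to $\SL_2$: for instance the preimage of $A_5$ is the binary icosahedral group $2\cdot A_5 \cong \SL_2(\F_5)$, whose existence inside $\SL_2(\F_p)$ depends on congruence conditions on $p$. Since the statement asks only for isomorphism types, this reduces to a finite case check rather than a structural difficulty.
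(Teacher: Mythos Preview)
The paper does not prove this theorem: it is quoted as a classical classification result with a reference to Suzuki's \emph{Group Theory I}, so there is no argument in the paper to compare your proposal against. Your sketch is the standard Dickson classification (splitting on whether $p$ divides $|\bar H|$, then invoking the Klein--Jordan list in the coprime case), which is essentially what one finds in Suzuki or Dickson; in that sense your route is the canonical one and there is nothing materially different to discuss.

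One point worth flagging, which you already touch on: the theorem as stated in the paper is slightly loose. Since $-I$ is the unique involution in $\SL_2(\F_p)$ for odd $p$, the groups $A_4$, $S_4$, $A_5$ themselves cannot literally sit inside $\SL_2(\F_p)$; what actually occurs are their binary covers $2\cdot A_4$, $2\cdot S_4$, $2\cdot A_5$. For the purposes of the paper this imprecision is harmless --- the only use made of the theorem is that every proper subgroup either has bounded order or lies in a coset of a Borel or dihedral subgroup --- so your remark that the central-extension bookkeeping reduces to a finite check is exactly the right attitude here.
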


We finish this section recalling the celebrated  result of Helfgott \cite{Harald} on the growth in $\SL_2 (\F_p)$.

\begin{theorem}
	Let $A\subseteq \SL_2 (\F_p)$. Assume that $|A|< p^{3-\d}$ for $\d>0$ and $A$ is not contained in any proper subgroup of $\SL_2 (\F_p)$. 
	Then there is a positive function $\kappa (\d) > 0$ such that 
	$$
	|AAA|\gg_\d |A|^{1+\kappa (\d)} \,.
	$$ 
	\label{t:Harald_SL2}
\end{theorem}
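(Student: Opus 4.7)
The plan is to prove Helfgott's theorem by combining the subgroup classification (Theorem \ref{t:classification}), an escape--from--subvarieties argument, and sum--product in $\F_p$. Assume for contradiction that $|AAA|\le|A|^{1+\k}$ for a tiny $\k=\k(\d)>0$ to be chosen later; by the non--abelian Plünnecke--Ruzsa calculus of Tao, every iterated product of length $m$ then has size at most $|A|^{1+O(m\k)}$, and we may freely pass between $A$, $A\cdot A^{-1}$, and short products.

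First I would set up the trace machinery. Because $A$ is contained in no proper subgroup, Theorem \ref{t:classification} forces $\langle A\rangle=\SL_2(\F_p)$; in particular the proper subvariety $\{g\in\SL_2:\tr(g)^2=4\}$ of non--regular elements cannot contain $A$, so a pigeonhole on $A\cdot A$ produces a regular semisimple element $g_0\in AA$ whose centraliser is a maximal torus $T$ with $|T|\in\{p-1,p+1\}$. The classical identity
$$
\tr(xy)+\tr(xy^{-1})=\tr(x)\tr(y)
$$
shows that the trace set $S_m:=\tr(A^{m})$, for any fixed $m=O(1)$, is almost closed under sums and products: both $S_m+S_m$ and $S_m S_m$ sit inside $\tr(A^{O(m)})$.

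Second I would bound $|S_m|$ from below. The conjugation map $a\mapsto ag_0a^{-1}$ has fibres equal to the right cosets of $T$, so the trace set $\{\tr(ag_0a^{-1}):a\in A\}$ has cardinality at least $|A|/\max_{h}|A\cap hT|$. If $\max_h|A\cap hT|$ were as large as $|A|^{1-\eta}$ for some small $\eta$, a standard covering argument would place $A$ in $|A|^{O(\k)}$ cosets of $T$ or of its normaliser (a dihedral group), and by Theorem \ref{t:classification} this would force $\langle A\rangle$ to lie in a Borel or dihedral subgroup, contradicting the hypothesis. Hence $|S_m|\gtrsim|A|^{\eta}$ for some $\eta=\eta(\k)>0$, while $S_m\subseteq\tr(A^{O(1)})$ has cardinality at most $|A|^{1+O(\k)}$.

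Third I would close the loop via sum--product in $\F_p$. Applying Theorem \ref{t:1/5_F_p} to $S_m$ yields $\max\{|S_m+S_m|,|S_m S_m|\}\gtrsim|S_m|^{1+1/5}$ as long as $|S_m|<p^{5/8}$. But by the trace identity both $S_m+S_m$ and $S_m S_m$ embed into $\tr(A^{O(1)})$, whose cardinality is $\le|A|^{1+O(\k)}$. Combining gives $|A|^{\eta(1+1/5)}\lesssim|A|^{1+O(\k)}$, which fails for $\k$ small enough relative to $\d$ and $\eta$; the remaining range $|S_m|\ge p^{5/8}$ is handled by a direct counting argument using $|\SL_2(\F_p)|\sim p^3$ and the hypothesis $|A|<p^{3-\d}$. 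The main obstacle is the second step: quantifying that $A$ cannot concentrate on a single torus or its normaliser. This is the technical heart of Helfgott's original proof and rests crucially on the classification Theorem \ref{t:classification}, where the exceptional groups $A_4,S_4,A_5$ are discarded by the size hypothesis $|A|\gg p^{\Omega(\d)}\gg 60$, and the only remaining proper subgroups large enough to be relevant---dihedral groups and Borel subgroups---are precisely the (near--)stabilisers of tori that the covering argument must rule out.
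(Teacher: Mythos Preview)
The paper does not prove this statement. Theorem~\ref{t:Harald_SL2} is simply \emph{quoted} from Helfgott's paper \cite{Harald} at the end of the preliminaries section (``We finish this section recalling the celebrated result of Helfgott\ldots''), and is used as a black box in the proof of Theorem~\ref{t:flattering}. So there is nothing in the paper to compare your argument against.

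That said, your outline does follow the broad strategy of Helfgott's original proof---torus/trace machinery, escape from proper subgroups via the classification, and sum--product on the trace set---but the way you close the argument has a genuine gap. From $|S_m|\gtrsim |A|^{\eta}$ and Theorem~\ref{t:1/5_F_p} you get $\max\{|S_m+S_m|,|S_mS_m|\}\gtrsim |A|^{6\eta/5}$, while your upper bound is $|A|^{1+O(\k)}$. The inequality $|A|^{6\eta/5}\lesssim |A|^{1+O(\k)}$ is \emph{not} a contradiction for small $\eta$; it just says $6\eta/5\le 1+O(\k)$. What Helfgott actually establishes is a two--sided estimate $|S_m|=|A|^{1/3+O(\k)}$: the upper bound comes from the fact that fibres of the trace map on a small--tripling set are large (of size roughly $|A|^{2/3}$), and the lower bound from the non--concentration on torus cosets that you sketch in step two. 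With matching exponents $1/3$, sum--product on $S_m$ then forces growth. Your step two also needs sharpening: merely ruling out $|A\cap hT|\ge |A|^{1-\eta}$ is not enough---you need $|A\cap hT|\le |A|^{1/3+O(\k)}$, which is the Larsen--Pink--type input that does the real work.
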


\section{First results}
\label{sec:first_results}

Throughout this section $\G$ is a multiplicative subgroup of $\F^*_p$. 
Such subgroups were studied by various authors and 
many deep results about subgroups were obtained, e.g., \cite{Bourgain_DH}, \cite{BC}, \cite{BGK}, \cite{KShp}, \cite{s_Bourgain} and others.
In this section 
we
find  
upper bounds for $\T^{+}_k (f)$, $\E^{+}_k (f)$ 
and for the exponential sums over $f$, where $f$ is an arbitrary $\G$--invariant function, that is, $f(x\gamma) = f(x)$ for all $\gamma \in \G$.
The main difference between our new theorems and results from \cite{s_Bourgain} is, firstly, that we consider general functions $f$ and, secondly, 
the absence of any restrictions on size of support of $f$ (but not on size of $\G$, of course) 
similar to $\R$ where we have no such restrictions, see our previous paper \cite{s_Bourgain}.

We begin with the quantity $\T^{+}_k (f)$ and we use $\T^{+}_2 (f)$ in  bounds below   
to make our results sharper. 
Of course, one can replace this quantity to 
$\|f\|_1^2 \|f\|_2^2$ (see formula (\ref{f:T_f_12'})) of by something even smaller using Lemma \ref{l:AA_small_energy}.

\begin{theorem}
	Let 
	$f$ be a $\G$--invariant 
	complex 
	function with $\sum_{x} f(x) = 0$.
	Then for any $k\ge 1$ 
	one has 
	\begin{equation}\label{f:T_k_G}
	\T^{+}_{2^k} (f) \le
	2^{3k^2} (C_* \log^4 p)^{k-1} \cdot \| f\|_1^{2^{k+1} -4} |\G|^{\frac{(1-k)}{2}} \T^{+}_2 (f) \,,
	\end{equation}
	where $C_*$ is the absolute constant from Lemma \ref{l:AA_small_energy}. 
	\label{t:T_k_G}
\end{theorem}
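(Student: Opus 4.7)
The plan is to prove Theorem~\ref{t:T_k_G} by induction on $k$, the base case $k=1$ being trivial. Set $H := f^{(2^k *)}$, so that $\T^{+}_{2^{k+1}}(f) = \E^{+}(H)$ and $\|H\|_2^2 = \T^{+}_{2^k}(f)$. A direct substitution $y = \gamma y'$ in $(g*h)(\gamma x)$ shows that convolutions of $\G$-invariant functions are $\G$-invariant, so $H$ and (by Fourier duality) $\FF{H}$ are $\G$-invariant; moreover $\FF{H}(0) = (\FF{f}(0))^{2^k} = 0$. The inductive step reduces to showing
\begin{equation}\label{f:plan_core}
\E^{+}(H) \lesssim C_* \log^{O(1)} p \cdot \|H\|_1^2 \|H\|_2^2 \, |\G|^{-1/2},
\end{equation}
because $\|H\|_1 \le \|f\|_1^{2^k}$ and $\|H\|_2^2 = \T^{+}_{2^k}(f)$ then give, via the inductive hypothesis, the desired bound for $\T^{+}_{2^{k+1}}(f)$.

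I would prove (\ref{f:plan_core}) by combining two complementary estimates. The \emph{dyadic estimate} starts from $\E^{+}(H) \le \E^{+}(|H|)$ and decomposes $|H| \approx \sum_j 2^j \mathbf{1}_{L_j}$ into the $\G$-invariant level sets $L_j = \{x : 2^j \le |H(x)| < 2^{j+1}\}$. Since $L_j \cdot \G = L_j$, Lemma~\ref{l:AA_small_energy} applied with $A = \G$ and $Q = L_j$ (so $M = 1$) gives $\E^{+}(L_j) \le C_*(|L_j|^4/p + |L_j|^3 |\G|^{-1/2})$. Combining with the mixed-energy bound (\ref{f:E_Ho}) and the interpolation $\|H\|_{4/3}^{4/3} \le \|H\|_1^{2/3}\|H\|_2^{2/3}$ yields
$$\E^{+}(H) \lesssim C_* \log^{O(1)} p \cdot \big(\|H\|_1^4/p + \|H\|_1^2 \|H\|_2^2 /|\G|^{1/2}\big).$$
The \emph{Fourier estimate} uses that $\FF{H}$ is $\G$-invariant with $\FF{H}(0) = 0$, so its $p-1$ nonzero values are constant on the $(p-1)/|\G|$ cosets of $\G$. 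Parseval gives $|\G|\,\|\FF{H}\|_\infty^2 \le \sum_{\xi \neq 0} |\FF{H}(\xi)|^2 = p\|H\|_2^2$, hence $\|\FF{H}\|_\infty^2 \le p\|H\|_2^2/|\G|$, and so
$$\E^{+}(H) = \frac{1}{p}\sum_\xi |\FF{H}(\xi)|^4 \le \|\FF{H}\|_\infty^2 \, \|H\|_2^2 \le \frac{p\|H\|_2^4}{|\G|}.$$

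The two ``problem quantities'' $\|H\|_1^4/p$ and $p\|H\|_2^4/|\G|$ have geometric mean exactly $\|H\|_1^2\|H\|_2^2/|\G|^{1/2}$. Thus when the dyadic main term $\|H\|_1^4/p$ dominates the subgroup term $\|H\|_1^2\|H\|_2^2/|\G|^{1/2}$, the inequality $\min\{A,B\} \le \sqrt{AB}$ combined with the Fourier bound eliminates it; otherwise the dyadic bound alone already gives (\ref{f:plan_core}). Iterating from $k = 1$, the powers of $2$ accumulate as $\prod_{j=1}^{k-1} 2^{6j+3} = 2^{3(k^2-1)}$ (matching $2^{3k^2}$), the $\|f\|_1$-exponents sum to $\sum_{j=1}^{k-1} 2^{j+1} = 2^{k+1}-4$, the subgroup exponent accumulates to $(1-k)/2$, and the log factors per step fit within $(C_*\log^4 p)^{k-1}$. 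The main obstacle is precisely the spurious term $\|H\|_1^4/p$ arising from losing the cancellation of $\sum H = 0$ when passing from $H$ to $|H|$; the subgroup-Fourier bound $\|\FF{H}\|_\infty^2 \le p\|H\|_2^2/|\G|$, which crucially uses both the mean-zero condition and the $\G$-invariance of $\FF{H}$, is the new ingredient that removes it.
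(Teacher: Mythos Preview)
Your proof is correct and follows the same inductive scheme as the paper: establish the step inequality $\T^{+}_{2s}(f)\lesssim C_* L^{O(1)}\|f\|_1^{2s}\,\T^{+}_s(f)\,|\G|^{-1/2}$ and iterate. Both arguments pass to level sets of $H=r_{sf}$, invoke Lemma~\ref{l:AA_small_energy} (with $Q$ the $\G$-invariant level set and $A=\G$, so $M=1$) to produce the term $\|H\|_1^{2}\|H\|_2^{2}/|\G|^{1/2}$, and then eliminate the spurious ``main'' term $\|H\|_1^4/p$ via a second, complementary estimate.

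The genuine difference lies in how that complementary estimate is obtained. The paper interprets $\T^{+}_{2s}(f)$ as a weighted point/plane incidence count, inserts the balanced function $f_\G$ using $\sum_x r_{sf}(x)=0$, and applies the design bound~\eqref{f:Vinh}; after the level-set restriction this yields (on the restricted sum) the bound $p\|H\|_2^4/|\G|$. You instead observe directly that $\FF{H}=(\FF f)^{2^k}$ is $\G$-invariant with $\FF{H}(0)=0$, so Parseval on a single $\G$-coset gives $\|\FF{H}\|_\infty^2\le p\|H\|_2^2/|\G|$ and hence $\E^+(H)\le p\|H\|_2^4/|\G|$ on the \emph{full} energy, without any incidence machinery or level restriction. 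This is a cleaner and more elementary derivation of the same inequality; it also lets you combine the two bounds by $\min\{A,B\}\le\sqrt{AB}$ with no further loss. A pleasant side effect is that your summation $\sum_j \E^+(g_j)^{1/4}$, bounded via $\|H\|_{4/3}$, picks up only a single factor of the number of dyadic levels rather than the paper's $L^4$, so your constants are in fact a little better. The paper's incidence route, on the other hand, is uniform with the rest of the article and makes the parallel with Rudnev's bound explicit.

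One point you leave implicit is the control on the number $J$ of relevant dyadic levels; as in the paper one cuts at $\rho\asymp \E^+(H)/\|H\|_1^{3}$ and uses $\|H\|_\infty\le\|f\|_1^{2^k-2}\|f\|_2^2$ to get $J\ll 2^k\log p$, which is absorbed in the stated constants.
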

\begin{proof}
	For $k=1$ bound (\ref{f:T_k_G}) is trivial, so below we will assume that $k\ge 2$. 
	Fix any $s \ge 2$ and put $L=L_s := s\log p$.
	Our aim 
	is to prove 
	\begin{equation}\label{f:T_2s,T_s}
	\T^{+}_{2s} (f) \le 128 C_* L^4_s \| f\|_1^{2s}  \T^{+}_{s} (f) |\G|^{-1/2} \,.
	\end{equation}
	After that we use induction and obtain  
	$$
	\T^{+}_{2^k} (f) \le (128C_*)^{k-1} \log^{4(k-1)} p \cdot 2^{4((k-1)+ (k-2) + \dots+2)}\| f\|_1^{2^{k}+\dots + 4} |\G|^{\frac{-(k-1)}{2}} \T^{+}_2 (f) 
	=
	$$
	$$
	2^{2k^2+5k-11} (C_* \log^4 p)^{k-1} \cdot \| f\|_1^{2^{k+1} -4} |\G|^{\frac{(1-k)}{2}} \T^{+}_2 (f) 
	\le
	2^{3k^2} (C_* \log^4 p)^{k-1} \cdot \| f\|_1^{2^{k+1} -4} |\G|^{\frac{(1-k)}{2}} \T^{+}_2 (f) 
	$$
	and this coincides with (\ref{f:T_k_G}).

	To prove (\ref{f:T_2s,T_s}) we notice that 
	by formula (\ref{f:T_2k_r})
	one has 
	$$
	\T^{+}_{2s} (f) = \sum_{x,y,z} r_{sf} (x) r_{sf} (y) r_{sf} (x+z)  r_{sf} (y+z) \,.
	$$
	Here as usual we have denoted by $r_{sf} (x)$ the function $r_{f+\dots+f} (x)$, where the number of $f$'s in the sum 
	is $s$. 
	We give two upper bounds for $\T^{+}_{2s} (f)$
	and first of all, notice that from the last formula, it follows that $\T^{+}_{2s} (f)$ equals
	$$
	\sigma := |\G|^{-2} \sum_{\gamma_1,\gamma_2 \in \G}\, \sum_{a,b,c,d} r_{sf} (a) r_{sf} (b) r_{sf} (c) r_{sf} (d) \cdot \d\{ a+\gamma_1 b = c+ \gamma_2 d \} 
	$$
	plus the term $\mathcal{E}$ which corresponds to $a,b,c,d$
	equals zero (see below).
	Consider the set of points $\P \subseteq \F^3_p$, each point $p$ indexed by $(\gamma_1,c,d)$ and the set of planes $\Pi \subseteq \F^3_p$ indexed by $(a,b,\gamma_2)$ and each 
	$\pi = \pi_{a,b,\gamma_2} \in \Pi$ has the form 
	$\pi : a+xb=y+\gamma_2 z$. 
	Then in terms of formula (\ref{f:Vinh}) one has $\d \{ a+\gamma_1 b = c+ \gamma_2 d \} = \I (p,\pi)$
	for $p=(\gamma_1,c,d)$ and $\pi = \pi_{a,b,\gamma_2}$. 
	By the assumption $\sum_x f(x)=0$. It follows that $\sum_x r_{sf} (x) = 0$ and hence 
	\begin{equation}\label{tmp:24.10_1}
	\sigma = |\G|^{-2} \sum_{\gamma_1,\gamma_2} f_\G (\gamma_1) f_\G (\gamma_2) \sum_{a,b,c,d} r_{sf} (a) r_{sf} (b) r_{sf} (c) r_{sf} (d) \cdot \d\{ a+\gamma_1 b = c+ \gamma_2 d \} \,,
	\end{equation}
	where $f_{\G} (x) = \G(x) - |\G|/p$ is the balanced function of $\G$. 
	In a similar way, considering for all nonzero $x$ the function $R(x) = |\G|^{-1} \sum_{y} f_\G (y) r_{sf} (x y^{-1})$, we obtain
	$$
	\sigma = |\G|^{-4} \sum_{\gamma_1,\gamma_2, \gamma_3, \gamma_4 } f_\G (\gamma_1) f_\G (\gamma_2) f_\G (\gamma_3) f_\G (\gamma_4) \sum_{a,b,c,d} r_{sf} (a) r_{sf} (b) r_{sf} (c) r_{sf} (d) \cdot \d\{ \gamma_1 a +\gamma_2 b = \gamma_3 c+ \gamma_4 d \} 
	$$	
	\begin{equation}\label{tmp:24.10_1_4}	
	= \sum_{x,y,z} R (x) R (y) R (x+z)  R (y+z) \,. 
	\end{equation}
	Clearly, $R(0) = 0$, further $R(x) = r_{sf} (x)$, $x\neq 0$ and  $\| R\|_\infty = \| r_{sf} \|_\infty$ if one considers the function  $r_{sf}$ as a function on $\F^*_p$ only. 
	Also, notice that $\|f_{\G}\|_1 < 2|\G|$  and hence 
	$$
	\| R\|_1 \le  |\G|^{-1} \| r_{sf} \|_1 \|f_{\G}\|_1 < 2  \| r_{sf} \|_1 \le 2 \| f \|^s_1 \,.
	$$
	
	Now put $\rho = \T^{+}_{2s} (f) /(64 \| f\|_1^{3s})$.
	Since
	$$
	\left| \sum_{x,y,z ~:~ |R (x)| \le \rho} R (x) R (y) R (x+z)  R (y+z) \right| 
	\le 
	8\rho \| f\|_1^{3s}  = \T^{+}_{2s} (f) /8 \,,
	$$
	it follows that 
	$$
	\T^{+}_{2s} (f)
	\le 
	\frac{3}{2} 
	\sum'_{x,y,z} R (x) R (y) R (x+z)  R (y+z) + 
	\frac{3}{2} 
	\mathcal{E} \,,
	$$
	where the sum $\sum'$ above (we denote it as $\T^{'}_{2s} (f)$) is taken over nonzero variables  $x,y,x+z,y+z$ with  $|R (x)|, |R (y)|, |R (x+z)|, |R (y+z)| > \rho$ 
	and by (\ref{f:T_f_12}) 
	$$
	\mathcal{E} \le 4 |r_{sf} (0)| \left| \sum_{y,z} r_{sf} (x) r_{sf} (y) r_{sf} (x+z) r_{sf} (y+z)  \right|
	\le
	4 r_{s|f|} (0) \| f \|^s_1 \T^{+}_s (f) 
	\le
	4 \|f\|^2_2  \| f \|^{2s-2}_1 \T^{+}_s (f)  \,.
	$$
	Let us compare the obtained estimate  for $\mathcal{E}$ with the upper bound in (\ref{f:T_2s,T_s}). 
	By the assumption $f$ is $\G$--invariant function and hence $\| f\|_1 = |\G| \sum_{\xi \in \F_p^*/\G} |f(\xi)|$, 
	as well as 
	\begin{equation}\label{f:L_2-L^2_1}
	\| f\|^2_2 = |\G| \sum_{\xi \in \F_p^*/\G} |f(\xi)|^2 \le |\G|^{-1} \|f \|^2_1 \,.
	\end{equation}
	In particular, 
	\begin{equation}\label{f:E_bound}
	\mathcal{E} \le  4 \| f \|^{2s}_1 \T^{+}_s (f) |\G|^{-1}  
	\le 4 \| f \|^{2s}_1 \T^{+}_s (f) |\G|^{-1/2} \,.
	\end{equation}
	Thus the obtained estimate  for $\mathcal{E}$ is much smaller than the upper bound for $\T^{+}_{2s} (f)$ in (\ref{f:T_2s,T_s}).
	Hence  if (\ref{f:T_2s,T_s}) holds, then there is nothing to prove and in the opposite case, we get
	\begin{equation}\label{f:E_bound'}
	\mathcal{E} \le \T^{+}_{2s} (f) / (32C_* L^4) \,,
	\end{equation}
	so it is negligible.  Also, we can assume that $\T'_{2s} (f) >0$ because otherwise there is nothing to prove.

	Put $P_j = \{ x ~:~ \rho 2^{j-1} < |r_{sf} (x)| \le \rho 2^{j}\} \subseteq \F^*_p$, $j\in \N$.  
	By the H\"older inequality or, alternatively, 
	counting trivial solutions $a_j= a'_j$ to equation (\ref{def:T_k}), 
	we have  $\T^{+}_{s} (f) \ge \|f\|^{2s}_2$. 
	If (\ref{f:T_2s,T_s}) does not hold, then, in particular, 
	\begin{equation}\label{tmp:01.10_2}
	\T^{+}_{2s} (f)\ge 2^7 \| f\|^{2s}_1 \T^{+}_s (f) |\G|^{-1/2} \ge 2^7 \| f\|^{2s}_1 \|f\|^{2s}_2 |\G|^{-1/2}
	\end{equation} 
	and hence the possible number of the sets $P_j$ does not exceed $L$.  
	Indeed, for any $x$ one has $|r_{s\G} (x)| \le \| f\|_1^{s-2} \|f\|^2_2$ and hence 
	$\rho 2^{j-1} = 2^{j-7} \T^{+}_{2s} (f) \| f\|_1^{-3s}$ must be less than $\| f\|_1^{s-2} \|f\|^2_2$ otherwise the correspondent set $P_j$ is empty.
	In other words,
	using the H\"older inequality one more time,  
	as well as 
	bound (\ref{tmp:01.10_2}), we obtain
	$$
	2^{j-7} \le \| f\|_1^{4s-2} \|f\|^2_2 / \T^{+}_{2s} (f) \le \| f\|_1^{2s-2} \|f\|^{-(2s-2)}_2 |\G|^{1/2} /2^7
	\le
	$$
	$$
	\le p^{s-1} |\G|^{1/2} /2^7 < p^{s} /2^7
	$$
	as required. 
	By the Dirichlet principle there is $\D = \rho 2^{j_0-1}$, 
	and a set $P=P_{j_0}$ such that 
	\begin{equation}\label{tmp:15.01_1}
	\T^{+}_{2s} (f) 
	\le 
	\frac{3}{2} 
	L^4 (2\D)^4 \E^{+} (P) + \frac{3}{2} 
	\mathcal{E} 
	= \T'_{2s} (f) + 
	\frac{3}{2} 
	\mathcal{E} \,.
	\end{equation}
	Indeed, putting $g_i (x) = P_i (x) r_{sf} (x)$, and using (\ref{f:E_Ho}), we get 
	$$
	\sum'_{x,y,z} r_{sf} (x) r_{sf} (y) r_{sf} (x+z) r_{sf} (y+z) 
	\le
	\sum_{i,j,k,l=1}^L\, \sum_{x,y,z} g_i (x) g_j (y) g_k (x+z)  g_l (y+z)
	\le 
	$$
	$$
	\le
	\sum_{i,j,k,l=1}^L (\E^{+} (g_i) \E^{+} (g_j) \E^{+} (g_k) \E^{+} (g_l) )^{1/4} 
	=
	\left( \sum_{i=1}^L (\E^{+} (g_i))^{1/4} \right)^4
	\le
	$$
	$$
	\le
	L^3  \sum_{i=1}^L \E^{+} (g_i) \le L^4 \max_i \E^{+} (g_i) \,. 
	$$ 
	Certainly, the sum  $\sum'_{x,y,z} R (x) R (y) R (x+z) R (y+z)$  can be estimated in a similar way and one can check that 
	all functions  $R_i (x) = |\G|^{-1} \sum_{y} f_\G (y) r_i (x y^{-1})$ have zero mean and $\|R_i\|_\infty \le \| r_i \|_\infty$. 
	Moreover we always have $|P| \D^2 \le \T^{+}_{s} (f)$ and 
	\begin{equation}\label{f:P_D_T_k}
	|P| \D \le \sum_{x\in P} |r_{sf} (x)| \le \sum_x |r_{sf} (x)| \le \sum_x r_{s|f|} (x) = \| f\|_1^s \,.
	\end{equation}	
	Using Lemma \ref{l:AA_small_energy}, we obtain
	$$
	\E^{+} (P) 
	\le
	C_* \left( \frac{|P|^4}{p} + \frac{|P|^3}{|\G|^{1/2}} \right) \,.
	$$
	Hence
	\begin{equation}\label{tmp:30.03_1}
	\T'_{2s} (f)
	\le 
	3 (16 C_*) L^4 \left( \frac{\D^4 |P|^4}{p} + \frac{\D^4 |P|^3}{|\G|^{1/2}} \right) \,.
	\end{equation}
	Suppose that 
	the second term in (\ref{tmp:30.03_1}) dominates. 
	Then in view of $|P| \D^2 \le \T^{+}_{s} (\G)$ and $|P| \D \le \| f\|_1^s$, we have 
	$$
	|P|^3 \D^4 = (P \D)^2 P \D^2 \le  \| f\|_1^{2s}  \T^{+}_{s} (f) \,.
	$$
	In other words, the second term in (\ref{tmp:30.03_1}) does not exceed 
	\begin{equation}\label{tmp:01.10_1}
	3(16 C_*) L^4 
	\| f\|_1^{2s}  \T^{+}_{s} (f) |\G|^{-1/2} \,.
	\end{equation}
	and inequality  (\ref{f:T_2s,T_s}) (also, recall bound (\ref{f:E_bound})) is proved.

	If the first term in (\ref{tmp:30.03_1}) dominates, then we notice that $|P| |\G|^{1/2} \ge p$ and use another bound.
	By $\sum_x f_\G (x) = 0$, formulae (\ref{tmp:24.10_1}),  (\ref{tmp:15.01_1}) and (\ref{f:Vinh}),  
	as well as 
	the last estimate, we have
	\begin{equation}\label{f:first_T}
	\T^{'}_{2s} (f) \le 3\cdot 8 L^4 p (4 |P| \D^2)^2 |\G|^{-1} = 3\cdot 2^7 |P|^3 \D^4 |\G|^{-1}  \le 3\cdot  16 |P|^3 \D^4 |\G|^{-1/2} 
	\end{equation}
	for $|\G| \ge 2^6$. 
	Of course 
	quantity (\ref{f:first_T}) 
	is less than the second term in  (\ref{tmp:30.03_1}). 
	If $|\G| < 2^6$, then it is easy to check that (\ref{f:T_k_G}) takes place. 
	Combining 
	the obtained bound (\ref{f:first_T}) 
	with (\ref{tmp:01.10_1}), we see that in any case 
	$$
	\T'_{2s} (f) \le 
	6(16 C_*) L^4 \| f\|_1^{2s}  \T^{+}_{s} (f) |\G|^{-1/2} \,.
	$$
	Finally, using (\ref{f:E_bound}), we have
	$$
	\T^{+}_{2s} (f) \le 128 C_* L^4 \| f\|_1^{2s}  \T^{+}_{s} (f) |\G|^{-1/2}
	$$
	This completes the proof. 
	$\hfill\Box$
\end{proof}

\bigskip 

Now we are ready to obtain an upper bound for the exponential sums over any $\G$--invariant function $f$.

\begin{corollary}
	Let 
	$\G \subseteq \F_p^*$  be a multiplicative subgroup, $|\G| \ge p^\d$, $\d>0$, and let  
	$f$ be a $\G$--invariant 
	complex 
	function with $\sum_{x} f(x) = 0$.
	Then for all sufficiently large $p$ one has 
	\begin{equation}\label{f:exp_sums}
	\max_{\xi} |\FF{f} (\xi)| \ll \| f\|_1 \cdot p^{-\frac{5\d}{2^{7+ 2\d^{-1}}}} \,.
	\end{equation}
	Further we have a nontrivial upper bound $o(\| f\|_1)$ for the maximum in (\ref{f:exp_sums})  if 
	\begin{equation}\label{f:exp_sums'}	
	\log |\G| \ge \frac{C\log p}{\log \log p} \,,
	\end{equation}
	where $C>2$ is any constant.
	\label{c:exp_sums}
\end{corollary}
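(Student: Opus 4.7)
The plan is to combine the upper bound for $\T^{+}_{2^k}(f)$ from Theorem~\ref{t:T_k_G} with the trivial Parseval--type inequality
$$|\FF{f}(\xi)|^{2^{k+1}} \le \sum_{\eta\in \F_p} |\FF{f}(\eta)|^{2^{k+1}} = p \cdot \T^{+}_{2^k}(f).$$
Since $f$ is $\G$--invariant, inequality (\ref{f:L_2-L^2_1}) gives $\|f\|_2^2 \le \|f\|_1^2 / |\G|$, and then (\ref{f:T_f_12'}) yields $\T^{+}_2(f) \le \|f\|_1^2 \|f\|_2^2 \le \|f\|_1^4/|\G|$. Plugging this into (\ref{f:T_k_G}) one obtains the master inequality
$$|\FF{f}(\xi)|^{2^{k+1}} \le p \cdot 2^{3k^2} (C_* \log^4 p)^{k-1} \cdot \|f\|_1^{2^{k+1}} \cdot |\G|^{-(k+1)/2}.$$

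To prove \eqref{f:exp_sums} the idea is to pick $k$ just large enough that the savings $|\G|^{(k+1)/2} \ge p^{\d(k+1)/2}$ overwhelm the loss factor $p$. Specifically, choose $k = \lceil 2/\d \rceil$, so that $\d(k+1)/2 \ge 1 + \d/2$ and therefore $p \cdot |\G|^{-(k+1)/2} \le p^{-\d/2}$. Taking the $2^{k+1}$--th root and using $2^{k+1} \le 4 \cdot 2^{2/\d}$, the exponent $\d/2^{k+2}$ is of order $\d/2^{3 + 2/\d}$, which comfortably beats the stated target $5\d/2^{7+2/\d}$. The residual factors $2^{3k^2/2^{k+1}}$ and $(C_* \log^4 p)^{(k-1)/2^{k+1}}$ are of the form $p^{o(1)}$ for fixed $\d$, so for all sufficiently large $p$ they are absorbed into a slightly weakened exponent, giving the announced bound.

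For the quantitative form \eqref{f:exp_sums'} I would let $k$ grow with $p$ rather than being fixed. Taking $k \asymp \log\log p$, the condition $\log |\G| \ge C \log p/\log\log p$ with $C>2$ gives
$$\tfrac{k+1}{2} \log|\G| \ge \tfrac{C}{2} \log p,$$
while the loss terms contribute at most $\log p + O(k^2) + O(k \log\log p) = (1+o(1)) \log p$. Since $C/2 > 1$, the master inequality forces $|\FF{f}(\xi)|/\|f\|_1 \to 0$ as $p \to \infty$.

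The one genuine delicacy is the joint calibration of $k$ against $\d$ (or against $\log p$): the exponential $2^{3k^2}$ and the iterated log power $(\log^4 p)^{k-1}$ grow rapidly, so $k$ must not be taken too large; yet $k$ must be large enough so that $|\G|^{(k+1)/2}$ surpasses $p$ by a definite power. The choices $k \asymp 1/\d$ for the first claim and $k \asymp \log\log p$ for the second are precisely the balance points, and this book-keeping (rather than any geometric step) is the whole content of the argument.
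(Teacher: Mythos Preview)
Your argument for the first part \eqref{f:exp_sums} is correct and essentially the same as the paper's. One minor difference: the paper squeezes out an extra factor of $|\G|$ on the left by noting that $|\FF f|$ is itself $\G$--invariant (so the maximum is attained on a full coset of size $|\G|$, giving $|\G|\,\rho^{2^{k+1}}\le p\,\T^{+}_{2^k}(f)$). You skip this and still reach the target exponent with room to spare, so no harm done.

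For the second part \eqref{f:exp_sums'} there is a genuine gap. Your displayed inequality $\tfrac{k+1}{2}\log|\G|\ge\tfrac{C}{2}\log p$ forces $k+1\ge\log\log p$, hence $2^{k+1}\ge\log p$. But then after taking the $2^{k+1}$--th root of
\[
(\rho/\|f\|_1)^{2^{k+1}}\le p^{-(C/2-1-o(1))}
\]
you obtain only $\rho/\|f\|_1\le 2^{-(C/2-1-o(1))}$, a constant strictly less than $1$ --- not $o(1)$. The point is that making the right--hand side of the master inequality negative is not enough; you need the ratio (negative quantity)$/2^{k+1}$ to tend to $-\infty$.

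The fix is to take $k=c\log\log p$ with a constant $c$ satisfying $2/C<c<1$ (such $c$ exists precisely because $C>2$). Then $\tfrac{k+1}{2}\log|\G|\ge(Cc/2)\log p$ still dominates the loss $(1+o(1))\log p$ since $Cc/2>1$, while $2^{k+1}\asymp(\log p)^{c}$ with $c<1$, so $(\log p)/2^{k+1}\to\infty$ and $\rho/\|f\|_1\to 0$. Alternatively, follow the paper: keep $k=\lceil 2\log p/\log|\G|\rceil+O(1)$ as in part one, obtain $\rho\ll\|f\|_1|\G|^{-c'/2^{k+2}}$, and then check that this is $o(\|f\|_1)$ iff $2^k=o(\log|\G|)$, which under $\log|\G|\ge C\log p/\log\log p$ again reduces to $C>2$.
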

\begin{proof}
	By $\rho$ denote the maximum in (\ref{f:exp_sums}).
	It is attained at some nonzero $\xi$ because $\sum_{x} f(x) = 0$.
	Then by Theorem \ref{t:T_k_G}, a trivial bound which follows from (\ref{f:T_f_12'}), namely, $\T^{+}_2 (f) \le \| f\|^2_1 \|f\|^2_2$ 
	and formula (\ref{def:T_k}), we obtain 
	\begin{equation}\label{tmp:31.03_1}
	|\G| \rho^{2^{k+1}} \le p \T^{+}_{2^k} (f) \le  
	p 2^{3k^2} (C_* \log^4 p)^{k-1} \cdot \| f\|_1^{2^{k+1} -2} \| f\|^2_2 |\G|^{\frac{(1-k)}{2}} \,.
	\end{equation}
	Using formula (\ref{f:L_2-L^2_1}), we get
	$$
	|\G| \rho^{2^{k+1}} \le p 2^{3k^2} (C_* \log^4 p)^{k-1} \cdot \| f\|_1^{2^{k+1}} |\G|^{-\frac{(k+3)}{2}} \,.
	$$
	Put $k = \lceil 2\log p /\log |\G| + 4 \rceil \le 2/\d +5$, say.
	Also, notice that 
	\begin{equation}\label{tmp:k_choice}
	\frac{p \log^{4(k-1)} p }{|\G|^{k/2}} 
	\le 
	1
	\end{equation}
	because $k\ge 2\log p /\log |\G| + 4$ and $p$ is a sufficiently large number depending on $\delta$ (the choice of $k$ is slightly larger than $2\log p /\log |\G|$ to "kill"\, $p$ by division by $|\G|^{k/2}$, as well as  logarithms $\log^{4(k-1)} p$). 	
	Taking  a power $1/2^{k+1}$ from both parts of (\ref{tmp:31.03_1}), we see 
	in view of (\ref{tmp:k_choice}) 
	that 
	$$
	\rho \ll \| f\|_1 \cdot |\G|^{-\frac{5}{2^{k+2}}} 
	\ll
	\| f\|_1 \cdot p^{-\frac{5\d}{2^{7+ 2\d^{-1}}}} \,.
	$$
	To prove the second part of our corollary just notice that the same choice of $k$ gives something nontrivial if
	$2^k \ll  \eps \log |\G|$ 
	for any $\eps>0$. 
	In other words, it is enough to have 
	$$
	k \le \frac{2\log p}{\log |\G|} + 5  \le \log \log |\G| - \log (1/\eps) \,.
	$$
	It means that the inequality  $\log |\G| \ge C \log p /(\log \log p)$ for any $C>2$ is enough. 
	This completes the proof. 
	$\hfill\Box$
\end{proof}

\bigskip

Let us obtain a new general bound for $\E^{+}_k (f)$.

\begin{theorem}
	Let 
	$f$ be a $\G$--invariant function 
	real 
	function with $\sum_{x} f(x) = 0$.
	Then for positive integer $k\ge 2$  
	either 
	$$
	\E^{+}_{2^{k+1}} (f) 
	\le
	32 C^{1/4}_* (1+\log (\| f\|_1 \|f\|^{-1}_2))  \| f\|^{2^{k+1}}_2 \E^{+}_{2^k} (f) |\G|^{-1/8} 
	$$
	or
	$$
	\E^{+}_{2^{k+1}} (f) 
	\le
	2 \| f\|^{2^{k+2}}_2 \,.
	$$
	Here $C_*$ is the absolute constant from Lemma \ref{l:AA_small_energy}. 
	In particular, if $k$ is chosen as 
	\begin{equation}\label{cond:k_Q_shift}
	|\G|^{\frac{k-1}{8}} \ge  (32 C^{1/4}_* (1+\log (\| f\|_1 \|f\|^{-1}_2)))^{k-1} \|f\|_1^2 \|f\|^{-2}_2 \,,
	\end{equation}
	then $\E^{+}_{2^{k+1}} (f) \le 2 \| f\|^{2^{k+2}}_2$. 
	\label{t:Q_shift}
\end{theorem}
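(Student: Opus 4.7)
The plan is to follow the template of Theorem \ref{t:T_k_G}: dyadically split the level sets of $r_{f-f}$, isolate a dominant set $P$ that inherits $\G$-invariance from $f$, use Lemma \ref{l:AA_small_energy} with $M=1$ to bound the additive energy of $P$, and close a recursion via Lemma \ref{l:change_QG}. Writing $E_j := \E^{+}_{2^j}(f)$, separate off the $x=0$ term:
$$
E_{k+1} = \|f\|_2^{2^{k+2}} + \sigma, \qquad \sigma := \sum_{x\ne 0} r_{f-f}(x)^{2^{k+1}}.
$$
If $\sigma \le \|f\|_2^{2^{k+2}}$ the second alternative holds, so assume $E_{k+1} \le 2\sigma$ and decompose $\F_p^*$ into dyadic level sets $P_j=\{x\ne 0 : 2^{j-1} < |r_{f-f}(x)| \le 2^j\}$; using $\|r_{f-f}\|_\infty \le \|f\|_2^2$ and absorbing contributions from very small scales into $\|f\|_2^{2^{k+2}}$, only $L = O(1+\log(\|f\|_1 \|f\|_2^{-1}))$ scales are relevant, so pigeonhole picks $P = P_{j_0}$ with $\Delta = 2^{j_0-1}$ satisfying $|P|\Delta^{2^{k+1}} \gtrsim E_{k+1}/L$. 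Crucially, since $f$ is $\G$-invariant so is $r_{f-f}$, hence $P\G=P$ and Lemma \ref{l:AA_small_energy} applies with $M=1$.

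Next, apply Lemma \ref{l:change_QG} with the even exponent $2^{k-1}$ to get
$$
\bigl(|P|\Delta^{2^{k-1}}\bigr)^4 \le \|f\|_2^{2^{k+1}}\,E_k\,\E^{+}(P),
$$
together with $\E^{+}(P)\le C_*\bigl(|P|^4/p + |P|^3/|\G|^{1/2}\bigr)$ from Lemma \ref{l:AA_small_energy}. In the principal sub-case $|P| \le p/|\G|^{1/2}$, the second term dominates; cancelling three powers of $|P|$ yields $|P|\Delta^{2^{k+1}} \le 2C_* \|f\|_2^{2^{k+1}} E_k/|\G|^{1/2}$, and combining with $E_{k+1}\lesssim L|P|\Delta^{2^{k+1}}$ produces the recursion, which implies (with ample room to spare) the stated form with constant $32C_*^{1/4}$ and decay $|\G|^{-1/8}$. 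In the complementary sub-case $|P|>p/|\G|^{1/2}$ one has $\Delta^{2^{k+1}}\le 2C_*\|f\|_2^{2^{k+1}} E_k/p$; tracing this back through the lower bound on $|P|\Delta^{2^{k+1}}$ forces a comparison against $\|f\|_2^{2^{k+2}}$ and, together with the upper bound $|P|\le p$, collapses the inequality into the trivial alternative.

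The \emph{in particular} assertion follows by iterating this recursion $k-1$ times, starting from the initial estimate $E_2 = \E^{+}_4(f)\le \|f\|_2^4\,\E^{+}(f)\le \|f\|_1^2\|f\|_2^6$ (via Cauchy--Schwarz together with (\ref{f:T_f_12'})); telescoping the geometric series $\sum_{j=2}^k 2^{j+1} = 2^{k+2}-8$ gives
$$
E_{k+1}\le \bigl(32C_*^{1/4}(1+\log(\|f\|_1\|f\|_2^{-1}))\bigr)^{k-1}\|f\|_1^2\|f\|_2^{2^{k+2}-2}\,|\G|^{-(k-1)/8},
$$
and condition (\ref{cond:k_Q_shift}) is exactly what is needed for this right-hand side to be dominated by $2\|f\|_2^{2^{k+2}}$, forcing the trivial alternative. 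The main technical obstacle I anticipate is verifying that the large-$|P|$ sub-case genuinely collapses into the trivial alternative (naively, it leads to an inequality lacking any $|\G|$-decay, so one must match the resulting $\Delta$ bound against the threshold $\|f\|_2^{2^{k+2}}$ carefully), together with the bookkeeping needed when $r_{f-f}$ takes both signs: one must restrict to even dyadic exponents in Lemma \ref{l:change_QG} and use the absolute value $|r_{f-f}|$ in defining the level sets.
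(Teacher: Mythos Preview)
Your overall plan is right and close to the paper's, but the large-$|P|$ sub-case does not collapse into the trivial alternative as you hope, and this is exactly where the paper inserts an ingredient you are missing. From $|P|^4\Delta^{2^{k+1}} \le C_* \|f\|_2^{2^{k+1}} E_k\, |P|^4/p$ you correctly extract $\Delta^{2^{k+1}} \le C_* \|f\|_2^{2^{k+1}} E_k/p$, but combining this with $|P|\le p$ and $|P|\Delta^{2^{k+1}} \gtrsim E_{k+1}/L$ only gives $E_{k+1} \lesssim LC_* \|f\|_2^{2^{k+1}} E_k$, with no $|\G|$-decay whatsoever. Since $E_k \ge \|f\|_2^{2^{k+1}}$ always, this is strictly weaker than \emph{either} alternative in the theorem; your proposed ``careful matching against the threshold $\|f\|_2^{2^{k+2}}$'' cannot repair it, because the inequality simply carries no information about $|\G|$.

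The paper closes this gap by proving, at the outset, an independent \emph{first bound}
\[
\E^{+}_{4l}(f)\ \le\ p\,\|f\|_2^{8l}\,|\G|^{-1}
\]
(formula (\ref{f:first_E})) coming from the design point/plane inequality (\ref{f:Vinh}). The trick is to write $\E^{+}_{4l}(f) = \sum_x r_{f-f}(x)\, r_{F-F}(x)$ for an auxiliary $F$ with $r^{4l-1}_{f-f}=r_{F-F}$ and $\|F\|_2^2=\|f\|_2^{8l-2}$, then use $\G$-invariance of $f$ to insert two free $\G$-multipliers and interpret the sum as a weighted point/plane incidence count. In the regime $|P|\,|\G|^{1/2}>p$ this first bound is combined with the trivial dyadic estimate $|P|\,\E'_{4l}(f) \le 32L(\E^{+}_{2l}(f))^2$ to produce $\E^{+}_{4l}(f) \le 8L^{1/2}\|f\|_2^{4l}\E^{+}_{2l}(f)|\G|^{-1/4}$, which is stronger than the claimed $|\G|^{-1/8}$. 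This is precisely the analogue of step (\ref{f:first_T}) in the proof of Theorem~\ref{t:T_k_G}, which you cited as your template but did not carry over.
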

\begin{proof}
	Fix an even integer $l\ge 1$ and prove that either 
	\begin{equation}\label{f:iteration_5.2}
	\E^{+}_{4l} (f) 
	\le 
	32 C^{1/4}_* (1+\log  (\| f\|_1 \|f\|^{-1}_2))  \| f\|^{4l}_2 \E^{+}_{2l} (f) |\G|^{-1/8} 
	\end{equation}
	or
	\begin{equation}\label{f:iteration_5.2'}
	\E^{+}_{4l} (f) \le 2\|f\|_2^{8l} \,.
	\end{equation}
	After that it requires just to use induction to see
	$$
	\E^{+}_{2^{k+1}} (f) 
	\le
	(32 C^{1/4}_* (1+\log (\| f\|_1 \|f\|^{-1}_2))^{k-1}  \| f\|^{2^{k+1} + \dots + 2^{3}}_2 \E^{+}_{4} (f) |\G|^{-(k-1)/8} 
	\le
	$$
	$$
	\le
	(32 C^{1/4}_* (1+\log (\| f\|_1 \|f\|^{-1}_2))^{k-1}  \| f\|^{2^{k+2} - 8}_2 \E^{+}_{4} (f) |\G|^{-(k-1)/8} \,.
	$$
	Trivially, $\E^{+}_{4} (f) \le \| f\|^{6}_2 \|f\|^2_1$ and hence 
	$$
	\| f\|^{2^{k+2}}_{2}
	\le
	\E^{+}_{2^{k+1}} (f) 
	\le
	(32 C^{1/4}_* (1+\log (\| f\|_1 \|f\|^{-1}_2))^{k-1}  \| f\|^{2^{k+2} - 2}_2 \|f\|^2_1 |\G|^{-(k-1)/8} \,.
	$$	
	Thus if 
	$$
	|\G|^{\frac{k-1}{8}} \ge  (32 C^{1/4}_* (1+ \log (\| f\|_1 \|f\|^{-1}_2))^{k-1} \|f\|_1^2 \|f\|^{-2}_2 \,,
	$$
	then, clearly, $\E^{+}_{2^{k+1}} (f) \le 2 \| f\|^{2^{k+2}}_2$.

	We give two upper bounds for $\E^{+}_{4l} (f)$. 
	Firstly, 
	let us remark that for any positive integer $n$
	there exists a function $F$ such that 
	\begin{equation}\label{f:F_def}
	r^{n}_{f-f} (x) = r_{F-F} (x) \,.
	\end{equation}	
	Indeed, 
	from the definition of the 
	required 
	function $F$ and formula (\ref{f:E_k_Fourier}) one has 
	$$
	|\FF{F} (x)|^{2} = p^{1-n} \sum_{y_1 +\dots + y_{n} = x} 
	|\FF{f} (y_1)|^2 \dots |\FF{f} (y_{n})|^2 \ge 0 
	$$
	and we can choose the Fourier transform of $F$ taking, say,  a positive square root of the left--hand side of the previous formula. 
	It defines our function $F$ (but not uniquely, even in the case $n=1$ one can take $F(x) = f(x)$ or $F(x) = f(-x)$, say). 
	In particular, by the Parseval identity, we get 
	\begin{equation}\label{tmp:28.04.2017_1}
	\| F \|_2^2 = p^{-1} \| \FF{F} \|_2^2 
	= p^{-n} \sum_x \sum_{y_1 +\dots + y_n = x} 
	|\FF{f} (y_1)|^2 \dots |\FF{f} (y_n)|^2
	= \|f\|_2^{2n} \,.
	\end{equation}
	To obtain another proof of the last 
	equality 
	just substitute $x=0$ into (\ref{f:F_def}) and notice that $(F\circ F) (0) = \| F\|^2_2  = r^n_{f-f} (0) = (f \circ f)^n (0) = \|f\|_2^{2n}$. 
	Applying these arguments for $n=4l-1$, we obtain
	$$
	\E^{+}_{4l} (f) = \sum_x r_{f-f} (x) r^{4l-1}_{f-f} (x) = \sum_x r_{f-f} (x) r_{F-F} (x) \,.
	$$
	By the assumption the function $f$ is $\G$--invariant.
	Thus 
	$$
	\E^{+}_{4l} (f) 
	= |\G|^{-2} \sum_{\gamma_1,\gamma_2 \in \G}\, \sum_{a,b,c,d} F(a) f(b) F (c) f (d) \cdot \d\{ a+\gamma_1 b = c+ \gamma_2 d \} 
	\,.
	$$ 
	Consider the set of points $\P \subseteq \F^3_p$, each point $p$ indexed by $(\gamma_1,c,d)$ and the set of planes $\Pi \subseteq \F^3_p$ indexed by $(a,b,\gamma_2)$ and each $\pi \in \Pi$ has the form 
	$\pi : a+xb=y+\gamma_2 z$. 
	Then we have as in Theorem  \ref{t:T_k_G} that $\d \{ a+\gamma_1 b = c+ \gamma_2 d \} = \I (p,\pi)$. 
	By the assumption $\sum_x f(x)=0$. 
	Besides $\| F \|^2_2 = \|f\|_2^{8l-2}$.  
	Hence by (\ref{f:Vinh}), we have
	\begin{equation}\label{f:first_E}
	\E^{+}_{4l} (f) 
	\le p \|f\|_2^{8l} |\G|^{-1} \,.
	\end{equation}

	Now let us give another bound for $\E^{+}_{4l} (f)$. 
	Put $g(x)  = r^l_{f-f} (x)$, $L = 2 + 2\log (\| f\|_1 \|f\|^{-1}_2)$
	and $\E'_{4l} (f) = \E^{+}_{4l} (f) - \|f\|_2^{8l}$. 
	We will assume below that $\E'_{4l} (f) \ge 2^{-1} \E^{+}_{4l} (f) > 0$ because otherwise 
	the required inequality 
	(\ref{f:iteration_5.2'}) 
	follows 
	immediately. 
	Similarly, we can assume that $\E'_{4l} (f) \ge \| f \|^{8l}_2$  because otherwise $\E^{+}_{4l} (f) \le 2\| f \|^{8l}_2$ and we are done. 
	Further put $\rho^{4-1/l} = 2^{-1} \| f\|^{8l}_2 \| f\|^{-2}_1$ and $P_j = \{ x ~:~ \rho 2^{j-1} < g(x) \le \rho 2^j \}$. 
	Clearly, 
	$$\sum_{x ~:~ g(x) < \rho} g^4 (x) < \rho^{4-1/l} \| f\|^2_1 = 2^{-1} \| f\|^{8l}_2 \,.$$
	Thus the number of the sets $P_j$ does not exceed $L$. 
	Indeed, for any $x$ one has $g(x) \le \|f\|^{2l}_2$ and hence $2^{j-1} \rho$ must be less than  $\|f\|^{2l}_2$ because  otherwise $P_j$ is empty. 
	Whence for $j \ge 3$ 
	$$
	2^{j-2} \| f\|^{8l}_2 \| f\|^{-2}_1 \le 2^{(j-1)(4-1/l)-1}  \| f\|^{8l}_2 \| f\|^{-2}_1 = 2^{(j-1)(4-1/l)} \rho^{4-1/l} \le \|f\|^{8l-2}_2 
	$$
	and $j\le 2+2\log (\| f\|_1 \|f\|^{-1}_2) := L$ (if $j< 3$, then the last bound holds trivially). 
	Notice that $\log (\| f\|_1 \|f\|^{-1}_2) \ge 0$. 
	Using the Dirichlet principle, we find a set $P=P_{j_0}$ and a positive number $\D=\rho 2^{j_0-1}$ such that $P = \{ x ~:~ \D < g(x) \le 2\D \} \subseteq \F_p^*$ and 
	\begin{equation}\label{tmp:02.10_1}
	\E'_{4l} (f) \le 2L \sum_{x \in P} r^{4l}_{f-f} (x)  \le 2L  \|f \|^{3l}_2 \sum_{x \in P} r^{5l/2}_{f-f} (x) := \|f \|^{3l}_2 \sigma \,. 
	\end{equation}
	Applying Lemma \ref{l:change_QG}, combining with Lemma \ref{l:AA_small_energy}, we obtain	
	\begin{equation*}\label{tmp:30.04_1}
	\sigma \le 2L (2\D)^{3/2} \sum_{x \in P} r^{l}_{f-f} (x) 
	\le
	2^{5/2} C^{1/4}_* L \D^{3/2} \| f\|^{l}_2 (\E^{+}_{2l} (f))^{1/4} \left( \frac{|P|^4}{p} + \frac{|P|^3}{|\G|^{1/2}}\right)^{1/4} 
	\le 
	\end{equation*}
	\begin{equation}\label{tmp:01.10_3} 
	\le
	2^{5/2} C^{1/4}_* L  \| f\|^{l}_2 (\E^{+}_{2l} (f))^{1/4} \left( \frac{\D^6 |P|^4}{p} + \frac{\D^6 |P|^3}{|\G|^{1/2}}\right)^{1/4} \,.
	\end{equation}
	Suppose that the second term in (\ref{tmp:01.10_3}) dominates. 
	Since $l$ is an even number, we have $\D |P| \le \E^{+}_{l} (f)$, $\D^2 |P| \le \E^{+}_{2l} (f)$ and hence 
	$\D^6 |P|^3 \le (\E^{+}_{2l} (f))^3$. 
	It follows that
	$$
	2^{-1} \E^{+}_{4l} (f)  \le 
	\E'_{4l} (f) 
	\le
	8 C^{1/4}_* L  \| f\|^{4l}_2 \E^{+}_{2l} (f) |\G|^{-1/8} 
	$$
	and we obtain (\ref{f:iteration_5.2}). 
	Now if the first term in (\ref{tmp:01.10_3}) dominates, then $|P| |\G|^{1/2} \ge p$ and returning to (\ref{tmp:02.10_1}), we have
	$$
	\E'_{4l} (f) \le 32L \D^4 |P| \,.
	$$
	Multiplying this inequality by $|P|$ and using $\D^2 |P| \le \E^{+}_{2l} (f)$, we get
	$$
	|P| \E'_{4l} (f) \le 32L (\E^{+}_{2l} (f))^2 \,.
	$$ 
	Recalling (\ref{f:first_E}), 
	applying the inequality 
	$|P| |\G|^{1/2} \ge p$ and the last bound, we obtain 
	$$
	\E^{+}_{4l} (f) 
	\le
	p  \|f\|_2^{8l} |\G|^{-1}
	\le
	|P| \|f\|_2^{8l} |\G|^{-1/2} 
	\le
	32L \|f\|_2^{8l} (\E^{+}_{2l} (f))^2 |\G|^{-1/2} (\E'_{4l} (f))^{-1} \,.
	$$
	Whence 
	in view of the inequality $2^{-1} \E^{+}_{4l} (f)  \le \E'_{4l} (f)$, we have 
	$$
	\E^{+}_{4l} (f) \le 8L^{1/2} \|f\|_2^{4l} \E^{+}_{2l} (f) |\G|^{-1/4} \,.
	$$
	Thus we see that the required inequality (\ref{f:iteration_5.2}) takes place in any case. 
	This completes the proof. 
	$\hfill\Box$
\end{proof}


\begin{remark}
	The upper bound in Theorem \ref{t:Q_shift} is optimal. 
	Indeed, let $\chi(x)$ be the Legendre symbol.
	In other words, if $R$ is the set of quadratic residues and $\chi_0 (x)$ is the trivial character, then 
	$\chi(x) = 2R(x)-\chi_0 (x)$. Let $\G\subseteq R$ be a multiplicative subgroup. 
	Then $\chi (x)$ is a real $\G$--invariant function and
	$\sum_x \chi(x) = 0$. 
	By standard formulas for characters see, e.g., \cite{BEW_book} one has for any $k\ge 2$ that 
	$\E^{+}_k (\chi) = \sum_x (\chi \circ \chi)^k (x) = (p-1)^k + (p-1) (-1)^k \sim p^k \sim \| \chi\|^{2k}_2$. 
\end{remark}


\begin{remark}
	\label{c:max_f_E_k}
	Let $f$ be a real $\G$--invariant function  with zero mean and let $T=\|f \|_1 / \| f\|_2$. By (\ref{f:L_2-L^2_1}) we have $T\ge |\G|^{1/2}$. 
	Choosing an integer  $k = C\log T/\log |\G|$ with sufficiently large constant $C>0$,  we satisfy condition (\ref{cond:k_Q_shift}), provided $\log T \ll |\G|$. 
	Under this condition, we get 
	$$
	|\G| \left( \max_{x \neq 0} |(f\circ f)(x)| \right)^{2^{k+1}} \le 2 \| f\|^{2^{k+2}} \,.
	$$
	It follows that
	\begin{equation}\label{f:max_fcf}
	\max_{x \neq 0} |(f\circ f)(x)| \le \| f\|_2^2 \cdot (2 |\G|^{-1})^{1/2^{k+1}} \,.
	\end{equation}
	Thus we have obtained a non--trivial upper bound for the quantity $\max_{x \neq 0} |(f\circ f)(x)|$ if the condition
	$$
	\log |\G| \gg \frac{\log T}{\log \log T}
	$$
	holds. Of course, we last bound implies $\log T \ll |\G|$. 
	Some applications of such sort of bounds can be found in papers \cite{Bourgain_more}, \cite{s_Bourgain}.
\end{remark}

\begin{corollary}
	Let 
	$\G \subseteq \F_p^*$  be a multiplicative subgroup, $|\G| \ge p^\d$ and 
	$f$ be a $\G$--invariant  
	real 
	function with $\sum_{x} f(x) = 0$.
	Also, let $k$ is chosen as 
	\begin{equation}\label{f:G+1}
	|\G|^{\frac{k-1}{8}} \ge  (32 C^{1/4}_* (1+\log (\| f\|_1 \|f\|^{-1}_2)))^{k-1} \|f\|_1^2 \|f\|^{-2}_2 \,,
	\end{equation}
	and $s=\lceil 2\log \| f\|_1 / \log (|\G|/2) \rceil$.
	Then 
	$$
	\E^{\times}_{2^{k+s+1}+1} (f+1) \le 3 \| f\|^{2^{k+s+2}+2}_2\,.
	$$
	\label{c:G+1}
\end{corollary}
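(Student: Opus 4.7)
Set $m=2^{k+s+1}+1$, so the claim reads $\E^{\times}_m(f+1)\le 3\|f\|_2^{2m}$.

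The first step is to iterate Theorem~\ref{t:Q_shift}. Hypothesis~\eqref{f:G+1} is exactly~\eqref{cond:k_Q_shift}, so we immediately get $\E^{+}_{2^{k+1}}(f)\le 2\|f\|_2^{2^{k+2}}$, and since~\eqref{cond:k_Q_shift} only weakens as $k$ grows, the bound $\E^{+}_{2^{k+j+1}}(f)\le 2\|f\|_2^{2^{k+j+2}}$ holds for every $j\ge 0$. Applying Remark~\ref{c:max_f_E_k} at level $k+s$ then yields the pointwise control
\[
\max_{x\neq 0}|(f\circ f)(x)|\le \|f\|_2^2\cdot(2/|\G|)^{1/2^{k+s+1}},
\]
which will be the key input for estimating off-diagonal contributions.

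The second step unfolds the multiplicative energy. For $\lambda\in\F_p^*$, expanding and using $\sum_x f=0$ gives $\sum_x(f(x)+1)(f(\lambda x)+1)=p+h(\lambda)$, where $h(\lambda):=\sum_x f(x)f(\lambda x)$. Raising to the $m$-th power, summing over $\lambda$, and isolating the $\lambda=0$ contribution,
\[
\E^{\times}_m(f+1)=(f(0)+1)^m p^m+\sum_{j=0}^m\binom{m}{j}p^{m-j}\sum_{\lambda\in\F_p^*}h(\lambda)^j.
\]
Since $f$ is $\G$-invariant, $h$ factors through the quotient group: $h(\lambda)=|\G|\cdot(a*a)(\lambda\G)$ for the coset-profile function $a\colon\F_p^*/\G\to\R$ with $\|a\|_2^2=\|f\|_2^2/|\G|$ and $\|a\|_1=\|f\|_1/|\G|$. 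Consequently each inner sum equals $|\G|^{j+1}\E^{+}_j(a)$ on the cyclic group $\F_p^*/\G$, and the trivial Cauchy--Schwarz bound $\E^{+}_j(a)\le \|a\|_1^{2j-2}\|a\|_2^2$ gives $\sum_\lambda h(\lambda)^j\le\|f\|_1^2\|f\|_2^{2j-2}$.

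The third step combines everything. Substituting these estimates into the binomial expansion, the dominant error contribution takes the form $\|f\|_1^2\|f\|_2^{-2}(p+\|f\|_2^2)^m$, while the main term is $p^{m+1}$. The explicit choice $s=\lceil 2\log\|f\|_1/\log(|\G|/2)\rceil$ is exactly what makes $(|\G|/2)^{s/2}\ge\|f\|_1$, and, combined with the pointwise bound from the first step (which decays with the exponent $1/2^{k+s+1}$), this absorbs the extra factor $\|f\|_1^2/\|f\|_2^2$ and the diagonal term $\lambda=0$. Summing across $j$ yields the desired $3\|f\|_2^{2m}$.

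The main obstacle is the mismatch between the multiplicative group $\F_p^*$, on which $\E^{\times}_m(f+1)$ naturally lives, and $\F_p$, on which the additive tools of Section~4 operate. The $\G$-invariance of $f$ is the crucial bridge: it forces $h=(f\odot f)$ to be constant on cosets of $\F_p^*/\G$, reducing each multiplicative quantity $\sum_\lambda h(\lambda)^j$ to an additive energy on the cyclic quotient, where only trivial bounds are needed provided the level $k+s$ has been iterated high enough---and the explicit calibration of $s$ to $\log\|f\|_1/\log|\G|$ is precisely what delivers this.
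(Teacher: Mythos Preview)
Your proposal rests on a misreading of the notation ``$f+1$''. In this corollary $f+1$ denotes the additive translate $x\mapsto f(x-1)$ (compare the label \texttt{c:G+1} and the surrounding discussion of shifts of $\G$--invariant sets), not the function $x\mapsto f(x)+1$. Under your reading the statement itself is false: you correctly obtain $\sum_x(f(x)+1)(f(\lambda x)+1)=p+h(\lambda)$ with $|h(\lambda)|\le\|f\|_2^2$, so already the single term $\lambda=1$ contributes $(p+\|f\|_2^2)^m\ge p^m$, and the $j=0$ term of your binomial expansion is $p^m(p-1)$. This cannot be bounded by $3\|f\|_2^{2m}$ unless $\|f\|_2^2\gtrsim p$, which is nowhere assumed (take $f=f_\G$ with $|\G|=p^\d$, $\d<1$). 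Your third step implicitly notices this --- ``the main term is $p^{m+1}$'' --- but never explains how that becomes $\le 3\|f\|_2^{2m}$; it cannot.

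Under the intended reading the argument is short and structurally different from your outline. One splits $\E^\times_l(f+1)=\sum_\lambda r^l_{(f+1)/(f+1)}(\lambda)$ into the term $\lambda=1$ (contributing $\|f\|_2^{2l}$), a boundary contribution bounded by $|f(-1)|^{2l}\le\|f\|_2^{2l}$, and the remaining $\lambda$. The key identity
\[
r_{(f+1)/(f+1)}(\lambda)\;=\;r_{f-f^\lambda}(\lambda-1),\qquad f^\lambda(x):=f(\lambda^{-1}x),
\]
converts each off--diagonal multiplicative ratio into an additive correlation at a nonzero shift. Since $f^\lambda$ is again $\G$--invariant, Cauchy--Schwarz together with Theorem~\ref{t:Q_shift} applied at level $k$ (not $k+s$) gives the uniform pointwise bound $|r_{f-f^\lambda}(\lambda-1)|\le\|f\|_2^2\,(2/|\G|)^{1/2^{k+1}}$ for $\lambda\neq 0,1$. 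Pulling out $(\max)^{l-1}$ with $l-1=2^{k+s+1}$ and using the trivial estimate $\sum_\lambda|r_{(f+1)/(f+1)}(\lambda)|\le\|f\|_1^2$ produces the error $\|f\|_2^{2(l-1)}\|f\|_1^2(2/|\G|)^s$, which the stated choice of $s$ kills. No passage to the cyclic quotient $\F_p^*/\G$, no binomial expansion, and no iteration of Theorem~\ref{t:Q_shift} to level $k+s$ is needed.
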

\begin{proof}
	For any integer $l$, we have
	$$
	\E^{\times}_{l} (f+1) = \| f\|^{2l}_2 + |f^{2l}(-1)| + \sum_{x \neq 0,1} r^{l}_{(f+1)/(f+1)} (x) 
	\le
	$$
	$$
	\le 
	2 \| f\|^{2l}_2 + \sum_{x \neq 0,1} r^{l}_{(f+1)/(f+1)} (x)  \,.
	$$
	Here we have used that $\sum_{x} f(x) = 0$.
	Further for any $\a \neq 0,1$ put $f^\a (x) = f(\a^{-1} x)$. 
	Then 
	$$
	r_{(f+1)/(f+1)} (x) = r_{f-f^x} (x-1) \,.
	$$
	Take $k$ such that 
	$$
	|\G|^{\frac{k-1}{8}} \ge  (32 C^{1/4}_* (1+\log (\| f\|_1 \|f\|^{-1}_2)))^{k-1} \|f\|_1^2 \|f\|^{-2}_2 \,.
	$$
	As in Corollary \ref{c:max_f_E_k}, we have for any $x\neq 0,1$ 
	$$
	|\G| \left(\max_{y\neq 0} |(f^x \circ f)(y)| \right)^{2^{k+1}} \le \sum_y r^{2^{k+1}}_{f-f^x} (y)
	\le
	(\E^{+}_{2^{k+1}} (f^x) \E^{+}_{2^{k+1}} (f))^{1/2} = \E^{+}_{2^{k+1}} (f) \,.
	$$
	Here we have used the Cauchy--Schwarz inequality. 
	Applying Theorem \ref{t:Q_shift}, we obtain
	$$
	\max_{y\neq 0} |(f^x \circ f)(y)| \le \| f\|_2^2 \cdot (2 |\G|^{-1})^{1/2^{k+1}} \,.
	$$
	Thus for any $s$, we have 
	$$
	\E^{\times}_{2^{k+1+s}+1} (f+1) \le 2 \| f\|^{2^{k+2+s}+2}_2 +  \| f\|^{2^{k+2+s}}_2 \| f\|_1^2 (2/|\G|)^{s}
	=
	\| f\|^{2^{k+2+s}+2}_2 (2  + \| f\|^2_1 (2/|\G|)^{s} ) \,.
	$$
	Taking $s$ such that 
	$$
	(|\G|/2)^s \ge \| f \|^2_1 \,,
	$$
	or, in other words, $s\ge 2\log \| f\|_1 / \log (|\G|/2)$, 
	we obtain the required result.
	This completes the proof. 
	$\hfill\Box$
\end{proof}

\bigskip

For example, let $f(x) = Q(x) - |Q|/p$, where $Q$ is any $\G$--invariant set, $\log |Q| \ll |\G|$. 
Then $k \sim \log |Q|/\log |\G|$ and $s \sim \log |Q|/\log |\G|$, so we have the same bound as in Theorem \ref{t:Q_shift} for more or less the same order 
$l \sim k,s$ 
of the 
energy $\E_l (f)$.

\bigskip 

The next result shows that smallness of the energy $\E^{+}_k$ allows to obtain upper bounds for sums of types (\ref{f:E_k_sigma}) and (\ref{f:E_k_sigma+}).
The arguments of the proof are rather general. 
Estimate (\ref{f:E_k_sigma+}) allows to give an alternative proof of formula   
(\ref{f:max_fcf}). 
Also, putting $s=2^k$ and $B=P$ in  formula (\ref{f:E_k_sigma+}) one can derive 
Lemma \ref{l:change_QG}. 

\begin{corollary}
	Let 
	$\G \subseteq \F_p^*$  be a multiplicative subgroup and 
	$f$ be a $\G$--invariant function 
	real 
	function with $\sum_{x} f(x) = 0$.
	If $k$ is chosen as 
	\begin{equation}\label{cond:k_Q_shift_E_k}
	|\G|^{\frac{k-1}{8}} \ge  (32 C^{1/4}_* (1+\log (\| f\|_1 \|f\|^{-1}_2)))^{k-1} \|f\|_1^2 \|f\|^{-2}_2 \,,
	\end{equation}
	then for any set $B\subseteq \F_p$ one has
	\begin{equation}\label{f:E_k_sigma}
	p^{-1} \sum_x |\FF{f} (x)|^2 r_{B-B} (x) \le \| f\|_2^{2} |B| \left( \frac{2\E^{+} (B)}{p|B|^2} \right)^{1/2^{k+1}} 
	\end{equation}
	and for any function $g : \F_p \to \C$ 
	and a positive integer  $s \le 2^k$  
	the following holds 
	\begin{equation}\label{f:E_k_sigma+}
	\left| \sum_{x\in B} (g  \circ f)^s  (x) \right| \le |B| \|g\|^s_2 \|f\|^s_2  \left( \frac{2\E^{+} (B)}{|B|^4}\right)^{s/2^{k+2}} \,.
	\end{equation}
	\label{c:E_k_sigma}
\end{corollary}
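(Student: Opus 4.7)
The plan is to prove both inequalities by combining an iterated Cauchy--Schwarz/H\"older argument with Theorem~\ref{t:Q_shift}: under the hypothesis (\ref{cond:k_Q_shift_E_k}), that theorem yields $\E^{+}_{2^{k+1}}(f)\le 2\|f\|_2^{2^{k+2}}$, and this is the essential analytic input. The role of the preliminary H\"older step in each part is to arrange things so that precisely this $2^{k+1}$-st energy appears after a single further Cauchy--Schwarz.

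For the inequality (\ref{f:E_k_sigma+}), I would first expand $(g\c f)^s(x)=\sum_{y_1,\dots,y_s}\prod_i\ov{g(y_i)}\,f(y_i+x)$ and apply Cauchy--Schwarz in the tuple $(y_1,\dots,y_s)$, which extracts the factor $\|g\|_2^{2s}$ and leaves
$$
\Bigl|\sum_{x\in B}(g\c f)^s(x)\Bigr|^2\le\|g\|_2^{2s}\sum_{x,x'\in B}(f\c f)(x-x')^s=\|g\|_2^{2s}\sum_t r_{B-B}(t)\,r_{f-f}(t)^s,
$$
mimicking the first step of the proof of Lemma~\ref{l:change_QG}. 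Setting $S=2^k$ (so that $S$ is even and $|r_{f-f}|^S=r_{f-f}^S$), H\"older's inequality with exponents $S/(S-s)$ and $S/s$ then bounds the right-hand side by $|B|^{2(S-s)/S}\bigl(\sum_t r_{B-B}(t)r_{f-f}^S(t)\bigr)^{s/S}$, using $\sum_t r_{B-B}(t)=|B|^2$. A final Cauchy--Schwarz in $t$ gives
$$
\sum_t r_{B-B}(t)\,r_{f-f}^S(t)\le\E^{+}(B)^{1/2}\E^{+}_{2S}(f)^{1/2},
$$
and Theorem~\ref{t:Q_shift} bounds $\E^{+}_{2^{k+1}}(f)^{1/2}\le\sqrt{2}\,\|f\|_2^{2^{k+1}}$. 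Unwinding powers (noting $4S=2^{k+2}$) produces the desired inequality.

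For (\ref{f:E_k_sigma}), I would first rewrite the left-hand side by expanding $|\FF{f}(x)|^2=\sum_{u,v}f(u)f(v)e(-x(u-v))$ and swapping summations, obtaining the identity
$$
\sum_x |\FF{f}(x)|^2 r_{B-B}(x)=\sum_{b_1,b_2\in B}|\FF{f}(b_1-b_2)|^2=\sum_t |\FF{B}(t)|^2\, r_{f-f}(t).
$$
After passing to absolute values $|r_{f-f}(t)|$, I apply H\"older with $S=2^k$, exponents $S/(S-1)$ and $S$, together with the Parseval identity $\sum_t|\FF{B}(t)|^2=p|B|$, to reduce matters to bounding $\sum_t |\FF{B}(t)|^2 r_{f-f}^S(t)$. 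One further Cauchy--Schwarz splits this into $(\sum_t |\FF{B}(t)|^4)^{1/2}=(p\E^{+}(B))^{1/2}$ and $(\sum_t r_{f-f}^{2S}(t))^{1/2}=\E^{+}_{2^{k+1}}(f)^{1/2}$, and Theorem~\ref{t:Q_shift} handles the latter. Collecting the resulting factors of $p$, $|B|$, $\E^{+}(B)$ and $\|f\|_2$ yields (\ref{f:E_k_sigma}).

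The main delicate point is the precise choice $S=2^k$ in the H\"older step of each part: it is what ensures that the subsequent Cauchy--Schwarz produces exactly the energy $\E^{+}_{2S}(f)=\E^{+}_{2^{k+1}}(f)$ that Theorem~\ref{t:Q_shift} controls under (\ref{cond:k_Q_shift_E_k}). A smaller $S$ would not fully exploit the hypothesis, while a larger $S$ would require a stronger assumption on $\G$. One must also verify that $S=2^k$ is even (true for $k\ge 1$; the case $k=0$ is trivial since then the right-hand sides reduce to straightforward Cauchy--Schwarz bounds) so that the transition between $|r_{f-f}|^S$ and $r_{f-f}^S$ is automatic.
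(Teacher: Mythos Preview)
Your proof is correct and follows the same strategy as the paper's: in each part you reduce, via H\"older and Cauchy--Schwarz, to the bound $\E^{+}_{2^{k+1}}(f)\le 2\|f\|_2^{2^{k+2}}$ supplied by Theorem~\ref{t:Q_shift}. The only differences are cosmetic---for (\ref{f:E_k_sigma}) the paper works on the Fourier side via iterated convolution of $\mu=p^{-1}|\FF f|^2$ rather than through your dual identity $\sum_x|\FF f(x)|^2 r_{B-B}(x)=\sum_t|\FF B(t)|^2 r_{f-f}(t)$, and for (\ref{f:E_k_sigma+}) it swaps the order of the H\"older and Cauchy--Schwarz steps---but the arguments are equivalent and yield the same constants.
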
 
\begin{proof}
	Denote by $\sigma$ the sum from (\ref{f:E_k_sigma}) and put $\mu(x) = p^{-1} |\FF{f} (x)|^2$.
	Clearly,
	$$
	\sigma = \sum_{x\in B} (\mu * B)(x) \,.
	$$
	Then using the H\"older inequality, we obtain
	$$
	\sigma^{2^k} \le |B|^{2^k-1} \sum_{x\in B} (\mu *_{2^k} \mu * B)(x) = |B|^{2^k-1} \sum_{x} (\mu *_{2^k} \mu )(x) (B \circ B) (x) \,.
	$$
	Applying the H\"older inequality one more time,  as well as  formula (\ref{f:E_k_Fourier}), we get
	$$
	\sigma^{2^{k+1}} \le |B|^{2^{k+1}-2} \E^{+} (B) \T^{+}_{2^k} (\mu) = p^{-1} |B|^{2^{k+1}-2} \E^{+} (B) \E^{+}_{2^{k+1}} (f) \,.
	$$ 
	By our choice of the parameter $k$ and Theorem \ref{t:Q_shift}, we have $\E^{+}_{2^{k+1}} (f) \le 2 \| f\|_2^{2^{k+2}}$ and hence 
	$$
	\sigma \le |B| \| f\|_2^{2} \left( \frac{2\E^{+} (B)}{p|B|^2} \right)^{1/2^{k+1}} \,.
	$$ 
	
	It remains to prove (\ref{f:E_k_sigma+}). 
	Using the H\"older inequality twice,  one has 
	$$
	\left( \sum_{x\in B} (g  \circ f)^s (x) \right)^{2^{k+2} /s}
	\le
	|B|^{2^{k+2}/s-4} \left( \sum_{x\in B} (g  \circ f )^{2^{k}} (x) \right)^{4}
	=
	$$
	$$	
	=
	|B|^{2^{k+2}/s-4} \left( \sum_{y_1,\dots, y_{2^{k}}} g(y_1) \dots g(y_{2^{k}}) \sum_{x \in B} f(y_1+x) \dots f(y_{2^{k}}+x) \right)^{4}
	\le
	$$
	$$
	\le
	|B|^{2^{k+2}/s-4} \| g\|^{2^{k+2}}_2  \left( \sum_x (f\c f)^{2^{k}} (x) (B\c B) (x)  \right)^{2}
	\le
	|B|^{2^{k+2}/s-4} \| g\|^{2^{k+2}}_2 \E_{2^{k+1}} (f) \E^{+} (B) \,.
	$$
	By our choice of the parameter $k$ and Theorem \ref{t:Q_shift}, we have $\E^{+}_{2^{k+1}} (f) \le 2 \| f\|_2^{2^{k+2}}$.
	Hence
	$$
	\left| \sum_{x\in B} (g  \circ f)^s (x) \right| \le  \|g\|^s_2 \|f\|^s_2 |B| \left( \frac{2\E^{+} (B)}{|B|^4}\right)^{s/2^{k+2}} \,.
	$$
	as required. 
	$\hfill\Box$
\end{proof}

\bigskip 

Estimate (\ref{f:E_k_sigma+}) shows that the smallness of $\E_k (A)$ energy implies that the sums from this inequality are small. 
It is easy to see that the reverse direction takes place as well. 
Indeed, suppose  in contrary that $\E_{n} (A) \ge  M |A|^n$ for a parameter $M\ge 1$ and  
for all positive integers $n$. 
Also, let $l$ be a positive integer and 
let $P = P_l$ be a set as in the proof of Theorem \ref{t:Q_shift} such that $\E_{l+1} (A) \sim |P| \D^{l+1}$ and $\Delta < r_{A-A} (x) \le 2\D$ on $P$. 
Then $|P| \gtrsim M$, and  using our assumption (let $s=1$ for simplicity)
$$
\E_{l+1} (A) \lesssim \D^{l} \sigma_P (A) \ll \D^{l} |A| |P| |P|^{-\eps} \lesssim |A| \E_l (A) M^{-\eps} \,,
$$ 
where $\eps>0$ is a constant from our assumption. 
So, after $t$ applications of this argument, we get 
$$
M |A|^{l+t} \le \E_{l+t} (A) \lesssim |A|^t \E_l (A) M^{-\eps t} \le |A|^{t+l+1} M^{-\eps t} 
$$
and hence after $t$ steps such that $M^{\eps t +1} \gtrsim |A|$ we obtain a contradiction and it means that, in particular, 
$\E_{t+1} (A) \le M|A|^t$.
For example, if $M=|A|^\delta$, then  $\E_{t+1} (A) \le |A|^{t+\delta}$ for
$t\gg \frac{1}{\eps \d}$, say.

\section{On some sum--product quantities with six and eight variables}
\label{sec:proof}

For any set $A\subseteq \F_p$  let 
\begin{equation}
\Do^\times (A) = \Do^\times_2 (A) 
:= |\{ (a_1-a_2) (a_3-a_4) = (a'_1-a'_2) (a'_3-a'_4) ~:~ a_i,\, a'_i\in A \}| \,,
\label{def:D_times}
\end{equation}
and more generally for $k\ge 1$ 
$$
\Do^\times_k (A) := |\{ (a_1-a_2) \dots (a_{2k-1}-a_{2k}) = (a'_1-a'_2) \dots (a'_{2k-1}-a'_{2k}) ~:~ a_i,\, a'_i\in A \}| \,.
$$
Clearly, $\Do^\times_1 (A) = \E^{+} (A)$. 
Sometimes, we need $\Do^\times_k (A,B)$ for two sets $A,B$ and even more generally $\Do^\times_k (\a,\beta)$ for two functions  $\a,\beta$.

Our task is to 
estimate the quantities $\Do^\times (A)$, $\Do^\times_k (A)$.  
The quantity  $\Do^\times (A)$ (and similar $\Do^\times_k (A)$) can be interpreted as the number of incidences between points and planes 
(see details in \cite{AMRS})
\begin{equation}\label{f:D_pp}
(a_1-a_2) \la = (a'_1-a'_2) \mu \,,
\end{equation}
counting with the weights $|\{ a_3-a_4=\la ~:~ a_3,a_4\in A\}|$ and $|\{ a'_3-a'_4=\mu ~:~ a'_3,a'_4\in A\}|$.

\begin{theorem}
	Let $A \subseteq \F_p$ be a set. Then
	\begin{equation}\label{f:D_times_2-}
	\Do^\times (A) -  \frac{|A|^8}{p} \ll (\log |A|)^2 |A|^5 (\E^{+} (A))^{1/2} \,.
	\end{equation}
	Moreover, for all $k\ge 2$ one has 
	\begin{equation}\label{f:D_times_2}
	\Do^\times_k (A) -  \frac{|A|^{4k}}{p} \ll (\log |A|)^4 |A|^{4k-2-2^{-k+2}} \E^+ (A)^{1/2^{k-1}} \,.
	\end{equation}		
	Generally, for any non--negative function $\a$ and $\beta (x) = A(x)$, 
	the following holds 
	\begin{equation}\label{f:D_times_3}
	\Do^\times_k (\a, \beta) -  \frac{\|\a\|^{2k}_1 \| \beta \|^{2k}_1}{p} 
	\ll 
	L^8 
	(\| \a\|_1 \| \beta \|_1)^{2k-2}
	(\| \a\|_2 \| \beta \|_2)^{2-2^{-k+2}}
	\E^+ (\a,\beta)^{1/2^{k-1}} \,,
	\end{equation}	
	where 	$L := \log (\|\a\|_1 \|\beta\|_1 |A| / (\|\a\|_2 \|\beta\|_2 ))$. 
	\label{t:D_times}
\end{theorem}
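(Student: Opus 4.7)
The plan is to interpret $\Do^\times_k(A)$ as the squared $\ell^2$-norm of an iterated multiplicative convolution and to apply Corollary~\ref{cor:weight_inc} at each step. For $j\ge 1$ set
$R_j(x):=\sum_{y_1\cdots y_j=x}\prod_{i=1}^j r_{A-A}(y_i)$,
so that $R_j$ is the $j$-fold multiplicative convolution of $r_{A-A}$ with itself and $\Do^\times_k(A)=\sum_x R_k(x)^2$. Using $r_{A-A}=\mathbf{1}_A*\mathbf{1}_{-A}$, one has $R_k(x)=r_{\a(\beta+C)}(x)$ with $\a=R_{k-1}$, $\beta=\mathbf{1}_A$ and $C=-A$, which is exactly the setup of estimate (\ref{f:weight_inc2}).

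For the base case $k=2$, the weights are $\|\a\|_1=|A|^2$, $\|\a\|_2=\E^+(A)^{1/2}$, $\|\beta\|_1=|A|$, $\|\beta\|_2=|A|^{1/2}$, $|C|=|A|$; hypothesis (\ref{f:weight_inc_cond}) reduces to the trivial inequalities $|A|^2\le\E^+(A)\le|A|^4$, and the corollary yields the main term $|A|^8/p$ together with an error of order $\log^4|A|\cdot|A|^5\E^+(A)^{1/2}$, recovering (\ref{f:D_times_2-}) up to the discrepancy $\log^2$ vs $\log^4$. The sharper logarithmic factor should follow either by tightening the dyadic partition inside the proof of Corollary~\ref{cor:weight_inc} (the setup here has fewer level sets than in the general corollary) or by applying Theorem~\ref{t:Misha+} directly to the points $(a_3,a_4,c)$ and planes $(a_1-a_2)(a_3-a_4)=c$.

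For general $k\ge 2$ in (\ref{f:D_times_2}), apply the same corollary with $\a=R_{k-1}$, $\|\a\|_1=|A|^{2(k-1)}$, $\|\a\|_2^2=\Do^\times_{k-1}(A)$, which produces the recursion
$\Do^\times_k(A)-|A|^{4k}/p \ll \log^4|A|\cdot|A|^{2k+1}\Do^\times_{k-1}(A)^{1/2}$.
Parameterising the error as $|A|^{a_k}\E^+(A)^{b_k}$ gives $a_k=(2k+1)+a_{k-1}/2$ and $b_k=b_{k-1}/2$ with $a_2=5$, $b_2=1/2$, which solve to $a_k=4k-2-2^{-k+2}$ and $b_k=1/2^{k-1}$, matching (\ref{f:D_times_2}) exactly. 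The weighted version (\ref{f:D_times_3}) runs along identical lines once $r_{A-A}$ is replaced throughout by $r_{\a-\beta}$ and the corollary is invoked with its ``$\beta$'' set to $\a$ and its ``$C$'' to $-\beta=-A$; the analogous recursion then solves to the stated exponents $2k-2$, $2-2^{-k+2}$ and $1/2^{k-1}$, with an $L^8$ logarithmic factor.

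The main obstacle will be verifying the size hypothesis (\ref{f:weight_inc_cond}) of Corollary~\ref{cor:weight_inc} at every stage of the induction, which translates into two-sided control of $\Do^\times_{k-1}(A)$ (or $\Do^\times_{k-1}(\a,\beta)$) that has to be propagated through the induction: the lower bound of order $|A|^{4(k-1)}/p$ comes from Cauchy--Schwarz, while the upper bound is exactly the inductive hypothesis. In degenerate ranges (for example when $p$ is close to $|A|^2$ and the main term $|A|^{4k}/p$ already dominates, or when $A$ is so small that the trivial $L^2$-bounds beat the incidence bound) one bypasses the corollary entirely by invoking the trivial estimates $|A|^{4k}/p\le\Do^\times_k(A)\le|A|^{4k}$ or applying Theorem~\ref{t:Misha+} directly.
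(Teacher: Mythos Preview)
Your overall strategy---interpret $\Do^\times_k$ as an $\ell^2$-norm of an iterated multiplicative convolution and set up a recursion via a point--plane incidence bound---is the same as the paper's. The execution differs in one key respect, and that difference resolves exactly the obstacle you flag.

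The paper works throughout with the balanced function $f=f_A=A-|A|/p$. One checks that $\Do^\times_k(f)=\Do^\times_k(A)-|A|^{4k}/p$, so the recursion is directly on the error term:
\[
\Do^\times_k(f)\ \ll\ L^2\,|A|^{2k+1}\,\bigl(\Do^\times_{k-1}(f)\bigr)^{1/2}.
\]
This is obtained by dyadically decomposing the single function $r_{f-f}$ into level sets $P_j$ (the bilinear form $n_{f,f}(\lambda,\mu)$ is positive semidefinite, so one reduces to a single level set) and applying Theorem~\ref{t:Misha+} directly to the resulting incidence problem, picking up only $L^2$ rather than $L^4$. Iterating gives the stated $L^4$ in~(\ref{f:D_times_2}).

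Your recursion instead carries $\Do^\times_{k-1}(A)^{1/2}$ on the right, which contains the main term $|A|^{4(k-1)}/p$. This contributes an extra $|A|^{4k-1}/p^{1/2}$ to the error at each step; for $|A|>p^{1/2}$ this term is \emph{not} controlled by the claimed bound $|A|^{4k-2-2^{2-k}}\E^+(A)^{2^{1-k}}$ (use $\E^+(A)\ge |A|^4/p$ and you will find the inequality goes the wrong way). Your proposed workaround of ``invoking trivial estimates in degenerate ranges'' does not obviously cover this. The balanced-function trick kills this issue outright and simultaneously removes the need to verify condition~(\ref{f:weight_inc_cond}) at each step, since Theorem~\ref{t:Misha+} is applied directly rather than through Corollary~\ref{cor:weight_inc}. (The corollary \emph{is} the tool the paper uses for the weighted version~(\ref{f:D_times_3}), where an $L^8$ is acceptable; there the paper checks~(\ref{f:weight_inc_cond}) via~(\ref{f:E_ab_norms}) and the lower bound $\E^+(\a,\beta)\ge\|\a\|_2^2\|\beta\|_2^2$.)
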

\begin{proof}
	We have
	$$
	\Do^\times (A) = \sum_{\la,\mu} r_{A-A} (\la) r_{A-A} (\mu) n(\la,\mu) \,,
	$$
	where 
	$
	n_{A,A} (\la,\mu) = \sum_x r_{(A-A)\la} (x) r_{(A-A)\mu} (x) \,.
	$ 
	Consider the balanced function $f (x) = f_A(x) = A(x) - |A|/p$. 
	Then we have
	$$
	\Do^\times (A) = \frac{|A|^8}{p} + \sum_{\la,\mu} r_{A-A} (\la) r_{A-A} (\mu) n_{f,f} (\la,\mu)
	=
	\frac{|A|^8}{p} + \sigma \,.
	$$
	Our task is to estimate the error term $\sigma$. 
	Put $L=\log |A|$.
	Splitting the sum, we get 
	$$
	\sigma \ll \sum_{i,j=1}^L \sum_{\la,\mu} n_{f,f} (\la,\mu) r^{(i)}_{A-A} (\la) r^{(j)}_{A-A} (\mu) \,,
	$$
	where by $r^{(j)}_{A-A} (\mu)$ 
	we have denoted  
	the restriction of the function $r_{A-A}$ on some set $P_j$ with 
	$\D_j < r^{(j)}_{A-A} (\mu) \le 2\D_j$, $\mu \in P_j$ and $\D_j >0$ is some number. 
	Clearly, the operator $n_{f,f} (\la,\mu)$ is non--negatively defined and hence
	$$
	\sigma  \ll L \sum_{j=1}^L \sum_{\la,\mu} n_{f,f} (\la,\mu) r^{(j)}_{A-A} (\la) r^{(j)}_{A-A} (\mu) \,.
	$$  
	By the pigeonhole principle there is some $\D =\D_j$ and $P=P_j$ such that
	\begin{equation}\label{f:sigma_n_f}
	\sigma \ll L^2 \D^2 \sum_{\la,\mu} n_{f,f} (\la,\mu) P(\la) P(\mu) \,.
	\end{equation}
	Since $\sum_x f(x) = 0$, we 
	derive from (\ref{f:D_pp}) and (\ref{f:Vinh}) that
	\begin{equation}\label{tmp:23.10_1}
	\sigma \ll L^2 p |A|^2 |P| \D^2 \,.
	\end{equation}
	Now we obtain another bound for $\sigma$.	
	Using Theorem \ref{t:Misha+}, we 
	get 
	\begin{equation}\label{tmp:23.10_2}
	\sigma \ll L^2 \D^2 \left( \frac{|A|^4 |P|^2}{p} + |A|^3 |P|^{3/2}  + |A|^2 |P| \max\{ |A|, |P|\} \right) \,.
	\end{equation}
	Since $P\subseteq A-A$ it is easy to see that the term $|A|^2 |P| \max\{ |A|, |P|\}$ is negligible comparable to $|A|^3 |P|^{3/2}$. 
	Suppose  that the second term in the last formula dominates. 
	Then
	$$
	\sigma \ll L^2 |A|^3 (\D |P|) \cdot (\D^2 |P|)^{1/2} \,.
	$$ 
	Clearly, 
	\begin{equation}\label{tmp:23.10_0}
	\D |P| \le \sum_{x} r^{(j)}_{A-A} (x) \le \sum_{x} r_{A-A} (x) \le |A|^2 \,, 
	\end{equation}
	and
	$$
	\D^2 |P| 
	\le
	\sum_{x} (r^{(j)}_{A-A} (x))^2 
	\le \E^{+} (A) \,. 
	$$
	Hence
	$$
	\sigma \ll L^2 |A|^5 (\E^{+} (A))^{1/2}
	$$
	and we are done.
	If the first term in formula (\ref{tmp:23.10_2}) is the largest one, then $p\le |A| |P|^{1/2}$ and inequality (\ref{tmp:23.10_1}) gives us
	$$
	\sigma \le L^2 p |A|^2 	\D^2 |P| \ll  L^2 |A|^3 \D^2 |P|^{3/2} \,.
	$$
	We see that it is smaller than the second term in (\ref{tmp:23.10_2}) 
	and hence we have proved (\ref{f:D_times_2-}). 
	Another way to bound \eqref{f:sigma_n_f} is just use estimate \eqref{f:Misha+_a} of  Theorem \ref{t:Misha+}.

	To obtain (\ref{f:D_times_2}) we firstly, notice that 
	$$
	\Do^\times (A) = \frac{|A|^8}{p} + \sum_{\la,\mu} r_{A-A} (\la) r_{A-A} (\mu) n_{f,f} (\la,\mu)
	=
	\frac{|A|^8}{p} + \sum_{\la,\mu} r_{f-f} (\la) r_{f-f} (\mu) n_{f,f} (\la,\mu) 
	=
	\frac{|A|^8}{p} + \sigma 
	$$
	and using the Dirichlet principle  as above, we can find a set $P$ and a number $\D$ such that $\D < |r_{f-f} (\mu)| \le 2\D$  on $P$ and 
	$$
	\sigma \ll L^2 \D^2 \sum_{\la,\mu} n_{f,f} (\la,\mu) P(\la) P(\mu) 
	$$
	(from the Fourier transform, say, it is easy to see that $n_{f,f} (\la,\mu) \ge 0$ for any function $f$ but actually one can avoid this step of the proof). 
	Secondly, we have
	$$
	\D |P| \le \sum_{x\in P} |r_{f-f} (x)| \le   
	\sum_x (r_{A-A} (x) + |A|^2/p) \le 2|A|^2 \,,
	$$
	and 
	$$
	\D^2 |P|  
	\le
	\sum_{x\in P} (r_{f-f} (x))^2 
	\le \E^{+} (f) \,. 
	$$
	Thus one can refine the upper bound for $\sigma$, namely,
	$$
	\sigma  = \Do^{\times} (f) \ll L^2 |A|^5 (\E^{+} (f))^{1/2} \,.
	$$
	Similarly as above, we get for any $k\ge 2$
	$$
	\Do^{\times}_k (f) \ll L^2 |A|^{2k+1} (\Do^{\times}_{k-1} (f))^{1/2} \,.
	$$
	Hence by induction 
	\begin{equation}\label{tmp:23.10_3}
	\Do^{\times}_k (f) \ll L^4 |A|^{4k-2-2^{-k+2}} \E^+ (f)^{1/2^{k-1}} \le L^4 |A|^{4k-2-2^{-k+2}} \E^+ (A)^{1/2^{k-1}} \,.
	\end{equation}
	If the third term in (\ref{tmp:23.10_2}) dominates (we are considering the quantity $\Do^{\times}_k (f)$ now), then 
	the term $|A|^{4k-2}$ appears 
	but it is easy to check that it does not exceed the last estimate in (\ref{tmp:23.10_3}) because $\E^+ (A) \ge |A|^2$. 
	Finally, to obtain \eqref{f:D_times_3} use Corollary \ref{cor:weight_inc} to estimate the required number of incidences with weights $\a, \beta$.
	Using 
	\begin{equation}\label{f:E_ab_norms}
	\E^+ (\a,\beta) 
	\le 
	\min \{ \| \a \|_2 \|\beta\|_2 \| \a \|_1 \|\beta\|_1, \| \a \|^2_2 \|\beta\|^2_1, \| \a \|^2_1 \|\beta\|^2_2\}
	\end{equation} 
	as well as $\E^+ (\a,\beta) \ge \| \a \|^2_2 \| \beta \|_2^2$ (at other steps of our iterative procedure similar bounds work)
	and  $\|\a \|_1 \ge \|\a\|_2$ for $\a(x) \ge 0$, one can  check that all conditions (\ref{f:weight_inc_cond}) are satisfied.
	This completes the proof. 
	$\hfill\Box$
\end{proof}


\begin{remark}
	\label{r:D'(A)}
	Similarly, one can obtain an upper bound for the quantity  
	$$ 
	\Do' (A) = 
	|\{ a_1 a_2 +a_3 a_4 = a'_1 a'_2 +a'_3 a'_4 ~:~ a_i,\, a'_i\in A \}| \,,
	$$
	as well as 
	for higher 
	energies 
	$\Do'_k (A) =\T^{+}_k (r_{AA})$, $\Do'_k (A,B)$ and even $\Do'_k (\a,\beta)$ for an arbitrary non--negative function $\a$ and  $\beta=A(x)$. 
	In this case  $\E^{+}$ in \eqref{f:D_times_2-}---\eqref{f:D_times_3} should be changed to $\E^{\times}$.
	Notice that our bound for $\Do'_k (A)$ is better than the correspondent bound in \cite[Theorem 2]{B_multilinear}.
	Besides some additional conditions on $A$ and $k$ are required in \cite{B_multilinear}.

	Is is easy to see that our error term for $\Do'_k (A)$ cannot be significantly improved for large $k$. 
	Indeed, considering $A$ to be a small interval $[n]$, we get $|AA| \ge |A|^{2-\eps}$ and $|kAA| \ll_k |A|^2$.
	Hence  
	one cannot obtain something better than a quadratic saving
	for the error term. 
\end{remark}	

\begin{remark}
	The same method works for convex sets \cite{ik} in the real setting where one obtains 
	$$ 
	\T^{+}_k (A) \ll (\log |A|)^4 |A|^{2k-2-2^{-k+3}} \E^+ (A)^{1/2^{k-2}} \ll (\log |A|)^4 |A|^{2k-2+2^{-k+1}} 
	$$ 
	for any convex set $A$, and we have used that  $\E^{+} (A) \ll |A|^{5/2}$. 
	This coincides with the main result of \cite{ik} up to logarithms. 
	Applying the best known bound for the additive energy of a convex set (see \cite{s_mixed}), namely, $\E^{+} (A) \lesssim |A|^{32/13}$, 
	we obtain an improvement. 
\end{remark}


Theorem above immediately implies a consequence on the growth of the products of the differences 
(here we use a trivial upper bound for the energy $\E^{+} (A) \le |A|^3$).

\begin{corollary}
	Let $A \subseteq \F_p$ be a set. Then for any $\eps> 0$ and an arbitrary integer $k\ge 1$ one has 
	$$
	|(A-A)^k| \gg \min \{ p, |A|^{2-2^{1-k}-\eps} \} \,.
	$$
\end{corollary}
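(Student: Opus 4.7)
The plan is to combine the upper bound for $\Do^\times_k(A)$ from Theorem \ref{t:D_times} with a Cauchy--Schwarz argument. Let $r_k(x) = |\{(a_1,\dots,a_{2k}) \in A^{2k} : (a_1-a_2)\cdots(a_{2k-1}-a_{2k}) = x\}|$, so that $\sum_x r_k(x) = |A|^{2k}$ and the support of $r_k$ is contained in $(A-A)^k$. By Cauchy--Schwarz,
$$
|A|^{4k} = \Bigl(\sum_{x \in (A-A)^k} r_k(x)\Bigr)^{2} \le |(A-A)^k| \cdot \sum_x r_k(x)^2 = |(A-A)^k| \cdot \Do^\times_k(A),
$$
hence $|(A-A)^k| \ge |A|^{4k}/\Do^\times_k(A)$.

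Next I would substitute the trivial estimate $\E^{+}(A) \le |A|^3$ into formula \eqref{f:D_times_2} of Theorem \ref{t:D_times} to obtain
$$
\Do^\times_k(A) - \frac{|A|^{4k}}{p} \ll (\log |A|)^4 \, |A|^{4k-2-4\cdot 2^{-k} + 6\cdot 2^{-k}} = (\log |A|)^4 \, |A|^{4k-2+2^{1-k}}.
$$

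Now I would split into two cases according to which term on the right-hand side dominates. If the main term $|A|^{4k}/p$ dominates, then $\Do^\times_k(A) \ll |A|^{4k}/p$, so $|(A-A)^k| \gg p$. Otherwise the error term dominates, giving
$$
|(A-A)^k| \ge \frac{|A|^{4k}}{\Do^\times_k(A)} \gg \frac{|A|^{2-2^{1-k}}}{(\log |A|)^4} \gg |A|^{2-2^{1-k}-\eps}
$$
for any fixed $\eps>0$ and $|A|$ sufficiently large, since the logarithmic factor is absorbed by $|A|^\eps$. Combining both cases yields the desired bound.

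There is no real obstacle here, since all the heavy lifting has already been done in Theorem \ref{t:D_times}; the only minor point is checking that the arithmetic with the exponent $4k-2-2^{-k+2} + 3 \cdot 2^{1-k}$ indeed simplifies to $4k-2+2^{1-k}$, which it does. The argument is essentially a standard Cauchy--Schwarz deduction of a lower bound for a set from an upper bound for the second moment of its representation function.
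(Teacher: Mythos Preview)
Your proof is correct and follows exactly the approach the paper takes: apply Cauchy--Schwarz to pass from $\Do^\times_k(A)$ to $|(A-A)^k|$, and plug the trivial bound $\E^{+}(A)\le |A|^3$ into formula~\eqref{f:D_times_2}. The only point worth noting is that \eqref{f:D_times_2} is stated for $k\ge 2$; the case $k=1$ reduces to $|A-A|\gg |A|^{1-\eps}$, which is trivial.
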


\bigskip

Another quick consequence of Theorem \ref{t:D_times} is (also, see Section 4 from \cite{MPR-NRS})

\begin{corollary}
	Let $A \subseteq \F_p$ be a set, $|A| \le p^{9/16}$. Then
	$$
	|(A-A)(A-A)| \gg \min\{ p, |A|^{3/2+c}\} \,,
	$$
	where $c>0$ is an absolute constant. 
\end{corollary}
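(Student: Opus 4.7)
The plan is to combine Cauchy--Schwarz with the asymptotic formula of Theorem~\ref{t:D_times}, followed by a dichotomy on $\E^+(A)$. For each $x\in\F_p$ let $r(x)=|\{(a_1,a_2,a_3,a_4)\in A^4:(a_1-a_2)(a_3-a_4)=x\}|$. Since $\sum_x r(x)=|A|^4$ and $\sum_x r(x)^2=\Do^\times(A)$, Cauchy--Schwarz gives $|(A-A)(A-A)|\cdot \Do^\times(A)\ge |A|^8$. Plugging in the bound \eqref{f:D_times_2-},
$$
\Do^\times(A)\ll \frac{|A|^8}{p}+\log^2|A|\cdot |A|^5\, \E^+(A)^{1/2},
$$
one immediately obtains
$$
|(A-A)(A-A)|\gg \min\Big\{\,p,\;\frac{|A|^3}{\log^2|A|\cdot \E^+(A)^{1/2}}\Big\}.
$$

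Fix a small constant $c>0$ to be chosen at the end and split into two cases. If $\E^+(A)\le |A|^{3-4c}$, the displayed inequality directly gives $|(A-A)(A-A)|\gg \min\{p,\,|A|^{3/2+c}\}$, as required. Otherwise $\E^+(A)\ge |A|^3/K$ with $K=|A|^{4c}$. Since translating $A$ by any $a_0\in A$ leaves $A-A$ unchanged, I would first replace $A$ by $A-a_0$ to assume $0\in A$, so that $A\subseteq A-A$. Applying Corollary~\ref{c:pre_eigen} to (the translated) $A$ then produces $A_*\subseteq A$ with $|A_*|\ge |A|/(16K)$ and $\E^\times(A_*)\lesssim K^{83/26}|A|^{32/13}$, whence a second Cauchy--Schwarz yields
$$
|A_*A_*|\ge \frac{|A_*|^4}{\E^\times(A_*)}\gg |A|^{20/13-O(c)}.
$$
Because $A_*\subseteq A\subseteq A-A$, we have $A_*A_*\subseteq (A-A)(A-A)$, and since $20/13>3/2$, choosing $c$ small enough gives $|(A-A)(A-A)|\gg |A|^{3/2+c}$.

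The main obstacle is verifying the hypothesis $|A|K\le\sqrt p$ of Corollary~\ref{c:pre_eigen} across the full range $|A|\le p^{9/16}$. The condition rewrites as $|A|^{1+4c}\le p^{1/2}$, which is fine for $|A|\le p^{1/2}$ with $c$ small, but fails in the upper subrange $p^{1/2}<|A|\le p^{9/16}$. There one must instead appeal to the unconditional first line of Corollary~\ref{c:pre_eigen} (or equivalently, a direct points/planes incidence bound from Theorem~\ref{t:Misha+}), exploiting that for such large $|A|$ the main term $|A|^8/p$ in \eqref{f:D_times_2-} approaches the full error term, which pushes the lower bound $|(A-A)(A-A)|\gg |A|^3/\E^+(A)^{1/2}$ close to $p$ itself; interpolating the two regimes and tracking the admissible range for the dichotomy parameter is precisely what pins down the exponent $9/16$. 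This interpolation is the most delicate step; the rest of the argument is routine application of Theorem~\ref{t:D_times} and Corollary~\ref{c:pre_eigen}.
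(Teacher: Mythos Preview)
Your opening (Cauchy--Schwarz combined with \eqref{f:D_times_2-}) and the dichotomy on $\E^+(A)$ match the paper exactly. The difference is in the treatment of the high--energy branch, and there your argument has a genuine gap.

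When $\E^+(A)\ge |A|^3/K$ with $K=|A|^{4c}$, you invoke \eqref{f:pre3} of Corollary~\ref{c:pre_eigen} to get $\E^\times(A_*)\lesssim K^{83/26}|A|^{32/13}$ and hence $|A_*A_*|\gg |A|^{20/13-O(c)}$. This is correct when the hypothesis $|A|K\le\sqrt p$ holds, i.e.\ for $|A|\le p^{1/2-O(c)}$. In the remaining range $p^{1/2}<|A|\le p^{9/16}$ you fall back on \eqref{f:pre1}, but that does not close the gap: with $B=A_*$ the bound \eqref{f:pre1} reads $\E^\times(A_*)\ll K^4|A|^2|A_*|^2/p + K^{7/2}|A||A_*|^{3/2}$, and when the second term dominates one only obtains $|A_*A_*|\gg |A_*|^{5/2}/(K^{7/2}|A|)\gg |A|^{3/2}/K^{6}$, which \emph{loses} a power of $K$ rather than gaining one. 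Your interpolation sketch (``the main term $|A|^8/p$ approaches the error term'') is also off: in the high--energy case $\E^+(A)\ge |A|^{3-4c}$ with $|A|>p^{1/2}$ one has $|A|^3/\E^+(A)^{1/2}\le |A|^{3/2+2c}\ll p$, so the Cauchy--Schwarz bound from the first step gives nothing beyond $|A|^{3/2+2c}$ and certainly does not force $|(A-A)(A-A)|$ near $p$.

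The paper handles this branch differently. From $\E^+(A)\ge |A|^{3-\eps}$ it applies the Balog--Szemer\'edi--Gowers theorem to find $A'\subseteq A$ with $|A'|\gg_\eps |A|$ and $|A'+A'|\ll_\eps |A'|$, and then quotes \cite[Theorem~4]{MPR-NRS}, which for sets with small sumset and $|A|\le p^{9/16}$ gives $|(A'-a)(A'-a)|\gtrsim |A'|^{14/9}$ for any fixed $a$. The constraint $|A|\le p^{9/16}$ in the corollary is precisely the hypothesis of that external result, not something obtained by interpolation. If you want to salvage your route via Corollary~\ref{c:pre_eigen}, you would need a separate argument (or a different input) for the window $p^{1/2}<|A|\le p^{9/16}$; as written, this case is not covered.
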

\begin{proof}
	By Theorem \ref{t:D_times} and the Cauchy--Schwarz inequality, we obtain
	\begin{equation}\label{tmp:12.02.2017_1}
	|A|^8 \ll Q \left( \frac{|A|^8}{p} + |A|^5 (\E^{+} (A))^{1/2} \log^2 |A| \right) \,,
	\end{equation}
	where 
	$Q= |(A-A)(A-A)|$. 
	Thus if the first term in (\ref{tmp:12.02.2017_1}) dominates, then we are done. 
	Further if $\E^{+} (A) \le |A|^{3-\eps}$, where $\eps>0$ is some small constant, then we are done.
	If not, then put $M = |A|^\eps$ and apply the Balog--Szemer\'edi--Gowers Theorem \cite{TV}, finding $A'\subseteq A$, 
	$|A'| \gg_M |A|$ and $|A'+A'| \ll_M |A'|$. 
	Using Theorem 4 from \cite{MPR-NRS}, we have for any $a\in A$ that 
	$$
	|(A-A)(A-A)| \ge  |(A-a)(A-a)| \gtrsim_M |A|^{14/9} \,,
	$$	
	provided $|A| \le p^{9/16}$.  
	This completes the proof. 
	$\hfill\Box$
\end{proof}

\bigskip

The same argument works for the set $\frac{A-A}{A-A}$ but in this situation much better bounds are known, see \cite{Redei}, \cite{Szonyi}.
Lower bound for the sets of the form $(A-A)(A-A)$, $(A-A)/(A-A)$ in general fields $\F_q$ can be found in paper \cite{MP_F_q}. 

\bigskip

Similar arguments allow us to formulate the second part of Theorem \ref{t:Q} as 

\begin{theorem}
	Let $A\subseteq \F_p$ be a set and $\T(A)$ of  collinear triples in $A\times A$.
	Then
	$$
	0 \le \T(A) - \frac{|A|^6}{p}  \ll \min\left\{ p^{1/2} |A|^{7/2}, |A|^{9/2} \right\} \,.
	$$	
	\label{t:Q_new}
\end{theorem}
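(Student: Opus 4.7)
The plan is to establish the lower bound $\T(A) \ge |A|^6/p$ via a Fourier-analytic positivity argument, and then to obtain the upper bound on $\T(A) - |A|^6/p$ by combining Theorem~\ref{t:Q} (which directly supplies the $p^{1/2}|A|^{7/2}$ half of the minimum) with a Cauchy--Schwarz step that leverages the $\Q(A)$ estimate of Theorem~\ref{t:Q} to supply the $|A|^{9/2}$ half.

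For the positivity, I would use that three points $(a_i,b_i) \in \F_p^2$ are collinear iff $(a_2-a_1)(b_3-b_1) = (a_3-a_1)(b_2-b_1)$. Applying Fourier inversion $\delta\{x=0\} = p^{-1}\sum_\xi e(\xi x)$ and then substituting $u_i = a_i-a_1$, $v_i = b_i-b_1$, the exponent $\xi(u_2 v_3 - u_3 v_2)$ factorises as a product of two sums, giving
\[
\T(A) = \frac{1}{p}\sum_{\xi \in \F_p}\sum_{a_1,b_1 \in A}\Big|\sum_{u \in A-a_1,\, v \in A-b_1} e(\xi u v)\Big|^2,
\]
a sum of non-negative terms whose $\xi = 0$ contribution is exactly $|A|^6/p$. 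Hence $\T(A) \ge |A|^6/p$.

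For the $|A|^{9/2}$ upper bound, let $t_\ell := |(A\times A) \cap \ell|$ for a line $\ell \subset \F_p^2$. Accounting for degenerate triples/quadruples of coincident points (each overcounted by $p+1$ in $\sum_\ell t_\ell^k$ because $p+1$ lines pass through a fixed point), one checks
$\sum_\ell t_\ell^2 = |A|^4 + p|A|^2$, $\sum_\ell t_\ell^3 = \T(A) + p|A|^2$ and $\sum_\ell t_\ell^4 = \Q(A) + p|A|^2$.
Cauchy--Schwarz gives $(\sum_\ell t_\ell^3)^2 \le (\sum_\ell t_\ell^2)(\sum_\ell t_\ell^4)$; inserting Theorem~\ref{t:Q}'s estimate $\Q(A) \ll |A|^8/p^2 + |A|^5 \log|A|$, and expanding the product separately in the sub-regimes $|A| \ge p^{2/3}$ and $p^{1/2} \le |A| < p^{2/3}$, one finds after taking the square root and subtracting $|A|^6/p$ that $\T(A) - |A|^6/p \ll |A|^{9/2}$ throughout $|A|^2 \ge p$. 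In the complementary regime $|A|^2 < p$ the first conclusion $p^{1/2}|A|^{7/2}$ of Theorem~\ref{t:Q} is already smaller than $|A|^{9/2}$, so together the two bounds produce the claimed minimum.

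The main obstacle will be cleanly extracting the leading term $|A|^6/p$ from the Cauchy--Schwarz step: after squaring one has a product whose dominant contribution $|A|^{12}/p^2$ must exactly match $(|A|^6/p)^2$ up to genuine lower-order terms of size $|A|^9$, so that taking square roots and subtracting $|A|^6/p$ really leaves a remainder of size $|A|^{9/2}$ and not something spurious like $|A|^5/p^{1/2}$.
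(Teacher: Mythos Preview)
Your lower-bound argument via the Fourier expansion is correct and neat. The gap is in the upper bound, and it is not the constant-extraction obstacle you flagged --- it is more basic. You have the comparison of the two terms in the minimum the wrong way round: $p^{1/2}|A|^{7/2}\le |A|^{9/2}$ is equivalent to $|A|\ge p^{1/2}$, not $|A|\le p^{1/2}$. Thus Theorem~\ref{t:Q} already supplies the minimum when $|A|\ge p^{1/2}$, while the \emph{new} content of Theorem~\ref{t:Q_new} is precisely the bound $\T(A)-|A|^6/p\ll|A|^{9/2}$ in the regime $|A|\le p^{1/2}$.

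In that small-$|A|$ regime your Cauchy--Schwarz route through $\Q(A)$ cannot possibly reach $|A|^{9/2}$. Indeed $\sum_\ell t_\ell^2=|A|^4+p|A|^2\ge p|A|^2$ and $\sum_\ell t_\ell^4=\Q(A)+p|A|^2\ge |A|^5$ (just from the $|A|$ vertical lines), so
\[
\T(A)+p|A|^2\;\le\;\Bigl(\textstyle\sum_\ell t_\ell^2\Bigr)^{1/2}\Bigl(\textstyle\sum_\ell t_\ell^4\Bigr)^{1/2}
\]
can never be made smaller than $\sqrt{p|A|^2\cdot|A|^5}=p^{1/2}|A|^{7/2}$; subtracting $p|A|^2+|A|^6/p$ still leaves a bound of order $p^{1/2}|A|^{7/2}$, which exceeds $|A|^{9/2}$ throughout $|A|<p^{1/2}$. (Even in the range $|A|\ge p^{1/2}$ where you intended to apply it, the argument only yields $|A|^{9/2}$ up to a logarithm, because the $\Q(A)$ estimate of Theorem~\ref{t:Q} carries a $\log|A|$.) The paper instead interprets $\T(A)$ as a weighted point--plane incidence count in $\F_p^3$, subtracts the main term via the balanced function $f_A=A-|A|/p$, and then bounds the remainder using Rudnev's incidence theorem (Theorem~\ref{t:Misha+}), which gives $|A|^{9/2}$ directly, together with the design bound~\eqref{f:Vinh} to handle the complementary case. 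Some input of this strength --- a genuine incidence bound, not just Cauchy--Schwarz against $\Q(A)$ --- is what your argument is missing.
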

\begin{proof}
	We use the arguments from \cite{SZ_inc}. 
	Put $f(x) = A(x) - |A|/p$. 
	It is easy to see that the quantity $\T(A)$  equals the number of incidences  between the planes
	\begin{equation}\label{tmp:20.12_1}
	\frac{x}{a''-a'} -\frac{a'}{a''-a'} - \a y +z =0
	\end{equation}
	and the points 
	$$
	\left( a, \frac{1}{\a''-\a'}, \frac{\a'}{\a''-\a'} \right) \,.
	$$
	Hence as in the proof of Theorem \ref{t:D_times}, we have 
	$$
	\T(A) = \frac{|A|^6}{p}  + \sigma \,,
	$$
	where the sum $\sigma$ 
	counts the number of incidences (\ref{tmp:20.12_1}) 
	with the weight $f(a) f(\a)$. 
	Hence by (\ref{f:Vinh}), we get $\sigma \le p |A|^3$ and by Theorem \ref{t:Misha+}, we have 
	$\sigma \ll \frac{|A|^6}{p} + |A|^{9/2}$. 
	If $|A|^{3/2} \ge p$, then   $\sigma \le p |A|^3 \le |A|^{9/2}$. 
	If $|A|^{3/2} < p$, then $\sigma \ll \frac{|A|^6}{p} + |A|^{9/2} \ll |A|^{9/2}$. 
	In any case $\sigma \ll |A|^{9/2}$. 
	Combining this with the bound from Theorem \ref{t:Q}, we obtain the required result. 
	This completes the proof. 
	$\hfill\Box$
\end{proof}

\bigskip 

For any sets $A, B, C \subseteq \F_p$  put
$$
\No (A) = |\{ a (b - c) = a' (b' - c') ~:~ a,a' \in A,\, b,b'\in B,\, c,c'\in C \}| \,.  
$$
We write $\No (A)$ if $A=B=C$.  
Now we prove an upper bound for the quantity $\No(A)$ which is better than $O(|A|^{9/2})$ for sets $A$  with small energies  $\E^{\times} (A)$ and $\E^{+} (A)$,
namely, when $(\E^{\times} (A))^2 \E^{+} (A) \ll |A|^{8-\eps}$, $\eps>0$.

\begin{corollary}
	Let $A,B \subseteq \F_p$ be sets. Then
	$$
	\No(B,A,A) - \frac{|A|^4|B|^2}{p} \ll 
	(\E^{\times} (B))^{1/2} (\E^{+} (A))^{1/4} |A|^{5/2} \log |A|  \,.
	$$
	\label{c:N(A)}
\end{corollary}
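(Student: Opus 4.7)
The plan is to reduce the count to a sum over multiplicative ratios in $B/B$ weighted by an additive quantity built from $A - A$, and then apply Cauchy--Schwarz together with the asymptotic bound on $\Do^\times(A)$ from Theorem~\ref{t:D_times}. First I would write
$$\No(B,A,A) = \sum_{\la,\mu} r_{A-A}(\la)\, r_{A-A}(\mu)\, n_{B,B}(\la,\mu),$$
where $n_{B,B}(\la,\mu) = |\{(a,a')\in B^2 : a\la = a'\mu\}|$. For $\la,\mu \ne 0$ one has $n_{B,B}(\la,\mu) = r_{B/B}(\mu/\la)$, and the contributions from $\la = 0$ or $\mu = 0$ give only a lower order $O(|A|^2|B|^2)$ diagonal. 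Substituting $t = \mu/\la$ and setting $g(t) := \sum_{y \ne 0} r_{A-A}(y)\, r_{A-A}(ty)$, this becomes
$$\No(B,A,A) = \sum_{t \ne 0} r_{B/B}(t)\, g(t) + O(|A|^2|B|^2).$$

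Next I would extract the main term. Since $\sum_{t \ne 0} g(t) = (|A|^2 - |A|)^2 \approx |A|^4$, the average of $g$ over $t \ne 0$ is close to $|A|^4/p$; combined with $\sum_{t \ne 0} r_{B/B}(t) = |B|^2$ this accounts for the claimed $|A|^4|B|^2/p$ main term. The error
$$\sigma := \sum_{t \ne 0} r_{B/B}(t)\bigl(g(t) - |A|^4/p\bigr)$$
is then handled by Cauchy--Schwarz,
$$\sigma^2 \le \E^\times(B)\cdot \sum_{t \ne 0}\bigl(g(t) - |A|^4/p\bigr)^2.$$

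The crucial step is the identification $\sum_t g(t)^2 = \Do^\times(A) + O(|A|^6)$, which follows by the substitution $s = ty$, $s' = ty'$ in $\sum_t \bigl(\sum_{y\ne 0} r_{A-A}(y) r_{A-A}(ty)\bigr)^2$: the resulting constraint $sy' = s'y$ is precisely the defining relation of $\Do^\times(A)$ with $\la_1 = s$, $\la_2 = y'$, $\la_3 = s'$, $\la_4 = y$. Expanding the variance and using $\sum_t g(t) \approx |A|^4$ yields
$$\sum_{t \ne 0}\bigl(g(t) - |A|^4/p\bigr)^2 = \Do^\times(A) - |A|^8/p + O(|A|^6 + |A|^7/p),$$
and Theorem~\ref{t:D_times} bounds $\Do^\times(A) - |A|^8/p$ by $|A|^5 \E^+(A)^{1/2} \log^2|A|$, which absorbs both boundary terms once one notes $\E^+(A) \ge |A|^2$ and $\E^+(A) \ge |A|^4/p$. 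Substituting back gives $\sigma \ll (\E^\times(B))^{1/2} (\E^+(A))^{1/4} |A|^{5/2} \log|A|$, matching the claim.

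The main obstacle is careful bookkeeping of the several boundary contributions: the $y = 0$ term in the definition of $g$, the anomalous value $g(1) = \E^+(A) - |A|^2$ at $t = 1$ where $r_{B/B}(1) = |B|$, and the $|A|^2|B|^2$ diagonal from $\la = \mu = 0$. Each of these is routinely verified to fit inside the claimed error using the lower bounds $\E^\times(B) \ge |B|^2$ and $\E^+(A) \ge |A|^2$, so the computation reduces to the Cauchy--Schwarz/incidence step above.
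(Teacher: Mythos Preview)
Your proposal is correct and follows essentially the same route as the paper: rewrite $\No(B,A,A)$ as $\sum_\la r_{B/B}(\la)\, r_{(A-A)/(A-A)}(\la)$, split off the main term, apply Cauchy--Schwarz, and invoke Theorem~\ref{t:D_times} to bound the remaining second moment by $\Do^\times(A)-|A|^8/p$. The only difference is cosmetic: the paper introduces the balanced function $f=A-|A|/p$ and writes the error directly as $\sum_\la r_{B/B}(\la)\, r_{(f-f)/(A-A)}(\la)$, so that $\sum_\la r^2_{(f-f)/(A-A)}(\la)=\Do^\times(A)-|A|^8/p$ exactly, which dispenses with the explicit average-subtraction and the boundary bookkeeping you flag as ``the main obstacle.''
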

\begin{proof}
	Put $f(x) = A(x) - |A|/p$. 
	We have
	$$
	\No(B,B,A) = \sum_{\la} r_{B/B} (\la) r_{(A-A)/(A-A)} (\la) = \frac{|A|^4|B|^2}{p} + \sum_{\la} r_{B/B} (\la) r_{(f-f)/(A-A)} (\la) 
	=
	\frac{|A|^4|B|^2}{p} + \sigma	\,.
	$$
	By the Cauchy--Schwarz inequality, we 
	get 
	$$
	\sigma^2 = \left( \sum_{\la} r_{B/B} (\la) r_{(f-f)/(A-A)} (\la) \right)^2	
	\le
	\sum_{\la} r^2_{B/B} (\la) \cdot \sum_{\la} r^2_{(f-f)/(A-A)} (\la) 
	\le
	$$
	$$
	\le
	\E^{\times} (B) \left( \Do^\times (A) - \frac{|A|^8}{p} \right) \,.
	$$
	Using Theorem \ref{t:D_times}, we obtain the required result. 
	$\hfill\Box$
\end{proof}

\bigskip

Similarly to $\No(A)$, put 

\begin{equation}\label{def:N'(A)}
\No' (A) = |\{ a_1 a_2 + a_3 = a'_1 a'_2 + a'_3 ~:~ a_i,\, a'_i \in A\}| \,.  
\end{equation}

\begin{corollary}
	Let $A \subseteq \F_p$ be a set. Then
	$$	
	\No'(A) - \frac{|A|^6}{p} \ll  |A|^{5/2} \E^{+} (A)^{1/2} \E^{\times} (A)^{1/4} \log |A| \,.
	$$
	\label{c:N'(A)}
\end{corollary}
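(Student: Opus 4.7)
The plan is to mirror the strategy of Corollary \ref{c:N(A)}, swapping the roles of addition and multiplication at the right moment. I would start by rewriting
$$
\No'(A) = \sum_{\la} r_{AA-AA}(\la)\, r_{A-A}(\la) \,,
$$
which records the equation $a_1 a_2 - a'_1 a'_2 = a'_3 - a_3$ as a convolutional sum.

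Next, I would extract the main term via balanced functions. Set $f(x) = A(x) - |A|/p$ and $F(x) = r_{AA}(x) - |A|^2/p$; both have zero mean on $\F_p$. A direct expansion gives
$$
r_{A-A}(\la) = (f \circ f)(\la) + \frac{|A|^2}{p} \,, \quad\quad r_{AA-AA}(\la) = (F \circ F)(\la) + \frac{|A|^4}{p} \,.
$$
Substituting into the expression for $\No'(A)$ and using $\sum_{\la} (f \circ f)(\la) = \sum_{\la} (F \circ F)(\la) = 0$ to kill the cross terms, one obtains the clean identity
$$
\No'(A) - \frac{|A|^6}{p} = \sum_{\la} (F \circ F)(\la)\, (f \circ f)(\la) \,.
$$

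At this point Cauchy--Schwarz yields
$$
\left( \No'(A) - \frac{|A|^6}{p} \right)^2 \le \left( \sum_{\la} ((F \circ F)(\la))^2 \right) \left( \sum_{\la} ((f \circ f)(\la))^2 \right) = \E^{+}(F) \cdot \E^{+}(f) \,.
$$
The factor $\E^{+}(f)$ is bounded by $\E^{+}(A)$. The factor $\E^{+}(F)$ is identified with $\Do'(A) - |A|^8/p$ by expanding $r_{AA-AA}(\la) = (F \circ F)(\la) + |A|^4/p$ inside $\Do'(A) = \sum_{\la} r_{AA-AA}(\la)^2$ and again invoking the zero-mean property of $F$.

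Finally, I would invoke the multiplicative analogue of Theorem \ref{t:D_times} recorded in Remark \ref{r:D'(A)}, which gives
$$
\Do'(A) - \frac{|A|^8}{p} \ll (\log |A|)^2 |A|^5 (\E^{\times}(A))^{1/2} \,.
$$
Substituting this into the Cauchy--Schwarz bound and taking a square root produces the stated estimate. I do not expect any genuine obstacle: the only mild care needed is in setting up $F$ and $f$ so that the cross-term cancellation is transparent, after which the argument is a routine Cauchy--Schwarz plus the input of Remark \ref{r:D'(A)}.
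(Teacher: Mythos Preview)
Your proof is correct and follows essentially the same route as the paper: isolate the main term via balanced functions, apply Cauchy--Schwarz to split off $\E^{+}(A)^{1/2}$, and bound the remaining factor by the $\Do'(A)$ estimate coming from points/planes incidences. The only cosmetic difference is that you package the second factor cleanly as $\E^{+}(F)=\Do'(A)-|A|^8/p$ and invoke Remark~\ref{r:D'(A)} as a black box, whereas the paper re-runs the dyadic decomposition and Theorem~\ref{t:Misha+} inside the proof; your presentation is in fact slightly more modular.
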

\begin{proof}
	Put $f(x) = A(x) - |A|/p$.  
	As in the proof of Theorem \ref{t:D_times}, we have  
	$$
	\No'(A) = \frac{|A|^6}{p} + \sigma \,,
	$$
	where the sum $\sigma$ counts the number of the solutions to equation (\ref{def:N'(A)}) with the weights $f(a_j)$, $f(a'_j)$.
	Thus by Theorem \ref{t:Misha+}, the Dirichlet principle  and the H\"{o}lder inequality, we get 
	$$
	\sigma = \E^{+} (f,r_{ff}) \le (\E^{+} (f))^2 (\E^{+} (r_{ff}))^{1/2} \ll 
	\log |A| \cdot \D \E^{+} (A)^{1/2} \E^{+} (r_{ff}, P)^{1/2} 
	\ll 
	$$
	\begin{equation}\label{tmp:20.12_2}
	\ll \log |A| \cdot \D \E^{+} (A)^{1/2}  \left( \frac{|A|^4 |P|^2}{p} + |A|^3 |P|^{3/2} \right)^{1/2} \,,
	\end{equation}
	where $\D < |r_{ff} (x)| \le 2\D$ on the set  $P$.
	Here we have used 
	the definition of $\E^{+} (r_{ff}, P)$, namely, 
	$$
	\E^{+} (r_{ff}, P) 
	= |\{ p + a_1 a_2 = p' + a'_1 a'_2 ~:~ p, p' \in P \}| \,, 
	$$
	counting with the weights $f(a_1)f(a_2)$, $f(a'_1)f(a'_2)$. 
	Again one can assume that the second term in 
	(\ref{tmp:20.12_2}) 
	dominates.
	Hence using $\D |P| \ll |A|^2$, $\D^2 |P| \le \E^\times (f) \le \E^\times (A)$, we obtain 
	$$
	\sigma 
	\ll
	\log |A| \cdot  \E^{+} (A)^{1/2} |A|^{5/2} \E^{\times} (A)^{1/4} 
	$$
	as required. 	
	$\hfill\Box$
\end{proof}

\bigskip 

The results above imply
an 
estimate for some average sums of the energies (other results on such quantities can be found in \cite{RSS}).

\begin{corollary}
	Let $A \subseteq \F_p$ be a set. Then
	$$
	\sum_{x\in X} \E^{+} (A,xA) - \frac{|X| |A|^4}{p} \ll  |A|^{5/4} \E^{+} (A)^{3/4} \E^{\times} (X)^{1/4} \log^{1/2} |A|  \,.
	$$
\end{corollary}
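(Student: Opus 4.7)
The plan is to mirror the strategy used in the preceding Corollaries~\ref{c:N(A)} and~\ref{c:N'(A)} in this section: pass to the balanced function of $A$, strip off the main term, apply a single Cauchy--Schwarz in the difference variable, and reduce everything to the already-estimated quantity $\No(X,A,A)$.

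First I would rewrite $\E^{+}(A,xA)=\sum_\mu r_{A-A}(\mu)\,r_{A-A}(\mu/x)$ for $x\neq 0$, so that
$$
\sum_{x\in X}\E^{+}(A,xA)=\sum_\mu r_{A-A}(\mu)\,r_{X(A-A)}(\mu),
$$
with $r_{X(A-A)}(\mu):=\sum_{x\in X\setminus\{0\}} X(x)\,r_{A-A}(\mu/x)$; the $x=0$ contribution is at most $|A|$ and is negligible. Setting $f:=A-|A|/p$ and using the splitting $r_{A-A}=|A|^2/p+r_{f-f}$ together with $\sum_\mu r_{f-f}(\mu)=0$, the diagonal piece $(|A|^2/p)(|X||A|^2/p)$ summed over $\mu$ produces the main term $|X||A|^4/p$, the two cross terms vanish, and the error reduces to
$$
\sigma:=\sum_{x\in X}\E^{+}(A,xA)-\frac{|X||A|^4}{p}=\sum_\mu r_{f-f}(\mu)\,r_{X(f-f)}(\mu).
$$

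The core step is a single Cauchy--Schwarz in $\mu$:
$$
\sigma^2\le \E^{+}(f)\cdot \sum_\mu r_{X(f-f)}(\mu)^2.
$$
Expanding $r_{X(A-A)}=|X||A|^2/p+r_{X(f-f)}$ and once more using $\sum_\mu r_{X(f-f)}(\mu)=0$ identifies the second factor cleanly as $\No(X,A,A)-|X|^2|A|^4/p$. Applying Corollary~\ref{c:N(A)} with $B=X$ bounds this by $\E^{\times}(X)^{1/2}\,\E^{+}(A)^{1/4}|A|^{5/2}\log|A|$, and combining with $\E^{+}(f)\le\E^{+}(A)$ and taking a square root yields
$$
\sigma\ll |A|^{5/4}\,\E^{+}(A)^{5/8}\,\E^{\times}(X)^{1/4}\,\log^{1/2}|A|.
$$
Since $\E^{+}(A)\ge|A|^2\ge 1$ forces $\E^{+}(A)^{5/8}\le \E^{+}(A)^{3/4}$, this implies the claimed inequality.

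No serious obstacle arises: the whole content of the argument is the recognition that the second Cauchy--Schwarz factor is exactly $\No(X,A,A)$ minus its main term, after which Corollary~\ref{c:N(A)} finishes the job. The only minor care needed is to isolate the $x=0$ case and to verify that the cross terms in the balancing step really do vanish.
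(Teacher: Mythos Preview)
Your proof is correct and follows essentially the same route as the paper: balance $A$, apply Cauchy--Schwarz in the difference variable, and reduce the second factor to the $\Do^\times(A)$ estimate of Theorem~\ref{t:D_times}. The only cosmetic difference is that you package the second Cauchy--Schwarz via Corollary~\ref{c:N(A)} (recognising the factor as $\No(X,A,A)$ minus its main term), whereas the paper applies Cauchy--Schwarz once more directly to land on $\Do^\times(A)-|A|^8/p$; both yield the same $\E^{+}(A)^{5/8}$ bound, which is then weakened to $\E^{+}(A)^{3/4}$.
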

\begin{proof}
	Indeed, putting $f(x) = A(x) - |A|/p$, we have 
	$$
	\sigma:= \sum_{x\in X} \E^{+} (A,xA) = \sum_{\la} r_{A-A} (\la) r_{(A-A)X}(\la) 
	=
	\frac{|X| |A|^4}{p} + \sum_{\la} r_{A-A} (\la) r_{(f-f)X}(\la) \,.
	$$
	Hence by the H\"older inequality, we get
	$$
	\sigma - \frac{|X| |A|^4}{p} \ll
	\E^{+} (A)^{1/2} \left( \sum_{\la} r^2_{(f-f)X}(\la) \right)^{1/2}\,. 
	$$
	Using  the H\"older inequality one more time, we obtain  
	$$
	\sum_{\la} r^2_{(f-f)X}(\la) \le \E^{\times} (X)^{1/2} \left( \Do^{\times} (A)- \frac{|A|^8}{p} \right)^{1/2} \,.
	$$
	Applying Theorem \ref{t:D_times}, we  have finally
	$$
	\sigma \ll |A|^{5/4} \E^{+} (A)^{3/4} \E^{\times} (X)^{1/4} \log^{1/2} |A|  
	$$
	as required. 
	$\hfill\Box$
\end{proof}

\bigskip

Notice that it was proved in \cite[Corollary 8]{RSS} that for any $A\subseteq \F_p$, $|A| \le p^{3/5}$ there exist two disjoint sets $B$ and $C$ of $A$, each of cardinality $\ge |A|/3$, such that 
$\E^{\times} (B)^3 \E^{+} (C)^2 \lesssim |A|^{14}$. 
%
%

\bigskip

{\Large \bf Unconditional upper bounds for $\Do^\times (A)$, $\Do' (A)$
	and  multilinear exponential sums}
\label{sec:unconditional}

\bigskip

The aim of this section is to prove 

\begin{theorem}
	Let $A\subseteq \F_p$ be a set, $|A| \le p^{2846/4991}$. 
	Then for any $c<\frac{1}{434}$ one has
	$$
	\Do^\times (A) \ll |A|^{13/2 -c} \,.
	$$
	Further if 
	$|A| \le p^{48/97}$,
	then for any 
	$c_1< \frac{1}{192}$ 
	the following holds  
	$$
	\Do^\times (A) \ll |A|^{13/2 -c_1} \,.
	$$
	\label{t:D_uncond}
\end{theorem}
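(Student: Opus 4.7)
The plan is to combine Theorem~\ref{t:D_times} with an unconditional upper bound for $\E^{+}(A)$ obtained through a sum--product dichotomy. From Theorem~\ref{t:D_times} one has $\Do^\times(A) \ll |A|^8/p + |A|^5\,\E^{+}(A)^{1/2}\log^2|A|$. The range assumption $|A|\le p^{2846/4991}$ is arranged so that the main term $|A|^8/p$ is dominated by the target $|A|^{13/2-c}$ (this requires $|A|^{3/2+2c}\le p$, which holds for $c=1/434$ with room to spare). The whole problem therefore reduces to establishing the unconditional bound $\E^{+}(A)\ll |A|^{3-2c}$.

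To prove such an additive-energy bound I would run a dichotomy. If already $\E^{+}(A)\le |A|^{3-2c}$ we are done, so suppose otherwise and set $K:=|A|^{2c}$, whence $\E^{+}(A)\ge |A|^3/K$. Corollary~\ref{c:pre_eigen} (whose hypothesis $|A|K\le\sqrt p$ holds throughout our range, and this is precisely why the exponent $2846/4991$ appears) produces a subset $A_*\subseteq A$ with $|A_*|\gg|A|/K$ and the moment bounds $\E^{\times}(A_*)\lesssim K^{83/26}|A|^{32/13}$ and $\E^{\times}_3(A_*)\lesssim K^{23/4}|A|^{3}$.

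With these two multiplicative moments of $A_*$ now controlled, I would feed them into Theorem~\ref{thm:eigenval} in its multiplicative incarnation (the theorem is insensitive to the ambient group operation) to obtain an improved estimate on the multiplicative energy $\E^{\times}(A_*)$ itself. This bound, converted back to an additive estimate via the points/planes incidence bound of Theorem~\ref{t:Misha+} applied precisely as in the proof of Theorem~\ref{t:D_times} (or equivalently through Corollary~\ref{cor:weight_inc} with weights from a dyadic slice of $r_{A_*-A_*}$), yields a non-trivial upper bound on $\E^{+}(A_*)$. Pl\"unnecke--Ruzsa combined with $|A_*|\gg|A|/K$ then transports the gain from $A_*$ back to $A$, and the resulting inequality contradicts the standing assumption $\E^{+}(A)\ge|A|^3/K$ once $c<1/434$.

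For the sharper bound $c_1<1/192$ under the tighter hypothesis $|A|\le p^{48/97}$ the same scheme is followed, but Lemma~\ref{l:line/point_as} (the full asymptotic Szemer\'edi--Trotter-type incidence count, whose regime $|A||\mathcal L|\le p^2$ is exactly what the narrower range guarantees) replaces the weaker design bound at the incidence step. This delivers cleaner values of $D_1$ and $D_2$ in Theorem~\ref{thm:eigenval} and a more favourable trade-off, giving the claimed $c_1<1/192$. The main obstacle is the bookkeeping of the $K^{O(1)}$ losses that accumulate through the chain of Balog--Szemer\'edi--Gowers, Pl\"unnecke--Ruzsa and incidence applications: the savings at each step must be weighed against these compounding losses, and the peculiar denominators $434$ and $192$ come out of solving the resulting linear program in the exponents. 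A secondary but essential point is to verify the condition $|A|K\le \sqrt p$ at every invocation of Corollary~\ref{c:pre_eigen} — it is exactly this condition that forces the explicit range exponents $2846/4991$ and $48/97$ displayed in the theorem.
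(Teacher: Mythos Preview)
Your reduction in the first paragraph is fatally flawed: you claim the whole problem reduces to proving $\E^{+}(A)\ll |A|^{3-2c}$ unconditionally, but this is simply false --- an arithmetic progression has $\E^{+}(A)\gg |A|^{3}$. No amount of bookkeeping with $K^{O(1)}$ losses will rescue an inequality that fails for the most basic example. The rest of your argument inherits this problem: steps~5--7 attempt to turn a bound on $\E^{\times}(A_*)$ back into a bound on $\E^{+}(A_*)$, but small multiplicative energy does not imply small additive energy (again, arithmetic progressions), and Pl\"unnecke--Ruzsa concerns sumset cardinalities, not energies, so step~7 makes no sense as written. Also note that step~5 is circular: Corollary~\ref{c:pre_eigen} already \emph{is} the output of feeding $D_1,D_2$ into Theorem~\ref{thm:eigenval}.

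The paper's argument uses the dichotomy in the opposite direction. The key observation you are missing is inequality~\eqref{f:D_E_a}:
\[
\Do^\times(A)\le |A|^{4}\max_{\alpha\in A}\E^{\times}(A-\alpha)\,.
\]
So when $\E^{+}(A)$ is \emph{large}, one does not try to bound it; instead one applies Balog--Szemer\'edi--Gowers to extract $A'\subseteq A$ with small doubling, and then Lemma~\ref{l:E(A+a)_1} (for the first part) or Corollary~\ref{c:pre_eigen} (for the second) bounds $\E^{\times}(A'-\alpha)$, hence $\Do^\times(A')$ via~\eqref{f:D_E_a}. One then removes $A'$ and iterates, assembling the pieces through the norm property of $(\E^{\times})^{1/4}$. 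Optimising the threshold $K$ yields $K=|A|^{1/217}$ (first part) and $K=|A|^{1/96}$ (second), giving $c<1/434$ and $c_1<1/192$; the range conditions on $|A|$ come from Lemma~\ref{l:E(A+a)_1}'s hypothesis and the constraint $|A|K\le\sqrt p$ in Corollary~\ref{c:pre_eigen}, respectively --- not from Lemma~\ref{l:line/point_as} as you suggest.
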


We need two lemmas from \cite[Section 4.5]{MPR-NRS}.

\begin{lemma}
	Let $A\subseteq \F_p$ be a set and $|A+A| =M|A|$, $|A|\le p^{13/23} M^{25/92}$.  
	Then 
	for any $\a\in \F_p$ one has 
	$$
	\E^\times (A+\alpha) \lesssim M^{51/26} |A|^{32/13} \,.
	$$
	\label{l:E(A+a)_1}
\end{lemma}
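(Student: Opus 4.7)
The plan is to follow the strategy of the proof of Corollary~\ref{c:pre_eigen}: separately estimate $\E^\times_3(A+\alpha)$ and $\E^\times(A+\alpha,B)$ for an arbitrary test set $B$, and then combine them via the eigenvalue bound of Theorem~\ref{thm:eigenval}. The starting observation is that the hypothesis $|A+A|=M|A|$ is substantially stronger than the additive--energy lower bound $\E^+(A)\ge |A|^3/M$ used in Corollary~\ref{c:pre_eigen}: by Pl\"unnecke--Ruzsa it yields $|kA-lA|\le M^{k+l}|A|$ for every $k,l\ge 0$ and, in particular, $|A-A|\le M^2|A|$. This uniform control is what ultimately improves the exponent of $M$ from $83/26$ to $51/26$.

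The bound on $\E^\times(A+\alpha,B)$ comes from a point--plane incidence count in $\F_p^3$. The equation $(a_1+\alpha)b_1=(a_2+\alpha)b_2$ is an incidence between the points $(a,ab,b)$ for $(a,b)\in A\times B$ and the planes $Y+\alpha Z=(a'+\alpha)b'$ parameterised by $(a',b')\in A\times B$. Theorem~\ref{t:Misha+}, combined with the collinear--points control $|A-A|\le M^2|A|$, and after the usual passage to the subset $A_*\subseteq A$ via Theorem~\ref{t:BSzG_Schoen} with $K=M$ in the style of Corollary~\ref{c:pre_eigen}, gives an inequality of the shape
\[
\E^\times(A_*+\alpha,B)\ \ll\ \frac{M^{a_2'}|A|^2|B|^2}{p}\ +\ M^{a_2}|A||B|^{3/2} \,.
\]
Similarly, $\E^\times_3(A+\alpha)$ is interpreted as a count of collinear triples on lines through the point $(-\alpha,-\alpha)$, which by the same covering argument reduces to a $\Q$--type quantity $\Q(A,P,P,P)$ with $P\subseteq A-A$, $|P|\lesssim M|A|$ (collinearity is invariant under this translation). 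Lemma~\ref{l:Q(A,B,C,D)} then yields
\[
\E^\times_3(A_*+\alpha)\ \ll\ M^{a_1}|A|^3\log^2|A| \,.
\]
Feeding these into Theorem~\ref{thm:eigenval} produces
\[
\E^\times(A_*+\alpha)\ \ll\ M^{(6a_1+2a_2)/13}|A|^{32/13}\log^{12/13}|A| \,,
\]
and the exponents from the two steps balance so that $12a_1+4a_2=51$, which gives exactly $M^{51/26}$.

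Two final points close the scheme. The range $|A|\le p^{13/23}M^{25/92}$ is precisely that in which the second (non--$p$) terms dominate in both Rudnev--type bounds above, playing the same role as $|A|K\le\sqrt p$ in Corollary~\ref{c:pre_eigen}; the specific exponent $25/92$ arises from equating the $p$--term with the target $M^{51/26}|A|^{32/13}$ and solving. The main technical obstacle is the passage from $\E^\times(A_*+\alpha)$ to the full $\E^\times(A+\alpha)$, since $|A_*|$ may be only $|A|/M$: the naive comparison $\E^\times(A+\alpha)\le(|A|/|A_*|)^2\E^\times(A_*+\alpha)$ loses a fatal factor of $M^2$. The standard remedy is to iterate the extraction along a dyadic decomposition of $A$ by additive--energy density and sum the resulting geometric series of losses, or equivalently to use a sharper Schoen--type refinement of Theorem~\ref{t:BSzG_Schoen} in which the exceptional set $A\setminus A_*$ is negligible in multiplicative energy; the tightness of the numerology $25/92$ is exactly what lets this iteration close.
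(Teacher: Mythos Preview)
The paper does not prove this lemma at all: it simply quotes it from \cite[Section~4.5]{MPR-NRS}. So there is no ``paper's own proof'' to compare with beyond the remark that the argument there is of exactly the type you describe --- the eigenvalue method (Theorem~\ref{thm:eigenval}) fed by a Rudnev point--plane bound for $\E^\times(A+\alpha,B)$ and a $\Q$--type bound for $\E^\times_3(A+\alpha)$.

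Your core strategy is correct and your numerology check $12a_1+4a_2=51$ is on target (the actual values are $a_1=15/4$ and $a_2=3/2$). However, you introduce a serious and entirely self-inflicted complication: the passage to a subset $A_*\subseteq A$ via Theorem~\ref{t:BSzG_Schoen}. This step is needed in Corollary~\ref{c:pre_eigen} because there one only has $\E^+(A)\ge |A|^3/K$. Here the hypothesis $|A+A|=M|A|$ is much stronger and gives the covering property \emph{for all of $A$}, not a subset. Concretely, set $P=A+A+\alpha$, so $|P|=M|A|$. Then for \emph{every} $a\in A$ and \emph{every} $c\in A$ one has $a+\alpha=(a+c+\alpha)-c\in P-c$; in the language of Corollary~\ref{c:pre_eigen} this means the intersection $|A\cap (a_1+P)\cap(a_2+P)\cap(a_3+P)|$ equals the full $|A|$ for any $a_1,a_2,a_3\in A$, with no loss at all. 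Hence
\[
|A|^2\,\E^\times_3(A+\alpha)\ \le\ \Q(A,P,P,P)\ \ll\ (M|A|)^{15/4}|A|^{5/4}\log^2|A|,
\qquad
|A|^2\,\E^\times(A+\alpha,B)\ \ll\ (M|A|^2|B|)^{3/2},
\]
giving $D_1\lesssim M^{15/4}$ and $D_2\ll M^{3/2}$ directly for $A$ itself, and Theorem~\ref{thm:eigenval} closes with $M^{51/26}|A|^{32/13}$. The condition $|A|\le p^{13/23}M^{25/92}$ is exactly what makes the $p$--term in the $\E^\times(A+\alpha,B)$ bound negligible for the relevant range $|B|\le 4|A|^4/\E^\times(A+\alpha)$, just as you say.

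In short: your ``main technical obstacle'' --- getting from $A_*$ back to $A$ --- does not exist, and the dyadic iteration / sharper BSG refinement you propose to overcome it is unnecessary. Drop Theorem~\ref{t:BSzG_Schoen} from the argument and work with $P=A+A+\alpha$ throughout; then the proof is clean.
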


\begin{lemma}
	Let $A\subseteq \F_p$ be a set and $|AA| =M|A|$, $|A|\le p^{13/23} M^{10/23}$.  
	Then 
	for any $\a\in \F^*_p$ one has 
	$$
	\E^\times (A+\alpha) \lesssim M^{33/13} |A|^{32/13} \,.
	$$
	\label{l:E(A+a)_2}
\end{lemma}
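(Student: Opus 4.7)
\emph{Plan.} The statement is the multiplicative-doubling analog of Lemma \ref{l:E(A+a)_1}: the hypothesis $|A+A|\le M|A|$ is replaced by $|AA|\le M|A|$, but the target $\E^{\times}(A+\alpha)$ still mixes an additive shift with a multiplicative energy. Consequently the result cannot be obtained from Lemma \ref{l:E(A+a)_1} by formal duality, and one has to run the same eigenvalue-method template as in Corollary \ref{c:pre_eigen} and in the proof of Lemma \ref{l:E(A+a)_1}, with all incidence inputs set up directly for the set $B:=A+\alpha$ and with $|AA|\le M|A|$ entering through Pl\"unnecke--Ruzsa.

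\emph{Step 1 (geometric reformulation).} Expanding the defining equation $(a_1+\alpha)(a_2+\alpha)=(a_3+\alpha)(a_4+\alpha)$, one checks that $\E^{\times}(A+\alpha)$ equals, up to an $O(|A|^2)$ diagonal which is absorbed by the target, the number of ordered pairs of points of $A\times A$ that are collinear with the fixed point $(-\alpha,-\alpha)$. Similarly $\E^{\times}_3(A+\alpha)$ counts ordered triples of such collinear points, while for a general set $C$ the quantity $\E^{\times}(A+\alpha, C)$ becomes a points--planes incidence count for the equation $a_1 c_1 p_1=a_2 c_2 p_2$ with $a_i\in A$, $c_i\in C$, and $p_i$ in an auxiliary set $P$ to be chosen.

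\emph{Step 2 (the hypothesis through $P=A/A$).} Ruzsa's triangle inequality gives $|P|=|A/A|\le M^2|A|$, and because $A\subseteq a P$ for every $a\in A$, the ``richness'' $|A\cap aP|=|A|$ used in the BSzG--Schoen scheme of Theorem \ref{t:BSzG_Schoen} is automatic: one can take $A_\ast=A$ with no loss from the extraction step. Applying Theorem \ref{t:Misha+} to the point--plane configuration of Step 1 (with $|\mathcal P|,|\Pi|\lesssim |A|\cdot|C|\cdot|P|$) and Lemma \ref{l:Q(A,B,C,D)} to the collinear-triple configuration yields bounds of the form
$$
\E^{\times}(A+\alpha,C)\ \lesssim\ \frac{M^{d_2}|A|^{2}|C|^{2}}{p}+M^{d_2'}|A||C|^{3/2}\,,\qquad \E^{\times}_3(A+\alpha)\ \lesssim\ M^{d_1}|A|^{3}\log^2|A|\,,
$$
where the condition $|A|\le p^{13/23}M^{10/23}$ is chosen precisely so that the ``$p$-term'' in \eqref{f:Misha+_a} and the term $\T(P)$ in Lemma \ref{l:Q(A,B,C,D)} are both negligible.

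\emph{Step 3 (eigenvalue method).} Feeding the two estimates of Step 2 into Theorem \ref{thm:eigenval} (applied to $\E^{\times}$ in place of $\E^{+}$; the proof is the same by duality) produces $\E^{\times}(A+\alpha)\lesssim D_1^{6/13}D_2^{2/13}|A|^{32/13}$, and the exponents $d_1,d_2,d_2'$ arising from Step 2 are arranged to give $D_1^{6/13}D_2^{2/13}=M^{33/13}$, as required. The main obstacle I expect is the bookkeeping: verifying that the Pl\"unnecke saving obtained from using $P=A/A$ together with $A_\ast=A$ (rather than the general Theorem \ref{t:BSzG_Schoen} extraction) propagates cleanly through Rudnev's incidence bound and through the $6/13$--$2/13$ combination of Theorem \ref{thm:eigenval} to land exactly on $M^{33/13}$, rather than on the larger $M^{83/26}$ that one would get by applying Corollary \ref{c:pre_eigen} as a black box; the secondary delicate point is checking that $|A|\le p^{13/23}M^{10/23}$ is indeed the precise threshold at which both Rudnev error terms are absorbed.
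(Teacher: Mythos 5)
First, a point of comparison: the paper does not prove this lemma at all --- it is imported verbatim from \cite[Section 4.5]{MPR-NRS} (``We need two lemmas from \cite[Section 4.5]{MPR-NRS}''), so there is no in-paper argument to measure you against; the closest in-paper model for the required work is the proof of Corollary~\ref{c:pre_eigen}, which shows the level of explicit bookkeeping such a statement demands.

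Measured against that standard, your proposal identifies the right toolkit (energy of the shift as collinearity through $(-\alpha,-\alpha)$; the covering $A\subseteq aP$ with $P=A/A$, $|P|\le M^2|A|$; Theorem~\ref{t:Misha+} and Lemma~\ref{l:Q(A,B,C,D)}; then Theorem~\ref{thm:eigenval}), but it is a plan, not a proof: the decisive quantitative content is exactly what you leave unverified. You never derive the two inputs $\E^{\times}_3(A+\alpha)\lesssim M^{d_1}|A|^3$ and $\E^{\times}(A+\alpha,B)\lesssim M^{d_2}|A||B|^{3/2}$ with constants that are pure powers of $M$, and the assertion that the exponents ``are arranged to give'' $D_1^{6/13}D_2^{2/13}=M^{33/13}$ (i.e.\ $6d_1+2d_2=33$), together with the claim that $|A|\le p^{13/23}M^{10/23}$ suppresses all $p$-terms, is precisely the lemma being assumed. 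Moreover, the route you gesture at does not obviously produce $|A|$-free constants: substituting $a_i=s_i/a$ with $s_i\in AA$ and applying Theorem~\ref{t:Misha+} to the resulting bilinear equation gives a main term of order $(M|A||B|)^{3/2}=M^{3/2}|A|^{3/2}|B|^{3/2}$, i.e.\ $D_2\approx M^{3/2}|A|^{1/2}$, and feeding that into Theorem~\ref{thm:eigenval} lands on $|A|^{33/13}$, not $|A|^{32/13}$; eliminating this $|A|^{1/13}$ loss is the real work, and nothing in Step~2 indicates how. Likewise, the reduction of $\E^{\times}_3(A+\alpha)$ to a $\Q$-type count cannot be copied from Corollary~\ref{c:pre_eigen}, because the triples here must be collinear with the \emph{fixed} point $(-\alpha,-\alpha)$ and the dilation $x\mapsto x/a$ moves that pencil of lines, so this step also has to be rebuilt rather than quoted. (A small additional slip: the equation $a_1c_1p_1=a_2c_2p_2$ in your Step~1 is not what $(a_1+\alpha)c_1=(a_2+\alpha)c_2$ becomes under any of the substitutions you describe.) In short, the framework is plausible and consonant with \cite{MPR-NRS}, but as written the argument has a genuine gap exactly where the exponents $33/13$ and the threshold $p^{13/23}M^{10/23}$ must be produced.
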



We have a connection between  the quantities $\E^{\times} (A-\a)$  and $\Do^\times (A)$, namely 
\begin{equation}\label{f:D_E_a}
\Do^\times (A) \le |A|^4 \max_{\a\in A} \E^{\times} (A-\a) \,.
\end{equation}
Indeed, just fix four variables $a_2,a'_2, a_4,a'_4$ in (\ref{def:D_times}).

\bigskip

Now we are ready to prove Theorem \ref{t:D_uncond}.

\bigskip

\begin{proof}
	Let $K$ be a parameter and $D=\Do^\times (A) - \frac{|A|^8}{p}$. 
	Our proof is a sort of an algorithm. 
	If $D \lesssim  |A|^{13/2} /K^{1/2}$, then we are done.
	If not, then $\E^{+} (A) \gtrsim |A|^3 /K$ because otherwise  
	by Theorem \ref{t:D_times} we have $D \lesssim |A|^{13/2}/ K^{1/2}$.
	So, we suppose that $\E^{+} (A) \gtrsim |A|^3 /K$. 
	Applying the Balog--Szemer\'{e}di--Gowers Theorem (see the required form  of this result in \cite{BG}), we find $A'\subseteq A$,  $|A'| \gtrsim |A|/K$ such that $|A'+A'| \lesssim K^4 |A'|^3 |A|^{-2}$. 
	By Lemma \ref{l:E(A+a)_1} and estimate (\ref{f:D_E_a}), we have
	$$
	\Do^\times (A') \lesssim |A|^4 |A'|^{5/2-1/26} K^{102/13} M^{51/13} \,,
	$$
	where $M=|A'|/|A|$. 
	The condition of the lemma takes place if 
	\begin{equation}\label{tmp:27.10_1}
	|A'| \le p^{13/23} (K^4 (|A'|/|A|)^2 )^{25/92} 
	\end{equation}
	and we will check 
	(\ref{tmp:27.10_1}) 
	later. 
	After that consider $A\setminus A'$ and continue our algorithm with this set. 
	We obtain disjoint sets $A_1=A'$, $A_2, \dots$ and, clearly, $\sum_j |A_j| \le |A|$. 
	Finally, in view of (\ref{f:D_E_a}) and the norm property of $\E^{\times} (\cdot)$,  we 
	get 
	an upper bound for $D$, namely,  
	$$
	D \lesssim |A|^{13/2} K^{-1/2} 
	+
	K^{102/13} |A|^4 \left(\sum_j (|A_j|^{5/2-1/26}  (|A_j|/|A|)^{51/13})^{1/4} \right)^{4}
	\lesssim
	$$
	$$
	\lesssim
	|A|^{13/2} K^{-1/2} + K^{102/13} |A|^4 |A|^{5/2-1/26} \,.
	$$
	Optimizing over $K$, that is, taking $K = |A|^{1/217}$ we obtain the required bound
	because condition (\ref{tmp:27.10_1}) follows from  
	$$
	|A'|^{42} |A|^{50} \le |A|^{92} \le p^{52} K^{100}
	$$
	or, in other words, from $|A| \le p^{(52 + 100/217)/92} = p^{2846/4991}$.
	Also, in view of the condition  $|A| \le p^{2846/4991}$ the term $|A|^8/p$ is negligible.

	Similarly, using bound (\ref{f:pre3}) of Corollary \ref{c:pre_eigen} and the same calculations, we see that  
	$$
	D \lesssim |A|^{13/2} K^{-1/2} 
	+
	K^{83/26} |A|^4 |A|^{5/2-1/26}  + 
	K^{83/26} \left(\sum_j |A_j| (|A|^{5/2-1/26} )^{1/4} \right)^{4}
	\lesssim
	$$
	$$
	\lesssim
	|A|^{13/2} K^{-1/2} + K^{83/26} |A|^4 |A|^{5/2-1/26} \,.
	$$
	Optimizing over $K$, that is, taking $K = |A|^{1/96}$ we obtain the required bound
	because the condition $|A|K \le \sqrt{p}$ follows from  $|A| \le p^{48/97}$.
	This completes the proof. 
	$\hfill\Box$
\end{proof}

\begin{remark}
	The same arguments, combining with Lemma \ref{l:E(A+a)_2} (or it refinement from \cite{MRSS}) 
	allow us to prove that either
	$\Do'(A) \ll |A|^{13/2-c}$ or $\Do'(A+\a) \ll |A|^{13/2-c}$ for any $\a\neq 0$ and all sufficiently small sets $A$
	(also, see 
	Remark \ref{r:D'(A)}).
	Here $c>0$ is an absolute constant.
\end{remark}


Given three sets $X,Y,Z\subseteq \F_p$ and three complex 
weights $\a = (\a_x)_{x\in X}$, $\beta = (\beta_y)_{y\in Y}$, $\gamma = (\gamma_x)_{x\in Z}$ all bounded by one, put
$$
S(X,Y,Z;\a,\beta,\gamma) = \sum_{x \in X,\, y\in Y,\, z\in Z} \a_x \beta_y \gamma_z e(xyz) \,.
$$
Similarly, for some complex weights $\rho = (\rho_{x,y})$, $\sigma = (\sigma_{x,z})$, $\tau = (\tau_{y,z})$ all  bounded by one, we define
$$
T(X,Y,Z;\rho,\sigma,\tau) = \sum_{x \in X,\, y\in Y,\, z\in Z} \rho_{x,y} \sigma_{x,z} \tau_{y,z} e(xyz) \,.
$$
Such sums were studied in \cite{PS}. 
Using Corollary \ref{c:N(A)} and Theorem \ref{t:D_uncond}, we improve \cite[Theorems 1.3]{PS} and refine  
\cite[Theorems 1.1]{PS} for sets with small energies 
(similar bound can be obtained for a 
correspondent 
sum with four variables, see \cite[Theorem 1.2, 1.4]{PS} 
for large range). 

\begin{corollary}
	Let $|X| \ge |Y| \ge |Z|$.
	Then 
	$$
	S(X,Y,Z;\a,\beta,\gamma) \ll \log^{1/4} |Y| \cdot p^{1/4} |X|^{3/4} |Y|^{5/8} |Z|^{1/2} (\E^{\times} (Z))^{1/8} (\E^{+} (Y))^{1/16} 
	+ |X|^{3/4} |Y| |Z|
	\,,
	$$
	and if $|Y| < p^{48/97}$, then 
	$$
	T(X,Y,Z;\rho,\sigma,\tau) \ll p^{1/8} |X|^{7/8} |Y|^{29/32} |Z|^{29/32} (|Y||Z|)^{-1/3072} \,.
	$$
	\label{c:PS_new}
\end{corollary}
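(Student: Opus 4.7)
The plan is to follow the Cauchy-Schwarz scheme of Petridis and Shparlinski \cite{PS} but feed in the sharper energy bounds of Corollary \ref{c:N(A)} and Theorem \ref{t:D_uncond}.

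For the estimate on $S$, I would write $S=\sum_x \a_x g(x)$ with $g(x)=\sum_{y,z}\beta_y\gamma_z e(xyz)$ and apply Cauchy-Schwarz twice, trivially extending to $x\in\F_p$ in the second step, to obtain
\[
|S|^4\le |X|^3\sum_{x\in\F_p}|g(x)|^4 = p|X|^3\cdot N,
\]
where $N$ is the weighted count of eight-tuples $(y_i,z_i)_{i=1}^4\in Y^4\times Z^4$ with $y_1 z_1-y_3 z_3=y_4 z_4-y_2 z_2$. A further Cauchy-Schwarz in the $y$-variables absorbs the right factors of $|Y|$ and dominates $N$ by the count $\No(Z,Y,Y)$ of $z(y-y')=z'(y''-y''')$. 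Corollary \ref{c:N(A)} then gives
\[
\No(Z,Y,Y)-\frac{|Y|^4|Z|^2}{p}\ll (\E^{\times}(Z))^{1/2}(\E^{+}(Y))^{1/4}|Y|^{5/2}\log|Y|.
\]
Substituting and taking a fourth root recovers the two-term bound: the $|X|^{3/4}|Y||Z|$ contribution comes from the main term $|Y|^4|Z|^2/p$, while the other arises from the error, with the $p^{1/4}$ factor absorbed from the $p|X|^3$ prefactor.

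For the estimate on $T$ the weights $\rho_{x,y}$, $\sigma_{x,z}$, $\tau_{y,z}$ each couple two of the three variables, so one more Cauchy-Schwarz is required. I would first apply Cauchy-Schwarz in the pair $(x,y)$, using $\|\rho\|_2^2\le |X||Y|$, to get
\[
|T|^2\le |X||Y|\sum_{x,y}\Bigl|\sum_z \sigma_{x,z}\tau_{y,z}e(xyz)\Bigr|^2.
\]
Expanding the square produces a phase $e(xy(z-z'))$ and weights coupling the pairs $(x,z),(x,z'),(y,z),(y,z')$. A second Cauchy-Schwarz in $(x,z,z')$, bounding the $\sigma,\tau$ weights by $1$, reorganises the remaining sum after relabelling $u=z-z'$, $v=y-y'$ into a count of solutions to $(y_1-y_2)(z_1-z_2)=(y_3-y_4)(z_3-z_4)$, that is, into $\Do^\times$ applied to a set of size comparable to $|Y|$ and $|Z|$. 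Invoking Theorem \ref{t:D_uncond}, which is applicable under the hypothesis $|Y|<p^{48/97}$, gives a saving of the form $|\cdot|^{-1/192}$ in this $\Do^\times$ quantity. Because the original sum $T$ effectively enters to the sixteenth power (two successive squarings followed by a final fourth root), this translates into a saving of $(|Y||Z|)^{-1/3072}$ in $T$ itself, consistent with the identity $1/3072=1/(16\cdot 192)$.

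The main obstacle is the combinatorial bookkeeping after the two Cauchy-Schwarz applications: one must make sure that $\sigma_{x,z}$ and $\tau_{y,z}$ can each be absorbed by crude $\ell^\infty$ or $\ell^2$ bounds at the correct stage, and that the resulting count genuinely matches the equation $(y_1-y_2)(z_1-z_2)=(y_3-y_4)(z_3-z_4)$ underpinning $\Do^\times$, rather than some weaker fourth-moment variant which would only give an $\E^\times$-type estimate. The template of \cite{PS} handles this symbol manipulation; the novelty here is only that the sharper sum-product inputs from Corollary \ref{c:N(A)} and Theorem \ref{t:D_uncond} are inserted in place of weaker ones.
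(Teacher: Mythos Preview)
Your overall strategy is the one the paper intends---follow the Cauchy--Schwarz template of Petridis--Shparlinski and plug in Corollary~\ref{c:N(A)} and Theorem~\ref{t:D_uncond}---but two steps of your outline are inaccurate.

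\textbf{The bound for $S$.} After $|S|^4\le p|X|^3 N$ with $N$ the weighted count of $y_1z_1+y_3z_3=y_2z_2+y_4z_4$, the Cauchy--Schwarz that connects $N$ to $\No(Z,Y,Y)$ is in the $z$-variables, not the $y$-variables, and the factor absorbed is $|Z|^2$, not a power of $|Y|$. Concretely, writing $R(\mu)=\sum_{z_1,z_3}\gamma_{z_1}\gamma_{z_3}\bigl[\sum_{y_1z_1+y_3z_3=\mu}\beta_{y_1}\beta_{y_3}\bigr]$ and applying Cauchy--Schwarz in $(z_1,z_3)$ gives $N=\sum_\mu|R(\mu)|^2\le |Z|^2\sum_{z_1,z_3}\bigl|\{z_1(y_1-y_2)=z_3(y_4-y_3)\}\bigr|=|Z|^2\,\No(Z,Y,Y)$. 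Inserting Corollary~\ref{c:N(A)} then yields exactly the stated two-term bound. Your version (``Cauchy--Schwarz in the $y$-variables\dots factors of $|Y|$'') would not land on $\No(Z,Y,Y)$.

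\textbf{The bound for $T$.} Your outline stops one Cauchy--Schwarz too early. After the first application in $(x,y)$ and the second in $(x,z,z')$ one obtains $|T|^4\le |X|^3|Y|^2|Z|^2\,B$ with
\[
B=\sum_{x\in X}\,\sum_{z,z'\in Z}\Bigl|\sum_{y\in Y}\tau_{y,z}\bar\tau_{y,z'}e\bigl(xy(z-z')\bigr)\Bigr|^2=\sum_{x\in X}\sum_\mu W(\mu)\,e(x\mu),
\]
where $W(\mu)$ is a weight supported on a \emph{single} product $(y-y')(z-z')=\mu$. This is not yet a $\Do^\times$-count; the variable $x$ is still present. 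The missing step is a third Cauchy--Schwarz in $x$ followed by extension to $\F_p$ and Parseval: since each summand $c(x)=\sum_\mu W(\mu)e(x\mu)\ge0$, one has $B^2\le |X|\sum_{x\in\F_p}c(x)^2=p|X|\sum_\mu|W(\mu)|^2\le p|X|\,\Do^\times(Y,Z)$. This gives $|T|^8\le p|X|^7|Y|^4|Z|^4\,\Do^\times(Y,Z)$, and then $\Do^\times(Y,Z)\le(\Do^\times(Y)\Do^\times(Z))^{1/2}$ together with Theorem~\ref{t:D_uncond} produces the saving $(|Y||Z|)^{-1/3072}$. Your exponent bookkeeping ($1/3072=1/(16\cdot192)$) is correct, but the phrase ``two successive squarings followed by a final fourth root'' does not match the three applications actually required; as written, your second step leaves only one difference product and cannot be ``reorganised'' into $\Do^\times$.
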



A series of applications of upper bounds for $S(X,Y,Z;\a,\beta,\gamma)$, $T(X,Y,Z;\rho,\sigma,\tau)$ can be found in the same paper \cite{PS}. 
Now we obtain a quantitative form of the main result of 
\cite{B_multilinear}.



\begin{theorem}
	Let $X,Y,Z \subseteq \F_p$ be arbitrary sets.
	Then for any $k \ge 2$ one has
	\begin{equation}\label{f:multilinear}
	\sum_{x \in X,\, y\in Y,\, z\in Z} e(xyz) \ll |X||Y||Z| 
	\left( |Z|^{-2^{-(k+1)}}  + \left(\frac{p}{|X||Y||Z|} \right)^{2^{-(k+1)}}  (|X||Y|)^{2^{-2^k}} \right) \,.
	\end{equation}
	More generally, for any non--negative functions $\a(x)$, $\beta (y)$, $\gamma (z)$  the following holds
	$$
	\sum_{x,y,z}  \a(x) \beta (y) \gamma (z) e(xyz) \ll p^{o(1)} \|\a \|_1 \| \beta \|_1 \| \gamma \|_1 \times
	$$
	\begin{equation}\label{f:multilinear'} 
	\times
	\left( \left( \frac{\| \gamma\|^2_2}{\| \gamma\|^2_1} \right)^{-2^{-(k+1)}}  + \left(\frac{p \|\a\|^2_2 \|\beta\|^2_2 \|\gamma\|^2_2}{\|\a\|^2_1 \|\beta\|^2_1 \| \gamma\|^2 _1} \right)^{2^{-(k+1)}}  \left( \frac{\|\a\|_1 \| \beta \|_1}{\|\a\|_2 \|\beta \|_2} \right)^{2^{-2^k}} \right) \,.
	\end{equation} 
	\label{t:multilinear}	
\end{theorem}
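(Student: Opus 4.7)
The plan is to apply an iterated H\"older inequality in the $z$-variable and reduce the problem to bounding the additive energy $\T^{+}_m(r_{\a\beta})$ of the multiplicative representation function $r_{\a\beta}(u):=\sum_{xy=u}\a(x)\beta(y)$, for the two exponents $m=2^k$ and $m=2^{k+1}$. These energies are precisely the quantities $\Do'_m(\a,\beta)$ discussed in Remark~\ref{r:D'(A)}, so the required bounds will come from the point/plane incidence estimate of Theorem~\ref{t:D_times}.

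Set $\psi(z):=\sum_{x,y}\a(x)\beta(y)e(xyz)$, so that the sum in question is $S=\sum_z\gamma(z)\psi(z)$. H\"older's inequality applied in the $z$-summation with exponent $2^{k+1}$ gives
\[
|S|^{2^{k+1}}\le\|\gamma\|_1^{2^{k+1}-1}\sum_z\gamma(z)\,|\psi(z)|^{2^{k+1}}.
\]
Writing $\psi(z)^{2^k}=\sum_u\Phi(u)\,e(zu)$, where $\Phi(u)$ is the $2^k$-fold additive convolution of $r_{\a\beta}$ evaluated at $u$ --- equivalently, the weighted count of tuples $(x_1,y_1,\ldots,x_{2^k},y_{2^k})$ with $x_1y_1+\cdots+x_{2^k}y_{2^k}=u$ --- one has $|\psi(z)|^{2^{k+1}}=\sum_{u,v}\overline{\Phi(u)}\Phi(v)e(z(v-u))$, and interchanging sums yields
\[
\sum_z\gamma(z)\,|\psi(z)|^{2^{k+1}}=\sum_t(\Phi\c\Phi)(t)\,\FF{\gamma}(t),
\]
with $(\Phi\c\Phi)(t):=\sum_u\overline{\Phi(u)}\Phi(u+t)$.

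Split the right-hand side into the diagonal contribution ($t=0$) and the off-diagonal. The diagonal equals $\|\Phi\|_2^2\,\|\gamma\|_1=\|\gamma\|_1\,\T^{+}_{2^k}(r_{\a\beta})$ by the definition of $\T^{+}_{2^k}$. For the off-diagonal, apply Cauchy--Schwarz together with Parseval's identity $\sum_t|\FF{\gamma}(t)|^2=p\|\gamma\|_2^2$ and the identity $\sum_t|(\Phi\c\Phi)(t)|^2=\T^{+}_{2^{k+1}}(r_{\a\beta})$ (which follows from the Fourier formula~\eqref{def:T_k}, using that $\Phi$ is a $2^k$-fold convolution of $r_{\a\beta}$) to obtain
\[
\Bigl|\sum_{t\ne 0}(\Phi\c\Phi)(t)\,\FF{\gamma}(t)\Bigr|\le\bigl(p\|\gamma\|_2^2\bigr)^{1/2}\,\T^{+}_{2^{k+1}}(r_{\a\beta})^{1/2}.
\]

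To finish, invoke Theorem~\ref{t:D_times} in the form of Remark~\ref{r:D'(A)} for the two exponents $m=2^k$ and $m=2^{k+1}$, each time giving
\[
\T^{+}_m(r_{\a\beta})\ll\frac{(\|\a\|_1\|\beta\|_1)^{2m}}{p}+p^{o(1)}\,(\|\a\|_1\|\beta\|_1)^{2m-2}(\|\a\|_2\|\beta\|_2)^{2-2^{-m+2}}\,\E^{\times}(\a,\beta)^{1/2^{m-1}}.
\]
Combining this with $\E^{\times}(\a,\beta)\le\|\a\|_1\|\beta\|_1\|\a\|_2\|\beta\|_2$ from~\eqref{f:E_ab_norms}, substituting back into the bound for $|S|^{2^{k+1}}$, and extracting the $2^{k+1}$-th root recovers the two terms of \eqref{f:multilinear'} (and \eqref{f:multilinear} upon specializing to indicator weights, where $\|\a\|_i=|X|^{1/i}$ and similarly for $\beta,\gamma$). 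The main obstacle lies in this last step: the trivial estimate $\T^{+}_m(f)\le\|f\|_1^{2m-2}\|f\|_2^2$ is too weak to produce the near-unit correction $(|X||Y|)^{2^{-2^k}}$ that enables a saving throughout the regime $|X||Y||Z|\gtrsim p^{1+\delta}$; only the doubly exponentially decaying factor $\E^{\times}(\a,\beta)^{1/2^{m-1}}$ coming from Theorem~\ref{t:D_times} is strong enough.
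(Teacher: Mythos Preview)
Your reduction to the quantities $\Do'_{m}(\a,\beta)=\T^+_m(r_{\a\beta})$ is the right idea, and the Fourier manipulation is correct. The gap is in the last step: your off--diagonal Cauchy--Schwarz picks up the \emph{main term} of $\T^+_{2^{k+1}}(r_{\a\beta})$, and that term is too large to be absorbed into \eqref{f:multilinear}. Concretely, for indicator weights the main term of $\Do'_{2^{k+1}}(X,Y)$ is $(|X||Y|)^{2^{k+2}}/p$, so your off--diagonal contribution to $|S|^{2^{k+1}}$ is at least
\[
|Z|^{2^{k+1}-1}(p|Z|)^{1/2}\bigl((|X||Y|)^{2^{k+2}}/p\bigr)^{1/2}=(|X||Y||Z|)^{2^{k+1}}|Z|^{-1/2},
\]
which after taking the $2^{k+1}$--th root gives $|X||Y||Z|\cdot|Z|^{-2^{-(k+2)}}$. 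This exceeds both terms on the right of \eqref{f:multilinear}: take for instance $|X|=|Y|=|Z|=p^{9/10}$ and $k=2$, where the right side of \eqref{f:multilinear} saves $p^{-0.1}$ but your term saves only $p^{-0.9/16}\approx p^{-0.056}$. (Restricting to $t\ne 0$ does not help: the $t=0$ contribution to $\sum_t(\Phi\c\Phi)(t)^2$ is $(\Do'_{2^k})^2\sim(|X||Y|)^{2^{k+2}}/p^2$, smaller by a factor $p$ than the main term, so no cancellation occurs.)

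The paper's argument sidesteps this by a simpler final step. After H\"older with exponent $2^k$ (one less than you use), it applies Cauchy--Schwarz in the physical variable $z$ to drop the restriction $z\in Z$:
\[
\Bigl(\sum_{z\in Z}|\psi(z)|^{2^k}\Bigr)^2\le |Z|\sum_{z\in\F_p}|\psi(z)|^{2^{k+1}}=|Z|\cdot p\,\Do'_{2^k}(X,Y),
\]
yielding $|S|^{2^{k+1}}\le |Z|^{2^{k+1}-1}\,p\,\Do'_{2^k}(X,Y)$ directly. Only a single invocation of $\Do'_{2^k}$ is needed, and its two pieces (main term and error) produce exactly the two terms of \eqref{f:multilinear}. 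Your diagonal/off--diagonal split is an unnecessary detour that forces you to control $\Do'_{2^{k+1}}$ as well, and the Cauchy--Schwarz you apply to the off--diagonal part is too crude for that.
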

\begin{proof}
	Let $S$ be the sum from \eqref{f:multilinear}.
	Using the Cauchy--Schwarz inequality several times, we get for any $k$
	$$
	|S|^{2^k} \le |Z|^{2^k-1} \sum_{\la, z} (r_{XY} *_{2^k} r_{XY}) (\la) Z(z) e(\la z) \,.
	$$ 
	Applying the Cauchy--Schwarz inequality  one more time, combining with the Parseval identity, we obtain
	\begin{equation}\label{tmp:25.02_1}
	|S|^{2^{k+1}} \le |Z|^{2^{k+1}-2} \Do'_{2^k} (X,Y) p |Z| \,. 
	\end{equation}
	Put $l=2^k$. 
	By an analogue of Theorem \ref{t:D_times} for $\Do'_k (X,Y)$, see Remark \ref{r:D'(A)}, and bound (\ref{f:E_CS}), we have 
	$$
	|S|^{2^{k+1}} \ll p|Z|^{2^{k+1}-1} \left( \frac{(|X||Y|)^{2^{k+1}}}{p} + (\log |X||Y|)^8  \left( \frac{\E^\times (X,Y)}{|X||Y|} \right)^{2^{-l+1}}  
	(|X||Y|)^{2^{k+1}-1} \right)  
	$$
	$$
	\le 
	p|Z|^{2^{k+1}-1} \left( \frac{(|X||Y|)^{2^{k+1}}}{p} + (\log |X||Y|)^8  (|X||Y|)^{2^{-l}}  
	(|X||Y|)^{2^{k+1}-1} \right) 
	$$
	as required. 
	Similarly, to obtain  \eqref{f:multilinear'} just	use Corollary \ref{cor:weight_inc}
	(actually, in this case we do not need in sharp asymptotic formulae but just in the incidences results as in Theorem \ref{t:Misha+})  
	to estimate the required number of incidences with weights $\a$, namely,
	$$
	\Do'_{2^k} (\a,\beta)  
	\ll 
	L^8 \cdot \frac{(\|\a \|_1 \|\beta \|_1)^{2^{k+1}}}{p} + 
	L^8 \cdot 
	(\| \a\|_1 \|\beta\|_1)^{2^{k+1}-2}
	(\| \a\|_2 \|\beta\|_2)^{2-2^{-l+2}}
	(\E^\times (\a,\beta))^{2^{-l+1}} 
	\ll
	$$
	$$
	\ll 
	L^8 \cdot (\|\a \|_1 \|\beta \|_1)^{2^{k+1}} \left( \frac{1}{p} + 
	\frac{\|\a\|^2_2 \|\beta\|^2_2}{\|\a\|^2_1 \|\beta\|^2_1} \left( \frac{\|\a\|_1 \| \beta \|_1}{\|\a\|_2 \|\beta \|_2} \right)^{2^{-l+1}} \right) 
	$$
	and apply the previous arguments. 
	Here we have put 
	$$L = \log (\|\a\|_1 \|\beta\|_1 \| \gamma\|_1 (\|\a\|_2 \|\beta\|_2 \| \gamma\|_2)^{-1} ) \ll \log p$$ 
	and also we have used the bound  $\E^\times (\a,\beta) \le \|\a\|_2 \|\beta \|_2 \| \a \|_1 \| \beta \|_1$ as in  \eqref{f:E_ab_norms}.
	This completes the proof. 
	$\hfill\Box$
\end{proof}

\bigskip

One can obtain Theorem \ref{t:multilinear} for general weights like in \cite{B_multilinear}. 
Also, it is known that	an analogue of our result (and un upper bound for  the multilinear exponential sums as well) for more than three sets follows from the case of three sets, see \cite[Section 8]{B_multilinear}.
We demonstrate this just for exponential sums with  three and four sets, see explicit bounds \eqref{f:etropy_exp_1}, \eqref{f:etropy_exp_2} below.
In general, one can use a simple inequality which takes place for any even $k_j$ and non--negative functions $\a_j$ 
$$
\left| \sum_{a_1, \dots, a_r} \a_1 (a_1) \dots \a_r (a_r) e(a_1 \dots a_r) \right|^{k_1 \dots k_r} 
\le
$$
$$
\le
\frac{(\prod_{j=1}^{r} \| \a_j \|_1)^{k_1 \dots k_r}}{\prod_{j=1}^r \|\a_j\|^{k_j}_1}  
\sum_{a_1, \dots, a_r} (\a_1 *_{k_1} \a_1) (a_1) \dots (\a_r *_{k_r} \a_r) (a_r) e(a_1 \dots a_r)
$$
and insure that the dependence on $r$ in the saving has the form  $p^{-\d/(C_1 \log (C_2 r/\d))^r}$, 
where $C_1,C_2 >0$ are  absolute constants. 
In this case we do not need in sharp asymptotic formulae but just the incidences results as in Theorem \ref{t:Misha+}. 
The dependence in \cite[Theorem A]{B_multilinear} was $p^{-(\d/r)^{Cr}}$, where $C>0$ is another absolute constant,
so our result is better.

\begin{corollary}
	Let $X,Y,Z \subseteq \F_p$ be arbitrary sets such that for some $\d >0$ the following holds
	\begin{equation}\label{f:c_mult_3}
	|X| |Y| |Z| \ge p^{1+\d} \,.
	\end{equation}
	Then 
	\begin{equation}\label{f:etropy_exp_1}
	\sum_{x \in X,\, y\in Y,\, z\in Z} e(xyz) \ll |X||Y||Z| \cdot p^{-\frac{\d }{8 \log (8/\delta)+4}} \,.
	\end{equation}
	Finally, let $r=4$, then for any sets $A_1,\dots,A_r \subseteq \F_p$ with $\prod_{j=1}^r |A_j| \ge p^{1+\d}$ one has
	\begin{equation}\label{f:etropy_exp_2}
	\sum_{a_1\in A_1,\, \dots,\, a_r \in A_r} e(a_1 \dots a_r) \ll \prod_{j=1}^r |A_j| \cdot p^{-\frac{\d }{16 \lceil 0.5 \log (200/\d) \rceil^2}} \,.
	\end{equation}
	\label{c:etropy_exp}
\end{corollary}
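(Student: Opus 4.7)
The plan is to deduce both inequalities from Theorem \ref{t:multilinear} by optimising the parameter $k$, and to obtain the four-variable case via the H\"older reduction displayed in the text just before the statement of the corollary.

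For \eqref{f:etropy_exp_1}, since $xyz$ is symmetric in its three arguments, I would first assume that $|Z|=\max(|X|,|Y|,|Z|)$, so $|Z|\ge(|X||Y||Z|)^{1/3}\ge p^{(1+\d)/3}$ by \eqref{f:c_mult_3}. Substituting this, $p/(|X||Y||Z|)\le p^{-\d}$, and the trivial $|X||Y|\le p^2$ into \eqref{f:multilinear} yields
$$
\Bigl|\sum_{x,y,z} e(xyz)\Bigr|\ll |X||Y||Z|\Bigl(p^{-(1+\d)2^{-(k+1)}/3}+p^{-\d\cdot 2^{-(k+1)}+2\cdot 2^{-2^k}}\Bigr).
$$
The plan is to choose $k$ just large enough that $2\cdot 2^{-2^k}\le\tfrac{1}{2}\d\cdot 2^{-(k+1)}$, i.e.\ $2^{2^k-k-3}\ge 1/\d$; such a $k$ is of order $\log\log(1/\d)$, and a direct calculation confirms that the corresponding $2^{k+1}$ can be arranged to be at most $8\log(8/\d)+4$. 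Both terms are then bounded by $p^{-\d/(8\log(8/\d)+4)}$, and no tool beyond \eqref{f:multilinear} is required.

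For \eqref{f:etropy_exp_2}, I would apply the H\"older inequality displayed in the text with $r=4$ and $k_1=k_2=k_3=k_4=2$, obtaining
$$
|S_4|^{16}\le\Bigl(\prod_{j=1}^4|A_j|\Bigr)^{14}\sum_{a_1,\dots,a_4}\prod_{j=1}^4 r_{A_j A_j}(a_j)\,e(a_1 a_2 a_3 a_4),
$$
and then pair two of the variables multiplicatively by setting $b=a_3 a_4$ with combined weight $\tilde\gamma(b)=\sum_{a_3 a_4=b}r_{A_3 A_3}(a_3) r_{A_4 A_4}(a_4)$. The resulting three-variable weighted sum is then controlled by \eqref{f:multilinear'}. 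Optimising $k$ as above but accounting for the sixteenth root and for the extra $\|\cdot\|_2/\|\cdot\|_1$ factor coming from the convolved weights introduces one additional logarithm in the denominator, producing the stated $\lceil 0.5\log(200/\d)\rceil^2$ factor. Iterating the same H\"older step $r-3$ times would give the announced general dependence $p^{-\d/(C_1\log(C_2 r/\d))^r}$.

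The main technical obstacle is balancing the two terms in \eqref{f:multilinear}: the condition $2^k\ge \log(1/\d)+k+O(1)$ is self-referential, and one has to track constants carefully to extract the clean denominator $8\log(8/\d)+4$. For the four-variable case one must in addition verify that, after the H\"older step, the hypothesis $|X||Y||Z|\ge p^{1+\d'}$ of the three-variable bound still holds for a suitable $\d'$ comparable to $\d$; this reduces to checking elementary bounds on the $L^1$ and $L^\infty$ norms of the convolved weights $r_{A_j A_j}$ and of $\tilde\gamma$. Once these bookkeeping points are settled, all remaining steps are direct substitution into Theorem \ref{t:multilinear}.
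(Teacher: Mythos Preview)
Your three-variable argument is correct and essentially matches the paper's: both apply Theorem~\ref{t:multilinear} and choose $l=2^k$ so that $2^l/l\ge 8/\d$. The only difference is in handling the term $|Z|^{-1/(2l)}$: the paper splits into the cases $|Z|\ge p^{\d/2}$ and $|Z|<p^{\d/2}$, using in the latter the trivial bilinear bound $\sum_{x,y}e(cxy)\ll\sqrt{p|X||Y|}$; you instead use the symmetry of $e(xyz)$ to take $|Z|$ largest, forcing $|Z|\ge p^{(1+\d)/3}$ directly. Both routes are valid.

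Your four-variable argument has a genuine gap. The H\"older inequality displayed just before the corollary produces \emph{additive} autocorrelation weights: one Cauchy--Schwarz in $a_j$ against the phase $e(a_j\cdot c)$ replaces $A_j(a_j)$ by $(A_j\circ A_j)(a_j)=r_{A_j-A_j}(a_j)$, not by the multiplicative $r_{A_jA_j}(a_j)$ you wrote. With the correct additive weights, after your pairing $b=a_3a_4$ none of the three resulting weights is the indicator of a set, whereas the incidence bound \eqref{f:D_times_3} underlying \eqref{f:multilinear'} requires $\beta$ to be a characteristic function (this is inherited from Corollary~\ref{cor:weight_inc}, where one slot must be a set $C$). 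So your black-box reduction to the weighted three-variable theorem does not go through as stated.

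The paper organises the $r=4$ case differently. It first groups $a_1a_2$ \emph{multiplicatively} into the weight $\eta(x)=r_{A_1A_2}(x)$, then applies H\"older with exponent $2n$ only in the pair $(a_3,a_4)$, and finally bounds $\Do'_{2l}(\eta*_{2n}\eta,A_3)$ directly via the weighted incidence estimate---crucially keeping $A_3$ as a genuine set in the slot that needs one. The two parameters are then set equal, $l=n=\lceil 0.5\log(200/\d)\rceil$, which is what produces the square in the stated exponent.
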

\begin{proof}
	To obtain \eqref{f:etropy_exp_1} we want to use  Theorem \ref{t:multilinear}. 
	Put $l=2^k$.
	Then using crude  bounds $|X|, |Y| \le p$, we get 
	$$
	\left(\frac{p}{|X||Y||Z|} \right)^{2^{-(k+1)}}  (|X||Y|)^{2^{-2^k}}
	\le 
	p^{-\d/(2l) + 2^{-l+1}} \le p^{-\d/(4l)} \,,
	$$
	provided
	$$
	\frac{2^l}{l} \ge \frac{8}{\d} \,. 
	$$
	It is easy to see that $l = \lceil 2 \log(8/\d) \rceil \ge 6$ is enough. 
	Applying Theorem \ref{t:multilinear}, we see that the second term in \eqref{f:multilinear} is at most  $|X| |Y| |Z| p^{-\frac{\d}{8\log (8/\delta)+4}}$.
	The first term in this formula equals 
	$$
	|X| |Y| |Z| \cdot |Z|^{-1/2l} \le p^{-\d/(4l)} \,,
	$$
	provided $|Z| \ge p^{\d/2}$ and hence this bound has the same quality. 
	Suppose that $|Z| < p^{\d/2}$.
	Then by \eqref{f:c_mult_3}, we obtain $|X||Y| \ge p^{1+\d/2}$ and by a trivial bound for double exponential sums, we get
	$$
	\sum_{x \in X,\, y\in Y,\, z\in Z} e(xyz) \ll |Z| \cdot \sqrt{p |X| |Y|} = |X| |Y| |Z| \cdot \sqrt{p/(|X| |Y|)}
	\le
	|X| |Y| |Z| \cdot p^{-\d/4}
	$$ 
	which is even better.

	Now to get \eqref{f:etropy_exp_2} we can replace all $A_i (x)$ to $A_i (x) - |A_i|/p$ if we want. 
	Let $S$ be the sum from \eqref{f:etropy_exp_2}. 
	Also, let $\eta (x) = r_{A_1 A_2} (x)$ and $L = \log (|A_1||A_2||A_3|)$.
	Applying the H\"older inequality, we obtain
	$$
	|S|^{2n} \le (|A_3| \dots |A_r|)^{2n-1} \sum_{a_3\in A_3,\, \dots,\, a_r \in A_r} \left| \sum_x  \eta (x) e(x a_3 \dots a_r) \right|^{2n}
	=
	$$
	$$
	=
	(|A_3| \dots |A_r|)^{2n-1}  \sum_x (\eta*_{2n} \eta) (x) \sum_{a_3\in A_3,\, \dots,\, a_r \in A_r}  e(x a_3 \dots a_r) \,.
	$$
	Applying the H\"older inequality as in Theorem \ref{t:multilinear} (see estimate (\ref{tmp:25.02_1})), combining with an analogue of \eqref{f:D_times_3} for 
	$\Do'$  and using $r=4$, we obtain for any $l$ 
	$$
	|A_3|^{-1} \sum_x (\eta*_{2n} \eta) (x) \sum_{a_3\in A_3,\, \dots,\, a_r \in A_r}  e(x a_3 \dots a_r) 
	\le
	\frac{|A_4|}{|A_3|} \left( \frac{p \Do'_{2l} (\eta *_{2n} \eta, A_3) }{|A_4|} \right)^{1/4l}
	\ll
	$$
	$$
	\ll \frac{|A_4|}{|A_3|} \left( p |A_4|^{-1} 			L^8 
	(|A_1|^{2n} |A_2|^{2n} |A_3|)^{4l-2}
	(\| \eta *_{2n} \eta \|^2_2 |A_3|)^{1-2^{-2l+1}}
	(\E^\times (\eta *_{2n} \eta,A_3))^{1/2^{2l-1}} 
	\right)^{1/4l}
	$$
	$$
	\ll
	\frac{|A_4|}{|A_3|} \left( p |A_4|^{-1} 			L^{16} 
	(|A_1|^{2n} |A_2|^{2n} |A_3|)^{4l-2}
	((|A_1||A_2|)^{4n-1+2^{-2n+1}} |A_3|)^{}
	|A_3|^{1/2^{2l-1}} 
	\right)^{1/4l}
	$$
	$$
	\ll
	|A_4| (|A_1| |A_2|)^{8ln} 
	\left( \frac{p}{|A_1||A_2||A_3||A_4|} \cdot L^{16} (|A_1| |A_2|)^{2^{-2n+1}} |A_3|^{2^{-2l+1}}  \right)^{1/4l} \,.
	$$
	Here we have used the fact that $\E^\times (\eta *_{2n} \eta,A_3) \le \| \eta *_{2n} \eta \|^2_2 |A_3|^2$. 
	So, taking $l=n$ such that $n=\lceil 0.5 \log (200/\d) \rceil$, we obtain the required result.
	This completes the proof. 
	$\hfill\Box$
\end{proof}

\section{An asymptotic variant of the Balog--Wooley decomposition Theorem}
\label{sec:BW}

Now we prove a result in the spirit of \cite{BW}, \cite{KS2}, \cite{RRS}, \cite{RSS}. 
The difference between our Theorem \ref{t:BW_as} and these results is that  we have an asymptotic formula for the energy. 
Of course in formulae (\ref{f:BW_as_1}), (\ref{f:BW_as_2}) below the additive and multiplicative energy can be swapped (for some other sets $B$ and $C$) 
and moreover can be replaced to other energies (see \cite{RSS}). 

\begin{theorem}
	Let $A\subseteq \F_p$ be a set and let $1\le M \le p/(2|A|)$ be a parameter.
	There exist two disjoint subsets $B$ and $C$ of $A$ such that $A = B\sqcup C$ 
	and 
	\begin{equation}\label{f:BW_as_1}
	\E^{+} (B) - \frac{|B|^4}{p} \le \frac{|A|^{2/3} |B|^{7/3}}{M} \,,
	\end{equation}
	and for  any set $X \subseteq \F_p$ one has
	\begin{equation}\label{f:BW_as_2}
	\E^{\times} (C,X)  \lesssim  \frac{M^2 |X|^2 |A|^2 }{p} + M^{3/2} |A|^{} |X|^{3/2} \,.
	\end{equation}
	\label{t:BW_as}
\end{theorem}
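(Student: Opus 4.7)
The plan is to construct the decomposition by iteratively peeling structured pieces off of $A$, in the style of the Balog--Wooley and Rudnev--Shakan--Shkredov decompositions, but equipped with the asymptotic main term $|B|^4/p$ coming from the $p$-term of the point--plane incidence bound of Theorem~\ref{t:Misha+}. Initialize $A^{(0)} = A$, $C = \emptyset$; at step $j$, if there is a subset $A' \subseteq A^{(j)}$ of dyadic size with $|A'+A| \le M|A'|$, move $A'$ to $C$ and set $A^{(j+1)} = A^{(j)} \setminus A'$; otherwise stop and set $B = A^{(j)}$.

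The lower bound $\E^+(B) \ge |B|^4/p$ in \eqref{f:BW_as_1} is the Fourier identity $\E^+(B) = p^{-1}\sum_\xi |\widehat{B}(\xi)|^4$ restricted to $\xi = 0$. For the upper bound, the termination condition on $B$ means that no sizeable subset of $B$ has additive doubling at most $M$ relative to $A$; a dyadic pigeonhole on the level sets of $r_{B-B}$, combined with Theorem~\ref{t:Misha+} applied to the balanced function of $B$ (as in Corollary~\ref{cor:weight_inc}), then gives $\E^+(B) - |B|^4/p \le |A|^{2/3}|B|^{7/3}/M$, the exponent $7/3$ emerging from the $|\mathcal{P}|^{1/2}|\Pi|$ term of the Rudnev bound weighted by the Pl\"unnecke ratio $M$.

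For $C$, each piece $A'_j$ satisfies $|A'_j + A| \le M|A'_j|$, so the multiplicative analogue of Lemma~\ref{l:AA_small_energy} (swap sum and product) gives
\[
\E^\times(A'_j, X) \ll \frac{M^2 |A'_j|^2 |X|^2}{p} + M^{3/2}|A'_j||X|^{3/2}.
\]
Since $\E^\times(C, X)^{1/2}$ is the $L^2$ norm of $\mathbf{1}_C *_\times \mathbf{1}_X$, the triangle inequality yields $\E^\times(C, X)^{1/2} \le \sum_j \E^\times(A'_j, X)^{1/2}$; one Cauchy--Schwarz step gives $\E^\times(C,X) \ll (\log|A|) \sum_j \E^\times(A'_j, X)$, and summing with $\sum_j |A'_j|^2 \le |A|^2$, $\sum_j |A'_j| \le |A|$ yields \eqref{f:BW_as_2}, absorbing the logarithm into $\lesssim$. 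The sum--product corollary is then standard: if $|A+A|, |AA| \le K|A|$ with $K$ small, Pl\"unnecke--Ruzsa gives lower bounds on $\E^+(A)$ and $\E^\times(A)$ that, combined with \eqref{f:BW_as_1}--\eqref{f:BW_as_2} (applied with $X = AA$ for the latter), force $K \gtrsim |A|^{1/5}$.

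The main obstacle will be making the termination condition on $B$ sufficiently sharp to yield the exponent $7/3$ in \eqref{f:BW_as_1}: one needs to verify that at termination $B$ has no dyadic subset with additive doubling $\le M$ relative to $A$, which requires iterating down to a precise size threshold and then translating this structural property into the claimed quantitative bound on $\E^+(B)$ via a careful Pl\"unnecke--Ruzsa-type calculation, all the while maintaining the asymptotic main term $|B|^4/p$ through each pigeonhole step.
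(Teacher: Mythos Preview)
Your iterative strategy is right in spirit, but the stopping criterion is the wrong one, and the obstacle you flag at the end is a genuine gap rather than a technicality. The implication ``no subset $A'\subseteq B$ has $|A'+A|\le M|A'|$'' $\Rightarrow$ ``$\E^+(B)-|B|^4/p\le |A|^{2/3}|B|^{7/3}/M$'' would require a Balog--Szemer\'edi--Gowers step with essentially no polynomial loss in $M$, which is not available; and even a lossless BSG would only produce $|A'+A'|\ll K|A'|$, with no mechanism to upgrade this to a sumset bound against the full set $A$.

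The paper runs the decomposition the other way around: the stopping criterion \emph{is} the energy inequality \eqref{f:BW_as_1}. When it fails, one pigeonholes on the level sets of $r_{f_B-B}$ to find $P$ with $\Delta|P|\le 2|B|^2$, $\Delta^2|P|\le \E^+(f_B,B)$, and then invokes the combinatorial Lemma~\ref{l:Misha_c} to extract $B_*\subseteq B$ together with a popularity parameter $q$ satisfying $r_{B+P}(x)\ge q$ on $B_*$ and $\Delta q|B_*|\gtrsim \E^+(f_B,B)$. This last relation is the crux: it converts the \emph{lower} bound $\E^+(f_B,B)\ge |A|^{2/3}|B|^{7/3}/M$ into $q^{-2}\lesssim |B_*|^2\Delta^2/\E^+(f_B,B)^2$, and after the point--plane count for $\E^\times(B_*,X)$ this yields the factor $|B_*|^2$ in \emph{both} terms of the piece estimate. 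That homogeneity is exactly what makes the triangle-inequality summation $\bigl(\sum_j |B_{*,j}|\bigr)^2\le |A|^2$ go through with no control on the number of pieces. Your piece bound, coming from the sumset hypothesis $|A'_j+A|\le M|A'_j|$, carries only $|A'_j|^{3/2}|A|^{-1/2}$ (or your looser $|A'_j|$) in the second term, so you are forced to bound the number of pieces by $\log|A|$, and nothing in your extraction rule guarantees that.
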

\begin{proof}
	Our proof is a sort of an algorithm similar to the arguments of the proof of Theorem \ref{t:D_uncond}. 
	At the first step put $B=A$ and $C=\emptyset$. 
	Suppose that we have constructed 
	$B$ at some step of our algorithm. 
	Write $f_B(x) = B(x) - |B|/p$.  
	Then $\E^{+} (B) = \frac{|B|^4}{p} + \E^{+} (f_B, B)$. 
	If 
	$$
	\E^{+} (f_B, B) = 
	\sum_{x} B(x) r_{f_B+B-B} (x)
	\le \frac{|A|^{2/3} |B|^{7/3}}{M} \,,  
	$$
	then we are done. 
	If not, then $\E^{+} (f_B, B)  \ge \frac{|A|^{2/3} |B|^{7/3}}{M}$ and by the pigeonhole principle we find a set $P$ such that 
	$\D < |r_{f_B - B} (x)| \le 2 \D$ for all $x\in P$
	and
	$$
	\frac{|A|^{2/3} |B|^{7/3}}{M} \le \E^{+} (f_B, B) \lesssim \D \sum_x B(x) |r_{f_B+P} (x)|  \le \D \sum_x B(x) r_{B+P} (x) + \frac{\D |B|^2 |P|}{p}
	\le
	$$
	$$ 
	\le
	\D \sum_x B(x) r_{B+P} (x) + \frac{|B|^4}{p} \le 2 \D \sum_x B(x) r_{B+P} (x) 
	\,.
	$$
	Here we have used the assumption $M\le p/(2|A|)$. 
	Using Lemma \ref{l:Misha_c} with $P=P$ and $A=B$, we find a set $B_* \subseteq B$ and a number $q$, $q \lesssim |B_*|$
	such that for any $x\in B_*$ one has $r_{B+P} (x) \ge q$,
	and $\sum_x B(x) r_{B+P} (x) \sim |B_*| q$.
	We have
	$$
	\E^{\times} (B_*,X) \le q^{-2} \left| \left\{ (b+p) x = (b'+p')x' ~:~ x,x'\in X,\, b,b'\in B,\, p,p'\in P \right\} \right| \,.
	$$
	Using Theorem \ref{t:Misha+} and the definition of $q$, we obtain
	$$
	\E^{\times} (B_*,X) \ll q^{-2} \left( \frac{|X|^2 |B|^2 |P|^2}{p} + (|X||B||P|)^{3/2} + |X||B||P| \max\{|X|,|B|,|P|\} \right) 
	\lesssim
	$$
	\begin{equation}\label{tmp:28.10_0}
	(\E^{+} (f_B, B))^{-2} |B_*|^2 \D^2 \left( \frac{|X|^2 |B|^2 |P|^2}{p} + (|X||B||P|)^{3/2} + |X||B||P| \max\{|X|,|B|,|P|\} \right) \,.
	\end{equation}
	Now clearly,
	\begin{equation}\label{tmp:28.10_1}
	\D |P| \le \sum_x r_{B-B} (x) + |B|^2 \le 2 |B|^2 \,.
	\end{equation}
	and 
	\begin{equation}\label{tmp:28.10_2}
	\D^2 |P| \le \sum_x r^2_{f_B-B} (x) =  \E^{+} (f_B, B) \,.
	\end{equation}
	Then using the last formulae, the fact that $B\subseteq A$ and returning to (\ref{tmp:28.10_0}), we obtain 
	$$
	\E^{\times} (B_*,X) \lesssim (\E^{+} (f_B, B))^{-2} |B_*|^2 \times
	$$
	$$
	\times 
	\left( \frac{|X|^2 |B|^6}{p} + |X|^{3/2} |B|^{7/2} (\E^{+} (f_B, B))^{1/2} + \D^2 |X||B||P| \max\{|X|,|B|,|P|\} \right) 
	\le
	$$
	\begin{equation}\label{tmp:28.10_3}
	\frac{M^2 |X|^2 |B_*|^2}{p} + M^{3/2} |A|^{-1} |B_*|^2 |X|^{3/2} 
	+
	M^2 |A|^{-4/3} |B|^{-11/3} \D^2 |B_*|^2 |X||P| \max\{|X|,|B|,|P|\} \,.
	\end{equation}
	Suppose that the third term in the last estimate is negligible. 
	After that we consider $B\setminus B_*$ and continue our algorithm with this set. 
	We obtain disjoint sets $A_1=B_*$, $A_2, \dots$ and let $C$ be its union.
	Finally, in view of  
	the norm property of $\E^{\times} (\cdot,X)$,  we get  an upper bound for $\E^{\times} (C,X)$, namely,
	$$
	\E^{\times} (C,X) \le \left( \sum_{j} (\E^{\times} (A_j,X))^{1/2} \right)^2 
	\lesssim  
	\left( \frac{M^2 |X|^2 }{p} + M^{3/2} |A|^{-1} |X|^{3/2} \right) \cdot \left( \sum_{j} |A_j|  \right)^2 
	\le 
	$$
	\begin{equation}\label{tmp:28.10_4}
	\le
	\frac{M^2 |X|^2 |A|^2 }{p} + M^{3/2} |A|^{}  |X|^{3/2} \,.
	\end{equation}
	It remains to check that the third term in (\ref{tmp:28.10_3}) is negligible. 
	From 
	(\ref{tmp:28.10_4}), 
	it follows that 
	\begin{equation}\label{tmp:22.12_1}
	M^3 \le |X| \le |A|^2 /M^3 
	\end{equation}
	because otherwise there is nothing to prove. 
	Since, $\E^{+} (f_B, B)  \ge \frac{|A|^{2/3} |B|^{7/3}}{M}$ we easily derive 
	\begin{equation}\label{tmp:22.12_2}
	|B| \ge \E^{+} (f_B, B)^{1/3} \gg |A|/M^{3/2} \,.
	\end{equation}
	Using these bounds, as well as a trivial upper estimate $\E^{+} (f_B, B) \le |B|^3$ one can 
	quickly 
	check that 
	\begin{equation*}\label{tmp:28.10_cond}
	\E^{+} (f_B, B) \le M^{-1/2} |X|^{-1/2} |B|^{11/3} |A|^{1/3} \,, \quad \E^{+} (f_B, B) \le M^{-1/2} |X|^{1/2} |B|^{8/3} |A|^{1/3} \,, 
	\end{equation*}
	and
	$$
	|B|^{1/3} M^{1/2} \le |X|^{1/2} |A|^{1/3}
	$$
	and 
	thus indeed 
	the third term in (\ref{tmp:28.10_3}) is negligible.
	This completes the proof. 
	$\hfill\Box$
\end{proof}

\bigskip

Notice that one cannot obtain an asymptotic formula as in (\ref{f:BW_as_1}) for both sets $B$ and $C$. 
Indeed, it would imply that 
$|B+B|, |CC| \gg p$ 
but there are sets $A$ having small sumsets and product sets, just put $A=P\cap \G$, where 
$P$ is a suitable arithmetic progression and $\G$ is a subgroup.  




\bigskip 

Now let us obtain a result on the sum--product phenomenon (of course one can replace below $+$ to $*$ and vice versa).

\begin{corollary}
	Let $A\subset \F_p$ be a set.
	Then either 
	$$ 
	|A+A| \ge 5^{-1} \min\{|A|^{6/5}, p/2\} 
	$$ 
	or 
	$$|AA| \gtrsim \min\{ p |A|^{-2/5}, |A|^{6/5} \} \,.$$ 
	\label{c:p_sum-prod}
\end{corollary}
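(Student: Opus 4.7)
The plan is to apply Theorem~\ref{t:BW_as} with a parameter $M$ of size comparable to $K := |A+A|/|A|$. Suppose $|A+A| < 5^{-1}\min\{|A|^{6/5}, p/2\}$; then $K < |A|^{1/5}/5$ and $K|A| < p/10$, so in particular $K|A| \le p/2$ and a value of $M$ of order $K$ meets the hypothesis $M \le p/(2|A|)$. I would choose $M$ to be a suitable constant multiple of $K$ (concretely $M \asymp 2^{8/3} K$), apply the theorem to produce a splitting $A = B \sqcup C$, and then split according to which of $B, C$ has size at least $|A|/2$.

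In the case $|B| \ge |A|/2$, I would combine the trivial Cauchy--Schwarz bound $\E^{+}(B) \ge |B|^4/|B+B| \ge |B|^4/(K|A|)$ with the asymptotic \eqref{f:BW_as_1}. The condition $K|A| \le p/2$ lets me absorb the $|B|^4/p$ term (since $1/(K|A|) - 1/p \ge 1/(2K|A|)$), and the resulting inequality simplifies to
\[
M\,|B|^{5/3} \le 2 K\,|A|^{5/3}.
\]
With $|B| \ge |A|/2$ this forces $M \le 2^{8/3} K$, contradicting the choice of $M$, so this case is ruled out.

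In the remaining case $|C| \ge |A|/2$ I would apply \eqref{f:BW_as_2} with $X = A$, using the dual Cauchy--Schwarz estimate $\E^{\times}(C,A) \ge |C|^2|A|^2/|CA| \ge |A|^4/(4|AA|)$ to obtain
\[
\frac{|A|^4}{|AA|} \lesssim \frac{M^2|A|^4}{p} + M^{3/2}|A|^{5/2},
\]
whence $|AA| \gtrsim \min\{p/M^2,\ |A|^{3/2}/M^{3/2}\}$. Substituting $M \asymp K < |A|^{1/5}/5$ recovers exactly $|AA| \gtrsim \min\{p|A|^{-2/5}, |A|^{6/5}\}$, which is the second alternative.

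The main obstacle is the book-keeping of absolute constants: one must choose $M$ large enough (relative to $K$) that case A is forced to fail, yet still small enough that $M \le p/(2|A|)$ remains valid given only $K|A| < p/10$, and one must verify that the exponents in $M \asymp K$ balance to give precisely $|A|^{6/5}$ and $p|A|^{-2/5}$ in the two regimes. All other steps are routine applications of Cauchy--Schwarz and the two estimates of Theorem~\ref{t:BW_as}.
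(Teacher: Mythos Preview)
Your approach is essentially the paper's: apply Theorem~\ref{t:BW_as}, split on whether $|B|\ge|A|/2$ or $|C|\ge|A|/2$, and use Cauchy--Schwarz to pass from energies to sumset/product-set sizes. The only real difference is the choice of the parameter $M$.

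The paper takes $M=|A|^{1/5}$ \emph{independently of $K$}, and in the case $|B|\ge|A|/2$ does not argue by contradiction but simply reads off
\[
|B|^4 \le |B+B|\,\E^+(B) \le |A+A|\cdot\tfrac{3}{2}\,\frac{|A|^{2/3}|B|^{7/3}}{M},
\]
yielding $|A+A|\ge 5^{-1}|A|^{6/5}$ directly. (When $|A|^{6/5}>p/2$ it passes to a maximal $A'\subseteq A$ with $|A'|^{6/5}\le p/2$.) Your choice $M\asymp K$ instead makes the $|B|$-case into a contradiction argument, and this is exactly where your constants jam: you need $M>2^{8/3}K$ to kill that case, but from $K|A|<p/10$ you only get room for $M\le 5K$ under the constraint $M\le p/(2|A|)$, and $2^{8/3}>5$. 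So the specific implementation does not close, as you suspected. Switching to the paper's fixed choice $M=|A|^{1/5}$ dissolves the tension: the hypothesis $M\le p/(2|A|)$ becomes $|A|^{6/5}\le p/2$, handled by the subset trick, and no circularity with $K$ arises.

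Your treatment of the $|C|\ge|A|/2$ case is identical to the paper's.
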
 
\begin{proof}
	Apply Theorem \ref{t:BW_as} with $M=|A|^{1/5}$. 
	We find two disjoint subsets $B$ and $C$ of $A$ such that $A = B\sqcup C$ and estimates (\ref{f:BW_as_1}), (\ref{f:BW_as_2}) take place.
	If $|B| \ge |A|/2$ and $M|A| = |A|^{6/5} \le p/2$, then by  (\ref{f:BW_as_1}) and the Cauchy--Schwarz inequality  one has 
	$$
	|B|^4 \le |B+B| \left( \frac{|B|^4}{p}  + \frac{|A|^{2/3} |B|^{7/3}}{M}  \right) \le 
	|A+A| \cdot \frac{3|A|^{2/3} |B|^{7/3}}{2M} 
	$$
	and hence $|A+A| \ge 5^{-1} |A|^{6/5}$.
	If  $|B| \ge |A|/2$,   
	then just consider a maximal set $A' \subseteq A$ of size $|A'|^{6/5} \le p/2$ and use the previous arguments. 
	Finally, if $|C|\ge |A|/2$, then putting $X=A$ in  (\ref{f:BW_as_2}), we obtain 
	$$
	|AA| \ge |AC| \ge \frac{|A|^2 |C|^2}{\E^\times (A,C)}\gtrsim \min\{ p |A|^{-2/5}, |A|^{6/5} \} 
	$$
	as required. 
	$\hfill\Box$
\end{proof}

\section{Asymptotic formulae in $\SL_2 (\F_p)$}
\label{sec:SL}




Now we consider the action of $\SL_2 (\F_p)$ on $\F_p$ and we begin 
with 
our version (see Theorem \ref{t:flattering} below) of so--called  $L_2$--flattering lemma from \cite{B_hyp} 
(also, see \cite{BG_SL}, \cite{SX}) which is a direct consequence of the celebrated Helfgott's  Theorem \ref{t:Harald_SL2}. 
The proof of Theorem \ref{t:flattering} can be found in the Appendix.

\begin{theorem}
	Let
	$\mu$ be a symmetric probability measure on $\SL_2 (\F_p)$ such that for a parameter 
	$K\ge 1$ one has \\
	$\circ~$ $\mu (g \G) \le K^{-1}$ for any proper subgroup $\G\subset \SL_2 (\F_p)$, $g\in \SL_2 (\F_p)$ and \\
	$\circ~$ $\| \mu \|_\infty \le K^{-1}$.\\
	Then for any integer $k \le K^{c_*}$
	the following holds  
	\begin{equation}\label{f:flattering}
	0\le  \| \mu *_{2^k} \mu \|^2_2 - |\SL_2 (\F_p)|^{-1} \le C^k_* K^{-c_* k} \,,
	\end{equation}
	where $c_* \in (0,1)$, $C_* >1$ are absolute constants. 
	\label{t:flattering}
\end{theorem}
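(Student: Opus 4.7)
The approach is a dyadic iteration of a one-step flattening lemma. Writing $\mu^{(k)} := \mu *_{2^k} \mu$, so that $\mu^{(k+1)} = \mu^{(k)} * \mu^{(k)}$, I aim to establish the single-step bound
\[
\|\mu^{(k+1)}\|_2^2 \le C_* K^{-c_*} \|\mu^{(k)}\|_2^2 + 2 |\SL_2 (\F_p)|^{-1}
\]
for absolute constants $c_* \in (0,1)$ and $C_* > 1$. Starting from the trivial estimate $\|\mu\|_2^2 \le \|\mu\|_\infty \le K^{-1}$ and unrolling the recursion gives (\ref{f:flattering}); the uniform distribution is a fixed point of convolution on the finite group $\SL_2 (\F_p)$, so once $\|\mu^{(k)}\|_2^2$ descends to its floor of order $|\SL_2 (\F_p)|^{-1}$ the bound persists.

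The one-step lemma is proved by contradiction. Suppose $\nu := \mu^{(k)}$ satisfies $\|\nu * \nu\|_2^2 > K^{-c_*} \|\nu\|_2^2$ while still $\|\nu\|_2^2 \ge 2 |\SL_2 (\F_p)|^{-1}$. This is near-saturation in Young's convolution inequality, so a non-commutative Balog--Szemer\'edi--Gowers argument (dyadic decomposition of $\nu$ into $O(\log K)$ level sets, pigeonholing, and Pl\"unnecke--Ruzsa in the non-abelian setting) produces a set $A \subseteq \SL_2 (\F_p)$ of size $|A| \gtrsim K^{-O(c_*)} \|\nu\|_2^{-2}$ with tripling bound $|AAA| \le K^{O(c_*)} |A|$ and carrying $\nu$-mass at least $K^{-O(c_*)}$.

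Helfgott's Theorem~\ref{t:Harald_SL2} (in its polynomially effective form) then leaves two alternatives. Either $|A| \ge p^{3 - \delta}$ for some small $\delta$, which is incompatible with $|A| \lesssim \|\nu\|_2^{-2}$ once $\|\nu\|_2^2$ is strictly above the uniform floor; or $A$ lies essentially inside a proper subgroup $\Gamma$ of $\SL_2 (\F_p)$. Invoking the classification of Theorem~\ref{t:classification}, $\Gamma$ is conjugate to one of $A_4, S_4, A_5$, a dihedral subgroup, or a subgroup of a Borel. The concentration $\nu(A) \gtrsim K^{-O(c_*)}$ then yields a coset $g\Gamma$ with $\nu(g\Gamma) \gtrsim K^{-O(c_*)}$. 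Tracing this back through the convolution expansion $\nu = \mu * \dots * \mu$ and pigeonholing on the $2^k$ factors produces an element $h$ with $\mu(h\Gamma) \gtrsim K^{-O(c_*)}$, contradicting the non-concentration hypothesis $\mu(g\Gamma) \le K^{-1}$ once $c_*$ is chosen small enough.

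The main obstacle is quantitative bookkeeping: to allow iteration for $k$ up to $K^{c_*}$, the Helfgott exponent $\kappa(\delta)$ of Theorem~\ref{t:Harald_SL2} must be replaced by a polynomial-in-$K$ growth bound, and the Balog--Szemer\'edi--Gowers losses must be tracked so that the per-step factor $K^{-c_*}$ really compounds. The Pyber--Szab\'o and Breuillard--Green--Tao refinements of the product theorem supply an effective version for $\SL_2 (\F_p)$, and I would insert them at the growth step; the remainder of the argument is standard dyadic and convolution bookkeeping of constants, which is why the final bound in (\ref{f:flattering}) carries the geometric prefactor $C_*^k$.
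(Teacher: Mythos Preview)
Your approach is essentially the paper's own proof: one-step flattening via dyadic level sets of the (balanced) iterated measure, non-abelian Balog--Szemer\'edi--Gowers, and Helfgott's growth theorem, then iteration. Two unnecessary complications in your sketch: coset non-concentration is automatically preserved under convolution (since $(\mu*\nu)(g\Gamma)=\sum_h \mu(h)\,\nu(h^{-1}g\Gamma)\le \max_{h'}\nu(h'\Gamma)$), so no ``tracing back through the $2^k$ factors'' is needed; and the paper first uses a Frobenius/quasirandomness spectral bound to dispose of the near-uniform case and derive $|H|\le p^{5/2}$, after which Helfgott's original $\SL_2(\F_p)$ theorem with a fixed $\delta$ suffices --- no Pyber--Szab\'o or Breuillard--Green--Tao refinement is required.
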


Now we derive some consequences of Theorem \ref{t:flattering} to sum--product phenomenon and we begin with some generalizations of arguments from \cite{NG_S}.
Transformations 
$$
y = \frac{-1}{x+a}  \,, \quad \quad y = \frac{-1}{x+a} + b
$$
correspond to $\SL_2 (\F_p)$ matrices 
\[
s' = \left( {\begin{array}{cc}
	0 & -1 \\
	1 & a \\
	\end{array} } \right) \in S' \,,
\quad \quad 
s_{a,b} = 
\left( {\begin{array}{cc}
	b & -1+ab \\
	1 & a \\
	\end{array} } \right) \in S \,.
\]
The 
collections 
$S',S$  
of such matrices 
are clearly connected with 
continued fractions 
$$
[a_1,a_2,\dots ] = \frac{1}{a_1+\frac{1}{a_2+\dots}}  
$$
and correspond to 
classical 
continuants (see, e.g., \cite{Hinchin}), 
as well as 
continuants (entries) of the product of two matrices 
$	\left( {\begin{array}{cc}
	0 & 1 \\
	1 & a_1 \\
	\end{array} } \right) 
\left( {\begin{array}{cc}
	0 & 1 \\
	1 & a_2 \\
	\end{array} } \right)
$.

We need several properties of the set $S$ and the first one can be found in \cite{NG_S} (or see the proof of Lemma \ref{l*:intersection} and Remark \ref{r:one_dim} below).
It is easy to 
check 
that Lemma \ref{l:intersection}
does not hold for the set $S'$.

\begin{lemma}
	Suppose that in the definition of the set $S$ one has $a \in B_1$, $b \in B_2$.
	For any $g_1, g_2 \in \SL_2 (\F_p)$ the following holds 
	\begin{equation}\label{f:intersection}
	|g_1 \B g_2 \cap S| \le \max\{ |B_1|, |B_2| \} \,.
	\end{equation}
	Moreover, 
	for any dihedral subgroup $\G$ one has 
	\begin{equation}\label{f:intersection+}
	|g_1 \G g_2 \cap S| \le 8\max\{ |B_1|, |B_2| \} \,.
	\end{equation}
	\label{l:intersection}
\end{lemma}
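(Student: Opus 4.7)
The plan is to reduce both bounds to counting zeros of a low--degree polynomial in $(a,b)$, using crucially that every $s_{a,b} \in S$ has $(2,1)$-entry equal to $1$.

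For \eqref{f:intersection}, I will rewrite the condition $s_{a,b} \in g_1 \B g_2$ as $g_1^{-1} s_{a,b} g_2^{-1} \in \B$, which, since $\B$ consists of upper--triangular matrices, amounts to the vanishing of the $(2,1)$-entry of $g_1^{-1} s_{a,b} g_2^{-1}$. Writing $g_1 = \bigl(\begin{smallmatrix} p & q \\ r & s \end{smallmatrix}\bigr)$ and $g_2 = \bigl(\begin{smallmatrix} x & y \\ z & w \end{smallmatrix}\bigr)$ and expanding directly, one arrives at the bidegree $(1,1)$ identity
\[
P(a,b) \;:=\; (pw - rz) \;-\; pz \cdot a \;-\; rw \cdot b \;+\; rz \cdot ab \;=\; 0 \,.
\]
A short case analysis using $\det g_1 = 1$ and $\det g_2 = 1$ shows that $P$ is never identically zero: the simultaneous vanishing $rz = pz = rw = pw = 0$ forces either $z = w = 0$ or $p = r = 0$, each of which kills $\det g_i$. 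Hence for each fixed $a \in B_1$ the equation $P(a, \cdot) = 0$ is either linear and nondegenerate, or inconsistent in $b$, producing at most one admissible $b$; so $|g_1 \B g_2 \cap S| \le |B_1|$, and by the symmetric argument $\le |B_2|$.

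For \eqref{f:intersection+}, I will invoke the classification Theorem \ref{t:classification}: every dihedral subgroup $\G \le \SL_2 (\F_p)$ sits inside the normalizer $N(T) = T \sqcup wT$ of a maximal torus $T$ (split or non--split), and splits as $\G = \G_T \sqcup w \G_T$ with $\G_T \le T$. It thus suffices to prove $|g_1 T g_2 \cap S| \le 2$ for every maximal torus $T$, and then apply the same bound with $g_1 w$ in place of $g_1$ for the coset $w \G_T$. For the split torus $T = \{\mathrm{diag}(t, t^{-1})\}$, the $(2,1)$-entry of $g_1\, \mathrm{diag}(t, t^{-1})\, g_2$ equals $rx \cdot t + sz \cdot t^{-1}$; imposing it to equal $1$ gives the quadratic $rx\, t^2 - t + sz = 0$, whose coefficient of $t$ is $-1 \ne 0$, so it is nontrivial and has at most two solutions, each uniquely determining the full matrix and hence the pair $(a, b)$. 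The non--split torus case reduces to the split one after base change to $\F_{p^2}$. Summing over the two cosets, $|g_1 \G g_2 \cap S| \le 4 \le 8 \max\{|B_1|, |B_2|\}$.

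The main technical nuisance is careful bookkeeping in the degenerate parameter regimes --- when several entries of $g_1, g_2$ vanish so that $P$ drops to a genuinely linear polynomial, or the torus quadratic collapses to a linear one --- where one re--derives the count by hand and uses the unit--determinant relations to verify that the stated upper bounds persist. Apart from these case checks, the whole argument is essentially a B\'ezout-type count: the two--parameter image $\{s_{a,b}\}$ inside the three--dimensional variety $\SL_2$ meets the two--dimensional Borel coset (respectively one--dimensional torus coset) in a uniformly bounded number of points.
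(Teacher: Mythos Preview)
Your approach is essentially correct and, for the dihedral case, considerably cleaner than the paper's. A couple of points:

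\textbf{The Borel case.} Your claim that for each fixed $a$ the equation $P(a,\cdot)=0$ is ``either linear and nondegenerate, or inconsistent'' is not quite true as stated. Write $P(a,b)=(pw-rz)-pza-rwb+rzab$. If $r=0$ (so $\det g_1 = ps = 1$ forces $p\neq 0$) and $z\neq 0$, then $P(a,b)=p(w-za)$ is independent of $b$; for the single value $a=w/z$ one has $P\equiv 0$ and \emph{every} $b\in B_2$ is admissible. Thus the intersection has size $|B_2|$, not $\le |B_1|$. Symmetrically, $z=0,\ r\neq 0$ gives a unique $b$ and all $a$. So you do not in general get $\le\min\{|B_1|,|B_2|\}$; the correct conclusion is $\le\max\{|B_1|,|B_2|\}$, which is exactly what the lemma asserts. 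This is precisely the ``degenerate regime'' you flagged, but it is worth noting that in these cases the equation does not merely drop degree --- it can vanish identically in one variable for a single value of the other. In the generic case $rz\neq 0$ one can factor $P(a,b)=rz\bigl(a-w/z\bigr)\bigl(b-p/r\bigr)-rz$, so $P=0$ is a genuine hyperbola and one indeed gets $\le\min\{|B_1|,|B_2|\}$.

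\textbf{The dihedral case.} Your argument is more direct than the paper's. The paper (via the proof of Lemma~\ref{l*:intersection}, to which it refers) parametrises the torus explicitly as $r_\eps(\alpha,\beta)=\bigl(\begin{smallmatrix}\alpha&\eps\beta\\ \beta&\alpha\end{smallmatrix}\bigr)$ with $\alpha^2-\eps\beta^2=1$ and then carries out a fairly lengthy case analysis on the matrix entries, eliminating $\alpha,\beta$ in terms of the parameters of $s_{a,b}$. Your route --- impose the single scalar condition ``$(2,1)$-entry $=1$'' on the one-parameter coset $g_1 T g_2$, obtain a quadratic in the torus parameter with middle coefficient $-1\neq 0$, and observe that each root determines the full matrix and hence $(a,b)$ uniquely --- is shorter and yields the stronger absolute bound $|g_1 N(T) g_2\cap S|\le 4$, well inside the stated $8\max\{|B_1|,|B_2|\}$. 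The base-change-to-$\F_{p^2}$ reduction for the non-split torus is fine; equivalently, one can stay over $\F_p$, write the torus as $\bigl(\begin{smallmatrix}\alpha&\eps\beta\\ \beta&\alpha\end{smallmatrix}\bigr)$, note that the $(2,1)$-entry condition is linear in $(\alpha,\beta)$, and intersect that line with the conic $\alpha^2-\eps\beta^2=1$.
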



Theorem \ref{t:flattering}, combining with Lemma \ref{l:intersection},  gives  a consequence for continued fractions 
(consider the following two--step transformation $\frac{1}{a+\frac{1}{x+b}}$ with the correspondent map from $\SL_2 (\F_p)$
and a well--known connection of continued fractions with continuants, see, e.g., \cite{Hinchin}).
Another  way to derive  Theorem \ref{t:CF_growth} is  
iteratively apply Corollary \ref{c:pol} below 
but 
this way gives worse bounds.

\begin{theorem}
	Let $A\subseteq \F_p$ be a set $|A| >p^\eps$, $\eps>0$.
	Then for any $k > C^{1/\eps}$, where $C>0$ is an absolute constant and for any $x\in \F_p$  one has 
	$$
	| \{ x = [a_1, a_2, \dots , a_k ] ~:~ a_j \in A \}|  = \frac{|A|^k}{p} (1+o(1)) \,.
	$$
	\label{t:CF_growth}
\end{theorem}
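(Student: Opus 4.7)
The plan is to interpret each continued fraction as a point in an $\SL_2(\F_p)$-orbit on $\mathbb{P}^1(\F_p)$ and apply the $L^2$-flattening Theorem \ref{t:flattering} to an appropriate probability measure supported on a two-parameter family of matrices. Assume first that $k=2m$ is even; the odd case reduces to this by splitting off one leading entry and summing over $A$. Iterating the two-step transformation from the hint, one verifies by induction that
\begin{equation*}
[a_1,a_2,\ldots,a_{2m}] \;=\; T_{a_1,a_2}\, T_{a_3,a_4}\cdots T_{a_{2m-1},a_{2m}} \cdot 0, \qquad T_{a,b} := \begin{pmatrix} 1 & b \\ a & 1+ab \end{pmatrix} \in \SL_2(\F_p).
\end{equation*}
Set $\tilde S := \{T_{a,b}: a,b\in A\}$; the pair $(a,b)$ is recoverable from $T_{a,b}$, so $|\tilde S| = |A|^2$. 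Let $\nu$ be the uniform probability measure on $\tilde S$. The quantity we want to estimate equals $|A|^{2m}\,\nu^{*m}(E_x)$, where $E_x = \{g\in\SL_2(\F_p): g\cdot 0 = x\}$ is a coset of $\mathrm{Stab}_{\SL_2(\F_p)}(0)$ of size $p(p-1)$. Since $\SL_2(\F_p)$ acts transitively on $\mathbb{P}^1(\F_p)$ with $|E_x|/|\SL_2(\F_p)|=1/(p+1)$, it suffices to show that $\nu^{*m}$ is $L^2$-close to the uniform measure $u$ on $\SL_2(\F_p)$ with an error $o(p^{-2})$.

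To apply Theorem \ref{t:flattering}, I pass to the symmetric companion $\mu:=\nu*\nu^*$, supported on $\tilde S\tilde S^{-1}$ with $\|\mu\|_\infty \le |A|^{-2}$. The non-concentration condition $\mu(g\Gamma)\le K^{-1}$ must be verified for every proper subgroup $\Gamma \subsetneq \SL_2(\F_p)$ and every $g$. By Suzuki's classification (Theorem \ref{t:classification}) only Borel and dihedral subgroups are substantial, the sporadic groups $A_4,S_4,A_5$ contributing a negligible $O(1/|A|^2)$. For these remaining cases I exploit the factorisation $T_{a,b}=L_a U_b$ with $L_a=\bigl(\begin{smallmatrix}1&0\\a&1\end{smallmatrix}\bigr)$, $U_b=\bigl(\begin{smallmatrix}1&b\\0&1\end{smallmatrix}\bigr)$, which exhibits $\tilde S$ as a product of two one-parameter unipotent translates; the argument of Lemma \ref{l:intersection} then adapts verbatim to give $|g\Gamma g'\cap\tilde S|,|g\Gamma g'\cap\tilde S^{-1}|\ll |A|$ and therefore $\mu(g\Gamma)\ll |A|^{-1}$. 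Hence the hypotheses of Theorem \ref{t:flattering} are satisfied with $K\asymp |A|\ge p^\eps$, and choosing $k_0 = \lceil C'/\eps\rceil$ with $C'$ sufficiently large yields
\begin{equation*}
\|\mu *_{2^{k_0}} \mu\|_2^2 \le |\SL_2(\F_p)|^{-1} + C_*^{k_0} K^{-c_* k_0} = |\SL_2(\F_p)|^{-1}\bigl(1 + o(p^{-1})\bigr),
\end{equation*}
together with the analogous decay $\|\nu^{*m}\|_2^2 \le |\SL_2(\F_p)|^{-1}(1+o(p^{-1}))$ as soon as $m\gtrsim 2^{k_0}$. Cauchy--Schwarz then gives $|\nu^{*m}(E_x)-1/(p+1)|\le \sqrt{|E_x|}\,\|\nu^{*m}-u\|_2 = o(p^{-1})$ uniformly in $x$, which delivers the asymptotic $|A|^k(1+o(1))/p$ whenever $k\ge 2\cdot 2^{k_0}>C^{1/\eps}$ for an appropriate absolute $C$.

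The main obstacle, to be handled carefully, is the transfer from the symmetric measure $\mu$ to the genuinely asymmetric $\nu^{*m}$. The crude domination $\nu^{*m}(g)\le 2^m\bigl(\tfrac{\nu+\nu^*}{2}\bigr)^{*m}(g)$ is far too lossy. The cleanest resolution is to rerun the Bourgain--Gamburd flattening scheme (growth via Helfgott's Theorem \ref{t:Harald_SL2} combined with a Balog--Szemer\'edi--Gowers type dichotomy) directly for $\nu$: symmetry is used in Theorem \ref{t:flattering} only as a convenience, whereas the essential inputs are the non-concentration on cosets of proper subgroups and the $\|\cdot\|_\infty$ bound, both of which have been verified for $\nu$ itself. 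Alternatively one may apply the symmetric flattening to $\eta:=\nu*\nu^*$ and then use the identity $\|\nu^{*m}\|_2^2=(\nu^{*m}*(\nu^*)^{*m})(e)$ together with one further convolution step to rearrange the $\nu$- and $\nu^*$-factors inside $\SL_2(\F_p)$ without incurring exponential losses. Making this transfer precise, so that the $o(p^{-3})$ decay on $\|\mu *_{2^{k_0}}\mu\|_2^2$ passes intact to $\|\nu^{*m}\|_2^2$, is the technical heart of the argument.
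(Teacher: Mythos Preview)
Your approach is essentially the paper's own: interpret the continued fraction via the two--step map $x\mapsto \frac{1}{a+\frac{1}{x+b}}$, collect these into a set $\tilde S\subset\SL_2(\F_p)$, verify non--concentration on cosets of proper subgroups (Lemma~\ref{l:intersection}, or rather its variant Lemma~\ref{l*:intersection} together with Remark~\ref{r:one_dim}, which covers exactly your $T_{a,b}$), and feed this into the flattening Theorem~\ref{t:flattering}. So the skeleton is right.

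The obstacle you flag --- passing from the symmetric $\mu=\nu^**\nu$ to the asymmetric $\nu^{*m}$ --- is not a genuine obstacle, and your proposed fix~(b) of ``rearranging $\nu$-- and $\nu^*$--factors'' does not work literally in a non--abelian group. The clean resolution, which is implicit in the proof of Lemma~\ref{l:counting}, is to never look at $\|\nu^{*m}\|_2$ on the group at all. Work instead with the balanced functions $h_j:=r_j-|\tilde S|^j/(p+1)$ on $\mathbb{P}^1(\F_p)$, where $r_j$ counts representations by a product of $j$ elements of $\tilde S$. Then $h_j=\tilde S*h_{j-1}$, and the crucial identity
\[
\|h_j\|_2^2=\|\tilde S*h_{j-1}\|_2^2=\langle h_{j-1},\,(\tilde S^{-1}*\tilde S)*h_{j-1}\rangle
\]
already produces the \emph{symmetric} measure $\mu=\tilde S^{-1}*\tilde S$. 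Iterated Cauchy--Schwarz (exactly formula~\eqref{f:sigma_1} in Lemma~\ref{l:counting}) followed by Lemma~\ref{l:Frobenious} and Theorem~\ref{t:flattering} gives $\|h_j\|_2\le C\,|\tilde S|^{1/2}p^{-1/2^{k+2}}\|h_{j-1}\|_2$. Iterating over $j$ yields geometric decay of $\|h_m\|_2$, and the trivial bound $|h_m(x)|\le\|h_m\|_2$ finishes. No transfer from $\mu$ to $\nu^{*m}$ is ever needed: the asymmetry is absorbed automatically because on $\mathbb{P}^1$ one only ever sees $\|\tilde S*h\|_2^2$, which is intrinsically quadratic in $\tilde S$.
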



Now we can formulate our "counting lemma".
Here $S$ can be any set of matrices satisfying (\ref{f:intersection}), (\ref{f:intersection+}).
Having a function $f : \F_p \to \C$ by $\langle f \rangle$ denote $\sum_{x\in \F_p} f(x)$.

\begin{lemma}
	Let $f_1,f_2 : \F_p \to \C$ be functions and $|S| > p^{\eps}$. 
	The 
	number of the solutions to the equation
	\begin{equation}\label{f:sa=a'}
	s a_1 = a_2 \,,
	\end{equation}
	counting with weights $f_1 (a_1)$, $f_2 (a_2)$, 
	and with the restriction 
	$s\in S$ 
	is 
	\begin{equation}\label{f:counting}
	\frac{|S| \langle f_1 \rangle \langle f_2 \rangle }{p} + 2 \theta \|f_1 \|_2 \|f_2 \|_2  |S| p^{-1/2^{k+2}} \,,
	\end{equation}
	where $|\theta| \le 1$ and $k=k(\eps)$. 
	\label{l:counting}
\end{lemma}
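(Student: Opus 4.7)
The plan is to extract the main term by writing $f_2(x) = \langle f_2\rangle/p + h_2(x)$ with $\sum_x h_2(x) = 0$; the constant summand contributes exactly $|S|\langle f_1\rangle\langle f_2\rangle/p$, so it remains to bound
$$E \;:=\; \sum_{s\in S,\,a_1,\,a_2} S(s)\, f_1(a_1)\, h_2(a_2)\,\delta\{sa_1 = a_2\} \;=\; |S|\,\langle \mu * f_1,\, h_2\rangle,$$
where $\mu := S/|S|$ is the uniform probability measure on $S \subseteq \SL_2(\F_p)$ and convolution is in the sense of Section~\ref{sec:preliminaries}. The overall strategy is to raise $\mu$ to a high convolution power on the group so that the $L^2$-flattening Theorem~\ref{t:flattering} becomes applicable, while simultaneously exploiting the zero mean of $h_2$ through Lemma~\ref{l:Frobenious}.

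One Cauchy--Schwarz in $a_1$ (using the adjointness identity $\langle \mu * f_1,\, h_2\rangle = \langle f_1,\, \check\mu * h_2\rangle$) together with the direct computation $\|\check\mu * h_2\|_2^2 = \langle \pi * h_2,\, h_2\rangle$ yields $|E|^2 \le |S|^2 \|f_1\|_2^2 \langle \pi * h_2,\, h_2\rangle$, where $\pi := \mu * \check\mu$ is a symmetric probability measure on $\SL_2(\F_p)$. Iterating the spectral inequality
$$\langle \pi * h_2,\, h_2\rangle^{2^m} \;\le\; \|h_2\|_2^{2(2^m-1)}\,\langle \pi^{*2^m} * h_2,\, h_2\rangle,$$
which holds because the convolution operator associated to $\pi$ is positive and self-adjoint, and then applying Lemma~\ref{l:Frobenious} to the resulting bilinear form (permissible since $h_2$ is zero-mean), I obtain after taking $2^{m+1}$-st roots and using $\|h_2\|_2 \le \|f_2\|_2$
$$|E| \;\le\; |S|\, \|f_1\|_2\, \|f_2\|_2\, (2p)^{1/2^{m+1}}\, \|\pi^{*2^m}\|_2^{1/2^{m+1}}.$$

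The remaining task is to bound $\|\pi^{*2^m}\|_2$ via Theorem~\ref{t:flattering}, whose three hypotheses must be verified for $\pi$ with a parameter $K$. Symmetry is automatic; $\|\pi\|_\infty \le \|\mu\|_\infty = 1/|S| \le p^{-\eps}$ is immediate; and the subgroup bound $\pi(g\Gamma) = \sum_j \mu(j)\mu(g\Gamma j) \le \max_j \mu(g\Gamma j) \le K^{-1}$ reduces to a two-sided coset bound for $\mu$ itself. For the latter I invoke the classification Theorem~\ref{t:classification}: for the finite subgroups $A_4, S_4, A_5$, trivially $\mu(g_1\Gamma g_2) \le 60/|S| \ll p^{-\eps}$; for Borel and dihedral $\Gamma$, hypotheses~(\ref{f:intersection}), (\ref{f:intersection+}) on $S$ give $|g_1\Gamma g_2 \cap S| \ll \max\{|B_1|,|B_2|\}$, whence $\mu(g_1\Gamma g_2) \ll 1/\min\{|B_1|,|B_2|\} \le p^{-\eps'}$ for some $\eps' = \eps'(\eps) > 0$ (using the structural form $|S| = |B_1||B_2|$ together with the hypothesis $|S| > p^\eps$). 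Taking $K = p^{\eps'}$ then yields $\|\pi^{*2^m}\|_2^2 \ll C_*^m p^{-c_*\eps' m}$, and choosing $m = k$ with $k \ge 3/(c_*\eps')$ makes the exponent of $p$ at most $-1/2^{k+2}$, which completes the proof.

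The main obstacle is the quantitative subgroup-avoidance verification in the final paragraph: extracting $K = p^{\eps'}$ requires combining the $\SL_2(\F_p)$-subgroup classification with Lemma~\ref{l:intersection} while keeping careful track of the structural form $|S| = |B_1||B_2|$ implicit in the hypothesis on $S$. Once that point is in place, the rest is a mechanical iteration of Cauchy--Schwarz together with Lemma~\ref{l:Frobenious} and the flattening theorem.
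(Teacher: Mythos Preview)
Your proof is correct and follows essentially the same route as the paper's: balance one of the functions to split off the main term, apply Cauchy--Schwarz to pass to the symmetric measure $\pi = \mu * \check\mu$ (the paper writes this as $S^{-1}*S$), iterate via positivity/self-adjointness, invoke Lemma~\ref{l:Frobenious}, and then feed the resulting convolution power into the flattening Theorem~\ref{t:flattering} after checking subgroup avoidance through Lemma~\ref{l:intersection} and the classification Theorem~\ref{t:classification}. The only cosmetic difference is that you balance $f_2$ while the paper balances $f_1$, and your quantitative bookkeeping in the final paragraph (taking $K = p^{\eps'}$ via $1/\min\{|B_1|,|B_2|\}$) matches the paper's choice $K = |S|^{1/2}/8$ once one notes that in all applications $|B_1|$ and $|B_2|$ are comparable.
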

\begin{proof}
	Denote by $\sigma$ the number of the solutions to equation (\ref{f:sa=a'}).
	In terms of the generalized convolution, 
	we have 
	$$
	\sigma = \sum_x f_2 (x) (S*f_1) (x) \,.
	$$
	Let $f(x) = f_1(x) - \langle f_1 \rangle /p$. 
	Then 
	\begin{equation}\label{f:sigma_balanced}
	\sigma = \frac{|S| \langle f_1 \rangle \langle f_2 \rangle}{p} + \sum_x f_2 (x) (S* f) (x) = \frac{|S| \langle f_1 \rangle \langle f_2 \rangle}{p} + \sigma_* \,.
	\end{equation}
	Using the Cauchy--Schwarz inequality, we get 
	$$
	\sigma^2_* \le 
	\|f_2 \|_2^2 \sum_x (S*f)^2 (x) = \|f_2 \|_2^2 \sum_{x} f (x) (S^{-1} * S * f) (x)   \,.
	$$
	Here $\mu(x) := (S^{-1} * S) (x) : \SL_2 (\F_p) \to \R$ is the usual convolution on the group $\SL_2 (\F_p)$. 
	Notice that $\mu(x) = \mu(x^{-1})$.
	Also, $\| f\|_2^2  = \|f_1 \|_2^2 - \langle f_1 \rangle^2/p \le  \|f_1 \|_2^2$.
	Thus
	$$
	\sigma_1 := \sigma^2_* \le \|f_2 \|_2^2 \sum_{x} f (x) (\mu * f) (x)  
	$$
	and we obtain by the iteration of the previous arguments
	(also, see the proof of Corollary \ref{c:E_k_sigma})
	that for any $k$ one has 
	\begin{equation}\label{f:sigma_1}
	\sigma^{2^k}_1 \le \|f_2 \|_2^{2^{k+1}} \|f_1 \|_2^{2^{k+1}-2} \sum_{x} f (x) ( \mu *_{2^k} \mu * f) (x) \,,
	\end{equation}
	where in $\mu *_{2^k} \mu$ the convolution on $\SL_2 (\F_p)$
	is taken $2^k-1$ times (so, we have written the function $\mu$ exactly $2^k$ times). 
	Now applying Lemma \ref{l:Frobenious}, we get
	\begin{equation}\label{f:sigma_pred}
	\sigma_* \le \|f_1 \|_2 \|f_2 \|_2  \cdot (2p \| \mu*_{2^k} \mu \|_2 )^{1/2^{k+1}} := \|f_1 \|_2 \|f_2 \|_2  (2p \T^{1/2}_{2^{k+1}} (S) )^{1/2^{k+1}}\,. 
	\end{equation}
	Here 
	$$
	\T_{2l} (S) = |\{ s^{-1}_1 s'_1 \dots  s^{-1}_l s'_l = w^{-1}_1 w'_1 \dots  w^{-1}_l w'_l  ~:~ s_j, s'_j, w_j, w'_j \in S \} |\,.
	$$
	Trivially, we have $\| \mu \|_\infty = |S|^{-1}$. 
	Suppose that $p$ is sufficiently large such that $|S| > p^{\eps} > 60$, say, 
	and hence in view of Lemma  \ref{l:intersection} we avoid all subgroups (1)---(3) from Theorem \ref{t:classification} in the sense that the conditions of 
	Theorem \ref{t:flattering} take place with $K= |S|^{1/2}/8$.
	Thus by Theorem \ref{t:flattering}, we 
	find 
	some $k = k(\eps)$ such that 
	$$\T^{}_{2^k} (S) \le 2|S|^{2^{k+1}} p^{-3} \,. $$
	Hence in view of (\ref{f:sigma_pred}), we get 
	$$
	\sigma_* \le 2 \|f_1 \|_2 \|f_2 \|_2   |S| p^{-1/2^{k+2}}
	$$
	as required.
	$\hfill\Box$ 
\end{proof}


\begin{remark}
	From the proof of Theorem \ref{t:flattering},  it follows that  the optimal choice of $k$ is 
	$k \sim \log p / \log |S|$.
	On the other hand, bound (\ref{f:counting}) is nontrivial if $k\ll \log\log p$. 
	So, one can check that the assumption $|S| > p^\eps$ can be relaxed to $\log |S| \gg \log p / \log \log p$
	and under this condition  we obtain a nontrivial bound in (\ref{f:counting}). 	
\end{remark}


Now we obtain an  interesting consequence of Lemma \ref{l:counting} to sets with small doubling
(another result of the same sort about the  products of sets with small doubling is contained in \cite{NG_S}). 
Combining Corollary \ref{c:1/A} 
and Lemma  \ref{l:1/A_energy} from Section \ref{sec:preliminaries} 
we derive 
Theorem \ref{t:1/A_intr} from the Introduction.

\begin{corollary}
	Let $A_1,A_2, B\subseteq \F_p$, 
	$|B| \ge p^\eps$, 
	$\eps > 0$ 
	and $|A_1+B| \le K_1 |A_1|$, $|A_2+B| \le K_2 |A_2|$.
	Then the number of the  solutions to the equation
	\begin{equation}\label{f:1/A}
	r_{A^{-1}_1 - A^{-1}_2} (1) = \left| \left\{ \frac{1}{a_1} - \frac{1}{a_2} = 1 ~:~ a_1 \in A_1,\, a_2 \in A_2 \right\} \right| 
	\end{equation}
	is at most 
	\begin{equation}\label{f:1/A_2}
	\frac{K_1 K_2 |A_1||A_2|}{p} + 2 (K_1 K_2 |A_1| |A_2|)^{1/2 }p^{-1/2^{k+2}} \,,
	\end{equation}
	where $k=k(\eps)$. 
	\label{c:1/A}
\end{corollary}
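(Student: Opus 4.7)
The plan is to encode the relation $1/a_1 - 1/a_2 = 1$ as an $\SL_2(\F_p)$-action relating elements of $A_1+B$ to elements of $A_2+B$, and then apply Lemma \ref{l:counting}. The hypothesis $|A_i + B| \le K_i|A_i|$ is the usual Pl\"unnecke-type input that lets us replace $a_i$ by $a_i + b_i$ at the cost of only a factor of $K_i$ per coordinate.

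First I would multiply the count $N := r_{1/A_1 - 1/A_2}(1)$ by $|B|^2$ and introduce dummy variables $b_1, b_2 \in B$. Substituting $c_i := a_i + b_i \in C_i := A_i+B$ gives
\[
|B|^2 N \;=\; \sum_{\substack{c_i \in C_i \\ b_i \in B}} A_1(c_1-b_1)\, A_2(c_2-b_2)\, \delta\!\left\{\tfrac{1}{c_1-b_1} - \tfrac{1}{c_2-b_2} = 1 \right\},
\]
where $|C_i| \le K_i |A_i|$. A direct manipulation shows the equation $1/(c_1-b_1) - 1/(c_2-b_2)=1$ is equivalent to
\[
c_1 \;=\; (b_1+1) - \tfrac{1}{c_2 - (b_2-1)},
\]
which is precisely $c_1 = s_{-(b_2-1),\, b_1+1} \cdot c_2$ in the notation of Section \ref{sec:SL}. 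Let $S := \{ s_{-(b_2-1),\, b_1+1} : b_1,b_2 \in B\}$; the map $(b_1,b_2) \mapsto s$ is injective (the matrix entries recover $b_1+1$ and $-(b_2-1)$), so $|S| = |B|^2$. Moreover $S$ has exactly the form covered by Lemma \ref{l:intersection} with $B_1 = -(B-1)$, $B_2 = B+1$, both of size $|B|$, so $S$ meets every Borel or dihedral coset in at most $O(|B|)$ elements.

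Bounding $A_i(c_i-b_i) \le C_i(c_i)$ (which only loses the fact that $c_i - b_i \in A_i$, yielding an upper bound consistent with the statement) produces
\[
|B|^2 N \;\le\; \sum_{s \in S}\sum_{c_1, c_2} C_1(c_1)\, C_2(c_2)\, \delta\{c_1 = s c_2\}.
\]
Since $|S| = |B|^2 \ge p^{2\eps}$, Lemma \ref{l:counting} applies with $f_1 = C_2$, $f_2 = C_1$, giving
\[
|B|^2 N \;\le\; \frac{|S|\,|C_1|\,|C_2|}{p} + 2\,|C_1|^{1/2}|C_2|^{1/2}\,|S|\, p^{-1/2^{k+2}}
\]
for some $k = k(\eps)$. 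Dividing by $|B|^2$ and invoking $|C_i| \le K_i|A_i|$ (so that $\|C_i\|_2 = |C_i|^{1/2} \le (K_i|A_i|)^{1/2}$) yields exactly \eqref{f:1/A_2}.

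The main obstacle is the algebraic step that recognises $1/a_1 - 1/a_2 = 1$, after the Pl\"unnecke inflation, as an action $c_1 = s \cdot c_2$ with $s$ drawn from the distinguished family $S$ of Section \ref{sec:SL}: once this is in hand, the hypotheses of Lemmas \ref{l:intersection} and \ref{l:counting} follow by inspection, and the asymptotic drops out mechanically. A natural variant of the argument handles the more general equation $1/a_1 - 1/a_2 = \la$ (which is what Theorem \ref{t:1/A_intr} asserts), by absorbing $\la$ into the shifts $b_1 \mapsto b_1 + \la$, $b_2 \mapsto b_2 - \la^{-1}\cdot(\cdots)$ in the same Möbius reduction.
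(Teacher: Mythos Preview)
Your proposal is correct and follows essentially the same route as the paper: inflate by $|B|^2$, rewrite the equation $1/(c_1-b_1)-1/(c_2-b_2)=1$ as a M\"obius action $c_1 = s\cdot c_2$ with $s$ ranging over the two--parameter family $S$ of Section~\ref{sec:SL}, invoke Lemma~\ref{l:intersection} to control coset intersections, and apply Lemma~\ref{l:counting}. The paper writes the action in the other direction (sending $x\in A_1+B$ to $y\in A_2+B$ via $s_{-(b+1),\,c-1}$), but this is an immaterial difference; your algebra and your tracking of $|S|=|B|^2$, $\|C_i\|_2\le (K_i|A_i|)^{1/2}$ are all fine.
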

\begin{proof}
	Clearly, the number of the  solutions to the equation (\ref{f:1/A}) 
	does not exceed 
	$$
	|B|^{-2} \left| \left\{ \frac{1}{x - b} - \frac{1}{y- c} = 1 ~:~ x \in A_1+B,\, y \in A_2  + B,\,  b,c \in B \right\} \right| \,.
	$$
	In other words, we have
	$$
	xy -(b+1) y - (c-1)x + (b+1) (c-1) + 1 = 0
	$$
	or, equivalently,  in terms of $\SL_2 (\F_p)$ actions $s_{-(b+1),c-1} x = y$, where 
	\[
	s_{-(b+1),c-1} = 
	\left( {\begin{array}{cc}
		c-1 & -1-(b+1)(c-1) \\
		1 & -(b+1) \\
		\end{array} } \right) \in S \,.
	\]
	Applying Lemma \ref{l:counting} to sets $-(B+1)$, $C-1$, we obtain the required bound.
	This completes the proof. 
	$\hfill\Box$ 
\end{proof}

\bigskip

Thus when $K_1,K_2$ are small and sizes of $A_1$, $A_2$ are close to $p$ our upper bound (\ref{f:1/A_2})  is close to the 
right 
asymptotic formula for 
$r_{A^{-1}_1 - A^{-1}_2} (1)$. 
The same can be proved in the case of $\G$--invariant sets $Q_1,Q_2$ for its intersection $|Q_1 \cap (Q_2+x)|$, where $x\ne 0$ is an arbitrary, see Section \ref{sec:first_results}. 
So, these two  phenomena  are parallel to each other.

\bigskip

Now let us obtain an application to estimates for some exponential sums.

\begin{corollary}
	For any functions $f,g : \F_p \to \C$, 
	$\langle f \rangle = 0$,  
	and for any set $B$ with $|B| \ge p^\eps$, 
	$\eps > 0$
	one has 
	\begin{equation}\label{f:new_exp_sums_1}
	\sum_{x,y} f(x) g(y) \sum_{b_1,b_2 \in B}  e\left( y\left( \frac{1}{x+b_1} + b_2 \right) \right) \ll \| f \|_2 \| g \|_2 \sqrt{p} |B|^2 p^{-\d} \,.
	\end{equation}
	Further for any nontrivial multiplicative character $\chi$, we get  
	\begin{equation}\label{f:new_exp_sums_2}
	\sum_{x,y} f(x) g(y)  \sum_{b_1,b_2 \in B} \chi \left( y+ b_2 + \frac{1}{x+b_1}  \right) \ll \| f \|_2 \| g \|_2 \sqrt{p} |B|^2 p^{-\d} \,,
	\end{equation}
	and for $|Y| \ge p^\eps$,  one has 
	\begin{equation}\label{f:new_exp_sums_3}
	\sum_{x,y} f(x) Y(y)  \sum_{b_1,b_2 \in B} \chi \left( y+ b_2 + \frac{1}{x+b_1}  \right) \ll_\eps 	
	\|f \|_1 |B|^2 |Y| \cdot \left( \frac{p^{1/2 -\d} \|f \|^2_2}{ \|f\|^2_1} \right)^{\eps} 	\,.
	\end{equation}
	Here 
	$\d = \d (\eps) > 0$. 
	\label{c:new_exp_sums}
\end{corollary}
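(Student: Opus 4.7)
The plan is to derive all three bounds from Lemma~\ref{l:counting} applied to the set $S=\{s_{b_1,b_2}:b_1,b_2\in B\}\subseteq \SL_2(\F_p)$ encoding the M\"obius action $x\mapsto b_2+\tfrac{1}{x+b_1}$ (a trivial sign change brings this to the matrices $s_{a,b}$ introduced earlier in the paper). Since $|S|=|B|^2\ge p^{2\eps}$ and Lemma~\ref{l:intersection} bounds the intersection of $S$ with Borel and dihedral subgroups by $|B|$, Lemma~\ref{l:counting} is in force with some $k=k(\eps)$, producing a saving of $p^{-1/2^{k+2}}=:p^{-\delta}$.

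For \eqref{f:new_exp_sums_1} I would introduce $h(z)=\sum_y g(y)e(yz)$, which by Parseval satisfies $\|h\|_2=\sqrt{p}\,\|g\|_2$. The whole sum rewrites as the weighted count $\sum_x f(x)\sum_{s\in S}h(s(x))$, which is exactly the quantity that Lemma~\ref{l:counting} estimates with $f_1=f$ and $f_2=h$. The hypothesis $\langle f\rangle=0$ annihilates the main term $|S|\langle f\rangle\langle h\rangle/p$, leaving only the error $2\|f\|_2\|h\|_2|S|p^{-\delta}=2\|f\|_2\|g\|_2\sqrt{p}\,|B|^2 p^{-\delta}$, as claimed.

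For \eqref{f:new_exp_sums_2} I would instead take $G(z)=\sum_y g(y)\chi(y+z)$; the full sum is again of the form $\sum_x f(x)\sum_{s\in S}G(s(x))$ and Lemma~\ref{l:counting} produces the error $\ll \|f\|_2\|G\|_2|S|p^{-\delta}$. What remains is the bound $\|G\|_2\le \sqrt{p}\,\|g\|_2$. Expanding $\|G\|_2^2=\sum_{y,y'}g(y)\overline{g(y')}\sum_z\chi((y+z)/(y'+z))$ and changing variables $w=(y+z)/(y'+z)$ reduces the inner sum to $\sum_{w\ne 0,1}\chi(w)=-1$ when $y\ne y'$ and to $p-1$ when $y=y'$, giving $\|G\|_2^2=p\|g\|_2^2-|\langle g\rangle|^2\le p\|g\|_2^2$.

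The main obstacle is \eqref{f:new_exp_sums_3}, whose target $\|f\|_1|B|^2|Y|\cdot(p^{1/2-\delta}\|f\|_2^2/\|f\|_1^2)^{\eps}$ cannot be recovered directly from \eqref{f:new_exp_sums_2}. My plan is to decompose $f$ dyadically according to the magnitude of $|f(x)|$, apply \eqref{f:new_exp_sums_2} to the mean--zero component of each level piece (the residual means contribute only harmlessly via the identity $\sum_x\chi(y+b_2+1/(x+b_1))=-\chi(y+b_2)$, which collapses the $x$--sum to an incomplete character sum times $|B|/p$), and then interpolate the resulting estimate against the trivial bound $\|f\|_1|Y||B|^2$ through a threshold $M$ separating the two regimes. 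The crux is selecting $M$ as the appropriate fractional power of $\|f\|_1/\|f\|_2$ and $p^{1/2-\delta}$ so that the $L^2$--saving from \eqref{f:new_exp_sums_2} combines with the $L^1$ bound to produce exactly the H\"older exponent $\eps$; the dyadic logarithmic losses are then absorbed into the $\ll_\eps$ constant. This balancing of two competing estimates, rather than any new $\SL_2$ input, is where the technical difficulty sits.
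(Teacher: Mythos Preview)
Your treatments of \eqref{f:new_exp_sums_1} and \eqref{f:new_exp_sums_2} are correct and in fact slightly cleaner than the paper's. The paper applies Cauchy--Schwarz in $y$ first and then invokes the machinery inside Lemma~\ref{l:counting}; you instead fold the $y$--sum into a single weight $h(z)=\sum_y g(y)e(yz)$ (resp.\ $G(z)=\sum_y g(y)\chi(y+z)$), observe that Parseval / the standard Jacobi--type computation gives $\|h\|_2,\|G\|_2\le \sqrt{p}\,\|g\|_2$, and then apply Lemma~\ref{l:counting} once. The two routes are equivalent, but yours makes the role of Lemma~\ref{l:counting} more transparent.

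Your plan for \eqref{f:new_exp_sums_3}, however, does not work. Interpolating between the trivial bound $\|f\|_1|Y||B|^2$ and the $L^2$ bound from \eqref{f:new_exp_sums_2} (with $g=Y$) can at best yield quantities of the shape
\[
\bigl(\|f\|_1|Y|\bigr)^{1-\theta}\bigl(\|f\|_2|Y|^{1/2}p^{1/2-\d}\bigr)^{\theta}|B|^2,
\]
and matching the $\|f\|$--exponents to the target forces $\theta=2\eps$, which then produces $|Y|^{1-\eps}p^{(1-2\d)\eps}$ rather than $|Y|\,p^{(1/2-\d)\eps}$. The shortfall $|Y|^{-\eps}p^{(1/2)\eps}$ is only $\le 1$ when $|Y|\ge p^{1/2}$, whereas the statement requires only $|Y|\ge p^{\eps}$. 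No dyadic decomposition of $f$ changes this, because the loss is in the $Y$--variable: \eqref{f:new_exp_sums_2} controls $Y$ only through $\|Y\|_2=|Y|^{1/2}$.

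The paper's argument supplies a genuinely different input here: Burgess's method. One applies H\"older with exponent $2k$ in the $(x,b_1,b_2)$--variables, so that the $y$--sum becomes the $2k$--th moment $\sum_{t}|\sum_{y\in Y}\chi(y+t)|^{2k}$, which Weil's theorem bounds by $(2k)^k p|Y|^k+2k\sqrt{p}\,|Y|^{2k}$. Choosing $k=\lceil 1/(2\eps)\rceil$ makes $|Y|^k\gg_\eps \sqrt p$, so the second term dominates and the full factor $|Y|^{2k}$ survives. The remaining factor $\sum_{x,x'}f(x)\overline{f(x')}\,|\{s_{b_1,b_2}x=s_{b'_1,b'_2}x'\}|$ is then bounded by Lemma~\ref{l:counting}. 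It is the Weil moment bound, not any balancing against \eqref{f:new_exp_sums_2}, that produces the exponent $\eps$ tied to $|Y|\ge p^{\eps}$.
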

\begin{proof}
	Using the Cauchy--Schwarz inequality, we 
	obtain 
	$$
	\left|\sum_{x,y} f(x) g(y) \sum_{b_1,b_2 \in B}  e\left( y\left( \frac{1}{x+b_1} + b_2 \right) \right) \right|^2
	\le
	$$
	$$
	\le
	\| g\|^2 _2 p \cdot \sum_{x,x'} f(-x) \ov{f(-x')} \cdot |\{  s_{b_1,b_2} x = s_{b'_1,b'_2} x' ~:~ b_1,b'_1 \in -B,\, b_2,b'_2 \in B \}| 
	= 
	\| g\|^2 _2 p \cdot \sigma' \,.
	$$
	Applying the arguments of the proof of Lemma \ref{l:counting} and the assumption $\sum_x f(x) =0$, we have 
	$$
	\sigma' \le 2 \| f\|^2_2 |B|^4 p^{-1/2^{k+2}} 
	$$
	as required.

	To obtain (\ref{f:new_exp_sums_2}), we use the usual  properties of multiplicative characters (see, e.g., \cite{BEW_book}) to derive 
	$$
	\sigma:=\left| \sum_{x,y} f(x) g(y)  \sum_{b_1,b_2 \in B} \chi \left( y+ b_2 + \frac{1}{x+b_1}  \right)  \right|
	\ll
	$$
	$$
	\ll 
	\| g\|^2 _2 p \cdot \sum_{x,x'} f(-x) \ov{f(-x')} \cdot |\{  s_{b_1,b_2} x = s_{b'_1,b'_2} x' ~:~  b_1,b'_1 \in -B,\, b_2,b'_2 \in B \}| 
	$$
	and repeat the arguments.

	Now let us use the usual Burgess' method, see, e.g., \cite{IK}.
	Namely, by the H\"older inequality and Weil's result (see \cite[Theorem 11.23]{IK}), we get  for any positive integer $k$  
	$$
	\sigma^{2k} \le(\| f\|_1 |B|^2)^{2k-2}  \cdot \sum_{x,x'} f(x) \ov{f(x')} \left| \left\{ b_2 + \frac{1}{x+b_1} = b'_2 + \frac{1}{x'+b'_1} ~:~ b_1, b'_1, b_2, b'_2 \in B \right\}\right| 
	\times
	$$
	$$
	\times 
	\left( (2k)^k p |Y|^k + 2k \sqrt{p} |Y|^{2k} \right) \,.
	$$
	By Lemma \ref{l:counting} we find 
	$l=l(\eps)$ 
	such that 
	\begin{equation}\label{tmp:04.12_1'}
	\sum_{x,x'} f(x) \ov{f(x')} \left| \left\{ b_2 + \frac{1}{x+b_1} = b'_2 + \frac{1}{x'+b'_1} ~:~ b_1, b'_1, b_2, b'_2 \in B \right\}\right| 
	\le 
	2 \|f \|^2_2 |B|^4 p^{-1/2^{l+2}} \,.
	\end{equation}
	Taking  constant $k = \lceil 1/2\eps \rceil$
	such that $ |Y|^{k} \gg_\eps (2k)^k \sqrt{p}$,  
	we obtain
	$$
	\sigma \ll_\eps \|f \|_1 |B|^2  p^{-1/k2^{l+3}} \cdot \left( \frac{\sqrt{p} \|f \|^2_2}{ \|f\|^2_1} \right)^{1/2k} 
	\le 
	\|f \|_1 |B|^2 |Y| \cdot \left( \frac{p^{1/2 -\d} \|f \|^2_2}{ \|f\|^2_1} \right)^{\eps} 	\,.
	$$
	Here we have denoted $1/2^{l+2}$ as $\delta$. 
	This completes the proof. 
	$\hfill\Box$ 
\end{proof}


\begin{remark}
	The results above  are nontrivial (suppose for simplicity that $f(x) = X(x)$
	for some set $X$) if $|X| \gg p^{1/2-\d}$
	and the restriction to the lower bound for size of $B$ can be extended to  $\log |B| \gg \log p / \log \log p$. 
	\label{r:chi_Burgess}
\end{remark}



Now we consider some {\it one--parametric} families of matrices in $\SL_2 (\F_p)$ for which the above methods can be applied.

\begin{lemma}
	Let $B\subseteq \F_p$ and $S_{r_1,r_2} \subseteq \SL_2 (\F_p)$ be a set of the form
	\[
	s_{b} = 
	\left( {\begin{array}{cc}
		1 & r_1 (b) \\
		r_2 (b) &  1+ r_1 (b) r_2 (b) \\
		\end{array} } \right) \in S_{r_1,r_2} \subseteq \SL_2 (\F_p)\,,
	\]
	where $r_1 = p_1/q_1,r_2 = p_2/q_2$ are 
	non--constant 
	rational functions 
	such that 
	$$
	\{ p_1 p_2, p_1 q_2, p_2 q_1, q_1 q_2 \}\,, 
	\{ p_1 q_1 q_2, p_1 p_2 q_1, p_1^2 p_2, q_1^2 q_2, q^2_1 p_2\}
	$$
	are linearly independent over $\F_p$. 
	Put 
	$$
	M:= 
	\max \{ \deg(p_1), \deg(p_2), \deg(q_1), \deg(q_2) \} \,.
	$$
	Then for any $g_1, g_2 \in \SL_2 (\F_p)$ one has 
	\begin{equation}\label{f*:intersection-}
	|g_1 \B g_2 \cap S_{r_1,r_2}| \le 2M \,.
	\end{equation}
	Moreover, 
	for any dihedral subgroup $\G$ one has 
	\begin{equation}\label{f*:intersection+}
	|g_1 \G g_2 \cap S_{r_1,r_2}| \le 12 M \,.
	\end{equation}
	The same holds when $\{ 1,r_1,r_2 \}$ are linearly dependent. 
	\label{l*:intersection}
\end{lemma}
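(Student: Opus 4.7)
The plan is to reduce both claims to counting roots of an explicit univariate polynomial in $b$, arising from a matrix coordinate of $g_1^{-1} s_b g_2^{-1}$, and to use the stated linear--independence hypotheses to certify non--triviality of those polynomials.

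For the Borel bound \eqref{f*:intersection-}, write $g_1^{-1} = \bigl(\begin{smallmatrix} u_1 & u_2 \\ u_3 & u_4 \end{smallmatrix}\bigr)$ and $g_2^{-1} = \bigl(\begin{smallmatrix} v_1 & v_2 \\ v_3 & v_4 \end{smallmatrix}\bigr)$. Then $s_b \in g_1 \B g_2$ is equivalent to the vanishing of the $(2,1)$--entry of $g_1^{-1} s_b g_2^{-1}$. A direct matrix multiplication shows this entry equals
$$
\Phi(b) = (u_3 v_1 + u_4 v_3) + u_3 v_3\, r_1(b) + u_4 v_1\, r_2(b) + u_4 v_3\, r_1(b) r_2(b).
$$
Multiplying by $q_1 q_2$ produces a polynomial $\Psi(b) \in \F_p[b]$ of degree at most $2M$, whose coefficients along the monomials $\{q_1 q_2,\, p_1 q_2,\, p_2 q_1,\, p_1 p_2\}$ are exactly the four scalars above. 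Under the first linear--independence hypothesis, $\Psi \equiv 0$ forces each of the four coefficients to vanish. A short case analysis using $\det g_1^{-1} = \det g_2^{-1} = 1$ rules this out (any surviving case reduces to a vanishing row or column in one of the $g_i^{-1}$), so $\Psi$ has at most $2M$ zeros in $\F_p$, proving \eqref{f*:intersection-}.

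For the dihedral bound \eqref{f*:intersection+}, invoke Theorem \ref{t:classification}: every dihedral $\G \subseteq \SL_2(\F_p)$ lies in the normalizer $N(T) = T \sqcup wT$ of a maximal torus $T$, split or non--split. Then
$$
|g_1 \G g_2 \cap S_{r_1,r_2}| \le |g_1 T g_2 \cap S_{r_1,r_2}| + |(g_1 w) T g_2 \cap S_{r_1,r_2}|,
$$
reducing to bounding a single torus coset $g_1' T g_2$. If $T$ is split then $T$ is contained in some Borel $\B'$ conjugate to $\B$, so after conjugating the conclusion of \eqref{f*:intersection-} applies and yields $\le 2M$ per coset. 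If $T$ is non--split, realize $T$ as the centralizer of a chosen non--scalar $\tau\in T$; substituting $X = g_1^{-1} s_b g_2^{-1}$ into the two independent polynomial relations defining $T$ and clearing the common denominator yields polynomial equations in $b$ of degree $O(M)$, whose coefficients occupy the span of the second independence set $\{p_1 q_1 q_2,\, p_1 p_2 q_1,\, p_1^2 p_2,\, q_1^2 q_2,\, q_1^2 p_2\}$. The independence of this set, combined with $\det g_i^{-1} = 1$, rules out simultaneous vanishing by a case analysis parallel to the Borel step, giving $O(M)$ roots per coset and the stated total $\le 12M$. The degenerate case $\{1, r_1, r_2\}$ linearly dependent collapses $\{s_b\}$ onto a simpler one--parameter curve along which the identical polynomial--zero argument applies.

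The main obstacle is the non--split case: one must track explicitly how the entries of $\tau$ interact with the bilinear forms in $u_iv_j$, verifying that after clearing the denominator $q_1^2 q_2^2$ the resulting polynomial coefficients genuinely span the second monomial set in such a way that their simultaneous vanishing contradicts $\det g_i^{-1}=1$. This bookkeeping is precisely why the lemma's hypothesis lists the richer second monomial set in addition to the first one used for the Borel case.
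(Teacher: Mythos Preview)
Your Borel argument is correct and essentially equivalent to the paper's: both isolate the $(2,1)$--entry of $g_1^{-1}s_bg_2^{-1}$ (the paper arrives at the same relation by eliminating the Borel parameter $r$ from two matrix entries), clear $q_1q_2$, and use the first independence set.

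Your dihedral reduction via $\Gamma\subseteq N(T)=T\sqcup wT$ is a genuinely different and more structural route than the paper's. The paper instead parametrises the torus explicitly as $r_\eps(\alpha,\beta)=\bigl(\begin{smallmatrix}\alpha&\eps\beta\\\beta&\alpha\end{smallmatrix}\bigr)$, solves for $\alpha,\beta$ from two of the four matrix entries, substitutes into a third, and chases a long chain of degenerate subcases (e.g.\ $\eps x^2=y^2$, $Z=0$, $\eps z^2=w^2$); only in the deepest subcase does it fall back on the quadratic constraint $\alpha^2-\eps\beta^2=1$, which after clearing $q_1^2q_2$ produces exactly the five monomials of the second independence set. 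Your split case, by contrast, is handled cleanly through the Borel bound, which is a real simplification.

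However, your non-split case has a gap. The ``two independent polynomial relations defining $T$'' as the centraliser of $\tau$ are \emph{linear} in the entries of $X=g_1^{-1}s_bg_2^{-1}$ --- for the model torus they are simply $X_{11}=X_{22}$ and $X_{12}=\eps X_{21}$. After clearing $q_1q_2$ (not $q_1^2q_2^2$) these lie in the span of the \emph{first} set $\{q_1q_2,\,p_1q_2,\,q_1p_2,\,p_1p_2\}$, not the second. So your stated justification for invoking the second set here is wrong. What you actually need to check is that these two linear conditions cannot \emph{both} vanish identically; concretely, extracting the $p_1p_2$--coefficients gives $u_2v_3=u_4v_4$ and $u_2v_4=\eps u_4v_3$, and a short elimination shows these force either a zero row/column in $g_i^{-1}$ or $\eps$ to be a square --- the latter impossible precisely because the torus is non-split. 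This step, not the second independence set, is the heart of the non-split case in your framework; the paper needs the second set only because it treats split and non-split tori uniformly through $r_\eps$, and the split case (with $\eps$ a square) genuinely allows the linear relations to degenerate.
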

\begin{proof}
	Take $a\in B_1$, $b\in B_2$ and consider the equation 
	\[
	\left( {\begin{array}{cc}
		xr & qx+y/r \\
		zr & qz+w/r \\
		\end{array} } \right)
	=
	\left( {\begin{array}{cc}
		x & y \\
		z & w \\
		\end{array} } \right) 
	\left( {\begin{array}{cc}
		r & q \\
		0 & r^{-1} \\
		\end{array} } \right) 
	=
	\left( {\begin{array}{cc}
		1 & r_1 \\
		r_2 & 1+r_1 r_2 \\
		\end{array} } \right) 
	\left( {\begin{array}{cc}
		X & Y \\
		Z & W \\
		\end{array} } \right) 
	=
	\]
	\[
	=
	\left( {\begin{array}{cc}
		X+r_1 Z & Y+r_1 W \\
		r_2 X + (1+r_1 r_2) Z & r_2 Y+ (1+r_1 r_2) W \\
		\end{array} } \right) \,.
	\]
	From $xr = X+r_1 Z$, $zr = r_2 X + (1+r_1 r_2) Z$, we have 
	\begin{equation}\label{tmp:02.12_1}
	zX + r_1 zZ = r_2 xX + xZ + r_1 r_2 xZ \,.
	\end{equation}
	If $Z=0$, then from $XW-YZ=1$ one derives  $X\neq 0$ and we arrive to $x r_2 = z$. 
	Since  $xw-yz=1$, it follows that $x,z$ cannot be zero simultaneously and hence $r_2$ is a constant.
	Similarly, we see that 
	$x\neq 0$. 
	By assumption $p_1 p_2, p_1 q_2, p_2 q_1, q_1 q_2$ are linearly independent.
	Hence multiplying (\ref{tmp:02.12_1}) by $q_1 q_2$, we obtain a non--zero polynomial (with the non--vanishing term $xZ p_1 p_2$) of degree at most $2M$. 
	Thus equation (\ref{tmp:02.12_1}) has at most $2M$ solutions.

	Now consider any dihedral subgroup which is just a product of a cyclic group of order four (of order two in ${\rm PSL\,}_2 (\F_p)$)
	and a cyclic group of order $(p\pm 1)/2$. 
	It is easy to see that the conjugate class of any element of $\SL_2 (\F_p)$ is the set of elements having the same trace and that an element with  trace 
	$\pm 2$ is conjugated  to 
	$
	\pm 
	\left( {\begin{array}{cc}
		1 & \la \\
		0 & 1 \\
		\end{array} } \right)
	$, see \cite[(6.3)]{Suzuki}.
	We have considered the case of elements with  trace $\pm 2$ already. 
	As for the remained case take any matrix of trace $2\a$ and of the form 
	$
	r_{\eps} (\a,\beta) = 
	\left( {\begin{array}{cc}
		\a & \eps \beta \\
		\beta & \a \\
		\end{array} } \right)
	$,
	where $\a^2 - \eps \beta^2 =1$ and $\a\neq \pm 1$ (hence $\eps \neq 0$). 
	One can check that  for any $n$ the element $r^n_{\eps} (\a,\beta)$ has the same form, i.e. $r^n_{\eps} (\a,\beta) = r_{\eps} (\a_n,\beta_n)$ for some 
	$\a_n, \beta_n \in \F_p$ and $\a^2_n - \eps \beta^2_n =1$. 
	Then as above
	\[
	\left( {\begin{array}{cc}
		\a x+ \beta y & \eps \beta x + \a y \\
		\a z + \beta w & \eps \beta z + \a w \\
		\end{array} } \right)
	=
	\left( {\begin{array}{cc}
		x & y \\
		z & w \\
		\end{array} } \right) 
	\left( {\begin{array}{cc}
		\a & \eps \beta \\
		\beta & \a \\
		\end{array} } \right) 
	=
	\left( {\begin{array}{cc}
		1 & r_1 \\
		r_2 & 1+r_1 r_2 \\
		\end{array} } \right)  
	\left( {\begin{array}{cc}
		X & Y \\
		Z & W \\
		\end{array} } \right) 
	=
	\]
	\[
	=
	\left( {\begin{array}{cc}
		X+r_1 Z & Y+r_1 W \\
		r_2 X + (1+r_1 r_2) Z & r_2 Y+ (1+r_1 r_2) W \\
		\end{array} } \right) \,.
	\]
	From this we have $X+r_1 Z = \a x+ \beta y$, $Y+r_1 W = \eps \beta x + \a y$. 
	Hence we can find $\a,\beta$ via $r_1$, provided $\eps x^2 \neq y^2$.
	After that, we get
	\begin{equation}\label{tmp:02.12_2}
	Z = \a z + \beta w -r_2 (X+r_1 Z) \,.
	\end{equation}
	Since $\a,\beta$ can be linearly expressed 
	via 
	$r_1$, 
	we obtain a contradiction with the linear independence of $p_1 p_2, p_1 q_2, p_2 q_1, q_1 q_2$
	provided $Z\neq 0$  (the term $Z p_1 p_2$ does not vanish). 
	If $Z=0$, then  
	$W\neq 0$ and 
	\begin{equation}
	\eps \beta z + \a w	= r_2 Y  + (1+r_1 r_2) W \,.
	\end{equation}
	We know that $\a,\beta$ can be founded via $r_1$. 
	It gives us a contradiction with linear independence of $p_1 p_2, p_1 q_2, p_2 q_1, q_1 q_2$
	(the term $W p_1 p_2$ does not vanish).

	It remains to consider the case   $\eps x^2 = y^2$. 
	Then we obtain an analogue of (\ref{tmp:02.12_2})
	\begin{equation}\label{tmp:02.12_3}
	W =  \eps \beta z + \a w -r_2 (Y+r_1 Z) \,.
	\end{equation}	
	If $\eps z^2 \neq  w^2$, then from 
	$\a z + \beta w = r_2 X + (1+r_1 r_2) Z$,  $\eps \beta z + \a w = r_2 Y+ (1+r_1 r_2) W$  
	we can find $\a,\beta$ which depends   linearly on $Z+r_2 (X+r_1Z)$, $W+r_2 (Y+r_1 W)$ and substituting them into $X+r_1 Z = \a x+ \beta y$, we obtain a contradiction with linear independence of $p_1 p_2, p_1 q_2, p_2 q_1, q_1 q_2$, provided $Z\neq 0$.
	More precisely, we get a dependence of the form
	$$
	Ar_1 r_2 + B r_2 +Z r_1 + C = 0 \,,
	$$
	where $A,B,C \in \F_p$ are some constants.
	If $A\neq 0$, then in the linear dependence between $p_1 p_2, p_1 q_2, p_2 q_1, q_1 q_2$ we have the non--vanishing term $A p_1 p_2$.
	Now if $A=0$, then $\{1,r_1,r_2\}$ are linearly dependent. 
	Let $\omega = X+r_1 Z$.
	Then from $\a x+ \beta y = \o$, $\a z + \beta w = Z + r_2 \o$, we obtain $\a = \o (w-yr_2) - yZ$, $\beta = \o (xr_2-z) + xZ$.  
	It gives us in view of $\eps x^2 = y^2$ that 
	$$
	1 = \a^2 - \eps \beta^2 = (\o (w-yr_2) - yZ)^2 - \eps (\o (xr_2-z) + xZ)^2 
	=
	$$
	$$
	= \o^2 ( (w-yr_2)^2 - 2\eps (xr_2-z)^2 ) - 2 \o Z (y (w-yr_2) + \eps x (xr_2-z)) 
	=
	$$
	\begin{equation}\label{tmp:23.01_1}
	= \o^2 ( w^2 - \eps z^2 - r_2 (yw - xz \eps) ) - 2 \o Z (yw - xz\eps) \,.
	\end{equation}
	Since $\eps z^2 \neq  w^2$, further $yw-xz \eps \neq 0$, $r_2$ depends linearly on $r_1$ and $\omega = X+r_1 Z$, $Z\neq 0$ it follows that 
	\eqref{tmp:23.01_1} gives a nontrivial equation  on $r_1$ of degree three. 
	Now if $Z=0$, then $\o = X \neq 0$ and from \eqref{tmp:23.01_1} we have a linear equation on $r_2$. 

	Finally, consider the case   $\eps x^2 = y^2$ and $\eps z^2 = w^2$. 
	Put $Q_1= X+r_1 Z$ and $Q_2 = Z+r_2 (X+r_1 Z) = Z+ r_2 Q_1$. 
	Then from $Q_1 = \a x+ \beta y$, $Q_2 = \a z + \beta w$, we obtain
	$
	\a = wQ_1 - yQ_2 \,, \beta = -zQ_1 + x Q_2
	$. 
	Hence using $\eps x^2 = y^2$ and $\eps z^2 = w^2$, we have 
	$$
	1 = \a^2 - \eps \beta^2 = (wQ_1 - yQ_2)^2 - \eps (-zQ_1 + x Q_2)^2 = 2 Q_1 Q_2 (\eps xz - wy) 
	=
	$$
	$$
	=2 (Q_1 Z + r_2 (X+r_1 Z)^2)  (\eps xz - wy)
	\,.
	$$
	If $Z\neq 0$, then the last identity gives us a contradiction with linear independence of  \\
	$\{ p_1 q_1 q_2, p_1 p_2 q_1, p_1^2 p_2, q_1^2 q_2, q^2_1 p_2\}$ because the term $2Z^2  (\eps xz - wy) p_1^2 p_2$ does not vanish.
	If $Z=0$, then from the same equation we obtain $1= 2X^2  (\eps xz - wy) r_2$ and hence  $r_2$ is a constant.

	Lastly, a careful analysis of the proof shows that the same arguments work in the case when $\{1, r_1, r_2 \}$ are linearly dependent.  
	This completes the proof. 
	$\hfill\Box$
\end{proof}

\begin{example}
	Let rational functions $r_1,r_2$ are  just non--constant polynomials. Then $q_1=q_2=1$ and our independency conditions
	are satisfied. 
\end{example}

\begin{remark}
	By a similar argument Lemma \ref{l*:intersection} takes place for rational functions $r_1,r_2$ of several variables. 
	Thus ideologically Lemma \ref{l:intersection} follows from Lemma \ref{l*:intersection} (up to constants). 
	\label{r:one_dim}
\end{remark}

\begin{corollary}
	Let $p_1,p_2 \in \F_p [x]$ be any non--constant polynomials.
	Then for any $A, B \subseteq \F_p$, $|B| \ge p^\eps$, $\eps>0$ one has 
	$$
	\left| \left\{ p_1(b) + \frac{1}{a+p_2 (b)} = p_1(b') + \frac{1}{a'+p_2 (b')} ~:~ a,a'\in A,\, b,b' \in B \right\} \right|
	- \frac{|A|^2 |B|^2}{p}
	\le
	$$
	\begin{equation}\label{f:pol} 
	\le
	2 |A| |B|^2 p^{-1/2^{k+2}} \,,
	\end{equation}
	where $k=k(\eps, \deg p_1, \deg p_2)$. 
	In particular, 
	\begin{equation}\label{f:pol'}
	\left| \left\{ p_1(b) + \frac{1}{a+p_2 (b)} ~:~ a \in A,\, b \in B \right\} \right| \gg \min \{ p, |A| p^{1/2^{k+2}} \} \,.
	\end{equation}
	\label{c:pol}
\end{corollary}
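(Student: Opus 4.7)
The plan is to encode the equation as an $\SL_2(\F_p)$-action and then apply the flattening estimate (Theorem \ref{t:flattering}) together with Lemma \ref{l*:intersection}, in the same spirit as Corollary \ref{c:1/A} and Corollary \ref{c:new_exp_sums}. The map $\phi_b : a \mapsto p_1(b) + 1/(a+p_2(b))$ corresponds to a M\"obius matrix of determinant $-1$; after multiplying by $\left(\begin{smallmatrix} 0 & 1 \\ 1 & 0 \end{smallmatrix}\right)$ we obtain
\[
s_b := \begin{pmatrix} 0 & 1 \\ 1 & 0 \end{pmatrix} \begin{pmatrix} p_1(b) & p_1(b)p_2(b)+1 \\ 1 & p_2(b) \end{pmatrix} = \begin{pmatrix} 1 & p_2(b) \\ p_1(b) & 1+p_1(b)p_2(b) \end{pmatrix} \in \SL_2(\F_p).
\]
Since $s_b \cdot a = 1/\phi_b(a)$ as a M\"obius action, the equation $\phi_b(a) = \phi_{b'}(a')$ is equivalent to $s_b \cdot a = s_{b'} \cdot a'$, up to a contribution of at most $|B|^2$ from the undefined pairs $a + p_2(b)=0$. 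The family $S := \{s_b : b\in B\}$ is exactly of the form $S_{r_1,r_2}$ of Lemma \ref{l*:intersection} with $r_1 = p_2$, $r_2 = p_1$, and the linear-independence hypotheses hold for non-constant polynomials (the \emph{linearly dependent} clause of that lemma handles the case $p_1 = c p_2 + d$).

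Now the counting runs parallel to Lemma \ref{l:counting}. Writing $N = \sum_{a\in A} A(a) \sum_{b,b'\in B} A(s_{b'}^{-1} s_b \cdot a)$, splitting $A = \bar A + |A|/p$, and extracting the main term gives $N - |A|^2|B|^2/p = \sigma_*$ with $\sigma_* = \sum_a A(a)(\nu * \bar A)(a)$, where $\nu := \sum_{b,b'\in B} \delta_{s_{b'}^{-1}s_b}$ is a symmetric measure on $\SL_2(\F_p)$ of total mass $|B|^2$. Iterating the Cauchy--Schwarz step of Lemma \ref{l:counting} $k$ times yields
\[
|\sigma_*|^{2^k} \le |A|^{2^k - 1}\, \sum_y \bar A(y)\, (\nu^{*2^k} * \bar A)(y),
\]
and Lemma \ref{l:Frobenious} (applicable since $\sum_y \bar A(y) = 0$) gives
\[
|\sigma_*| \le (2p)^{1/2^k} |A|\,|B|^2\, \|\rho^{*2^k}\|_2^{1/2^k}, \qquad \rho := \nu/|B|^2.
\]

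To apply Theorem \ref{t:flattering} to $\rho$, two inputs are required. First, the polynomial map $b\mapsto s_b$ has fibres of size at most $M := \max(\deg p_1,\deg p_2)$, hence $\|\rho\|_\infty \le M/|B|$. Second, for any proper subgroup $\G \subset \SL_2(\F_p)$ and any coset $g\G$,
\[
\rho(g\G) = |B|^{-2}\sum_{b'\in B}|\{b\in B : s_b \in s_{b'} g\G\}| \le 12M^2/|B|,
\]
where Lemma \ref{l*:intersection} controls the Borel and dihedral cases and the $\|\rho\|_\infty$ bound controls the finite subgroups $A_4, S_4, A_5$. Thus both hypotheses of Theorem \ref{t:flattering} hold with $K \asymp |B|/M^2 \ge p^\eps/M^2$. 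Choosing $k = k(\eps, \deg p_1, \deg p_2)$ just large enough that $(C_*/K^{c_*})^k \le p^{-3}$ forces $\|\rho^{*2^k}\|_2 \lesssim p^{-3/2}$. Substituting and absorbing numerical constants into $k$ gives $|\sigma_*| \le 2|A||B|^2 p^{-1/2^{k+2}}$, which is \eqref{f:pol}. The growth bound \eqref{f:pol'} then follows by Cauchy--Schwarz: $|A|^2|B|^2 \le |\{\phi_b(a) : a\in A,\, b\in B\}|\cdot N$, combined with the estimate on $N$.

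The main obstacle I expect is the verification of the subgroup-avoidance hypothesis of Theorem \ref{t:flattering}: it combines Lemma \ref{l*:intersection} (a one-sided coset bound for the \emph{set} $\{s_b\}$) with the polynomial fibre bound for $b \mapsto s_b$, and the linearly-dependent clause of Lemma \ref{l*:intersection} must be invoked when $p_1$ and $p_2$ are affinely related. Tracking the exponent $1/2^{k+2}$ through the iterated Cauchy--Schwarz while absorbing constants is a minor but unavoidable bookkeeping task.
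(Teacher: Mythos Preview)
Your proposal is correct and follows essentially the same route as the paper: encode $\phi_b(a)=\phi_{b'}(a')$ as $s_b a = s_{b'} a'$ with $s_b\in S_{p_2,p_1}\subset \SL_2(\F_p)$, then apply the counting machinery of Lemma \ref{l:counting} (iterated Cauchy--Schwarz, Lemma \ref{l:Frobenious}, and Theorem \ref{t:flattering}), using Lemma \ref{l*:intersection} to verify the coset-avoidance hypothesis. The paper's proof is the terse three-line version of exactly this argument; your version simply unpacks the proof of Lemma \ref{l:counting} and is a bit more careful about multiplicity of the map $b\mapsto s_b$ and the degenerate denominators.
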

\begin{proof}
	Indeed, in terms of the set $S_{p_1,p_2}$ the number of the solutions to (\ref{f:pol}) is
	$$
	s a = s' a' \,, \quad \quad a,a'\in A\,, s,s' \in S_{p_2,p_1} 
	$$
	or, equivalently, $(s')^{-1} s a = a'$ (we can assume that $a+p_2$ and $ap_1+1+p_1p_2$ are nonzero). 
	Using Lemma \ref{l:counting}, combining with Lemma \ref{l*:intersection}, 
	we obtain the required result. 
	$\hfill\Box$
\end{proof}

\bigskip 

Of course Corollary \ref{c:pol} does not hold if either $p_1$ or $p_2$ has zero degree. 
Using the same arguments as in the proof of Corollary \ref{c:new_exp_sums}, we derive

\begin{corollary}
	Let $B\subseteq \F_p$ and $p_1,p_2,q_1,q_2 \in \F_p [x]$
	such that 
	$$\{p_1,q_1\}\,,  \{p_2,q_2\}\,, \{ p_1 p_2, p_1 q_2, p_2 q_1, q_1 q_2 \}\,, 
	\{ p_1 q_1 q_2, p_1 p_2 q_1, p_1^2 p_2, q_1^2 q_2, q^2_1 p_2\} 
	$$
	are linearly independent over $\F_p$. 
	Put 
	$$
	M:= 
	\max \{ \deg(p_1), \deg(p_2), \deg(q_1), \deg(q_2) \} \,.
	$$
	For any functions $f,g : \F_p \to \C$, $\sum_x f(x) = 0$, and for any set $B$ with $|B| \ge p^\eps$, 
	$\eps > 0$
	one has 
	\begin{equation}\label{f:new_exp_sums_1_a}
	\sum_{x,y} f(x) g(y) \sum_{b \in B}  e\left( y\left( \frac{q_1 (b) q_2 (b) x+ p_1 (b) q_2 (b)}{p_2 (b) q_1 (b) x+ q_1 (b) q_2 (b) + p_1 (b) p_2 (b)} \right) \right) \ll_M \| f \|_2 \| g \|_2 \sqrt{p} |B| p^{-\d} \,.
	\end{equation}
	Further for any nontrivial multiplicative character $\chi$ and $\la \neq 0$, we get 
	\begin{equation}\label{f:new_exp_sums_2_a}
	\sum_{x,y} f(x) g(y)  \sum_{b \in B} \chi \left( y+  \frac{q_1 (b) q_2 (b) x+ p_1 (b) q_2 (b)}{p_2 (b) q_1 (b) x+ q_1 (b) q_2 (b) + p_1 (b) p_2 (b)}  \right) 
	\ll_M \| f \|_2 \| g \|_2 \sqrt{p} |B| p^{-\d} \,.
	\end{equation}
	Here $\d = \d (\eps, M) > 0$. 
	\label{c:R[A,B,B]}
\end{corollary}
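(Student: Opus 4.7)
The plan is to follow the proof of Corollary \ref{c:new_exp_sums} essentially verbatim, with the set $S_{r_1,r_2}$ from Lemma \ref{l*:intersection} playing the role of the set $S$ used there. A direct computation shows that the rational expression inside the character equals $s_b \cdot x$, where $s_b \in S_{r_1,r_2}$ acts on $\F_p$ by the standard $\SL_2$ Möbius action with $r_1 = p_1/q_1$ and $r_2 = p_2/q_2$. Hence \eqref{f:new_exp_sums_1_a} and \eqref{f:new_exp_sums_2_a} reduce, respectively, to bounding $\sum_{x,y,b} f(x) g(y) e(y \cdot s_b x)$ and $\sum_{x,y,b} f(x) g(y) \chi(y + s_b x)$, with $b$ ranging over $B$.

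For \eqref{f:new_exp_sums_1_a} I would apply the Cauchy--Schwarz inequality in $y$ and then use orthogonality of additive characters in $y$ to collapse the problem to $p \|g\|_2^2 \cdot \sigma'$, where $\sigma' = \sum_{x,x'} f(x) \overline{f(x')} \cdot |\{(b,b') \in B \times B : s_b x = s_{b'} x'\}|$. Rewriting the constraint $s_b x = s_{b'} x'$ as $(s_{b'}^{-1} s_b) x = x'$ places the estimation of $\sigma'$ into exactly the framework used inside the proof of Lemma \ref{l:counting}; after an iterated Cauchy--Schwarz as in that proof, and using $\sum_x f(x) = 0$, the task becomes an $L^2$-flattening estimate for $\mu = S^{-1} * S$ restricted to $b \in B$. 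For \eqref{f:new_exp_sums_2_a} the standard identity $\sum_y \chi(y+t) \overline{\chi(y+t')} = p \cdot \mathbf{1}[t = t'] - 1$ together with Cauchy--Schwarz produces the same quantity $\sigma'$ up to negligible terms, and the same estimate applies.

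The only non-routine ingredient is verifying the hypotheses of Theorem \ref{t:flattering} for $\mu$, which amounts to the intersection bounds $|g_1 \B g_2 \cap S_{r_1,r_2}| \ll_M 1$ and $|g_1 \Gamma g_2 \cap S_{r_1,r_2}| \ll_M 1$ for every dihedral $\Gamma$. These are exactly the content of Lemma \ref{l*:intersection}, whose linear-independence hypotheses on $\{p_1,q_1\}$, $\{p_2,q_2\}$, $\{p_1 p_2, p_1 q_2, p_2 q_1, q_1 q_2\}$, and $\{p_1 q_1 q_2, p_1 p_2 q_1, p_1^2 p_2, q_1^2 q_2, q_1^2 p_2\}$ coincide precisely with those imposed in the corollary. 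With these bounds at hand, Theorem \ref{t:flattering} applies with $K = c_M |B|^{1/2}$ for a constant $c_M > 0$ depending only on $M$, and delivers the claimed saving $p^{-\delta(\varepsilon, M)}$.

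The main obstacle I foresee is purely bookkeeping rather than conceptual: one has to excise a negligible set of pairs $(b, x)$ for which the denominator $p_2(b) q_1(b) x + q_1(b) q_2(b) + p_1(b) p_2(b)$ vanishes so that $s_b$ is well-defined as an element of $\SL_2(\F_p)$, and one has to verify the pointwise bound $\|\mu\|_\infty \le K^{-1}$ required by Theorem \ref{t:flattering}, absorbing the $M$-dependence into the implicit constants rather than into the exponent of the saving. Both checks are routine but must be carried out carefully so that the final exponent $\delta = \delta(\varepsilon, M)$ is independent of $p$.
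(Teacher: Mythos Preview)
Your proposal is correct and follows essentially the same route as the paper, which simply states that Corollary~\ref{c:R[A,B,B]} is obtained ``using the same arguments as in the proof of Corollary~\ref{c:new_exp_sums}''. Your identification of the rational expression as the M\"obius action $s_b\cdot x$ with $s_b\in S_{r_1,r_2}$ is exactly the intended reduction, and invoking Lemma~\ref{l*:intersection} in place of Lemma~\ref{l:intersection} is precisely how the coset hypotheses of Theorem~\ref{t:flattering} are verified here. One very minor remark: since Lemma~\ref{l*:intersection} gives intersection bounds of size $O_M(1)$ rather than $O(|B|^{1/2})$, you may in fact take $K\asymp_M |B|$ rather than $K=c_M|B|^{1/2}$; your weaker choice still suffices, but the stronger one is what the one--parameter setting actually yields.
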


Again, using the usual Burgess method
one can obtain a non--trivial bound for  sum (\ref{f:new_exp_sums_2_a}) in the regime when (let $f(x) = X(x)$, $g(y) =Y(y)$ for simplicity)  
$|Y|  \ge p^\eps$, $|B| \ge p^{\eps}$ and $|X| \gg_\eps p^{1/2-\d}$, see Remark \ref{r:chi_Burgess} and the proof of Corollary \ref{c:new_exp_sums}. 

\bigskip


The same method, combining with 
the results 
from \cite{Brendan_rich} concerning {\it rich lines} (not rich hyperbolas)
in $\F_p \times \F_p$ allows to prove

\begin{theorem}
	For any functions $f, g : \F_p \to \C$ and any sets $A$, $B$ with $|B| \ge p^\eps$, 
	$|A| < p^{1-\eps}$, 
	$\eps > 0$
	one has 
	$$
	\sum_{x,y} f(x) g(y) \sum_{b_1,b_2 \in B}  e(y (a+b_1) b_2) \ll \| f \|_2 \| g \|_2 \sqrt{p} |B|^2 p^{-\d} \,,
	$$
	where $\d = \d (\eps) > 0$. 
\end{theorem}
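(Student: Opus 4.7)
The plan is to apply Cauchy--Schwarz in the $y$-variable, reducing the exponential sum estimate to a weighted collision count which is then recognised as a weighted incidence sum between the Cartesian product point set $\mathrm{supp}(F) \times B \subset \F_p^2$ (where $F = f * 1_B$) and the family of lines through the origin, and finally bounded by means of the rich-lines bound of \cite{Brendan_rich}.

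Writing $T(y) = \sum_x f(x) \sum_{b_1, b_2 \in B} e(y(x+b_1) b_2)$ and $S = \sum_y g(y) T(y)$, Cauchy--Schwarz in $y$ combined with the orthogonality of additive characters yields $|S|^2 \le \|g\|_2^2 \sum_y |T(y)|^2 = p \|g\|_2^2 N$, where
$$
N = \sum_{x,x',\, b_1, b_1', b_2, b_2'} f(x) \overline{f(x')} B(b_1) B(b_1') B(b_2) B(b_2') \delta\{(x+b_1) b_2 = (x'+b_1') b_2'\}.
$$
It therefore suffices to prove $N \ll \|f\|_2^2 |B|^4 p^{-2\delta}$. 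The substitution $u = x + b_1$, $\lambda = b_2/b_2'$ rewrites this count in geometric form as $N = \sum_{\lambda} |m(\lambda)|^2$, where $m(\lambda) = \sum_{v \in B} F(\lambda^{-1} v)$ is, for $f \ge 0$ (to which we reduce by splitting $f$ into its four real non-negative parts), the $F$-weighted count of points of the Cartesian product $\mathrm{supp}(F) \times B$ lying on the line $v = \lambda u$ through the origin in $\F_p^2$.

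The naive Cauchy--Schwarz bound gives $N \le |B| \sum_\mu \sum_{v \in B} F(\mu v)^2 = |B|^2 \|F\|_2^2 \le |B|^4 \|f\|_2^2$, which already matches the target up to the missing $p^{-2\delta}$ factor. To obtain the saving, I would decompose the slopes by richness, $\Lambda_T = \{\lambda : m(\lambda) \ge T\}$, and bound $|\Lambda_T|$ via the rich-lines estimate of \cite{Brendan_rich} (a Szemer\'edi--Trotter-type bound for Cartesian products in $\F_p$, cf.\ also \cite{SZ_inc}). Summing dyadically, $N \ll \sum_t 4^{t+1} |\Lambda_{2^t}|$, and exploiting the hypotheses $|A| < p^{1-\eps}$ (to keep $\mathrm{supp}(F)$ from filling all of $\F_p$ in a structured way) and $|B| \ge p^\eps$ (so that typical lines through the origin are sufficiently rich) should yield $N \ll \|f\|_2^2 |B|^4 p^{-2\delta}$ for some $\delta = \delta(\eps) > 0$.

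The main obstacle lies in the third step: extracting a uniform power-of-$p$ saving from the rich-lines bound across all dyadic scales of $m(\lambda)$. The difficulty stems from the fact that the action $x \mapsto b_2(x+b_1)$ lies in the Borel subgroup of $\SL_2(\F_p)$, where the Helfgott growth theorem gives no expansion; consequently, unlike Corollary \ref{c:new_exp_sums} one cannot invoke Lemma \ref{l:counting}, and the rich-lines substitute provided by \cite{Brendan_rich} is essential. The conditions $|A| < p^{1-\eps}$ and $|B| \ge p^\eps$ are precisely what ensure the Cartesian product $\mathrm{supp}(F) \times B$ stays in the regime where the Szemer\'edi--Trotter-type bound delivers a genuine power saving rather than the trivial estimate.
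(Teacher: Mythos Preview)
Your proposal follows essentially the same route as the paper's one--line sketch: Cauchy--Schwarz in $y$, recognition that the affine maps $x\mapsto b_2(x+b_1)$ lie in the Borel subgroup (so Lemma~\ref{l:counting} via Helfgott is unavailable), and an appeal to the rich--lines bound of \cite{Brendan_rich} in its place. Your reformulation $N=\sum_\lambda |m(\lambda)|^2$ with $m(\lambda)=\sum_{v\in B}F(\lambda^{-1}v)$ is correct and is indeed a weighted count of incidences with lines through the origin in $\mathrm{supp}(F)\times B$.

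One point of difference worth noting: after your substitution $u=x+b_1$ the grid becomes $\mathrm{supp}(F)\times B\subseteq (A+B)\times B$, whose first factor is not directly controlled by the hypothesis $|A|<p^{1-\eps}$ (indeed $|A+B|$ may be as large as $p$); the size condition must then be recovered through the moments $\|F\|_1=\|f\|_1|B|$, $\|F\|_\infty$, etc., and a further dyadic decomposition of $F$ before \cite{Brendan_rich} can be applied level by level. The more direct realization --- and likely the one the paper has in mind --- is to keep the collision equation $(x+b_1)b_2=(x'+b_1')b_2'$ as a family of \emph{genuine} lines $b_2X-b_2'X'=b_1'b_2'-b_1b_2$ in the grid $\mathrm{supp}(f)\times\mathrm{supp}(f)\subseteq A\times A$. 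There the hypothesis $|A|<p^{1-\eps}$ bounds the grid size immediately, putting one squarely in the regime where the Szemer\'edi--Trotter type bound of \cite{Brendan_rich} yields a power saving. Both setups ultimately lead to the same conclusion; the $A\times A$ picture simply makes the role of the hypothesis transparent and avoids the extra layer of dyadic decomposition on $F$.
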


Again the result is nontrivial (suppose for simplicity that $f(x) = X(x)$ for some set $X$) if $|X| \gg p^{1/2-\d}$
and the restriction to the lower bound for size of $B$ can be extended to  $\log |B| \gg \log p / \log \log p$.


\section{On $\GL_2 (\F_p)$--actions}
\label{sec:GL}

In this section we consider the set of all non--degenerate matrices $\GL_2 (\F_p)$ with coefficients from $\F_p$.
Also, let $G$ be its subset.
By  
$\det G$ 
denote the set $\det G := \{ \det g ~:~ g\in G \} \subseteq \F^*_p$ and because $\GL_2 (\F_p)$ is acting on $\F_p$ we can consider 
$G(A) := \{ ga ~:~ g\in G,\, a\in A\}$ for any $A\subseteq \F_p$. 
Of course there is no expanding result similar to Theorem \ref{t:Harald_SL2} in $\GL_2 (\F_p)$ but nevertheless one can easily obtain 

\begin{proposition}
	Let $G\subseteq \GL_2 (\F_p)$ be a set of matrices, $A\subseteq \F_p$ and $\eps >0$ be a real number. 
	Suppose that \\ 
	$\circ~$ $|G| > p^\eps$, \\
	$\circ~$ $|\det G| \le |G| p^{-\eps}$, \\
	$\circ~$ $\sum_{x \in s \G} (G^{-1} * G ) (x) \le p^{-\eps} \sum_{x \in \SL_2 (\F_p)} (G^{-1} * G ) (x)$ for any proper subgroup $\G \subset \SL_2 (\F_p)$, $s\in \SL_2 (\F_p)$.\\
	Then there is $\d = \d (\eps) >0$ such that 
	$$
	|G(A)| \gg \min \{ p, |A| p^\d \} \,. 
	$$
	\label{p:GL2}
\end{proposition}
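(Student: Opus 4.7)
The approach mirrors Corollary~\ref{c:pol} and its relatives, with the $\SL_2$--action replaced by the full $\GL_2$--action on $\F_p$. Cauchy--Schwarz applied to the map $(g,a)\mapsto ga$ yields
$$
|G(A)|\;\ge\;\frac{|G|^2|A|^2}{\sigma},\qquad
\sigma\;=\;\sum_{c\in\F_p}r_{GA}^2(c)\;=\;\sum_{h\in\GL_2(\F_p)}\mu(h)\,r_A(h),
$$
where $\mu:=G^{-1}*G$ and $r_A(h):=|\{(a_1,a_2)\in A^2:h a_1=a_2\}|$. Because $\GL_2(\F_p)$ acts transitively on $\F_p\cup\{\infty\}$, the natural main term in $\sigma$ is $|G|^2|A|^2/(p+1)$, so establishing the error bound $\sigma - |G|^2|A|^2/(p+1)\ll |G|^2|A|\,p^{-\d}$ would yield $|G(A)|\gg\min\{p,|A|p^{\d}\}$.

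\textbf{Flattening on $\SL_2(\F_p)$.}
Let $\nu:=\mu|_{\SL_2(\F_p)}$ and $G_d:=\{g\in G:\det g=d\}$. By Cauchy--Schwarz and the second hypothesis,
$$
\|\nu\|_1\;=\;\sum_d|G_d|^2\;\ge\;\frac{|G|^2}{|\det G|}\;\ge\;|G|\,p^{\eps},
$$
while the pointwise bound $\|\nu\|_\infty\le|G|$ (attained at the identity) gives $\|\nu\|_\infty/\|\nu\|_1\le p^{-\eps}$. Since $\mu$ is symmetric, so is $\nu$, and combined with the third hypothesis the normalised probability measure $\t\nu:=\nu/\|\nu\|_1$ fulfils all assumptions of Theorem~\ref{t:flattering} with $K=p^{\eps}$. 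Hence for every $k\le p^{c_*\eps}$,
$$
\|\t\nu*_{2^k}\t\nu\|_2^2\;-\;|\SL_2(\F_p)|^{-1}\;\le\;C_*^k\,p^{-c_*\eps k}.
$$

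\textbf{Iterated Cauchy--Schwarz and assembly.}
Passing to the balanced function $f_A:=A-|A|/(p+1)$ and restricting $h$ to $\SL_2$, I would imitate the iteration at the heart of Lemma~\ref{l:counting}: $k+1$ successive applications of Cauchy--Schwarz to the $\SL_2$--piece $\sigma_1:=\sum_{h\in\SL_2(\F_p)}\nu(h)\,r_A(h)$ produce, after invoking Lemma~\ref{l:Frobenious}, a bound of the form
$$
\Bigl(\sigma_1-\tfrac{\|\nu\|_1|A|^2}{p+1}\Bigr)^{2^{k+1}}\;\ll\;\|f_A\|_2^{2^{k+2}-2}\,\|\nu\|_1^{2^{k+1}}\cdot 2p\,\|\t\nu*_{2^k}\t\nu\|_2.
$$
Inserting the flattening estimate above and choosing $k$ of order $\log(1/\eps)$ produces a power saving $p^{-\d(\eps)}$, giving $\sigma_1=\|\nu\|_1|A|^2/(p+1)+O(\|\nu\|_1|A|\,p^{-\d})$.

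\textbf{Main obstacle.}
The delicate remaining piece is the contribution $\sigma-\sigma_1$ coming from $h\in\GL_2(\F_p)\setminus\SL_2(\F_p)$, since Theorem~\ref{t:flattering} and the third hypothesis only control the $\SL_2$--slice of $\mu$. The proposed resolution is to parametrise each determinant coset by a fixed representative: for $e\in DD^{-1}\setminus\{1\}$ (with $D:=\det G$) pick $s_e\in\GL_2(\F_p)$ of determinant $e$, write $h=ks_e$ with $k\in\SL_2$, and note that $r_A(ks_e)=|A_e\cap k^{-1}A|$ where $A_e:=s_e(A)\subseteq\F_p\cup\{\infty\}$. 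The sliced measures $\nu_e(k):=\mu(ks_e)$ on $\SL_2$ satisfy $\sum_e\|\nu_e\|_1=|G|^2$, and the plan is to transfer the flattening of $\t\nu$ to each $\t\nu_e$ by exploiting the second hypothesis to force the determinant distribution of $\mu*_2\mu$ to be nearly uniform on $(DD^{-1})^2\subseteq\F_p^*$. Verifying that each $\t\nu_e$ avoids proper $\SL_2$--cosets so that Theorem~\ref{t:flattering} becomes applicable on every slice, and then running the iterated Cauchy--Schwarz argument of the previous paragraph once on each slice, is the step where the proof does its substantive work.
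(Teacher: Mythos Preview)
Your overall strategy (Cauchy--Schwarz followed by flattening on $\SL_2(\F_p)$) is the right one, and your measure $\nu=\mu|_{\SL_2}=\sum_\la G_\la^{-1}*G_\la$ is exactly the object the paper ultimately flattens. The difficulty is precisely where you locate it, and your proposed resolution of that difficulty does not go through. To run Theorem~\ref{t:flattering} on a slice $\t\nu_e$ with $e\neq 1$ you would need (i) symmetry of $\t\nu_e$, which fails once you shift by $s_e$, and more seriously (ii) a bound of the form $\t\nu_e(s\G)\le K^{-1}$ for every proper $\G\subset\SL_2(\F_p)$. The hypothesis of the proposition controls only $\sum_{x\in s\G}(G^{-1}*G)(x)$ for $s\in\SL_2(\F_p)$, i.e.\ the $e=1$ slice; nothing is assumed about cosets inside the other determinant classes, and the second hypothesis on $|\det G|$ gives no such control. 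So as stated, the off--$\SL_2$ contribution $\sigma-\sigma_1$ carries a main term of size roughly $(|G|^2-\|\nu\|_1)|A|^2/p$ together with an error you cannot bound.

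The paper sidesteps the whole issue by reversing the order of operations: it slices $G$ by determinant \emph{before} Cauchy--Schwarz. One pigeonholes to a dyadic level set $\L=\{\la:\D<|G_\la|\le 2\D\}$ with $\rho_1:=\sum_{\la\in\L}|G_\la|^2\gtrsim L^{-1}\sum_\la|G_\la|^2$, and then applies Cauchy--Schwarz in the pair $(\la,x)$ to
\[
\sum_{\la\in\L}|G_\la|\,|A|\;=\;\sum_{\la\in\L}\sum_{x\in G(A)}(G_\la*A)(x),
\]
obtaining $|A|^2\D^2|\L|^2\le|\L|\,|G(A)|\sum_{\la\in\L}\sum_x(G_\la*A)^2(x)$. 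The point is that each $G_\la^{-1}*G_\la$ is automatically supported on $\SL_2(\F_p)$, so the resulting normalised measure $\mu_0:=\rho_1^{-1}\sum_{\la\in\L}G_\la^{-1}*G_\la$ lives entirely on $\SL_2$, is symmetric, has $\|\mu_0\|_\infty\ll\D^{-1}\ll Lp^{-\eps}$ thanks to the second hypothesis, and inherits the coset bound directly from the third hypothesis. From there the counting lemma (Lemma~\ref{l:counting}) finishes as you outlined for your $\sigma_1$. In short: do not Cauchy--Schwarz on all of $G$ and then try to recover the $\SL_2$ structure; fibre over the determinant first so that the $\SL_2$ structure is there from the start.
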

\begin{proof}
	Put $L = \log p$, $X= G(A)$. 
	For any $\la \in \F^*_p$ consider  $G_\la = \{ g \in G ~:~ \det g = \la \}$.
	Using the Dirichlet principle, we find $\L \subseteq \F^*_p$ and a number $\D$ such that  
	$\D <|G_\la| \le 2\D$ on the set $\L$ and 
	\begin{equation}\label{tmp:20.12_3}
	\rho :=\sum_{x \in \SL_2 (\F_p)} (G^{-1} * G ) (x) = \sum_{\la} |G_\la|^2 \ll L \sum_{\la \in \L} |G_\la|^2 := L \rho_1 \,.
	\end{equation}
	Using 
	the assumption $|\L| \le |\det G| \le |G| p^{-\eps}$ and the Cauchy--Schwarz inequality, we see that
	$$
	\rho \ge  |\det (G)|^{-1} \left( \sum_{\la} |G_\la| \right)^2   = \frac{|G|^2}{|\det G|} \ge |G| p^\eps 
	$$ 
	and hence $\D \gg p^{\eps} / L$.  
	By  the definition of the set $\L$ and the Cauchy--Schwarz inequality, we obtain  
	$$
	|A|^2 \D^2 |\L|^2 
	\le
	\left( \sum_{\la \in \L} |G_\la| |A| \right)^2
	=
	\left( \sum_{\la \in \L} \sum_{x \in X} (G_\la * A) (x) \right)^2 
	\le
	$$
	\begin{equation}\label{tmp:20.12_3-}
	\le
	|\L| |X| \sum_{\la \in \L} \sum_{x} (G_\la * A)^2  (x)
	=
	|\L| |X| \sum_{\la \in \L} \sum_{x} (G^{-1}_\la * G_\la * A)  (x) A(x) = |\L| |X|  \sigma\,.
	\end{equation}
	Further consider $f(x) = \sum_{\la \in \L} (G^{-1}_\la * G_\la ) (x)$ with $\|f\|_1 = \rho_1$ and the measure  $\mu (x) = f(x)/ \rho_1$   and notice that 
	$$
	\sigma = \sum_{x\in A} (f * A) (x) \,.
	$$
	Moreover 
	$$
	\| \mu \|_\infty \ll |\L|\D/(|\L|\D^2) = \D^{-1} \ll L p^{-\eps} \,,
	$$
	and by the assumption we see that 
	for any proper subgroup $\G \subset \SL_2 (\F_p)$, $s\in \SL_2 (\F_p)$ the following holds
	$$
	\mu (s \G)	= \rho_1^{-1} \sum_{x \in s\G} \sum_{\la \in \L} (G^{-1}_\la * G_\la ) (x)
	\le
	\rho_1^{-1}  \sum_{x \in s \G} (G^{-1} * G ) (x)
	\ll
	L \rho^{-1} \sum_{x \in s \G} (G^{-1} * G ) (x) \le L p^{-\eps} \,.
	$$
	Using the arguments as in the proof of  Lemma \ref{l:counting}, we see that 
	for some $\d = \d (\eps) > 0$ one has 
	$$
	\sum_{\la \in \L} \sum_{x} (G^{-1}_\la * G_\la * A)  (x) A(x) -\frac{|A|^2 \rho_1}{p} \ll |A|\rho_1 p^{-\d} \,.
	$$
	Hence, returning to (\ref{tmp:20.12_3-}), we 
	obtain  
	$$
	|A|^2 \D^2 |\L|^2 \ll |\L| |X| \left( \frac{|A|^2}{p} + |A| p^{-\d} \right) \sum_{\la \in \L} |G_\la|^2 
	\ll 
	|\L|^2 \D^2 |X| \left( \frac{|A|^2}{p} + |A| p^{-\d} \right) \,. 
	$$
	It follows that 
	$$
	|X| \gg \min \{ p, |A| p^\d \} \,.
	$$
	This completes the proof. 
	$\hfill\Box$
\end{proof}

\begin{remark}
	\label{r:det_cond}
	One can see from the proof that the condition $|\det G| \le |G| p^{-\eps}$ can be refined to $\sum_{\la } |G_\la|^2 \ge |G| p^\eps$.
\end{remark}


Now we give the proof of a simple consequence of  the theorem above 
(the constants $100$
in (\ref{f:GL2-}), (\ref{f:GL2+}) are not really important
and can be certainly decreased). 

\begin{lemma}
	Let $B_1, B_2, B_3 \subseteq \F_p$ and $G \subseteq \GL_2 (\F_p)$ be a set of the form
	\[
	s_{b_1,b_2,b_3} = 
	\left( {\begin{array}{cc}
		1 & b_1  \\
		b_2 &  b_3 \\
		\end{array} } \right) \in G \,. 
	\]
	Let $M=\max\{ |B_1|, |B_2|, |B_3| \}$.
	Then for any $g_1, g_2 \in \GL_2 (\F_p)$ one has 
	\begin{equation}\label{f:GL2-}
	\sigma_{g_1 \B g_2} (S) \le 100M^4 \,.
	\end{equation}
	Moreover, 
	for any dihedral subgroup $\G$ one has 
	\begin{equation}\label{f:GL2+}
	\sigma_{g_1 \G g_2} (S) \le 100 M^4 \,.
	\end{equation}
	\label{l:GL2_intr}
\end{lemma}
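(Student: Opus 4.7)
The plan is to unravel $\sigma_{g_1 H g_2}(G) = |\{(s,s') \in G \times G : s^{-1}s' \in g_1 H g_2\}|$ (for $H \in \{\B, \G\}$) via the projective action of $\SL_2(\F_p)$ on $\mathbb{P}^1(\F_p)$: $\B$ is the stabilizer of $\infty$, and any dihedral subgroup $\G \subset \SL_2(\F_p)$ is the set-wise stabilizer of an unordered pair $\{\zeta_1,\zeta_2\} \subset \mathbb{P}^1(\ov{\F_p})$. This converts the coset-membership into one or two linear equations in the parameters $(b_1', b_2', b_3')$ of $s'$, together with the determinant identity $\det s' = c\det s$ coming from $H \subset \SL_2$, where $c = \det(g_1)\det(g_2)$.

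For the Borel case, $s^{-1}s' \in g_1\B g_2$ is equivalent to $s'\cdot z_0 = s\cdot z_1$ with $z_0 = g_2^{-1}\infty$, $z_1 = g_1\infty$, together with the above determinant identity. Setting $w := s\cdot z_1$, I would eliminate $b_1'$ from the linear equation $b_1' = wz_0 b_2' + wb_3' - z_0$ and substitute into $b_3' - b_1'b_2' = c\det s$, obtaining a linear equation for $b_3'$ with leading coefficient $1 - wb_2'$. If this coefficient vanishes (i.e.\ $b_2' = 1/w$), the equation reduces to $c\det s = 0$, impossible since $s \in \GL_2(\F_p)$ and $c \neq 0$. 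Hence for each of the $\le M^3$ choices of $s$ and each of the $\le M$ choices of $b_2'$, the values of $b_1'$ and $b_3'$ are uniquely determined (possibly outside $B_1 \times B_3$, which only helps), yielding $\sigma_{g_1\B g_2}(G) \le M^4$ in the generic range. The handful of projective degeneracies ($z_0 = \infty$, $z_1 = \infty$, or $w \in \{0,\infty\}$) will be handled by rerunning the same substitution with a different coordinate as the free parameter, picking up only an absolute constant.

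For the dihedral case, I will write $\G = T \sqcup wT$ where $T$ is the maximal torus fixing $\zeta_1,\zeta_2$ individually and $w$ interchanges them, so that $s^{-1}s' \in g_1\G g_2$ splits into two sub-cases according to whether the induced map on $\{\zeta_1,\zeta_2\}$ is the identity or the swap. Each sub-case imposes two linear equations of the form $s'\cdot g_2^{-1}\zeta_i = s\cdot g_1\zeta_{\pi(i)}$ on $(b_1',b_2',b_3')$; since the two source points $g_2^{-1}\zeta_1,g_2^{-1}\zeta_2$ are distinct these equations are linearly independent and cut out an affine line in $\F_p^3$. The quadric $b_3' - b_1'b_2' = c\det s$ meets this line in at most two points, so each sub-case contributes $\le 2$ choices of $s'$ per $s$, giving $\sigma_{g_1\G g_2}(G) \ll M^3$, comfortably within $100M^4$.

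The main technical nuisance will be the non-split torus case, where $\zeta_1,\zeta_2 \in \F_{p^2}\setminus \F_p$ are Galois-conjugate: the two linear equations then have $\F_{p^2}$-coefficients but are themselves Galois-conjugates, so for $s' \in \GL_2(\F_p)$ they impose the same pair of $\F_p$-linear constraints as in the split case, and the bound is unaffected. The remaining work is the (mostly bookkeeping) enumeration of the projective corner cases in both parts; none affects the $O(M^4)$ scaling, and all the case-by-case losses are absorbed into the constant $100$.
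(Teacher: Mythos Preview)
Your approach is correct and genuinely different from the paper's.  The paper argues by brute-force matrix-entry computation: it writes the relation $g_1^{-1}\cdot(\text{Borel element}) = (s^{-1}s')\cdot g_2^{-1}$ entry by entry, eliminates the Borel parameters $r,q$ from the first column to obtain a single relation among the entries $r_0,r_1,r_2,r_3$ of $s^{-1}s'$ (your equation is their $r_0 zX + r_1 zZ = r_2 xX + r_3 xZ$), and then enters a long case split on which entries vanish; in every branch one of the $r_i$ is pinned down, which after substitution into the determinant identity $b_3-b_1b_2=b_3'-b_1'b_2'$ leaves a nontrivial polynomial relation of bounded degree among the remaining five parameters.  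For the dihedral case the paper repeats this with the explicit torus representatives $r_\eps(\alpha,\beta)=\bigl(\begin{smallmatrix}\alpha&\eps\beta\\ \beta&\alpha\end{smallmatrix}\bigr)$ in place of the Borel element, and the case tree becomes correspondingly longer (including the exotic branch $\eps x^2=y^2$, $\eps z^2=w^2$).

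Your projective reformulation---$\B$ as $\mathrm{Stab}(\infty)$ and $\G$ as the setwise stabiliser of an unordered pair---replaces all of this by the single observation that $s^{-1}s'\in g_1Hg_2$ is equivalent to one (for $\B$) or two (for $\G$) M\"obius incidence conditions on $s'$ together with $\det s'=c\det s$.  This yields a shorter argument and, in the dihedral case, the visibly stronger bound $\sigma_{g_1\G g_2}(G)\ll M^3$.  Two points to be careful about when you write it out: (i) the line cut out by your two M\"obius equations can lie in a ruling of the quadric $b_3'-b_1'b_2'=c\det s$ only when one of the target points $w_i=s\cdot\xi_i$ is $0$ or $\infty$, which for fixed $g_1,g_2$ confines $s$ to a hyperplane and costs at most $M^3$; (ii) the lemma is stated for \emph{any} dihedral subgroup, possibly a proper subgroup of the full setwise stabiliser, but since $\sigma$ is monotone in the coset you may simply bound by the maximal dihedral group containing it.  With those checks and the routine handling of the projective corner cases ($z_0,z_1,w\in\{0,\infty\}$), your sketch goes through.
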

\begin{proof}
	Take any $r_0, r_1, r_2, r_3 \in \F_p$  and consider the equation
	\[
	\left( {\begin{array}{cc}
		xr & qx+y/r \\
		zr & qz+w/r \\
		\end{array} } \right)
	=
	\left( {\begin{array}{cc}
		x & y \\
		z & w \\
		\end{array} } \right) 
	\left( {\begin{array}{cc}
		r & q \\
		0 & r^{-1} \\
		\end{array} } \right) 
	=
	\left( {\begin{array}{cc}
		r_0 & r_1 \\
		r_2 & r_3 \\
		\end{array} } \right) 
	\left( {\begin{array}{cc}
		X & Y \\
		Z & W \\
		\end{array} } \right) 
	=
	\]
	\[
	=
	\left( {\begin{array}{cc}
		r_0 X+r_1 Z & r_0 Y+r_1 W \\
		r_2 X + r_3 Z & r_2 Y+ r_3 W \\
		\end{array} } \right) \,.
	\]
	From $xr = r_0 X+r_1 Z$, $zr = r_2 X +  r_3 Z$, we have 
	\begin{equation}\label{tmp:02.12_1+}
	r_0 zX + r_1 zZ = r_2 xX + r_3 xZ \,.
	\end{equation}
	If $Z=0$, then from $XW-YZ \neq 0$ one obtains $X\neq 0$ and we arrive to $x r_2 = z r_0$. 
	Since  $xw-yz \neq 0$, it follows that either $x$ or $z$ does not vanish and hence either $r_0$ or $r_2$ can be found uniquely.  
	Similarly, one can consider the case $x=0$ ($Z$ is zero or not) and arrive to $r_0 X = - r_1 Z$.
	Hence either  $r_0$ or $r_1$ can be found uniquely. 
	Finally, if $Z\neq 0$ and $x\neq 0$, then we can find $r_3$ from (\ref{tmp:02.12_1+}). 
	Now our matrix from $\SL_2 (\F_p) \cap G^{-1} G$ has the form
	\[
	(b'_3- b'_1 b'_2)^{-2}
	\left( {\begin{array}{cc}
		b'_3 & -b'_1 \\
		-b'_2 & 1 \\
		\end{array} } \right)
	\left( {\begin{array}{cc}
		1 & b_1 \\
		b_2 & b_3 \\
		\end{array} } \right)
	= 
	(b'_3- b'_1 b'_2)^{-2}
	\left( {\begin{array}{cc}
		b'_3-b'_1 b_2 & b_1 b'_3 - b'_1 b_3 \\
		b_2 - b'_2 & b_3 - b_1 b'_2 \\
		\end{array} } \right) 
	\]
	and such that 
	\begin{equation}\label{f:det_b}
	b_3- b_1 b_2 = b'_3- b'_1 b'_2 \neq 0 \,.
	\end{equation} 
	We need to estimate the number of the solutions to \eqref{f:det_b} with some restrictions as $x r_2 = z r_0$, $r_0 X = - r_1 Z$ and so on.
	The appeared  systems of two polynomial equations  are rather concrete, so it is not difficult task. 
	Another way is to use the B\'ezout Theorem.  
	
	First of all notice that fixing a variable of equation (\ref{f:det_b}), we obtain the contribution  at most $M^4$ into the sum  $\sigma_{g_1 \B g_2} (S)$.
	Secondly, if one expression among $r_0,r_2, r_3$ 
	is fixed, 
	then we substitute one appropriate variable into \eqref{f:det_b} and obtain the contribution at most  
	$M^4$ into the sum  $\sigma_{g_1 \B g_2} (S)$.
	If $r_1$ is a constant, then we consider two cases: $b_1=0$ and $b_1 \neq 0$. 
	The last case allows to substitute $b'_3$ into \eqref{f:det_b} and 
	obtain a linear equation relatively to $b_3$ with the main coefficient $(b_1-b'_1)$.
	Totally it gives the contribution at most $4M^4$. 
	Now if $r_0 = C r_2$ for some $C\neq 0$, then we substitute $b'_2$  into \eqref{f:det_b}, obtain a linear equation relatively to $b_2$ 
	and get the contribution at most $2M^4$.
	If $r_0 = C r_1$, then either $b_1 = C^{-1}$ (contribution $M^4$) or $b_1 \neq C^{-1}$ and hence the substitution  $b'_3$ into \eqref{f:det_b} gives  us 
	an equation of degree  at most three and with five variables from $B_j$.
	Similarly, in the case when we find $r_3$ from \eqref{tmp:02.12_1+} one obtains an equation degree at most three and with five variables from $B_j$. 
	It gives at most $3M^4$ solutions and the total contribution into the sum  $\sigma_{g_1 \B g_2} (S)$ can be estimated roughly as   $100M^4$.

	Now let us have deal with the case of a 
	dihedral subgroup which is just a product of a cyclic group of order four (of order two in ${\rm PSL\,}_2 (\F_p)$)
	and a cyclic group of order $(p\pm 1)/2$. 
	Then we use arguments from the proof of Lemma \ref{l*:intersection} and consider  
	$
	r_{\eps} (\a,\beta) = 
	\left( {\begin{array}{cc}
		\a & \eps \beta \\
		\beta & \a \\
		\end{array} } \right)
	$,
	where $\a^2 - \eps \beta^2 =1$ and $\a\neq \pm 1$ (hence $\eps \neq 0$). 
	Again, one can check that  for any $n$ the element $r^n_{\eps} (\a,\beta)$ has the same form, i.e. $r^n_{\eps} (\a,\beta) = r_{\eps} (\a_n,\beta_n)$ for some 
	$\a_n, \beta_n \in \F_p$ and $\a^2_n - \eps \beta^2_n =1$. 
	Then as above
	\[
	\left( {\begin{array}{cc}
		\a x+ \beta y & \eps \beta x + \a y \\
		\a z + \beta w & \eps \beta z + \a w \\
		\end{array} } \right)
	=
	\left( {\begin{array}{cc}
		x & y \\
		z & w \\
		\end{array} } \right) 
	\left( {\begin{array}{cc}
		\a & \eps \beta \\
		\beta & \a \\
		\end{array} } \right) 
	=
	\left( {\begin{array}{cc}
		r_0 & r_1 \\
		r_2 & r_3 \\
		\end{array} } \right)  
	\left( {\begin{array}{cc}
		X & Y \\
		Z & W \\
		\end{array} } \right) 
	=
	\]
	\[
	=
	\left( {\begin{array}{cc}
		r_0 X+r_1 Z & r_0 Y+r_1 W \\
		r_2 X + r_3 Z & r_2 Y+ r_3 W \\
		\end{array} } \right) \,.
	\]
	From this we have $r_0 X+r_1 Z = \a x+ \beta y$, $r_0 Y+r_1 W = \eps \beta x + \a y$. 
	Hence we can find $\a,\beta$ via $r_0$, $r_1$, provided $\eps x^2 \neq y^2$.
	After that, we get
	\begin{equation}\label{tmp:02.12_2+}
	Z = Z (r_0 r_3-r_1 r_2) = r_0 (\a z + \beta w)  
	-r_2 (r_0 X+r_1 Z) \,.
	\end{equation}
	Since $\a,\beta$ can be linearly expressed via  $r_0$, $r_1$, we find $r_2 = (b_2 - b'_2)/ (b_3 - b_1 b_2)^2$ from the last expression, 
	provided 
	$r_0 X+r_1 Z \neq 0$.
	It is easy to check that it  gives us a nontrivial linear equation relatively to $b'_2$ because $r_0,r_1$ do not depend on $b'_2$. 
	The 
	possibility 
	$r_0 X+r_1 Z = 0$ was considered above.
	In any case it gives 
	the contribution at most  $3M^4 + 6 M^4 = 9M^4$ into the sum  $\sigma_{g_1 \G g_2} (S)$.

	It remains to consider the case   $\eps x^2 = y^2$. 
	Then we obtain an analogue of (\ref{tmp:02.12_2+})
	\begin{equation}\label{tmp:02.12_3+}
	W = W (r_0 r_3-r_1 r_2) =  r_0 (\eps \beta z + \a w) -r_2 (r_0 Y+r_1 W) \,.
	\end{equation}	
	If $\eps z^2 \neq  w^2$, then from 
	$\a z + \beta w = r_2 X + r_3 Z$,  $\eps \beta z + \a w = r_2 Y+ r_3  W$  
	we can find $\a,\beta$ as some linear combinations of $r_2,r_3$
	and substituting them into $r_0 X+r_1 Z = \a x+ \beta y$, we find $r_0$ or $r_1$ uniquely, provided $X \neq 0$ or $Z\neq 0$
	(or use the arguments as above applying  (\ref{tmp:02.12_2+}), (\ref{tmp:02.12_3+})).
	In any case it gives 
	the contribution at most  $18M^4$ into the sum  $\sigma_{g_1 \G g_2} (S)$.
	Finally, consider the case  $\eps x^2 = y^2$ and  $\eps z^2 = w^2$. 
	Put $Q_1= r_0 X+r_1 Z$ and $Q_2 = r_2 X + r_3 Z$. 
	Also, let 
	$
	d = \det 
	\left( {\begin{array}{cc}
		x & y \\
		z & w \\
		\end{array} } \right) \neq 0. 
	$
	Then from $Q_1 = \a x+ \beta y$, $Q_2 = \a z + \beta w$, we obtain
	$
	\a = d^{-1} (wQ_1 - yQ_2) \,, \beta = d^{-1} (-zQ_1 + x Q_2) 
	$. 
	Hence
	$$
	d^2 = d^2 (\a^2 - \eps \beta^2) = (wQ_1 - yQ_2)^2 - \eps (-zQ_1 + x Q_2)^2 = 2 Q_1 Q_2 (\eps xz - wy) 
	=
	$$
	$$
	=
	2  (\eps xz - wy)  (r_0 X+r_1 Z) (r_2 X + r_3 Z)
	\,.
	$$
	Using the definitions of $r_j$ and formula (\ref{f:det_b}) to exclude $b'_3$, we have
	$$
	d^2 (b_3 - b_1 b_2 )^2 = 	2  (\eps xz - wy) ( (b_3 - b_1b_2 +b'_1 b'_2 - b'_1 b_2)X+ (b_1b_3-b^2_1 b_2 + b_1 b'_1 b'_2 - b'_1 b_3) Z) ( (b_2-b'_2) X + (b_3 - b_1 b'_2) Z) \,.
	$$
	In gives us 
	a quadratic equation on $b_2$ and the leading  coefficient of this equation is 
	$$ 
	d^2 b_1^2  + 2  (\eps xz - wy) X(Z b^2_1 +  X(b'_1 + b_1)) \,.
	$$
	If this coefficient does not vanished, then it gives us the contribution at most $2M^4$ into the sum  $\sigma_{g_1 \G g_2} (S)$. 
	Suppose that $b_1 \neq 0$. 
	In this case because $d\neq 0$ 
	our 
	coefficient  
	vanishes  iff $X\neq 0$ and  $(2 (\eps xz - wy)  Z+ d^2 X^{-1}) b^2_1 + 2  (\eps xz - wy) X (b'_1 + b_1)  =0$. 
	Since $(\eps xz - wy) \neq 0$, $X\neq 0$,  we find $b'_1$ uniquely. 
	So,  it gives us the contribution at most $2M^4$ into the sum  $\sigma_{g_1 \G g_2} (S)$. 
	This completes the proof. 
	$\hfill\Box$
\end{proof}

\bigskip 

Now we are ready to 
obtain 
a consequence of Proposition \ref{p:GL2}.
Of course one can replace an upper bound for   $|B_3-B_1 B_2|$ to a lower bound for $\sum_x r^2_{B_1-B_2 B_3} (x)$, see Remark \ref{r:det_cond} and formula (\ref{f:r_BB-B}) below. 
Other refinements are possible if one can 
estimate 
the number of the solutions to equation \eqref{f:det_b} with a fixed variable 
and under other restrictions, say, 
$$
C_0 r_0 + C_1 r_1 + C_2 r_2 + C_3 r_3 = 0 \,, \quad \quad  \vec{0} \neq (C_0, C_1, C_2, C_3) \in \F^4_p \,.
$$ 
more 
accurately (we need $p^{-\eps}\sum_x r^2_{B_1-B_2 B_3} (x)$ bound).

\begin{corollary}
	Let $A\subseteq \F_p$, $B_1, B_2, B_3 \subseteq \F_p$, $B:=|B_1|=|B_2|=|B_3| >p^\eps$. 
	Suppose that $|B_3-B_1 B_2| \le B^2 p^{-\eps}$.
	Then there is $\d = \d (\eps) >0$ such that 
	\begin{equation}\label{f:GL2_rational}
	\left| \left\{ \frac{a+b_1}{a b_2  + b_3 } ~:~ a\in A\,, b_j \in B_j \right\} \right| \gg \min \{ p, |A| p^\d \} \,.
	\end{equation}
	\label{c:GL2_rational}
\end{corollary}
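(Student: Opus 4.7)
\medskip
\noindent\textbf{Proof plan for Corollary \ref{c:GL2_rational_intr}.}

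The plan is to set $G := \{ s_{b_1,b_2,b_3} : b_j \in B_j \} \subseteq \GL_2(\F_p)$, where $s_{b_1,b_2,b_3}$ is the matrix from Lemma \ref{l:GL2_intr}. Since the $\GL_2(\F_p)$--action on $\F_p$ by M\"obius transformations satisfies $s_{b_1,b_2,b_3} \cdot a = (a+b_1)/(a b_2 + b_3)$, the image $G(A)$ is exactly the set whose cardinality we must bound from below. Thus it suffices to verify the three hypotheses of Proposition \ref{p:GL2} for $G$ and apply it.

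For the first hypothesis, the map $(b_1,b_2,b_3) \mapsto s_{b_1,b_2,b_3}$ is injective (the entries of the matrix read off the triple), so $|G|=B^3 \ge p^{3\eps}$. For the second, observe that $\det s_{b_1,b_2,b_3}=b_3-b_1 b_2$, so $\det G \subseteq B_3 - B_1 B_2$, and the hypothesis $|B_3-B_1B_2|\le B^2 p^{-\eps}$ gives $|\det G|/|G|\le B^{-1} p^{-\eps} \le p^{-2\eps}$. It remains to check the non--concentration hypothesis, and here Lemma \ref{l:GL2_intr} does the work.

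For the non--concentration step, write $G_\la=\{g\in G:\det g=\la\}$. By Cauchy--Schwarz and the size of $\det G$, the denominator is large:
$$
\sum_{x\in \SL_2(\F_p)} (G^{-1}*G)(x) = \sum_\la |G_\la|^2 \ge \frac{|G|^2}{|\det G|}\ge B^4 p^\eps .
$$
For the numerator, invoke Suzuki's classification (Theorem \ref{t:classification}) of proper subgroups of $\SL_2(\F_p)$. If $\G$ is one of $A_4,S_4,A_5$, then $|s\G|\le 60$ and $(G^{-1}*G)(x)\le |G|=B^3$, giving a total of $O(B^3)$, which is negligible compared to $B^4 p^\eps$ once $B>p^\eps$. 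If $\G$ is contained in a Borel or is (a subgroup of) a dihedral subgroup, then $s\G$ is contained in a double coset of the form $g_1 \B g_2$ or $g_1 \G' g_2$ with $\G'$ dihedral, and Lemma \ref{l:GL2_intr} yields $\sum_{x\in s\G}(G^{-1}*G)(x)\le 100 B^4$. Comparing to the denominator gives a ratio of $O(p^{-\eps})$, which is the required non--concentration inequality (with a slightly smaller $\eps$).

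With all three hypotheses verified (for some $\eps'=\eps'(\eps)>0$), Proposition \ref{p:GL2} produces $\d=\d(\eps)>0$ such that $|G(A)|\gg \min\{p,\,|A|p^\d\}$, which is the claimed bound. The main technical obstacle is the verification of hypothesis (3): one has to make sure every proper subgroup type from Theorem \ref{t:classification} is covered, and in particular that the Borel/dihedral bounds of Lemma \ref{l:GL2_intr} (which are stated for double cosets $g_1\G g_2$) apply to the left cosets $s\G$ required in Proposition \ref{p:GL2}. This is immediate because $s\G = s\G\cdot e$, but the careful bookkeeping between $\GL_2$-- and $\SL_2$--structures, and the comparison of $|\det G|$ with $|G|$ via Cauchy--Schwarz, is the crux of the argument.
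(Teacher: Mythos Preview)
Your proposal is correct and follows essentially the same route as the paper: define $G$ as the set of matrices $s_{b_1,b_2,b_3}$, verify the three hypotheses of Proposition~\ref{p:GL2} (size, $|\det G|$, and non--concentration via Lemma~\ref{l:GL2_intr} together with the Cauchy--Schwarz lower bound $\sum_\la |G_\la|^2 \ge |G|^2/|\det G| \ge B^4 p^\eps$), and conclude. Your write--up is in fact more explicit than the paper's in spelling out the case split over Theorem~\ref{t:classification} and in noting that left cosets $s\G$ are the special double cosets $s\G\cdot e$. One tiny inaccuracy: the map $(b_1,b_2,b_3)\mapsto s_{b_1,b_2,b_3}$ is injective, but its image lands in $\GL_2(\F_p)$ only when $b_3\neq b_1 b_2$, so $|G|=B^3$ should be $|G|\ge B^3-B^2\gg B^3$; this changes nothing in the estimates.
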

\begin{proof}
	Let $G= S_{b_1,b_2,b_3}$, $b_j \in B_j$ and define $G_\la$ as in Proposition \ref{p:GL2}. 
	Since $|B_3-B_1 B_2| \le B^2 p^{-\eps}$, it follows that $|\det G| \le |G| B^{-1}  p^{-\eps}$. 
	Further by Lemma \ref{l:GL2_intr} for any proper subgroup $\G \subset \SL_2 (\F_p)$, we have  
	$$
	\sum_{x \in s \G} (G^{-1} * G ) (x) 
	\ll  B^4 \,,
	$$
	and 
	\begin{equation}\label{f:r_BB-B}
	\sum_{x \in \SL_2 (\F_p)} (G^{-1} * G ) (x) = \sum_z r^2_{B_1-B_2 B_3} (z) \ge \frac{(|B_1||B_2||B_3|)^2}{|B_3-B_1 B_2|}
	\ge
	B^4 p^\eps \,.
	\end{equation}
	Thus all conditions of Proposition \ref{p:GL2} take place for sufficiently large $p$. 
	This completes the proof. 
	$\hfill\Box$
\end{proof}

\section{Appendix}
\label{sec:appendix}

This section contains the proof of Lemma \ref{l:Frobenious} and Theorem \ref{t:flattering}.
Also, we obtain an upper bound for the energy $\E_{k}$, see Theorem \ref{t:E_k_SL2} and Corollary \ref{c:E_k_SL2} below.

\bigskip

{\it The proof of Lemma \ref{l:Frobenious}.}
Having a function $f$ with $\sum_x f(x) =0$, we consider the matrix $M(g,x) := f(g^{-1}x)$ of size $|\SL_2 (\F_p)| \times p$ and its singular value decomposition \cite{Horn-Johnson}
\begin{equation}\label{f:singular}
M(g,x) = \sum_{j=1}^{p} \la_j u_j (g) \ov{v_j (x)} \,.
\end{equation}
One can assume that $\la_1 \ge \la_2 \ge \dots \ge \la_p \ge 0$.
Let $\vec{u} = (1,\dots,1)$ be the vector having $p$ ones. 
Since for any $y\in \F_p$ one has 
$$
(M^* M \vec{u} ) (y) = \sum_x \sum_g M (g,x) \ov{M(g,y)} = \sum_x \sum_g  f(g^{-1}x)  \ov{f(g^{-1}y)} = 
$$
$$
=
\sum_g  \ov{f(g^{-1}y)} \sum_x f(g^{-1}x) 
=
p^{-1} |\SL_2 (\F_p)| \cdot \left|\sum_x f(x) \right|^2 = 0 \,,
$$
it follows that $\la_p=0$. 
Here we have used the fact that for any $x\in \F_p$ 
the following holds 
$|\mathrm{Stab} (x)| = p^{-1} |\SL_2 (\F_p)|$. 
Further 
$$
\sum_{j=1}^{p-1} |\la_j|^2 = \sum_{j=1}^{p} |\la_j|^2 = \sum_{g} \sum_x |M(g,x)|^2 = \sum_{g} \sum_x |f(g^{-1}x)|^2 
=
$$
\begin{equation}\label{tmp:05.11.2017_1}
= |\SL_2 (\F_p)| \cdot \sum_x |f(x)|^2 
= (p^3-p) \| f\|^2_2 \,.
\end{equation}
It is easy to check that if $\vec{v} (x) \in \F^p_p$ is an eigenvector of $T:=M^* M$, then for any $g\in \SL_2 (\F_p)$ the vector   $\vec{v} (g x)$ is another eigenvector of  $T$
and 
moreover 
$T g \vec{v} = g T \vec{v}$.
Thus the following linear operator $Y_g$ defined by the formula $(Y_g h) (x):= h(gx)$, where $h$ belongs to any eigenspace 
defines 
a representation because, obviously,  
$Y_{g_1} Y_{g_2} = Y_{g_1 g_2}$.  
By the famous Frobenius result \cite{Frobenius} the dimension of all nontrivial irreducible representations of $\SL_2 (\F_p)$ is at least $(p-1)/2$. 
It follows that for any eigenfunction $\vec{v}$, $\vec{v} \neq \vec{u}$ the multiplicity of the correspondent eigenvalue is at least $(p-1)/2$ (see details in \cite{SX}, \cite{BG}, \cite{Gill}).  
Hence in view of (\ref{tmp:05.11.2017_1}), we obtain $\la_1 \le 2p \| f\|_2$. 
Finally, by formula (\ref{f:singular}), the orthogonality of the systems of functions $u_j$ and $v_j$, the H\"older inequality,  as well as our upper bound for $\la_1$, 
we have 
$$
\sum_x (F*f) (x) \_phi(x) = \sum_{g,x} M(g,x) F(g) \_phi(x) = \sum_{j=1}^{p-1} \la_j \langle F,\ov{u}_j \rangle \langle \_phi,v_j \rangle
\le	
2p \| F\|_2 \|\_phi\|_2 \| f\|_2 \,.
$$
This completes the proof. 
$\hfill\Box$

\bigskip

{\it The proof of Theorem \ref{t:flattering}.} 
%
Clearly, one can assume that $K$ is sufficiently large because otherwise (\ref{f:flattering}) is trivial. 
Put $s= |\SL_2 (\F_p)|$ and  write $f(x) = \mu(x) - s^{-1}$. 
Then $\sum_x f(x) = 0$, $f(x^{-1}) = f(x)$,  $\| f\|_1 \le 1+1 = 2$ and
by induction one can check that $(f *_{l} f ) (x) = (\mu *_{l} \mu ) (x) - s^{-1}$, $l \in \N$.  
It 
gives 
that 
$$
\T_{2^k} (\mu) = \| \mu *_{2^k} \mu \|^2_2 = s^{-1} +  \| f *_{2^k} f \|^2_2 = s^{-1} + \T_{2^k} (f)\,.
$$
So, our task is to estimate $\| f *_{2^k} f \|^2_2$. 
Clearly, it is enough to prove 
the following bound 
\begin{equation}\label{f:basic_ineq_SL}
\| f *_{2l} f \|^2_2 \ll \| f *_{l} f \|^2_2 \cdot K^{-c_*} := \| f *_{l} f \|^2_2  / M \,,
\end{equation} 
where $c_*>0$ is an absolute constant and $l \in \N$. 
Since 
\begin{equation}\label{tmp:19.12_1}
\| f *_{2l} f \|^2_2 = \sum_x (f *_{2l} f)(xy) (f *_{2l} f)(x) (f *_{2l} f)(y) \,,
\end{equation} 
it follows that by an analogue of Lemma \ref{l:Frobenious} (applied to the natural action of $\SL_2 (\F_p)$ onto $\SL_2 (\F_p)$, see details in \cite{Gow_random}, say) that 
$$
\| f *_{2l} f \|^2_2 \le 2p \| f *_{2l} f \|_2 \| f *_{l} f \|^2_2 
$$
and this implies 
$$
\| f *_{2l} f \|^2_2 \le 4p^2 \| f *_{l} f \|^4_2 \,.
$$
Thus we can assume that
\begin{equation}\label{f:lower_E}
\| f *_{l} f \|^2_2 \ge \frac{1}{4 Mp^2}  
\end{equation}
because otherwise there is nothing to prove.
Now put $r(x) = (f *_{l} f) (x)$. 
Then, clearly, 
$$
\| r\|_\infty \le \| r\|_1 
=
\sum_x \left| (\mu *_{l} \mu) (x) - \frac{1}{s} \right| 
\le
1+ 1  = 2 \,.
$$
From (\ref{tmp:19.12_1}), we have 
$$
\T_{2l} (f) := \| f *_{2l} f \|^2_2 = \sum_{xy = zw} r(x) r(y) r(z) r(w) \,.
$$
Put $\rho = \T_{2l} (f)/(8 \|r\|^3_1)$. 
Since 
$$
\sum_{xy = zw ~:~ |r(x)| < \rho} r(x) r(y) r(z) r(w) 
\le \rho \|r\|^3_1 \le  \T_{2l} (f)/8 
\,,
$$
it follows that
$$
\T_{2l} (f) \le 2 \sum_{xy = zw ~:~ |r(x)|, |r(y)|, |r(z)|, |r(w)| \ge \rho} r(x) r(y) r(z) r(w) \,.
$$
Put $P_j = \{ x\in \SL_2 (\F_p) ~:~ \rho 2^{j-1} < |r (x)| \le \rho 2^{j}\}$ and 
$L = 8 + \log (2l) \cdot \log K$. 
By the assumption $k\le K^{c_*}$ and hence $L\ll k  \log K \le K^{c_*} \log K$. 
Since $\sum_x \mu (x) = 1$, it follows that $\| \mu\|_\infty \ge s^{-1}$ and we obtain a rough upper bound for $K$, namely, $K\le s$.
So, choosing $c_*$ to be sufficiently small, we can suppose that for sufficiently large $p$ one has $L < p^\epsilon$ with a given $\epsilon >0$.   
Clearly, we can assume that $\T_{2^k} (f) \ge K^{-k}$ and hence  for any $x$ such that $|r(x)| \ge \rho$ one has 
$$
2^j \cdot 2^{-4} K^{-\log 2l} \| r\|_1^{-3} \le 2^j \cdot 2^{-4} \T_{2l} (f) \| r\|_1^{-3} = \rho 2^{j-1} < |r(x)| \le \| r\|_\infty \le 2 \,.
$$
It follows that the possible number of the sets $P_j$ does not exceed $L$.
Thus as in the proof of Theorem \ref{t:T_k_G}, we see that there is $P=P_{j_0}$, $\D=2^{j_0-1} \rho$  such that 
$$
\T_{2l} (f) \le 2 L^4 (2\D)^4 \E(P) \,.
$$
Clearly, $\D^2 |P| \le \T_l (f)$ and  
if (\ref{f:basic_ineq_SL})  does not hold, then 
it gives us 
$$
M^{-1} \T_{l} (f) \le \T_{2l} (f) \le 2^5 L^4 \D^4 |P|^3 \le 2^5 L^4 \T^2_{l} (f) |P| \,.
$$
On the other hand, by the second assumption of the theorem and $K\le s$, we get
\begin{equation}\label{f:mu_infty_using}
\T_{l} (f) = \T_{l} (\mu)  + s^{-1} \le \| \mu*_l \mu \|_\infty + s^{-1} \le \| \mu\|_\infty + s^{-1} \le 2K^{-1} \,.
\end{equation}
Hence, combining the last two bounds, we obtain 
\begin{equation}\label{f:P_below}
|P| \ge \frac{K}{2^{6} L^{4} M^{}} \,.
\end{equation}
Similarly, we have 
$$
M^{-1} \T_{l} (f) \le \T_{2l} (f) \le 2^5 L^4 \D^4 |P|^3 \le 2^5 L^4 \T_{l} (f) (|P| \D)^2 \,,
$$
and hence 
\begin{equation}\label{f:PD_below}
|P| \D \ge \frac{1}{2^{3} L^{2} M^{1/2}} \,.
\end{equation}
Also, we have
$$
|P| \D \le \sum_{x\in P} |r (x)| \le \sum_x |r_{} (x)| 
= 
\| r\|_1 \,.
$$
So, 
we see that if (\ref{f:basic_ineq_SL}) has no place, then
$$
\T_{l} (f)/M \le \T_{2l} (f) \le 2 L^4 (2\D)^4 \E(P) = 2^5 L^4 \D^4 |P|^3 (\E(P)/|P|^3) \le  2^{5} L^4 (\D|P|)^2 \T_l (f) (\E(P)/|P|^3) \,.
$$
In other words
$$
\E(P) \ge |P|^3 \cdot  2^{-5} L^{-4} M^{-1} (\D|P|)^{-2} \ge 2^{-7} L^{-4} M^{-1} = \zeta |P|^3 \,.
$$
By the Balog--Szemer\'edi--Gowers Theorem in non--commutative case 
(see, e.g., \cite[Corollary 2.46]{TV}), we see that there is 
$P_* \subseteq P$, $|P_*| \gg \zeta^C |P|$, $|P_* P^{-1}_*| \ll \zeta^{-C} |P_*|$, where $C>0$ is an absolute constant. 
The fact $|P_* P^{-1}_*| \ll \zeta^{-C} |P_*|$ implies that there is a symmetric set $H$, $|H| \ll \zeta^{-C'} |P_*|$,
containing 
the identity, and  a set $|X| \ll \zeta^{-C'}$ 
such that $P_* \subseteq XH$ and
\begin{equation}\label{f:almost_subgr}
HH \subseteq XH\subseteq HXX \,, \quad \quad \mbox{ and } \quad \quad HH \subseteq HX \subseteq XXH 
\end{equation}
see \cite[Proposition 2.43]{TV}.
Here $C'>0$ is another absolute constant. 
Clearly, 
inclusions 
(\ref{f:almost_subgr}) combining with $|X| \ll \zeta^{-C'}$, imply 
\begin{equation}\label{f:H^3}
|H^3| = |H\cdot H^2| \le |H^2 \cdot X| \le |H \cdot X^2| \le |X|^2 |H| \ll \zeta^{-2C'} |H| \,.
\end{equation}
Further since $P_* \subseteq XH$, we see that there is $x\in X$ such that 
\begin{equation}\label{f:H_tmp}
|H| \ge |P_* \cap xH| \ge |P_*|/|X| \gg \zeta^{C'} |P_*| \,.
\end{equation}
By the 
inclusion 
$P_* \subseteq P$ and the definition of the set $P$, we have
\begin{equation}\label{tmp:16.11_3}
\D \zeta^{C'} |P_*| \ll  \D |P_* \cap xH| \le \sum_{z\in P_* \cap xH} |r(x)| \le \mu(xH) + |P_*|/s \le 2 \mu(xH)
\end{equation}
because otherwise in view of our choice of $\D \ge \rho$, we obtain
$$
\T_{2l} (f)/(8 \|r\|^3_1) = \rho \le \D \ll \zeta^{-C'}/s 
$$
and hence by (\ref{f:lower_E}), we derive ($M$ is sufficiently small comparable to $p$ and 
$L < p^\epsilon$ for sufficiently small $\epsilon$, depending on $C'$ only)
$$
\T_{2l} (f) \ll  L^{4C'} M^{C'}  p^{-3} \ll L^{4C'} M^{C'+1} \T_{l} (f) /p \le \T_{l} (f) / M 
$$
as required in (\ref{f:basic_ineq_SL}). 
Thus we see that (\ref{tmp:16.11_3}) takes place and whence by (\ref{f:PD_below})
\begin{equation}\label{f:mu_low}
\mu (xH) \gg \D \zeta^{C'} |P_*| \gg \D |P| \zeta^{C'_1}  \gg \zeta^{C'_1} L^{-2} M^{-1/2} \gg L^{-4 C''} M^{-C''} \,.
\end{equation}
Finally, 
\begin{equation}\label{f:5/2}
|H| \ll \zeta^{-C'} |P_*| \le \zeta^{-C'} \T_l (f) /\D^2 \le \zeta^{-C'} \T_l (f) /\rho^2 \ll  \zeta^{-C'} \T_l (f) / \T^2_{2l} (f) \le p^{5/2} \,,
\end{equation}
say, because otherwise (just the last inequality should be explained) in view of (\ref{f:lower_E}) and sufficiently small $M$, say, $M \le p^{1/12}$,  we get 
$$
\T_{2l} (f) \ll \zeta^{-C'/2} p^{-5/4} \T^{1/2}_l (f) \ll \T^{}_l (f) / M \,.
$$	
Using (\ref{f:5/2}), lower bound  (\ref{f:P_below}), 
estimate (\ref{f:H_tmp}) and recalling (\ref{f:H^3}), we see in view of Theorem \ref{t:Harald_SL2} 
that either our set $H$ belongs to some proper subgroup $\G$ or 
\begin{equation}\label{tmp:16.11_1}
M^{2C'} L^{8C'} \gg \zeta^{-2C'} \gg |H|^c  \gg (\zeta^{C'} |P_*|)^c  \gg  (\zeta^{C'+C} |P|)^c \gg K^c \zeta^{C'''}\,.
\end{equation}
In the last 
inequality 
we have used lower bound (\ref{f:P_below}). 
Now suppose that $H$ belongs to a subgroup.  
By our assumption $\mu (g \G) \le K^{-1}$ for any proper subgroup $\G\subset \SL_2 (\F_p)$, $g\in \SL_2 (\F_p)$.
Hence (\ref{f:mu_low}) gives us 
\begin{equation}\label{tmp:16.11_2}
K^{-1} \ge \mu (x\G) \ge \mu (xH) \gg L^{-4 C''} M^{-C''} \,.
\end{equation}
Finding $M$ satisfying both (\ref{tmp:16.11_1}), (\ref{tmp:16.11_2}) and using the assumption $k\le K^{c_*}$, we obtain the required dependence $M$ on $K$. 
This completes the proof. 
$\hfill\Box$

\bigskip 

%

Using the same method of the proof (one can check that a non--commutative analogue of Lemma \ref{l:change_QG}	takes place and also, see \cite[Theorem 27]{s_Bourgain} and Theorem \ref{t:Q_shift} above), we obtain 

\begin{theorem}
	Let
	$\mu$ be a symmetric probability measure on $\SL_2 (\F_p)$ such that for a parameter 
	$K\ge 1$ one has \\
	$\circ~$ $\mu (g \G) \le K^{-1}$ for any proper subgroup $\G\subset \SL_2 (\F_p)$, $g\in \SL_2 (\F_p)$ and \\
	$\circ~$ $\| \mu \|_\infty \le K^{-1}$.\\
	Put $f(x) = \mu (x) - |\SL_2 (\F_p)|^{-1}$.
	Then there is  $k \ll \log (\| \mu \|^{-1}_2) /\log K$ such that $\E_{2^k} (f) \le 2 \| f \|^{2^{k+1}}_2$. 
	\label{t:E_k_SL2}
\end{theorem}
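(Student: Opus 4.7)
Proof proposal. The plan is to run the iterative argument of Theorem \ref{t:Q_shift}, replacing the multiplicative-subgroup action of $\G$ on $\F_p$ by the action of $\SL_2(\F_p)$ on itself and using Theorem \ref{t:flattering} (hence Helfgott's growth result) as the non-commutative substitute for Lemma \ref{l:AA_small_energy}. First I would set $s = |\SL_2(\F_p)|$ and $f = \mu - s^{-1}$, so that $\sum_x f(x) = 0$, $\|f\|_1 \le 2$, $(f *_j f)(x) = (\mu *_j \mu)(x) - s^{-1}$, and for $\E_l(f) = \sum_x (f \circ f)^l(x)$ with group convolution $(f \circ f)(x) = \sum_y \overline{f(y)} f(xy)$ the trivial value $(f \circ f)(e) = \|f\|_2^2$ gives the lower bound $\E_l(f) \ge \|f\|_2^{2l}$. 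The target bound is thus tight up to a factor of $2$.

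The core step is the dichotomy: for every even integer $2l$, either $\E_{4l}(f) \le 2\|f\|_2^{8l}$, or
\begin{equation*}
\E_{4l}(f) \le C (\log K)^4 K^{-c} \|f\|_2^{4l}\, \E_{2l}(f),
\end{equation*}
for absolute constants $C, c > 0$. I would prove it by the level-set decomposition of $f \circ f$ into $O(\log(\|f\|_1/\|f\|_2))$ dyadic pieces $P_j = \{x : \D_j < |(f \circ f)(x)| \le 2\D_j\}$ exactly as in Theorem \ref{t:Q_shift}, picking the dominant $P = P_{j_0}$. Two competing bounds then apply: a Frobenius bound from Lemma \ref{l:Frobenious} (yielding $p \|f\|_2^{8l}/s$ as in \eqref{f:first_E}) and the non-commutative analogue of Lemma \ref{l:change_QG}
\begin{equation*}
\Bigl(\sum_{x\in P} r^{\,l}_{f^{-1}f}(x)\Bigr)^4 \le \|f\|_2^{4l}\, \E_{2l}(f)\, \E(P),
\end{equation*}
where $\E(P) = |\{(a,b,c,d) \in P^4 : ab = cd\}|$ is the multiplicative energy of $P$ in $\SL_2(\F_p)$. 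The key input is $\E(P) \le K^{-c}|P|^3$, which one obtains by precisely the argument of Theorem \ref{t:flattering}: apply the non-commutative Balog--Szemer\'edi--Gowers theorem to extract an approximate group $H$ with $|H^3| \ll |H|$; if $H$ sits inside a proper-subgroup coset, the hypothesis $\mu(g\G) \le K^{-1}$ contradicts the lower bound on $\mu(xH)$ propagated from $P$ as in \eqref{tmp:16.11_3}--\eqref{tmp:16.11_2}, while otherwise Helfgott's Theorem \ref{t:Harald_SL2} forces $|H|$ to be very small compared to $|P|$.

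Once the dichotomy is established, iterating it starting from the trivial bound $\E_2(f) \le \|f\|_1^2 \|f\|_2^2$ gives, after $k$ doublings,
\begin{equation*}
\E_{2^k}(f) \le \bigl(C (\log K)^4 K^{-c}\bigr)^{k-1} \|f\|_1^2\, \|f\|_2^{2^{k+1}-2},
\end{equation*}
and choosing $k$ of order $\log(\|f\|_1/\|f\|_2)/\log K \ll \log(\|\mu\|_2^{-1})/\log K$ absorbs the accumulated prefactor, yielding $\E_{2^k}(f) \le 2\|f\|_2^{2^{k+1}}$ as claimed. I expect the main obstacle to be the subgroup-avoidance step inside the $\E(P)$ estimate, namely verifying that the hypothesis $\mu(g\G) \le K^{-1}$ on $\mu$ passes to the dyadic level sets $P$ that arise along the iteration; this is precisely the technical point resolved in the proof of Theorem \ref{t:flattering} by converting $L^\infty$-concentration of $r_{f^{-1}f}$ on a coset back to $\mu$-mass on that coset via the averaging inequality \eqref{tmp:16.11_3}, and I would reuse that argument almost verbatim.
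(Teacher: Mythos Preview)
Your proposal is correct and follows essentially the same approach as the paper: the paper's own proof is a one-line remark that the argument of Theorem \ref{t:Q_shift} goes through with a non-commutative analogue of Lemma \ref{l:change_QG}, and you have spelled this out accurately, including the key point that the energy bound on the level set $P$ is supplied by the BSzG--Helfgott--subgroup-avoidance machinery from the proof of Theorem \ref{t:flattering}. One small imprecision: the ``Frobenius bound'' you invoke is not Lemma \ref{l:Frobenious} itself (which concerns the action of $\SL_2(\F_p)$ on $\F_p$) but its analogue for the regular representation, i.e.\ Gowers' quasirandomness bound used in the proof of Theorem \ref{t:flattering}; and the number of dyadic level sets is $O(\log(\|f\|_1/\|f\|_2)) = O(\log(\|\mu\|_2^{-1}))$ rather than $O(\log K)$, though this only affects harmless logarithmic factors in the final choice of $k$.
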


Theorem 
\ref{t:E_k_SL2}
immediately implies (see the proof of bound (\ref{f:E_k_sigma+}) from Corollary \ref{c:E_k_sigma}).

\begin{corollary}
	Let
	$\mu$ be a symmetric probability measure on $\SL_2 (\F_p)$ such that for a parameter 
	$K\ge 1$ one has \\
	$\circ~$ $\mu (g \G) \le K^{-1}$ for any proper subgroup $\G\subset \SL_2 (\F_p)$, $g\in \SL_2 (\F_p)$ and \\
	$\circ~$ $\| \mu \|_\infty \le K^{-1}$.\\
	Put $f(x) = \mu (x) - |\SL_2 (\F_p)|^{-1}$.
	Then there is  $k \ll \log (\| \mu \|^{-1}_2) /\log K$ such that for an arbitrary function $h : \SL_2 (\F_p) \to \C$ 
	and any set $A \subseteq \SL_2 (\F_p)$ and a positive integer  $s \le 2^k$  one has
	\begin{equation}\label{f:E_k_sigma+_SL}
	\left| \sum_{x\in A} (h  * f)^s (x) \right| \le  |A| \|h\|^s_2 \| f \|^s_2  \left( \frac{2\E (A)}{|A|^4}\right)^{s/2^{k+2}} \,.
	\end{equation}
	\label{c:E_k_SL2}
\end{corollary}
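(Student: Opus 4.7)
The plan is to imitate the proof of the abelian bound \eqref{f:E_k_sigma+} from Corollary \ref{c:E_k_sigma}, replacing the additive convolution by the $\SL_2(\F_p)$-convolution and invoking Theorem \ref{t:E_k_SL2} in place of Theorem \ref{t:Q_shift}. Two successive applications of H\"older's inequality and the Cauchy--Schwarz inequality should peel off $|A|$ and $\|h\|_2^s$ and reduce the estimate to a product of the two energies $\E(A)$ and $\E_{2^{k+1}}(f)$; Theorem \ref{t:E_k_SL2} then controls the latter. The only delicate point is that the ambient group is non-commutative, so variable substitutions must be tracked with some care, but nothing genuinely new happens.

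First, since $s\le 2^k$, H\"older's inequality in the form
\[
\Bigl|\sum_{x\in A}(h*f)^{s}(x)\Bigr|^{2^{k+2}/s}\le |A|^{2^{k+2}/s-4}\Bigl(\sum_{x\in A}(h*f)^{2^{k}}(x)\Bigr)^{4}
\]
reduces matters to bounding the $2^k$-th moment. Expanding the inner $2^k$-th power gives
\[
\sum_{x\in A}(h*f)^{2^k}(x)=\sum_{g_1,\dots,g_{2^k}}\prod_{j=1}^{2^k}h(g_j)\sum_{x\in A}\prod_{j=1}^{2^k}f(g_j^{-1}x),
\]
and Cauchy--Schwarz in the $g$-variables extracts a factor $\|h\|_2^{2^{k}}$, leaving $\sum_{x,y\in A}\bigl(\sum_{g}f(g^{-1}x)f(g^{-1}y)\bigr)^{2^{k}}$. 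The substitution $g'=g^{-1}x$ rewrites $\sum_{g}f(g^{-1}x)f(g^{-1}y)$ as the correlation $(f\circ f)(x^{-1}y):=\sum_{g}f(g)f(gx^{-1}y)$; grouping terms according to $u=x^{-1}y$ and applying the Cauchy--Schwarz inequality once more yields
\[
\Bigl(\sum_{x\in A}(h*f)^{2^{k}}(x)\Bigr)^{4}\le \|h\|_2^{2^{k+2}}\,\E(A)\,\E_{2^{k+1}}(f).
\]

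The hypotheses on $\mu$ are precisely those of Theorem \ref{t:E_k_SL2}, so I can choose $k$ of the advertised order $\log(\|\mu\|_2^{-1})/\log K$ with $\E_{2^{k+1}}(f)\le 2\|f\|_2^{2^{k+2}}$ (if necessary enlarging $k$ by one, which stays in the allowed range). Substituting this into the chain of inequalities above and taking the $(s/2^{k+2})$-th root produces
\[
\Bigl|\sum_{x\in A}(h*f)^{s}(x)\Bigr|\le |A|\,\|h\|_2^{s}\|f\|_2^{s}\Bigl(\frac{2\,\E(A)}{|A|^{4}}\Bigr)^{s/2^{k+2}},
\]
which is the desired bound. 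The principal technical hurdle is therefore just the non-commutative bookkeeping in the double Cauchy--Schwarz step; since it reduces, as in the abelian case, to a left-translation $g\mapsto g^{-1}x$, I do not anticipate any genuine obstacle.
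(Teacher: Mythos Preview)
Your argument is correct and is precisely the route the paper intends: it explicitly says the corollary follows from Theorem~\ref{t:E_k_SL2} by repeating the proof of bound~\eqref{f:E_k_sigma+} in Corollary~\ref{c:E_k_sigma}, and your two H\"older/Cauchy--Schwarz steps together with the substitution $g\mapsto g^{-1}x$ are exactly that proof transported to $\SL_2(\F_p)$. The only cosmetic point is the index shift (Theorem~\ref{t:E_k_SL2} bounds $\E_{2^{k}}(f)$, so one applies it with $k+1$ and relabels), which you already noted.
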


\bigskip

\noindent{I.D.~Shkredov\\
	Steklov Mathematical Institute,\\
	ul. Gubkina, 8, Moscow, Russia, 119991}
\\
and
\\
IITP RAS,  \\
Bolshoy Karetny per. 19, Moscow, Russia, 127994\\
and 
\\
MIPT, \\ 
Institutskii per. 9, Dolgoprudnii, Russia, 141701\\
{\tt ilya.shkredov@gmail.com}


\end{document}